\title{Formatting Instructions For NeurIPS 2020}
\newcommand{\printfnsymbol}[1]{%
  \textsuperscript{\@fnsymbol{#1}}%
}
\newcommand\ec[2][]{\ensuremath{\mathbb{E}_{#1} \left[#2\right]}}
\newcommand\ecb[2][]{\ensuremath{\mathbb{E}_{B_{#1}} \left[#2\right]}}
\newcommand\ecn[2][]{\ec[#1]{\norm{#2}^2}}
\newcommand\ecd[2][]{\ensuremath{\mathbb{E}_{\mathcal{D}} \left[#2\right]}}
\newcommand\ev[1]{\left \langle #1 \right \rangle}
\newcommand\dotprod[1]{\left \langle #1 \right \rangle}
\newcommand\br[1]{\left ( #1 \right )}
\newcommand\pbr[1]{\left \{ #1 \right \} }
\newcommand\floor[1]{\left \lfloor #1 \right \rfloor}
\newcommand{\1}{\mathbbm{1}}
\newcommand{\N}{\mathbb{N}}
\newcommand{\R}{\mathbb{R}}
\DeclareMathOperator*{\argmin}{\arg\!\min}
\newcommand{\diag}{\mathrm{diag }}
\newcommand{\Proba}{\mathbb{P}}
\newcommand{\norm}[1]{\left\lVert#1\right\rVert}
\newcommand{\sqn}[1]{{\left\lVert#1\right\rVert}^2}
\newcommand{\trn}[1]{{\left\lVert#1\right\rVert}^2_{\mathrm{Tr}}}
\newcommand{\tr}{\mathrm{Tr}}
\newcommand{\prox}{\mathrm{prox}}
\renewcommand{\L}{\mathcal{L}}
\newcommand{\D}{\mathcal{D}}
\newcommand{\eqdef}{\overset{\text{def}}{=}}
\newsavebox\myboxA
\newsavebox\myboxB
\newlength\mylenA
\newcommand{\cL}{\mathcal{L}}
\definecolor{mydarkgreen}{RGB}{39,130,67}
\definecolor{mydarkred}{RGB}{192,47,25}
\newcommand*\overbar[2][0.75]{%
    \sbox{\myboxA}{$\m@th#2$}%
    \setbox\myboxB\null
    \ht\myboxB=\ht\myboxA%
    \dp\myboxB=\dp\myboxA%
    \wd\myboxB=#1\wd\myboxA
    \sbox\myboxB{$\m@th\overline{\copy\myboxB}$}
    \setlength\mylenA{\the\wd\myboxA}
    \addtolength\mylenA{-\the\wd\myboxB}%
    \ifdim\wd\myboxB<\wd\myboxA%
       \rlap{\hskip 0.5\mylenA\usebox\myboxB}{\usebox\myboxA}%
    \else
        \hskip -0.5\mylenA\rlap{\usebox\myboxA}{\hskip 0.5\mylenA\usebox\myboxB}%
    \fi}
\newcommand{\rob}[1]{}
\definecolor{shadecolor}{gray}{0.90}
\declaretheoremstyle[
headfont=\normalfont\bfseries,
notefont=\mdseries, notebraces={(}{)},
bodyfont=\normalfont,
postheadspace=0.5em,
spaceabove=1pt,
mdframed={
  skipabove=8pt,
  skipbelow=8pt,
  hidealllines=true,
  backgroundcolor={shadecolor},
  innerleftmargin=4pt,
  innerrightmargin=4pt}
]{shaded}
\declaretheorem[style=shaded,within=section]{definition}
\declaretheorem[style=shaded,sibling=definition]{theorem}
\declaretheorem[style=shaded,sibling=definition]{proposition}
\declaretheorem[style=shaded,sibling=definition]{assumption}
\declaretheorem[style=shaded,sibling=definition]{corollary}
\declaretheorem[style=shaded,sibling=definition]{lemma}
\declaretheorem[style=shaded,sibling=definition]{remark}
\begin{document}
\title{\bf Asynchronous Iterations in Optimization: New Sequence Results and Sharper Algorithmic Guarantees}
\author[1]{Hamid Reza\ Feyzmahdavian}
\author[2]{Mikael\ Johansson}
\affil[1]{ABB Corporate Research, V\"aster\aa s, Sweden} 
\affil[2]{KTH Royal Institute of Technology, Stockholm, Sweden}
\maketitle

%
%
\begin{abstract}
We introduce novel convergence results for asynchronous iterations that appear in the analysis of parallel and distributed optimization algorithms. The results are simple to apply and give explicit estimates for how the degree of asynchrony impacts the convergence rates of the iterates. Our results shorten, streamline and strengthen existing convergence proofs for several asynchronous optimization methods and allow us to establish convergence guarantees for popular algorithms that were thus far lacking a complete theoretical understanding. Specifically, we use our results to derive better iteration complexity bounds for proximal incremental aggregated gradient methods, to obtain tighter guarantees depending on the average rather than maximum delay for the asynchronous stochastic gradient descent method, to provide less conservative analyses of the speedup conditions for asynchronous block-coordinate implementations of Krasnosel'ski\u{i}–Mann iterations, and to quantify the convergence rates for totally asynchronous iterations under various assumptions on communication delays and update rates.
\end{abstract}

%
%
\section{Introduction}
With the ubiquitous digitalization of the society, decision problems are rapidly expanding in size and scope. Increasingly often, we face problems where data, computations, and decisions need to be distributed on multiple nodes. These nodes may be individual cores in a CPU, different processors in a multi-CPU platform, or servers in a geographically dispersed cluster. Representative examples include machine learning on data sets that are too large to conveniently store in a single computer, real-time decision-making based on high-velocity data streams, and control and coordination of infrastructure-scale systems.

Insisting that such multi-node systems operate synchronously limits their scalability, since the performance is then dictated by the slowest node, and the system becomes fragile to node failures. Hence, there is a strong current interest in developing asynchronous algorithms for optimal decision-making (see, e.g.,~\cite{RRW+:11,Liu:14,Peng:16,LPL:17,Mishchenko:2020} and references therein). Well-established models for parallel computations, such as bulk synchronous parallel~\citep{Val:90} or MapReduce~\citep{DeG:08}, are now being complemented by stale-synchronous parallel models~\citep{HCC+:13} and fully asynchronous processing paradigms~\citep{Han:15}. In many of these frameworks, the amount of asynchrony is a design parameter: in some systems, the delay is proportional to the number of parallel workers deployed~\citep{RRW+:11}; while other systems use communication primitives which enforce a hard limit on the maximum information delay~\citep{HCC+:13}. It is therefore useful to have theoretical results which characterize the level of asynchrony that can be tolerated by a given algorithm. To this end, this paper develops several theoretical tools for studying the convergence of asynchronous iterations.

The dynamics of asynchronous iterations are much richer than their synchronous counterparts and quantifying the impact of asynchrony on the convergence rate is mathematically challenging. Some of the first results on the convergence of asynchronous iterations were derived by~\cite{ChM:69} for solving linear equations. This work was later extended to nonlinear iterations involving maximum norm contractions~\citep{Bau:78} and monotone mappings~\citep{BeE:87}. Powerful convergence results for broad classes of asynchronous iterations under different assumptions on communication delays and update rates were presented by~\cite{Ber:83},~\cite{Tsitsiklis:86}, and in the celebrated book of~\cite{BeT:89}. Although the framework for modeling asynchronous iterations in~\cite{BeT:89} is both powerful and elegant, the most concrete results only guarantee asymptotic convergence and do not give explicit bounds on convergence times. Execution time guarantees are essential when iterative algorithms are used to find a decision under stringent real-time constraints. In this paper, we derive a number of convergence results for asynchronous iterations which explicitly quantify the impact of asynchrony on the convergence times of the iterates.

The convergence guarantees for influential asynchronous optimization algorithms such as~\textsc{Hogwild!}~\citep{RRW+:11}, Delayed \textsc{Sgd}~\citep{AgD:12}, \textsc{AsySCD}~\citep{Liu:14}, ARock~\citep{Peng:16} and \textsc{Asaga}~\citep{LPL:17} have been established on a per-algorithm basis, and are often based on intricate induction proofs. Such proofs tend to be long and sources of conservatism are hard to isolate. A closer analysis of these proofs reveals that they rely on a few common principles. In this paper, we attempt to unify these ideas, derive general convergence results for the associated sequences, and use these results to systematically provide stronger guarantees for several popular asynchronous algorithms. In contrast to the recent analysis framework proposed in~\cite{Mania:17}, which models the effect of asynchrony as noise, our results attempt to capture the inherent structure in the asynchronous iterations. This allows us to derive convergence results for complex optimization algorithms in a systematic and transparent manner, without introducing unnecessary conservatism. We make the following specific contributions:
\begin{itemize}
\item We identify two important families of sequences, characterized by certain inequalities, that appear naturally when analyzing the convergence of asynchronous optimization algorithms. For each family, we derive convergence results that allow to quantify how the degree of asynchrony affects the convergence rate guarantees. We use these sequence results to analyze several popular asynchronous optimization algorithms. 
\item First, we derive stronger convergence guarantees for the proximal incremental gradient method and provide a larger range of admissible step-sizes. Specifically, for $L$-smooth and convex objective functions, we prove an iteration complexity of $\mathcal{O}\bigl( L \tau/\epsilon\bigr)$, which improves upon the previously known rate $\mathcal{O}\bigl( L^2 \tau^3/\epsilon\bigr)$ given in~\cite{Sun:19}. We also show that for objective functions that satisfy a quadratic functional growth condition, the iteration complexity is $\mathcal{O}\bigl(Q \tau \log(1/\epsilon)\bigr)$, where $Q=L/\mu$  is the condition number. In this case, our result allows the algorithm to use larger step-sizes than those provided in~\cite{Vanli:18}, leading to a tighter  convergence rate guarantee.
\item Second, we analyze the asynchronous stochastic gradient descent method with delay-dependent step-sizes and extend the results of~\cite{Koloskova:2022} from non-convex to convex and strongly convex problems. We show that our sequence results are not limited to providing step-size rules and convergence rates that depend on the maximal delay. In particular, for convex problems, we derive an iteration complexity of
\begin{align*}
\mathcal{O}\left(\frac{L \tau_{\textup{ave}}}{\epsilon} + \frac{ \sigma^2}{\epsilon^2}\right),  
\end{align*}
where $\tau_{\textup{ave}}$ is the average delay, and $\sigma$ denotes the variance of stochastic gradients. For strongly convex problems, we obtain the iteration complexity 
\begin{align*}
\mathcal{O}\left(\left({Q \tau_{\textup{ave}}} + \frac{\sigma^2}{\epsilon}\right)\log\left(\frac{1}{\epsilon}\right)\right).
\end{align*}
Our guarantees improve the previously best known bounds given in~\cite{Arjevani:2020} and~\cite{Karimireddy:2020}, which are based on the maximal delay that can be significantly larger than the average delay. Similar to~\cite{Mishchenko:2022}, we also provide convergence guarantees and admissible step-sizes which depend only 
on the number of parallel workers deployed, rather than on the gradient delays.
\item Third, we give an improved analysis of the ARock framework for asynchronous block-coordinate updates of Krasnosel'ski\u{i}–Mann iterations. For pseudo-contractive operators, we show that ARock achieves near-linear speedup as long as the number of parallel computing elements is $o(m)$, where $m$ is the number of decision variables. Compared to the results presented in \cite{Hannah:17}, we improve the requirement for the linear speedup property from $\mathcal{O}\left(\sqrt{m}\right)$ to $o\left(m\right)$.
\item Finally, we present a uniform treatment of asynchronous iterations involving block-maximum norm contractions under partial and total asynchronism. Contrary to the results in~\cite{Bertsekas:15} which only established asymptotic convergence, we give explicit estimates of the convergence rate for various classes of bounded and unbounded communication delays and update intervals.
\end{itemize}

This paper generalizes and streamlines our earlier work~\citep{FeJ:14, FAJ:14, AFJ:16}. Specifically, we extend the sequence result in~\citep{FAJ:14} to a family of unbounded delays, which allows to deal with totally
asynchronous iterations. Compared to~\citep{FAJ:14}, we present two new Lemmas~\ref{Sec:Main Results Lemma2} and~\ref{Sec:Main Results Lemma3} in Section~\ref{Sec:Main Results}. The analysis in \cite{FeJ:14} is limited to contraction mappings in the maximum-norm, and does not provide any sequence results for analyzing asynchronous algorithms. However, Lemma~\ref{Sec:Main Results Lemma3} in Section~\ref{Sec:Main Results} recovers the results in~\cite{FeJ:14} as a special case. The sequence result presented in \citep{AFJ:16} is only applicable for deriving linear convergence rates, and is restricted to deliver step-size rules and convergence rates that depend on the maximum delay. To overcome these limitations, we introduce a novel sequence result in Lemma~\ref{Sec:Main Results Lemma4} that guarantees both linear and sub-linear rates of convergence and can provide convergence bounds that depend on the average delay. In~\citep{AFJ:16}, the analysis of the proximal incremental gradient method has a drawback that the guaranteed bound grows quadratically with the maximum delay $\tau$. In our work, we improve the dependence on $\tau$ from quadratic to linear.

\subsection{Notation and Preliminaries}
Here, we introduce the notation and review the key definitions that will be used throughout the paper. We let $\mathbb{R}$, $\mathbb{N}$, and $\mathbb{N}_0$ denote the set of real numbers, natural numbers, and the set of natural numbers including zero, respectively. For any $n\in\mathbb{N}$, 
\begin{align*}
[n]:=\{1,\ldots,n\}.
\end{align*}
For a real number $a$, we denote the largest integer less than or equal to $a$ by $\lfloor a \rfloor$ and define
\begin{align*}
(a)_{+} := \max\{a,\; 0\}. 
\end{align*}
We use $\|\cdot\|$ to represent the standard Euclidean norm on $\mathbb{R}^d$ and  $\langle x,\;y\rangle$ to denote the Euclidean (dot) inner product of two vectors $x,y\in\mathbb{R}^d$. We say that a function $f:\mathbb{R}^d \rightarrow \mathbb{R}$ is $L$-smooth if it is differentiable and 
\begin{align*}
\| \nabla f(y) - \nabla f(x) \| \leq L \| y - x\|,\quad \forall x,y\in\mathbb{R}^d.
\end{align*}
We say a convex function $f:\mathbb{R}^d \rightarrow \mathbb{R}$ is $\mu$-strongly convex if 
\begin{align*}
f(y) \geq f(x) + \langle \nabla f(x), y - x \rangle + \frac{\mu}{2} \| y -x \|^2,\quad \forall x,y\in\mathbb{R}^d.
\end{align*}
The notation $g(t) = \mathcal{O}\bigl(h(t)\bigr)$ means that there exist positive constants $M$ and $t_0$ such that $g(t) \leq M h(t)$ for all $t \geq t_0$, while $g(t) = o\bigl(h(t)\bigr)$ means that $\lim_{t \rightarrow \infty} g(t)/h(t) = 0$. Following standard convention, we use tilde $\tilde{\mathcal{O}}$-notation to hide poly-logarithmic factors in the problem parameters.

%
%
\section{Lyapunov Analysis for Optimization Algorithms}
\label{Sec:Motivation}

Convergence proofs for optimization algorithms are usually based on induction and often presented without invoking any general theorems. For more complex algorithms, this leads to lengthy derivations where it is difficult to distinguish mathematical innovations. The need to systemize convergence proofs for optimization algorithms was recognized in Polyak’s insightful textbook~\citep{Polyak:87}. Polyak argued that most results concerning convergence and rate of convergence of optimization algorithms can be derived using Lyapunonv’s second method, with typical Lyapunov functions being the objective function value, the norm of its gradient, or the squared distance between the current iterate and the optimal set. In addition, he derived and collected a number of useful sequence results which allowed to shorten, unify and clarify many convergence proofs~\cite[Chapter 2]{Polyak:87}.  

To make these ideas more concrete, consider the simple gradient descent method
\begin{align*}
x_{k+1} &= x_k -\gamma \nabla f(x_k), \quad k\in\mathbb{N}_0.
\end{align*}
Assume that $f:\mathbb{R}^d \rightarrow \mathbb{R}$ is $\mu$-strongly convex and $L$-smooth. If $x^\star$ is the minimizer of $f$ on~$\mathbb{R}^d$, the standard convergence proof (e.g., Theorem $2.1.15$ in \cite{Nesterov:13}) establishes that the iterates satisfy
\begin{align*}
	\Vert x_{k+1}-x^{\star}\Vert^2 &\leq \left(1 - \frac{2\gamma\mu L}{\mu+L} \right) \Vert x_k - x^{\star}\Vert^2 -  \gamma\left(\frac{2}{\mu+L} - \gamma\right) \Vert \nabla f(x_k)\Vert^2.
\end{align*}
In terms of
$V_k =  \Vert x_k - x^{\star}\Vert^2$ and $W_k =  \Vert  \nabla f(x_k)\Vert^2$, the inequality reads
\begin{align*}
	V_{k+1} &\leq \left(1 - \frac{2\gamma\mu L}{\mu+L} \right) V_k -  \gamma\left(\frac{2}{\mu+L} - \gamma\right) W_k.
\end{align*}
For step-sizes $\gamma\in \bigl(0, 2/{(\mu + L)}\bigr]$, the second term on the right-hand side can be dropped and, hence, $V_k$ is guaranteed to decay by at least a factor 
$q=1-2\gamma \mu L/(\mu + L)$. That is,
\begin{align}
V_{k+1} &\leq  q V_k,\quad k\in\mathbb{N}_0. 
\label{Sec:Main Results Eq1}
\end{align}
This implies linear convergence of the iterates, i.e., 
\begin{align*}
V_k\leq q^k V_0, \quad k\in\mathbb{N}_0.
\end{align*}

When $f$ is convex, but not necessarily strongly convex, and $L$-smooth, the iterates satisfy 
\begin{align*}
2\gamma \bigl(f(x_{k}) - f(x^\star)\bigr) +  \Vert x_{k+1} -x^{\star}\Vert^2 \leq \Vert x_k - x^{\star} \Vert^2 -  \gamma\left(\frac{1}{L} - \gamma\right) \Vert \nabla f(x_k)\Vert^2.
\end{align*}
Let $V_k$ and $W_k$ be as before, while $X_k= 2\gamma (f(x_k) - f(x^\star))$. Then, the inequality above can be rewritten as
\begin{align}
 X_{k} + V_{k+1}  & \leq V_k  -  \gamma\left(\frac{1}{L} - \gamma\right) W_k.
\label{Sec:Main Results Eq2}
\end{align}
If $\gamma \in (0, 1/L]$, the second term on the right-hand side is non-positive and can be dropped. 
Summing both sides of~\eqref{Sec:Main Results Eq2} over $k$ and using telescoping cancellation then gives
\begin{align*}
 \sum_{k=0}^K X_{k} + V_{K+1}\leq V_0, \quad K\in\mathbb{N}_0.
\end{align*}
The well-known $\mathcal{O}(1/k)$ convergence rate of $f(x_k)$ to $f(x^\star)$ follows from the fact that $V_k$ is non-negative and that $X_k$ is non-increasing~\cite[Theorem $10.21$]{Beck:17}. 

In the analysis of gradient descent above, it is natural to view the iteration index $k$ as a surrogate for the accumulated execution time. This interpretation is valid  when the computations are done on a single computer and the time required to perform gradient computations and iterate updates are constant across iterations. In distributed and asynchronous systems, on the other hand, computations are performed in parallel on nodes with different computational capabilities and workloads, without any global synchronization to a common clock. In these systems, the iteration index is simply an ordering of events, typically associated with reading or writing the iterate vector from memory. Since the global memory may have been updated from the time that it was read by a node until the time that the node returns its result, $x_{k+1}$ may not depend on $x_k$ but rather on some earlier iterate $x_{k-\tau_k}$, where $\tau_k\in\{0,\ldots, k-1\}$. More generally, many asynchronous optimization algorithms result in iterations on the form
\begin{align*}
    x_{k+1} &= {\mathcal M}(x_{k}, x_{k-1}, \dots, x_{k-\tau_k}).
\end{align*}
We call these \emph{asynchronous iterations}. The information delay $\tau_k$ represents the age (in terms of event count) of the oldest information that the method uses in the current update, and can be seen as a measure of the amount of asynchrony in the system. When we analyze the convergence of asynchronous iterations, $\Vert x_{k+1}-x^{\star}\Vert^2$ will not only depend on $\Vert x_k-x^{\star}\Vert^2$, but also on $\Vert x_{k-1}-x^{\star}\Vert^2, \dots, \Vert x_{k-\tau_k}-x^{\star}\Vert^2$, and sometimes also on $\Vert \nabla f(x_{k-1})\Vert^2, \dots, \Vert \nabla f(x_{k-\tau_k})\Vert^2$.  Hence, it will not be enough to consider simple sequence relationships such as (\ref{Sec:Main Results Eq1}) or (\ref{Sec:Main Results Eq2}). Because of asynchrony, the right-hand side of the inequalities will involve delayed versions of $V_k$ and $W_k$ that perturb the convergence of the synchronous iteration. 

To fix ideas, consider the gradient descent method with a constant delay of $\tau$ in the gradient computation:
\begin{align}
    x_{k+1} &= x_k - \gamma \nabla f(x_{k-\tau}). \label{eqn:delayed_gd}
\end{align}
To analyze its convergence when $f$ is $\mu$-strongly convex and $L$-smooth, we add and subtract $\gamma \nabla f(x_k)$ to the right-hand side of (\ref{eqn:delayed_gd}) and study the distance of the iterates to the optimum. By applying the same analysis as for the gradient descent method with $\gamma\in (0, 2/(\mu+L)]$, we obtain
\begin{align}
    \Vert x_{k+1}-x^{\star}\Vert^2 &\leq \left(1 - \frac{2\gamma\mu L}{\mu+L} \right) \Vert x_k - x^{\star}\Vert^2
    + \omega_k,
    \label{eqn:delayed_gd_analysis}
\end{align}
where the perturbation term $\omega_k$ accounts for the impact of the delay in gradient computation and is given by
\begin{align}
    \omega_k &= 2\gamma \bigl\langle \nabla f(x_{k})-\nabla f(x_{k-\tau}),\; x_k-\gamma\nabla f(x_k)-x^{\star} \bigr\rangle + \gamma^2 \Vert \nabla f(x_k) - \nabla f(x_{k-\tau}) \Vert^2.
    \label{eqn:delayed_gd_error}
\end{align}
According to Lemma~\ref{Lemma_delayedgradient_1} in Appendix~\ref{Appendix_Movation}, we can bound $\omega_k$ by
\begin{align}
    \omega_k &\leq \left( 
    \gamma^4 L^4 \tau^2 + 2\gamma^2L^2 \tau
    \right) \max_{(k-2\tau)_+\leq \ell \leq k}\left\{ \Vert x_\ell-x^{\star}\Vert^2 \right\}.
    \label{eqn:delayed_gd_errorbound}
\end{align}
Substituting~\eqref{eqn:delayed_gd_errorbound} into~\eqref{eqn:delayed_gd_analysis}, we conclude that
\begin{align*}
    \Vert x_{k+1}-x^{\star}\Vert^2 &\leq \left(1 - \frac{2\gamma\mu L}{\mu+L} \right) \Vert x_k - x^{\star}\Vert^2 + \left( 
    \gamma^4 L^4 \tau^2 + 2\gamma^2L^2 \tau
    \right) \max_{(k-2\tau)_+\leq \ell \leq k}\left\{ \Vert x_\ell-x^{\star}\Vert^2 \right\}.
\end{align*}
In terms of $V_k=\Vert x_k-x^{\star}\Vert^2$, the iterates therefore satisfy 
\begin{align}
    V_{k+1} &\leq \left(1 - \frac{2\gamma\mu L}{\mu+L} \right) V_k + 
    \left( 
    \gamma^4 L^4 \tau^2 + 2\gamma^2L^2 \tau
    \right) \max_{(k-2\tau)_+\leq \ell \leq k} V_{\ell}.
    \label{Sec:Main Results Eq1-1}
\end{align}
We see that the inequality~\eqref{Sec:Main Results Eq1-1} includes delayed versions of $V_k$ on the right-hand side and reduces to~\eqref{Sec:Main Results Eq1} when $\tau=0$. 

When $f$ is $L$-smooth and convex, but not strongly convex, we first note that
\begin{align}
    \Vert x_{k+1}-x^{\star}\Vert^2 &= 
    \Vert x_k-x^{\star}\Vert^2 - 2\gamma \langle \nabla f(x_{k-\tau}), x_k-x^{\star} \rangle + \gamma^2\Vert \nabla f(x_{k-\tau})\Vert^2.
    \label{eqn:delayed_gd_expansion}
\end{align}
By Lemma~\ref{Lemma_delayedgradient_2} in Appendix~\ref{Appendix_Movation}, the inner product is lower bounded by
\begin{align*}
f(x_k)-f^\star + \frac{1}{2L}\Vert \nabla f(x_{k-\tau})\Vert^2 - \frac{\gamma^2 L\tau}{2} \sum_{\ell=(k-\tau)_+}^{k-1} \Vert \nabla f(x_{\ell-\tau)}\Vert^2 \leq \langle \nabla f(x_{k-\tau}), x_k-x^{\star} \rangle.
\end{align*}
Substituting this bound into (\ref{eqn:delayed_gd_expansion}) yields
\begin{align*}
    2\gamma (f(x_k)-f^{\star}) + \Vert x_{k+1}-x^{\star}\Vert^2 \leq
    \Vert x_k-x^{\star}\Vert^2 & +
    \gamma^3 L \tau \sum_{\ell=(k-\tau)_+}^{k-1} \Vert \nabla f(x_{\ell -\tau)}\Vert^2 
     -\gamma\left(\frac{1}{L}-\gamma\right)
    \Vert \nabla f(x_{k-\tau})\Vert^2.
\end{align*}
With $X_k=2\gamma(f(x_{k})-f^{\star})$, $V_k=\Vert x_k-x^{\star}\Vert^2$, and $W_k=\Vert \nabla f(x_{k-\tau})\Vert^2$, the iterates hence satisfy a relationship on the form
\begin{align}
X_{k} + V_{k+1} &\leq V_k + \gamma^3 L \tau \sum_{\ell = (k-\tau)_+}^{k-1} W_{\ell} -\gamma\left(\frac{1}{L}-\gamma\right) W_k.
\label{Sec:Main Results Eq2-2}
\end{align}
Comparing with~\eqref{Sec:Main Results Eq2}, the right-hand side involves delayed versions of $W_k$.

In the next section, we study the convergence of sequences which include~(\ref{Sec:Main Results Eq1-1}) and~(\ref{Sec:Main Results Eq2-2}) as special cases. Our first set of results considers sequence relationships on the form
\begin{align}
V_{k+1} &\leq q V_k +  p \max_{(k-\tau_k)_+\leq \ell \leq k} V_{\ell} ,\quad k\in\mathbb{N}_0.
\label{Sec:Main Results Eq3}
\end{align}
Here, the perturbation caused by asynchrony at iteration $k$ is modeled as a function on the order of $V_{\ell}$ scaled by a factor $p$, where $\ell \in [k-\tau_k, k]$ and $\tau_k$ is the age of the outdated information. Such sequences have appeared, for example, in the analysis of incremental aggregated gradient methods~\citep{Gurbuzbalaban:17}, accelerated incremental aggregated gradient methods with curvature information~\citep{Wai:20}, asynchronous quasi-Newton methods~\citep{Eisen:17}, and asynchronous forward–backward methods for solving monotone inclusion problems~\citep{Stathopoulos:19}.

The second set of our results considers iterate relationships on the form
\begin{align}
X_{k} + V_{k+1} &\leq q_k V_k + p_k \sum_{\ell = (k-\tau_k)_+}^{k} W_{\ell} - r_k W_k + e_k,\quad k\in\mathbb{N}_0.
\label{Sec:Main Results Eq4}
\end{align}
Here, the perturbation due to asynchrony does not introduce delayed $V_k$-terms, but manifests itself through the presence of delayed $W_k$-terms instead. As we will show in this paper, these relationships appear naturally in the analysis of the proximal incremental aggregated gradient method~\citep{AFJ:16}, the asynchronous stochastic gradient descent method~\citep{AgD:12}, and asynchronous Krasnosel'ski\u{i}–Mann method for pseudo-contractive operators~\citep{Peng:16} .

\section{Novel Sequence Results for Asynchronous Iterations}
\label{Sec:Main Results}

In this section, we develop specific convergence results for iterations on the form (\ref{Sec:Main Results Eq3}) and (\ref{Sec:Main Results Eq4}). Our results attempt to balance simplicity, applicability and power, and provide explicit bounds on how the amount of asynchrony affects the guaranteed convergence rates. As we will demonstrate later, the results allow for a simplified and uniform treatment of several asynchronous optimization algorithms.

\subsection{Results for Iterations on the form (\ref{Sec:Main Results Eq3})}

Our first result, introduced in~\cite{FAJ:14}, establishes convergence properties of iterations on the form~(\ref{Sec:Main Results Eq3}) when delays are bounded.
\begin{lemma}
\label{Sec:Main Results Lemma1}
Let $\{V_k\}$ be a non-negative sequence satisfying
\begin{align}
V_{k+1}\leq q V_k + p \max_{(k-\tau_k)_+\leq \ell \leq k} V_{\ell}, \quad k\in \mathbb{N}_0,
\label{Sec:Main Results Lemma1 Eq1}
\end{align}
for non-negative constants $q$ and $p$. Suppose there is a non-negative integer $\tau$ such that
\begin{align*}
0\leq \tau_k \leq \tau, \quad k\in\mathbb{N}_0.
\end{align*}
If $q + p < 1$, then 
\begin{align*}
V_k \leq \rho^k V_0,\quad k\in\mathbb{N}_0,
\end{align*}
where $\rho=(q + p)^{\frac{1}{1+\tau}}$.
\end{lemma}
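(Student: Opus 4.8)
The plan is to prove the bound $V_k \le \rho^k V_0$ by strong induction on $k$, using $\rho = (q+p)^{1/(1+\tau)}$. Note first that since $q+p<1$ we have $\rho \in [0,1)$, and moreover $\rho^{1+\tau} = q+p$. The key observation driving the induction is that for any index $\ell$ with $(k-\tau_k)_+ \le \ell \le k$, we have $\ell \ge k - \tau$, so if the inductive hypothesis $V_j \le \rho^j V_0$ holds for all $j \le k$, then $\max_{(k-\tau_k)_+\le \ell \le k} V_\ell \le \rho^{(k-\tau)_+} V_0 \le \rho^{k-\tau} V_0$, because $\rho < 1$ makes $\rho^j$ decreasing in $j$. (One must be slightly careful when $k < \tau$, but since $\rho^{(k-\tau)_+} \le \rho^{k-\tau}$ holds with the convention that $\rho^{k-\tau}$ is just a real power $\ge \rho^{(k-\tau)_+}$, or one simply restricts attention to $\ell \ge 0$; either way the bound $\max_\ell V_\ell \le \rho^{k-\tau}V_0$ is what we feed forward, interpreting negative exponents formally or handling small $k$ directly.)

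The base case is $k=0$: trivially $V_0 \le \rho^0 V_0 = V_0$. For the inductive step, assume $V_j \le \rho^j V_0$ for all $0 \le j \le k$. Applying the hypothesis~\eqref{Sec:Main Results Lemma1 Eq1} and then the inductive bounds,
\begin{align*}
V_{k+1} &\le q V_k + p \max_{(k-\tau_k)_+ \le \ell \le k} V_\ell \le q \rho^k V_0 + p \rho^{k-\tau} V_0 = \rho^{k-\tau} V_0 \left( q \rho^\tau + p \right).
\end{align*}
Since $\rho < 1$ we have $\rho^\tau \le 1$, hence $q\rho^\tau + p \le q + p = \rho^{1+\tau}$. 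Therefore
\begin{align*}
V_{k+1} \le \rho^{k-\tau} V_0 \cdot \rho^{1+\tau} = \rho^{k+1} V_0,
\end{align*}
which closes the induction.

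The only genuinely delicate point — and the one I would flag as the main obstacle to a fully rigorous write-up — is the bookkeeping around small indices, i.e. when $k < \tau$ so that $k - \tau$ is negative. The clean fix is to observe that the sequence $\{V_k\}$ is only indexed by $k \in \mathbb{N}_0$, and in the maximum the lower limit is truncated at $0$ via $(\cdot)_+$; so for $k < \tau$ one has $\max_{0 \le \ell \le k} V_\ell \le V_0 \le \rho^{k-\tau}V_0$ automatically since $\rho^{k-\tau} \ge 1$ when $k \le \tau$. Thus the inequality $\max_\ell V_\ell \le \rho^{k-\tau}V_0$ is valid for all $k$ (reading $\rho^{k-\tau}$ as a real number, possibly exceeding $1$), and the algebra above goes through verbatim. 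An equivalent and perhaps more transparent alternative is to prove by induction the slightly stronger-looking statement $V_k \le (q+p)^{\lfloor k/(1+\tau)\rfloor} \max_{0\le j \le \tau} V_j$ and then simplify, but the direct argument with $\rho$ is shortest and is what I would present.
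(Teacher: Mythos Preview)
Your proof is correct and follows essentially the same strong-induction argument that underlies the paper's result (the paper defers to Lemma~3 in \cite{FAJ:14}, and the same induction template is spelled out in the proof of the more general Lemma~\ref{Sec:Main Results Lemma3} in the appendix). Your handling of the small-index case $k<\tau$ via the observation that $\rho^{k-\tau}\ge 1$ is exactly the right bookkeeping; the only edge case you might mention explicitly is $q+p=0$, where $\rho=0$ and the bound is trivial since then $V_{k+1}=0$ for all $k$.
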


\begin{proof} 
See Lemma $3$ in~\cite{FAJ:14}.
\end{proof}

Consider the delay-free counterpart of~\eqref{Sec:Main Results Lemma1 Eq1}:
\begin{align*}
V_{k+1} \leq (q + p) V_{k},\quad k\in\mathbb{N}_0.
\end{align*}
Clearly, if $q+p<1$, the sequence $\{V_k\}$ converges linearly at a rate of $\rho = q + p$. Lemma~\ref{Sec:Main Results Lemma1} shows that the convergence rate of~$\{V_k\}$ is still linear in the presence of bounded delays. Lemma~\ref{Sec:Main Results Lemma1} also gives an explicit bound on the impact that an increasing delay has on the convergence rate. As can be expected, the guaranteed convergence rate deteriorates with increasing $\tau$. More precisely, $\rho$ is monotonically increasing in $\tau$, and approaches one as $\tau$ tends to infinity. 

The next result extends Lemma~\ref{Sec:Main Results Lemma1} to a family of unbounded delays, which allows to deal with \textit{totally asynchronous iterations}~\cite[Chapter $6$]{BeT:89}, and shows that the sequence $\{V_k\}$ can still be guaranteed to converge.
\begin{lemma}
\label{Sec:Main Results Lemma2}
Let $\{V_k\}$ be a non-negative sequence such that
\begin{eqnarray}
V_{k+1} \leq q V_k + p  \max_{(k-\tau_k)_+\leq \ell \leq k} V_{\ell}, \quad k\in \mathbb{N}_0,
\label{Sec:Main Results Lemma2 Eq1}
\end{eqnarray}
for some non-negative scalars $q$ and $p$. Suppose that the delay sequence $\{\tau_k\}$ satisfies
\begin{align}
\lim_{k \rightarrow +\infty} k - \tau_k = +\infty.
\label{Sec:Main Results Lemma2 Eq2}
\end{align}
If $q + p < 1$, then $\{V_k\}$ asymptotically converges to zero: 
\begin{align*}
\lim_{k \rightarrow +\infty} V_k = 0.
\end{align*}
\end{lemma}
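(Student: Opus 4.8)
The plan is to exploit the fact that $q+p<1$ to extract linear decay over any window in which the delays reach back only a bounded distance, and then to stitch together infinitely many such windows using the hypothesis $\lim_{k\to\infty}(k-\tau_k)=+\infty$. Concretely, I would first set $\eta := q+p \in [0,1)$ and, for each iteration $k$, define the running maximum $M_k := \max_{0\le \ell \le k} V_\ell$, which is non-decreasing and finite. Applying~\eqref{Sec:Main Results Lemma2 Eq1} directly gives $V_{k+1} \le q V_k + p \max_{(k-\tau_k)_+\le\ell\le k} V_\ell \le \eta\, M_k$, so in particular $\{V_k\}$ is bounded and $M_k \le \max\{V_0, \eta M_{k-1}\}$ stabilises; the sequence $\{M_k\}$ therefore converges to some limit $M_\infty \ge 0$, and the whole task reduces to showing $M_\infty = 0$.

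The core step is a "horizon" argument. By~\eqref{Sec:Main Results Lemma2 Eq2}, for any threshold $T$ there is an index $K(T)$ such that $k - \tau_k \ge T$ for all $k \ge K(T)$; that is, for $k \ge K(T)$ the update at step $k$ only looks back into the region of indices $\ge T$. I would build an increasing sequence of thresholds $T_0 = 0 < T_1 < T_2 < \cdots$ with $T_{j+1} \ge K(T_j)$, so that on the block $k \ge T_{j+1}$ every term $V_{k+1}$ is bounded by $\eta$ times a maximum of earlier $V_\ell$ with $\ell \ge T_j$. Writing $A_j := \sup_{k \ge T_j} V_k$, a short induction on the recursion shows $A_{j+1} \le \eta\, A_j$: indeed, for $k \ge T_{j+1}$ one has $V_{k+1} \le \eta \max_{(k-\tau_k)_+ \le \ell \le k} V_\ell$, and every such $\ell$ satisfies $\ell \ge T_j$, so $\max_{\ell} V_\ell \le \max\{\,\sup_{T_j \le \ell \le T_{j+1}} V_\ell,\ A_{j+1}\,\}$ — one then argues that the supremum defining $A_{j+1}$ cannot exceed $\eta$ times $\sup_{\ell \ge T_j} V_\ell = A_j$, since $\eta < 1$ rules out the self-referential branch. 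Hence $A_j \le \eta^j A_0 \to 0$, and since $A_0 = \sup_{k\ge 0} V_k$ dominates $\limsup_k V_k$ — in fact $\limsup_k V_k \le A_j$ for every $j$ — we conclude $\limsup_k V_k = 0$, i.e. $V_k \to 0$.

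The main obstacle is handling the self-reference in the inequality $A_{j+1}\le \eta\max\{\cdots, A_{j+1}\}$ cleanly: one has to be careful that the $\max$ inside the recursion can, a priori, pick up a term $V_\ell$ with $\ell$ in the \emph{current} block $[T_{j+1}, k]$ rather than strictly earlier, which is why the bound involves $A_{j+1}$ itself. The resolution is that any such "current-block" term $V_\ell$ with $T_{j+1} \le \ell \le k$ was itself produced, one step earlier, from data reaching back no further than $T_j$ (because $\ell-1 \ge T_{j+1}-1 \ge K(T_j)-1$; choosing $T_{j+1} \ge K(T_j)+1$ makes this clean), so it is already $\le \eta A_j < A_j$; feeding this back shows $A_{j+1}\le \eta A_j$ without circularity. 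A secondary technical point is ensuring the block boundaries $T_j$ are chosen large enough that the first few terms of each block are also controlled — this is a matter of taking $T_{j+1}$ slightly larger than $K(T_j)$ and is routine once the main recursion is in place.
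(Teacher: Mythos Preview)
Your proposal is correct and follows essentially the same horizon/threshold argument as the paper: first establish boundedness ($V_k \le V_0$), then inductively build indices $K_m$ (your $T_j$) with $V_k \le (q+p)^m V_0$ for all $k \ge K_m$, using the hypothesis $k-\tau_k \to +\infty$ to guarantee that beyond some $\bar K_m$ the recursion only reaches back into the region where the level-$m$ bound already holds. The paper's formulation sidesteps the self-reference issue you flag by proving the pointwise tail bound $V_k \le (q+p)^m V_0$ for all $k\ge K_m$ directly (so the $\max$ in the recursion is controlled by the induction hypothesis alone, never by the current block), but the underlying idea is identical.
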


\begin{proof} 
See Appendix~\ref{Sec:Main Results Lemma2 Proof}.
\end{proof}

Lemma~\ref{Sec:Main Results Lemma2} provides a test for asymptotic convergence of asynchronous iterations with delays satisfying~\eqref{Sec:Main Results Lemma2 Eq2}. Assumption~\eqref{Sec:Main Results Lemma2 Eq2} holds for bounded delays, irrespectively of whether they are constant or time-varying. Moreover, delays satisfying~\eqref{Sec:Main Results Lemma2 Eq2} can be unbounded, as exemplified by $\tau_k=\lfloor 0.2k\rfloor$ and $\tau_k=\lfloor \sqrt{k}\rfloor$. This constraint on delays guarantees that as the iteration count $k$ increases, the delay $\tau_k$ grows at a slower rate than time itself, thereby allowing outdated information about process updates to be eventually purged from the computation. To see this, let us assume that the update step in the gradient descent method is based on gradients computed at stale iterates rather than the current iterate, i.e.,
\begin{align*}
x_{k+1} = x_k - \gamma \nabla f(x_{k-\tau_k}).    
\end{align*}
If $\tau_k$ satisfies~\eqref{Sec:Main Results Lemma2 Eq2}, then given any time $K_1\in\mathbb{N}$, there exists a time $K_2 \in \mathbb{N}$ such that
\begin{align*}
k-\tau_k \geq K_1,\quad \forall k\geq K_2.
\end{align*}
This means that given any time $K_1$, out-of-date information prior to $K_1$ will not be used in updates after a sufficiently long time $K_2$. Therefore,~\eqref{Sec:Main Results Lemma2 Eq2} is satisfied in asynchronous algorithms as long as no processor ceases to update~\citep{BeT:89}.

Although Lemma~\ref{Sec:Main Results Lemma2} establishes convergence guarantees for the sequence $\{V_k\}$ also under unbounded delays, it no longer provides any finite-time guarantee or rate of convergence. The next result demonstrates that such guarantees can be obtained when we restrict how the possibly unbounded delay sequence is allowed to evolve.

\begin{lemma}
\label{Sec:Main Results Lemma3}
Let $\{V_k\}$ be a non-negative sequence satisfying
\begin{eqnarray}
V_{k+1}\leq q V_k + p \max_{(k-\tau_k)_+ \leq \ell \leq k} V_{\ell}, \quad k\in \mathbb{N}_0,
\label{Sec:Main Results Lemma3 Eq1}
\end{eqnarray}
for some non-negative constants $q$ and $p$ such that $q + p < 1$. In addition, assume that there exists a function $\Lambda:\mathbb{R}\rightarrow \mathbb{R}$ such that the following conditions hold:
\begin{itemize}
\item[(i)] $\Lambda(0)=1$.
\item[(ii)] $\Lambda$ is non-increasing.
\item [(iii)] $\lim_{k\rightarrow +\infty} \Lambda(k) = 0$ and 
\begin{align}
(q + p) \Lambda(k - \tau_k) \leq \Lambda(k+1), \quad k\in \mathbb{N}_0.
\label{Sec:Main Results Lemma3 Eq2}
\end{align}
\end{itemize}
Then $V_k \leq \Lambda(k) V_0$ for all $k\in\mathbb{N}_0$.
\end{lemma}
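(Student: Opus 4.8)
The plan is to establish the bound $V_k \le \Lambda(k) V_0$ directly by induction on $k$. The base case $k=0$ is immediate: by condition (i), $\Lambda(0) V_0 = V_0$, so $V_0 \le \Lambda(0) V_0$ holds with equality.

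For the inductive step, assume $V_j \le \Lambda(j) V_0$ for every $j \in \{0,1,\dots,k\}$. Substituting the induction hypothesis into the recursion~\eqref{Sec:Main Results Lemma3 Eq1} and using $q,p \ge 0$ gives
\[
V_{k+1} \le q\,\Lambda(k)\,V_0 + p \max_{(k-\tau_k)_+ \le \ell \le k} \Lambda(\ell)\,V_0 .
\]
The first key step is to evaluate the maximum: since $\Lambda$ is non-increasing (condition (ii)) and the window runs over integers $\ell$ with $(k-\tau_k)_+ \le \ell \le k$, the largest value of $\Lambda(\ell)$ is attained at the left endpoint, so the maximum equals $\Lambda\bigl((k-\tau_k)_+\bigr)$. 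The second key step is to relax both arguments of $\Lambda$ down to the common value $k-\tau_k$: because $\tau_k \ge 0$ we have $k-\tau_k \le k$, hence $\Lambda(k) \le \Lambda(k-\tau_k)$; and because $(k-\tau_k)_+ \ge k-\tau_k$, monotonicity gives $\Lambda\bigl((k-\tau_k)_+\bigr) \le \Lambda(k-\tau_k)$ — here it is convenient that $\Lambda$ is defined on all of $\mathbb{R}$, so $\Lambda(k-\tau_k)$ is meaningful even when $k<\tau_k$. Combining the two observations,
\[
V_{k+1} \le (q+p)\,\Lambda(k-\tau_k)\,V_0 \le \Lambda(k+1)\,V_0 ,
\]
where the last inequality is precisely the hypothesis~\eqref{Sec:Main Results Lemma3 Eq2}. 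This closes the induction and proves the lemma.

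I do not expect any genuine obstacle: the whole argument is essentially a one-line induction, the only point needing a moment's thought being that monotonicity of $\Lambda$ lets the ``$\max$'' in the recursion be replaced by $\Lambda$ evaluated at the oldest admissible index $(k-\tau_k)_+$, which can then be pushed down to $k-\tau_k$ so that~\eqref{Sec:Main Results Lemma3 Eq2} applies verbatim. It is worth noting that the stated conclusion $V_k \le \Lambda(k) V_0$ uses neither $q+p<1$ nor $\lim_{k}\Lambda(k)=0$; those hypotheses matter only for the companion fact $V_k \to 0$ and for guaranteeing that an admissible $\Lambda$ exists — for instance, when $\tau_k \le \tau$ one checks that $\Lambda(t)=\rho^{t}$ with $\rho=(q+p)^{1/(1+\tau)}$ satisfies (i)--(iii), thereby recovering Lemma~\ref{Sec:Main Results Lemma1}.
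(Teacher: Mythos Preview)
Your proof is correct and follows essentially the same approach as the paper's: a direct strong induction, using monotonicity of $\Lambda$ to replace the maximum by $\Lambda(k-\tau_k)$ and then invoking~\eqref{Sec:Main Results Lemma3 Eq2}. Your additional remark that neither $q+p<1$ nor $\lim_k\Lambda(k)=0$ is needed for the bound itself is accurate and a nice observation.
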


\begin{proof} 
See Appendix~\ref{Sec:Main Results Lemma3 Proof}.
\end{proof}

According to Lemma~\ref{Sec:Main Results Lemma3}, any function $\Lambda$ satisfying conditions $(i)$--$(iii)$ can be used to quantify how fast the sequence $\{V_k\}$ converges to zero. For example, if $\Lambda(t) = \rho^t$ with $\rho\in(0,1)$, then $\{V_k\}$ converges at a linear rate; and if $\Lambda(t) = t^{-\eta}$ with $\eta>0$, then $\{V_k\}$ is upper bounded by a polynomial function of time. Given $q$ and $p$, it is clear from~\eqref{Sec:Main Results Lemma3 Eq2} that the admissible choices for~$\Lambda$ and, hence, the convergence bounds that we are able to guarantee depend on the delay sequence $\{\tau_k\}$. To clarify this statement, we will analyze a  special case of unbounded delays in detail. Assume that $\{\tau_k\}$ can grow unbounded at a linear rate, i.e., 
\begin{align}
\tau_k \leq \alpha k + \beta, \quad k\in\mathbb{N}_0,
\label{Sec:Main Results Eq5}
\end{align}
where $\alpha\in (0,1)$ and $\beta\geq 0$. The associated convergence result reads as follows.

\begin{corollary}
\label{Sec:Main Results Corollary1}
Let $\{V_k\}$ be a non-negative sequence such that
\begin{eqnarray*}
V_{k+1}\leq q V_k + p \max_{(k-\tau_k)_+ \leq \ell \leq k} V_{\ell}, \quad k\in \mathbb{N}_0,
\end{eqnarray*}
for some non-negative scalars $q$ and $p$. Suppose the delay sequence $\{\tau_k\}$ satisfies~\eqref{Sec:Main Results Eq5}. If $q + p < 1$, then  
\begin{align*}
V_k \leq \left(\frac{\alpha k}{1-\alpha + \beta}  + 1\right)^{-\eta} V_0, \quad k\in\mathbb{N}_0,
\end{align*}
where $\eta = \ln(q+p)/\ln(1-\alpha)$.
\end{corollary}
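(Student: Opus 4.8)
The plan is to derive the corollary directly from Lemma~\ref{Sec:Main Results Lemma3} by exhibiting an explicit admissible rate function $\Lambda$. We may assume $q+p>0$, since if $q+p=0$ then $V_k=0$ for all $k\geq 1$ and the claimed bound holds trivially. Because $\alpha\in(0,1)$ and $q+p\in(0,1)$, both $\ln(1-\alpha)$ and $\ln(q+p)$ are negative, so $\eta=\ln(q+p)/\ln(1-\alpha)>0$. I would take
\begin{align*}
\Lambda(t) \;=\; \left(\frac{\alpha\,(t)_+}{1-\alpha+\beta}+1\right)^{-\eta},\qquad t\in\mathbb{R}.
\end{align*}
Conditions (i) and (ii) and the limit in (iii) are then immediate: $\Lambda(0)=1$; on $[0,\infty)$ the base $\tfrac{\alpha t}{1-\alpha+\beta}+1$ is positive and increasing (as $1-\alpha+\beta>0$) and the exponent $-\eta$ is negative, so $\Lambda$ is non-increasing on all of $\mathbb{R}$; and $\Lambda(k)\to 0$. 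Moreover, for $k\geq 0$ we have $\Lambda(k)=\bigl(\tfrac{\alpha k}{1-\alpha+\beta}+1\bigr)^{-\eta}$, so the conclusion $V_k\leq\Lambda(k)V_0$ of Lemma~\ref{Sec:Main Results Lemma3} is exactly the asserted estimate. Everything therefore reduces to verifying the key inequality~\eqref{Sec:Main Results Lemma3 Eq2}, i.e.\ $(q+p)\Lambda(k-\tau_k)\leq\Lambda(k+1)$ for all $k\in\mathbb{N}_0$.

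To check this, I would first use the delay bound~\eqref{Sec:Main Results Eq5} to write $k-\tau_k\geq (1-\alpha)k-\beta$ and invoke the monotonicity of $\Lambda$, which reduces the task to showing $(q+p)\,\Lambda\bigl((1-\alpha)k-\beta\bigr)\leq\Lambda(k+1)$. Then I would split into two cases according to the sign of $(1-\alpha)k-\beta$. The computation rests on two elementary facts: the identity $(1-\alpha)^{\eta}=q+p$ (equivalently $(1-\alpha)^{-\eta}=1/(q+p)$), which is just the definition of $\eta$ unwound, and the simplification $1-\alpha+\beta-\alpha\beta=(1-\alpha)(1+\beta)$. In the case $(1-\alpha)k-\beta\geq 0$, a short rearrangement (together with $\alpha(k+1)+1-\alpha+\beta=\alpha k+1+\beta$) gives $\Lambda\bigl((1-\alpha)k-\beta\bigr)=(1-\alpha)^{-\eta}\Lambda(k+1)$, so multiplying by $q+p=(1-\alpha)^{\eta}$ yields~\eqref{Sec:Main Results Lemma3 Eq2} with equality. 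In the case $(1-\alpha)k-\beta<0$, we have $\Lambda\bigl((1-\alpha)k-\beta\bigr)=1$, and the inequality $(1-\alpha)k<\beta$ defining this case rearranges precisely to $\tfrac{\alpha k+1+\beta}{1-\alpha+\beta}\leq\tfrac{1}{1-\alpha}$, hence $\Lambda(k+1)\geq(1-\alpha)^{\eta}=q+p$, which is what is needed. Once~\eqref{Sec:Main Results Lemma3 Eq2} is in hand, Lemma~\ref{Sec:Main Results Lemma3} finishes the proof.

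I expect the main (minor) obstacle to be bookkeeping rather than mathematics: the argument $k-\tau_k$ can be negative when $\beta>0$, which is why $\Lambda$ must be defined with the truncation $(\cdot)_+$ so that it remains well-defined and non-increasing on all of $\mathbb{R}$; and one has to notice that the sign condition $(1-\alpha)k-\beta<0$ in the second case is exactly the hypothesis that makes that case go through. All the remaining steps are routine algebra, and in fact the analysis reveals that the bound is tight in the sense that~\eqref{Sec:Main Results Lemma3 Eq2} holds with equality whenever the delay saturates its bound $\tau_k=\alpha k+\beta$.
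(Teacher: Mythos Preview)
Your proposal is correct and follows essentially the same approach as the paper: both apply Lemma~\ref{Sec:Main Results Lemma3} with the specific rate function $\Lambda(t)=\bigl(\tfrac{\alpha t}{1-\alpha+\beta}+1\bigr)^{-\eta}$. The paper simply states this $\Lambda$ and asserts that conditions (i)--(iii) hold, whereas you fully verify them, including the key inequality~\eqref{Sec:Main Results Lemma3 Eq2} via the two-case split; your added truncation $(\cdot)_+$ is a minor technical refinement that makes $\Lambda$ well-defined and non-increasing on all of $\mathbb{R}$, as Lemma~\ref{Sec:Main Results Lemma3} formally requires.
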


\begin{proof} 
Conditions $(i)$--$(iii)$ of Lemma~\ref{Sec:Main Results Lemma3} are satisfied by the function
\begin{align*}
\Lambda(t) = \left(\frac{\alpha t}{1-\alpha + \beta}  + 1\right)^{-\eta}.
\end{align*}
\end{proof}

Corollary~\ref{Sec:Main Results Corollary1} shows that for unbounded delays satisfying~\eqref{Sec:Main Results Eq5}, the convergence rate of the sequence $\{V_k\}$ is $\mathcal{O}(k^{-\eta})$. Note that $\alpha$, the rate at which the unbounded delays grow large, affects~$\eta$. Specifically, $\eta$ is monotonically decreasing with $\alpha$ and approaches zero as $\alpha$ tends to one. Hence, the guaranteed convergence rate slows down as the growth rate of the delays increases.

\subsection{Results for Iterations on the form (\ref{Sec:Main Results Eq4})}

We will now shift our attention to the convergence result for iterations on the form~(\ref{Sec:Main Results Eq4}). This result adds a lot of flexibility in how we can model and account for different perturbations that appear in the analysis of asynchronous optimization algorithms, and will be central to the developments in Subsections~\ref{Sec:PIAG}, \ref{Sec:SGD}, and~\ref{Sec:ARock}.

\begin{lemma}
\label{Sec:Main Results Lemma4}
Let $\{ V_{k}\}$, $\{W_{k}\}$, and $\{X_{k}\}$ be non-negative sequences satisfying 
\begin{align}
X_{k} + V_{k+1} \leq q_k V_{k} + p_k \sum_{\ell = (k - \tau_k)_+}^{k} W_{\ell} - r_k W_{k} + e_k,\quad k\in\mathbb{N}_0,
\label{Sec:Main Results Lemma4 Eq0}
\end{align}
where $e_k\in\mathbb{R}$, $q_k\in[0,1]$, and $p_k, r_k\geq 0$ for all $k$. Suppose that there is a non-negative integer $\tau$ such that
\begin{align*}
0\leq \tau_k \leq \tau, \quad k\in\mathbb{N}_0.
\end{align*}
For every $K\in\mathbb{N}_0$, the following statements hold:
\begin{enumerate}
\item Assume that $q_k = 1$ for $k\in\mathbb{N}_0$. If
\begin{align}
\sum_{ \ell = 0}^\tau p_{k + \ell} \leq r_k
\label{Sec:Main Results Lemma4 Eq1}
\end{align}
is satisfied for all $k\in\mathbb{N}_0$, then 
\begin{align*}
\sum_{k=0}^{K} X_{k} &\leq V_0 + \sum_{k=0}^{K} e_{k},\\ V_{K+1}&\leq V_0 + \sum_{k=0}^{K} e_k.
\end{align*}
\item Assume that $p_k=p>0$ and $r_k=r>0$ for  $k\in\mathbb{N}_0$. Assume also that there exists a constant $q\in (0, 1)$ such that $q_k\geq q$ for $k\in\mathbb{N}_0$. If 
\begin{align*}
2\tau + 1 &\leq \min\left\{\frac{1}{1-q}, \frac{r}{p}\right\},
\end{align*}
then 
\begin{align*}
V_{K+1} &\leq  Q_{K+1}\left(V_0 + \sum_{k=0}^{K} \frac{e_{k}}{Q_{k+1}}\right),\\
\sum_{k=0}^{K} \frac{X_{k}}{Q_{k+1}} &\leq  V_0 + \sum_{k=0}^{K} \frac{e_{k}}{Q_{k+1}},
\end{align*}
where $Q_k$ is defined as 
\begin{align*}
Q_k = \prod_{\ell=0}^{k-1} q_{\ell}, \quad k \in\mathbb{N},    
\end{align*}
with $Q_0 = 1$.
\end{enumerate}
\end{lemma}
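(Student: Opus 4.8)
\emph{Proof proposal.} The plan is to handle both parts by summing the recursion~\eqref{Sec:Main Results Lemma4 Eq0} over $k=0,\dots,K$, letting the $V$-terms telescope, and showing that the accumulated delayed $W$-terms are absorbed by the $-r_kW_k$ terms. Everything hinges on one bookkeeping observation that I would establish first: when the double sum $\sum_{k=0}^{K}p_k\sum_{\ell=(k-\tau_k)_+}^{k}W_\ell$ is expanded and the coefficient of a fixed $W_\ell$ collected, an index $k$ contributes precisely when $(k-\tau_k)_+\le\ell\le k$, i.e.\ when $\ell\le k\le\ell+\tau_k\le\ell+\tau$; since $p_k\ge 0$, the coefficient of $W_\ell$ is therefore at most $\sum_{j=0}^{\tau}p_{\ell+j}$.

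\textbf{Part 1.} With $q_k=1$, summing~\eqref{Sec:Main Results Lemma4 Eq0} over $k=0,\dots,K$ and telescoping $\sum_k(V_{k+1}-V_k)=V_{K+1}-V_0$ gives
\[
\sum_{k=0}^{K}X_k+V_{K+1}\;\le\;V_0+\sum_{k=0}^{K}p_k\sum_{\ell=(k-\tau_k)_+}^{k}W_\ell-\sum_{k=0}^{K}r_kW_k+\sum_{k=0}^{K}e_k .
\]
By the bookkeeping observation and~\eqref{Sec:Main Results Lemma4 Eq1} (used with $k=\ell$), the coefficient of each $W_\ell$ in the double sum is at most $\sum_{j=0}^{\tau}p_{\ell+j}\le r_\ell$, so the double sum does not exceed $\sum_{k=0}^{K}r_kW_k$ and the two $W$-contributions cancel. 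Hence $\sum_{k=0}^{K}X_k+V_{K+1}\le V_0+\sum_{k=0}^{K}e_k$, and dropping the non-negative term $V_{K+1}$ (resp.\ $\sum_{k=0}^{K}X_k$) yields the two stated bounds.

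\textbf{Part 2.} Here the idea is to reduce to Part 1 by dividing~\eqref{Sec:Main Results Lemma4 Eq0} through by $Q_{k+1}=q_kQ_k>0$. Since $q_kV_k/Q_{k+1}=V_k/Q_k$, setting $\widetilde V_k=V_k/Q_k$, $\widetilde X_k=X_k/Q_{k+1}$, $\widetilde e_k=e_k/Q_{k+1}$, $\widetilde p_k=p/Q_{k+1}$, $\widetilde r_k=r/Q_{k+1}$ turns the recursion into
\[
\widetilde X_k+\widetilde V_{k+1}\;\le\;\widetilde V_k+\widetilde p_k\sum_{\ell=(k-\tau_k)_+}^{k}W_\ell-\widetilde r_kW_k+\widetilde e_k ,
\]
which is an instance of Part 1 once its hypothesis $\sum_{j=0}^{\tau}\widetilde p_{k+j}\le\widetilde r_k$ is verified, equivalently $p\sum_{j=0}^{\tau}Q_{k+1}/Q_{k+j+1}\le r$. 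Since $Q_{k+1}/Q_{k+j+1}=(q_{k+1}\cdots q_{k+j})^{-1}\le q^{-j}$, it suffices to show $p\sum_{j=0}^{\tau}q^{-j}\le r$. This is where both standing assumptions enter: $2\tau+1\le 1/(1-q)$ gives $1-q\le 1/(2\tau+1)$, so Bernoulli's inequality yields $q^{j}\ge 1-j(1-q)\ge(\tau+1)/(2\tau+1)$ for $0\le j\le\tau$, whence $\sum_{j=0}^{\tau}q^{-j}\le(\tau+1)\cdot\tfrac{2\tau+1}{\tau+1}=2\tau+1\le r/p$. Applying Part 1 to the tilded sequences and using $\widetilde V_0=V_0$ and $\widetilde V_{K+1}=V_{K+1}/Q_{K+1}$ then gives the claimed inequalities.

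I expect the main obstacle to be the Part 1 bookkeeping — pinning down exactly which indices $k$ contribute to the coefficient of a given $W_\ell$, handling both the $(\cdot)_+$ truncation and the time-varying $\tau_k$ (which is why~\eqref{Sec:Main Results Lemma4 Eq1} must be required for every $k$) — together with recognizing that Part 2 is not a fresh computation but the same telescoping argument after reweighting by $1/Q_{k+1}$; granting that, the scalar inequality $\sum_{j=0}^{\tau}q^{-j}\le 2\tau+1$ is an immediate consequence of Bernoulli's inequality.
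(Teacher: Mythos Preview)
Your proposal is correct and follows essentially the same approach as the paper: divide through by $Q_{k+1}$, telescope the $V$-terms, reorder the double sum so that the coefficient of each $W_\ell$ is bounded by $\sum_{j=0}^{\tau}p_{\ell+j}/Q_{\ell+j+1}$, and invoke Bernoulli's inequality to get $(\tau+1)q^{-\tau}\le 2\tau+1$. The only difference is organizational—the paper derives one weighted inequality first and then specializes it to each part, whereas you prove Part~1 directly and then reduce Part~2 to it via the $1/Q_{k+1}$ reweighting.
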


\begin{proof} 
See Appendix~\ref{Sec:Main Results Lemma4 Proof}.
\end{proof}

Consider the non-delayed counterpart of~\eqref{Sec:Main Results Lemma4 Eq0} with $e_k\equiv 0$, $q_k=q\in(0,1)$, $p_k=p>0$ and $r_k=r>0$:
\begin{align*}
X_{k} + V_{k+1} \leq q V_{k} + (p - r) W_{k},\quad k\in\mathbb{N}_0.
\end{align*}
Assume that $q\in(0,1)$ and $p\leq r$, or equivalently,
\begin{align*}
1\leq \frac{1}{1-q}  \quad \textup{and} \quad 1\leq \frac{r}{p}.
\end{align*}
In this case, the sequence $\{V_k\}$ converges linearly to zero at a rate of $q$. In general, the existence of delays may impair performance, induce oscillations and even instability. However, Lemma~\ref{Sec:Main Results Lemma4} shows that for the delayed iteration~\eqref{Sec:Main Results Lemma4 Eq0}, the convergence rate of $\{V_k\}$ is still $q$ if the maximum delay bound $\tau$ satisfies
\begin{align*}
2\tau + 1\leq \frac{1}{1-q}  \quad \textup{and} \quad 2\tau + 1\leq \frac{r}{p}.
\end{align*}
This means that up to certain value of the delay, the iteration~\eqref{Sec:Main Results Lemma4 Eq0} and its delay-free counterpart have the same guaranteed convergence rate. 

%
%
%
%
\section{Applications to Asynchronous Optimization Algorithms}
\label{Sec:Applications of Main Results}
Data-driven optimization problems can grow large both in the number of decision variables and in the number of data points that are used to define the objective and constraints. It may therefore make sense to parallelize the associated optimization algorithms over both data and decision variables, see Figure~\ref{fig:dopt_architectures}. One popular framework for parallelizing
algorithms in the data dimension is the parameter server~\citep{LZY+:11}. Here, a master node (the server) maintains the decision vector, while the data is divided between a number of worker nodes. When a worker node is queried by the server, it computes and returns the gradient of the part of the objective function defined by its own data. The master maintains an estimate of the gradient of the full objective function, and executes a (proximal) gradient update whenever it receives gradient information from one of the workers. As soon as the master completes an update, it queries idle worker nodes with the updated decision vector. If the asynchrony, measured in terms of the maximum number of iterations carried out by the master between two consecutive gradient updates from any worker, is bounded, then convergence can be guaranteed under mild assumptions on the objective function~\citep{Li:13,AFJ:16}.

\begin{figure}[t]
\centerline{\includegraphics[width=\hsize]{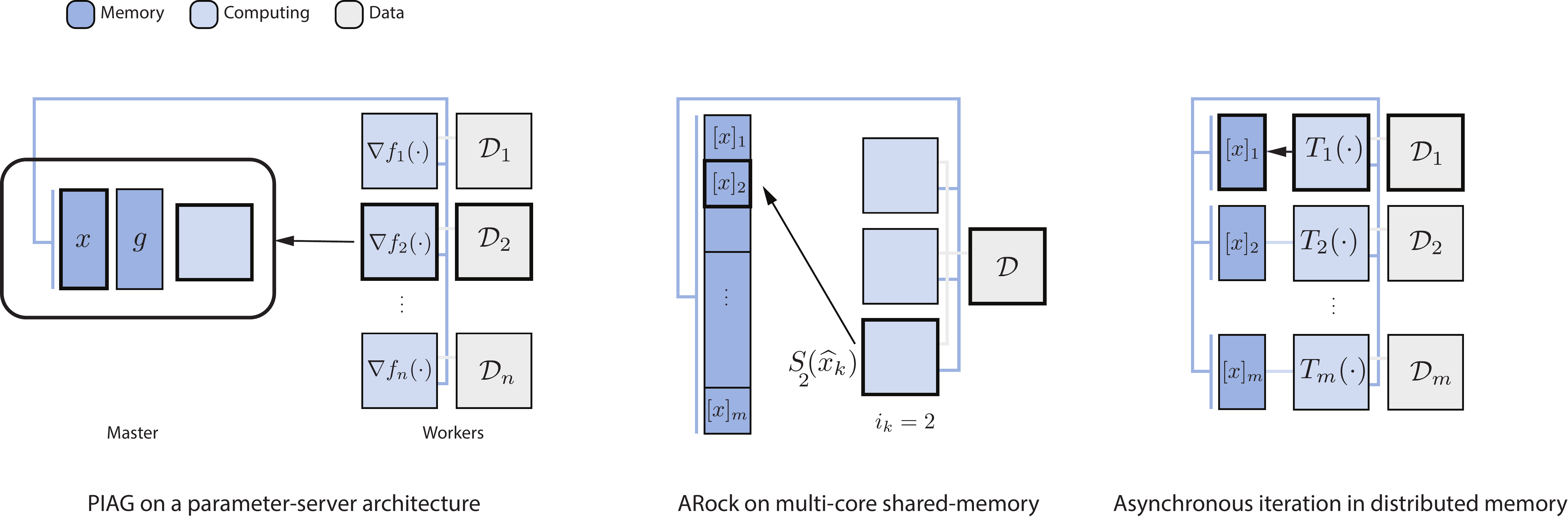}}
\caption{\footnotesize{
Three different parallel architectures studied for various algorithms in Section~\ref{Sec:Applications of Main Results}. The \textsc{Piag} and \textsc{Sgd} algorithms run on the parameter server~(left) distribute data over multiple nodes that are able to evaluate the corresponding loss function gradients, while the master maintains and updates the decision vector. In the ARock framework~(middle), multiple computing units access shared memory and update (randomly selected) sub-vectors of the overall decision vector in parallel. Finally, the totally asynchronous framework~(right) allows to model loosely coupled distributed architectures where computing nodes retrieve parts of the global decision vector from remote nodes, and evaluate components of an operator to update their local decisions.}}
\label{fig:dopt_architectures}
\end{figure}

A natural way to parallelize problems with high-dimensional decision vectors is to use block-coordinate updates. In these methods, the decision vector is divided into sub-vectors, and different processing elements update the sub-vectors in parallel. In the partially and totally asynchronous models of~\cite{BeT:89}, each processing element is responsible for storing and updating one sub-vector, and it does so using delayed information of the remaining decision variables retrieved from the other (remote) processors. Under weak assumptions on the communications delays and update rates of individual processors, convergence can be proven for contraction mappings with respect to the block-maximum norm~\cite[Section $6.3$]{BeT:89}. However, only some special combinations of algorithms and optimization problems result in iterations that are contractive with respect to the block-maximum norm~\cite[Section $3.1$]{BeT:89}. Another type of block-coordinate updates are used in the ARock framework~\citep{Peng:16}. Here, the decision vector is stored in shared memory, and the parallel computing elements pick sub-vectors uniformly at random to update whenever they terminate their previous work. Under an assumption of bounded asynchrony, convergence of ARock can be established for a wide range of objective functions~\citep{Peng:16}. 

In the remaining parts of this paper, we demonstrate how the sequence results introduced in the previous section allows to strengthen the existing convergence guarantees for the algorithms discussed above. Specifically, we improve iteration complexity bounds for the proximal incremental aggregated gradient (\textsc{Piag}) method, which is suitable for implementation in the parameter server framework, with respect to both the amount of asynchrony and the problem conditioning; we derive tighter guarantees for the asynchronous stochastic gradient descent method, which depend on the average delay rather on maximal delay; we prove the linear rate of convergence for ARock under larger step-sizes and provide better scalability properties with respect to the number of parallel computing elements; and we describe a unified Lyapunov-based approach for analysis of totally and partially asynchronous iterations involving maximum norm contractions, that allows to derive convergence rate guarantees also outside the partially asynchronous regime.

%
%
\subsection{Proximal Incremental Aggregated Gradient Method}
\label{Sec:PIAG}
We begin by considering composite optimization problems of the form
\begin{align}
\underset{x\in \mathbb{R}^d}{\textup{minimize}}\quad P(x) :=F(x) + R(x).
\label{Sec:PIAG Eq1}
\end{align}
Here, $x$ is the decision variable, $F$ is the average of many component functions $f_i$, i.e.,
\begin{align*}
F(x)=\frac{1}{n}\sum_{i=1}^n f_i(x),
\end{align*}
and $R$ is a proper closed convex function that may be non-differentiable and extended real-valued. We use $\mathcal{X}^\star$ to denote the set of optimal solutions of~\eqref{Sec:PIAG Eq1} and $P^\star$ to denote the corresponding optimal value. We impose the following assumptions on Problem~\eqref{Sec:PIAG Eq1}.
\begin{assumption}
\label{Sec:PIAG Assumption1}
The optimal set $\mathcal{X}^\star$ is non-empty.  
\end{assumption}
\begin{assumption}
\label{Sec:PIAG Assumption2}
Each function $f_i:\mathbb{R}^d\rightarrow \mathbb{R}$, $i\in [n]$, is convex and $L_i$-smooth.
\end{assumption}
Note that under Assumption~\ref{Sec:PIAG Assumption2}, the average function $F$ is $L_F$-smooth~\citep{Xiao:2014}, where
\begin{align}
L_F \leq L:= \frac{1}{n}\sum_{i=1}^n L_i.
\label{LipschitzConstant}
\end{align}

In the optimization problem~\eqref{Sec:PIAG Eq1}, the role of the regularization term~$R$ is to favor solutions with certain structures. Common choices of $R$ include: the \emph{$\ell_1$ norm}, $R(x) = \lambda \|x\|_1$ with $\lambda >0$, used to promote sparsity in solutions; and the indicator function of a non-empty closed convex set ${\mathcal X}\subseteq \mathbb{R}^d$,
\begin{align*}
R(x) =\begin{cases}
0, & \textup{if} \;\;x\in \mathcal{X},\\
+\infty, & \textup{otherwise}, 
\end{cases}
\end{align*}
used to force the admissible solutions to lie in $\mathcal{X}$. A comprehensive catalog of regularization terms is given in~\cite{Beck:17}.

Optimization problems on the form~\eqref{Sec:PIAG Eq1} are known as \textit{regularized empirical risk minimization} problems and arise often in machine learning, signal processing, and statistical estimation (see, e.g.,~\cite{HTF:09}). In such problems, we are given a collection of $n$ training samples $\left\{(a_1,b_1),\ldots,(a_n,b_n)\right\}$, where each $a_i\in\mathbb{R}^d$ is a feature vector, and each $b_i\in\mathbb{R}$ is the desired response. A classical example is the least-squares regression where the component functions are given by 
\begin{align*}
f_i(x) = \frac{1}{2}(a_i^\top x - b_i)^2, \quad i \in [n],
\end{align*}
and popular choices of the regularization terms include $R(x)=\lambda_1\|x\|_2^2$ (ridge regression), $R(x)=\lambda_2\|x\|_1$ (Lasso), or $R(x)=\lambda_1\|x\|_2^2+\lambda_2\|x\|_1$ (elastic net) for some  
non-negative parameters $\lambda_1$ and $\lambda_2$. Another example is logistic regression for binary classification problems, where each $b_i \in\{-1,1\}$ is the desired class label and the component functions are 
\begin{align*}
f_i(x) = \log \left(1+\exp(-b_i a_i^\top x)\right), \quad i \in [n].
\end{align*}

A standard method for solving Problem~\eqref{Sec:PIAG Eq1} is the \textit{proximal gradient}~(\textsc{Pg}) method, which consists of a gradient step followed by a proximal mapping. More precisely, the \textsc{Pg} method is described by Algorithm~\ref{Sec:PIAG Algorithm1}, where $\gamma$ is a positive step-size, and the prox-operator (proximal mapping) is defined as
\begin{align*}
\textup{prox}_{\gamma R}(x) = \underset{u\in \mathbb{R}^d}{\textup{argmin}} \left\{\frac{1}{2}\|u - x\|^2 + \gamma R(u)\right\}.
\end{align*}
Under Assumptions~\ref{Sec:PIAG Assumption1} and~\ref{Sec:PIAG Assumption2}, the iterates generated by the \textsc{Pg} method with $\gamma=\frac{1}{L}$ satisfy
\begin{align}
P(x_k) - P^\star \leq \frac{L\Vert x_0-x^{\star}\Vert^2}{2k }
\label{Sec:PIAG Eq2}
\end{align}
for all $k\in\mathbb{N}$~\cite[Theorem $10.21$]{Beck:17}. This means that Algorithm~\ref{Sec:PIAG Algorithm1} achieves an $\mathcal{O}(1/k)$ rate of convergence in function values to the optimal value.

\begin{algorithm}[H]
\textbf{Input:} $x_0 \in \mathbb{R}^d$, step-size $\gamma>0$, number of iterations $K\in \mathbb{N}$
  \begin{algorithmic}[1]
    \State Initialize $k\leftarrow 0$
     \While {$k<K$}
      \State Set $g_k  \leftarrow \frac{1}{n}\sum_{i=1}^n \nabla f_i(x_k)$
      \State Set $x_{k+1} \leftarrow \textup{prox}_{\gamma R}(x_k - \gamma g_k)$
     \State Set $k \leftarrow k+1$
    \EndWhile
  \end{algorithmic}
  \caption{Proximal Gradient (\textsc{Pg}) Method}
  \label{Sec:PIAG Algorithm1}
\end{algorithm}

Each iteration of the \textsc{Pg} method requires computing the gradients for all~$n$ component functions. When~$n$ is large, this per iteration cost is  expensive, and hence often results in slow convergence. An effective alternative is the \textit{proximal incremental aggregated gradient}~(\textsc{Piag}) method that exploits the additive structure of~\eqref{Sec:PIAG Eq1} and operates on a \textit{single} component function at a time, rather than on the entire cost function~\citep{Tseng:14}. The \textsc{Piag} method evaluates the gradient of only one component function per iteration, but keeps a memory of the most recent gradients of all component functions to approximate the full gradient~$\nabla F$. Specifically, at iteration $k$, the method will have stored $\nabla f_i(x_{[i]})$ for all $i\in [n]$, where $x_{[i]}$ represents the latest iterate at which~$\nabla f_i$ was evaluated. An integer $j\in [n]$ is then chosen and the full gradient $\nabla F(x_k)$ is approximated by
\begin{align*}
g_k = \frac{1}{n}\left(\nabla f_{j} (x_k) - \nabla f_{j}(x_{[j]}) + \sum_{i=1}^n \nabla f_i\bigl(x_{[i]}\bigr)\right).
\end{align*}
The aggregated gradient vector $g_k$ is employed to update the current iterate $x_k$ via
\begin{align*}
x_{k+1} = \textup{prox}_{\gamma R}(x_k - \gamma g_k).
\end{align*}
Thus, the \textsc{Piag} method uses outdated gradients from previous iterations for the components $f_i$, $i \neq j$, and does need not to compute gradients of these components at iteration $k$. 

A formal description of the \textsc{Piag} method is presented as Algorithm~\ref{Sec:PIAG Algorithm2}. Let us define $s_{i, k}$ as the iteration number in which the gradient of the component function~$f_i$ is updated for the last time before the completion of the $k$th iteration. Then, we can rewrite the aggregated gradient vector $g_k$ as 
\begin{align*}
g_k = \frac{1}{n}\sum_{i=1}^n \nabla f_i\bigl(x_{s_{i, k}}\bigr).
\end{align*}
Note that $s_{i,k}\in\{0,\ldots,k\}$ for each $i\in [n]$ and $k\in\mathbb{N}_0$. In a traditional (serial) implementation of~\textsc{Piag}, the selection of~$j$ on line $7$ follows a deterministic rule (for example, the cyclic rule) and the gradient of the component function $f_j$ is computed at the current iterate, i.e., $s_{j,k}=k$. In the parameter server implementation, the index $j$ will not be chosen but rather assigned to the identity of the worker that returns its gradient to the master at iteration $k$. Since workers can exchange information with the master independently of each other, worker $j$ may evaluate $\nabla f_j$ at an outdated iterate $x_{s_{j, k}}$, where $s_{j, k}<k$, and send the result to the server. We assume that each component function is sampled at least once in the past $\tau$ iterations of the \textsc{Piag} method. In other words, there is a fixed non-negative integer $\tau$ such that the indices $s_{i, k}$ satisfy
\begin{align*}
(k - \tau)_+\leq s_{i, k}\leq k, \quad \; i \in [n],\; k\in\mathbb{N}_0.
\end{align*}
The value $\tau$ can be viewed as an upper bound on the delay encountered by the gradients of the component functions. For example, if the component functions are chosen one by one using a deterministic cyclic order on the index set $\{1,\ldots, n\}$, then $\tau = n-1$. 

\begin{algorithm}[H]
\textbf{Input:} $x_0 \in \mathbb{R}^d$, step-size $\gamma>0$, number of iterations $K\in \mathbb{N}$
  \begin{algorithmic}[1]
    \For {$i=1$ to $n$}
     \State Compute $\nabla f_i(x_0)$
     \State Store $\nabla  f_{i}(x_{[i]}) \leftarrow \nabla f_i(x_0)$
     \EndFor
     \State Initialize $k\leftarrow 0$
     \While {$k<K$}
     \State Choose $j$ from $\{1,\ldots, n\}$
     \State Compute $\nabla f_j(x_k)$
     \State Set $g_k \leftarrow \frac{1}{n}\left(\nabla f_{j} (x_k) - \nabla f_{j}(x_{[j]}) + \sum_{i=1}^n \nabla f_i\bigl(x_{[i]}\bigr)\right)$
     \State Store  $\nabla  f_{j}(x_{[j]}) \leftarrow \nabla f_j(x_k)$ 
      \State Set $x_{k+1} \leftarrow \textup{prox}_{\gamma R}(x_k - \gamma g_k)$
     \State Set $k \leftarrow k+1$
    \EndWhile
  \end{algorithmic}
  \caption{Proximal Incremental Aggregated Gradient (\textsc{Piag}) Method}
  \label{Sec:PIAG Algorithm2}
\end{algorithm}

The following result shows that in the analysis of the \textsc{Piag} method, we can establish iterate relationships on the form~\eqref{Sec:Main Results Eq4}.

\begin{lemma}
\label{Sec:PIAG Lemma1}
Let Assumptions~\ref{Sec:PIAG Assumption1} and~\ref{Sec:PIAG Assumption2} hold. Suppose that $\{\alpha_k\}$ is a sequence defined by 
\begin{align*}
\alpha_k = k + \alpha_0,\quad k\in\mathbb{N}_0,
\end{align*}
where $\alpha_0$ is a non-negative scalar. Let 
\begin{align*}
V_k = 2\gamma \alpha_k\bigl(P(x_{k})-P^\star\bigr)  + \|x_k - x^\star\|^2
\end{align*}
and $W_k = \|x_{k+1}-x_k\|^2$ for $k\in\mathbb{N}_0$. Then, the iterates $\{x_k\}$ generated by Algorithm~\ref{Sec:PIAG Algorithm2} satisfy
\begin{align*}
V_{k+1} &\leq V_k +  \gamma L\bigl(\alpha_k + \tau+1\bigr) \sum_{\ell = (k-\tau)_+}^{k} W_{\ell} - \left(2 \alpha_k +1 -  \gamma L\tau \alpha_k\right) W_k,\quad k\in\mathbb{N}_0.
\end{align*}
\end{lemma}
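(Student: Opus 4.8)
The plan is to start from the basic inequality for proximal-gradient-type updates and carefully track the error introduced by using the aggregated (delayed) gradient $g_k$ in place of the true gradient $\nabla F(x_k)$. First I would invoke the standard prox-inequality: since $x_{k+1} = \textup{prox}_{\gamma R}(x_k - \gamma g_k)$, the optimality condition for the prox subproblem gives, for every $x$,
\begin{align*}
R(x_{k+1}) + \tfrac{1}{2\gamma}\|x_{k+1}-x_k\|^2 + \tfrac{1}{\gamma}\langle x_{k+1}-x_k, \gamma g_k\rangle \leq R(x) + \tfrac{1}{2\gamma}\|x-x_k\|^2 - \tfrac{1}{2\gamma}\|x-x_{k+1}\|^2.
\end{align*}
Combining this with the $L_F$-smoothness descent inequality $F(x_{k+1}) \leq F(x_k) + \langle \nabla F(x_k), x_{k+1}-x_k\rangle + \tfrac{L}{2}\|x_{k+1}-x_k\|^2$ (using $L_F \leq L$ from~\eqref{LipschitzConstant}) and convexity of $F$, and choosing $x = x^\star$ and $x = x_k$ appropriately, should yield a preliminary bound of the shape
\begin{align*}
2\gamma\bigl(P(x_{k+1})-P^\star\bigr) + \|x_{k+1}-x^\star\|^2 \leq \|x_k-x^\star\|^2 + 2\gamma\langle \nabla F(x_k)-g_k,\, x_{k+1}-x^\star\rangle + (\gamma L - 1)W_k + \text{(similar term for }P(x_k)-P^\star).
\end{align*}

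The key step is then to control the error term $\langle \nabla F(x_k)-g_k,\, x_{k+1}-x^\star\rangle$. Writing $\nabla F(x_k) - g_k = \tfrac1n\sum_{i=1}^n\bigl(\nabla f_i(x_k)-\nabla f_i(x_{s_{i,k}})\bigr)$, I would use $L_i$-smoothness of each $f_i$ together with $\tfrac1n\sum L_i = L$ to bound $\|\nabla F(x_k)-g_k\| \leq \tfrac1n\sum_i L_i\|x_k - x_{s_{i,k}}\| \leq L \max_i \|x_k - x_{s_{i,k}}\|$, and then expand the telescoping difference $x_k - x_{s_{i,k}} = \sum_{\ell=s_{i,k}}^{k-1}(x_{\ell+1}-x_\ell)$. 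Since $(k-\tau)_+ \leq s_{i,k} \leq k$, this gives a bound in terms of $\sum_{\ell=(k-\tau)_+}^{k-1}\|x_{\ell+1}-x_\ell\| = \sum_\ell \sqrt{W_\ell}$. The inner product $\langle \nabla F(x_k)-g_k, x_{k+1}-x^\star\rangle$ would be split as $\langle \nabla F(x_k)-g_k, x_{k+1}-x_k\rangle + \langle \nabla F(x_k)-g_k, x_k-x^\star\rangle$; the first piece is handled by Young's inequality against $W_k$, and the second — which does not obviously reduce to $W$-terms — I expect must be absorbed by exploiting the telescoping structure across the \emph{whole} run together with the growing weights $\alpha_k$, which is precisely why the statement carries the factor $\alpha_k$ rather than a constant.

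This is the main obstacle: getting the weighted inequality to close so that \emph{only} $W_\ell$ terms (with the stated coefficients $\gamma L(\alpha_k+\tau+1)$ and $2\alpha_k+1-\gamma L\tau\alpha_k$) appear, and no residual $\|x_k-x^\star\|$ or $(P(x_k)-P^\star)$ cross-terms survive. The trick will be to multiply the preliminary one-step inequality by $2\gamma\alpha_k$ for the function-value part, add $\|x_{k+1}-x^\star\|^2$, and use the identity $\alpha_{k+1}=\alpha_k+1$ so that $2\gamma\alpha_{k+1}(P(x_{k+1})-P^\star)$ on the left matches $2\gamma\alpha_k(P(x_{k+1})-P^\star) + 2\gamma(P(x_{k+1})-P^\star)$, the extra unit weight being exactly what is needed to cancel the stray $P(x_k)-P^\star$ term picked up from convexity. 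After collecting terms and applying $\sqrt{W_\ell}$ cross-products via $ab \leq \tfrac12(a^2+b^2)$ (summed over the at most $\tau+1$ indices $\ell$, which is where the $+\tau+1$ in the first coefficient and the $-\gamma L\tau\alpha_k$ correction in the $W_k$ coefficient come from), one arrives at the claimed recursion. I would finish by noting all three sequences $\{V_k\}, \{W_k\}, \{X_k\}$ (with $X_k = 2\gamma(P(x_k)-P^\star) \geq 0$ by Assumption~\ref{Sec:PIAG Assumption1}) are non-negative, so the inequality is of the form~\eqref{Sec:Main Results Eq4} as required.
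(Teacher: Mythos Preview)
Your plan has a genuine gap at exactly the point you flag as ``the main obstacle.'' Once you write the descent inequality at $x_k$ and introduce the error $\nabla F(x_k)-g_k$, you are forced to control
\[
\langle \nabla F(x_k)-g_k,\; x_k-x^\star\rangle,
\]
and this term simply cannot be reduced to $W$-quantities alone. Bounding $\|\nabla F(x_k)-g_k\|$ by $L\sum_{\ell}\sqrt{W_\ell}$ and applying Young leaves you with a residual $\eta\,\|x_k-x^\star\|^2$ on the right-hand side. But the target inequality has $V_{k+1}\le V_k + \cdots$ with coefficient \emph{exactly} $1$ on $V_k$; there is no slack to absorb any extra multiple of $\|x_k-x^\star\|^2$. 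Your appeal to ``the telescoping structure across the whole run together with the growing weights $\alpha_k$'' does not provide a mechanism here---the $\alpha_k$ weights are attached to $P(x_k)-P^\star$, not to $\|x_k-x^\star\|^2$, and the recursion is a one-step statement that must hold for each $k$ separately.

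The paper avoids this difficulty by never forming $\nabla F(x_k)-g_k$ at all. Instead of using smoothness of $F$ at $x_k$, it applies smoothness of each $f_i$ \emph{at the delayed point} $x_{s_{i,k}}$:
\[
f_i(x_{k+1}) \le f_i(x_{s_{i,k}}) + \langle \nabla f_i(x_{s_{i,k}}),\, x_{k+1}-x_{s_{i,k}}\rangle + \tfrac{L_i}{2}\|x_{k+1}-x_{s_{i,k}}\|^2,
\]
and then convexity of $f_i$ at $x_{s_{i,k}}$ toward $x^\star$. After averaging over $i$, the inner-product term is exactly $\langle g_k,\, x_{k+1}-x^\star\rangle$, which cancels cleanly against the prox first-order condition. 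The only residual is the purely quadratic term $\tfrac{1}{n}\sum_i \tfrac{L_i}{2}\|x_{k+1}-x_{s_{i,k}}\|^2$, which by the convexity of $\|\cdot\|^2$ and the telescoping $x_{k+1}-x_{s_{i,k}}=\sum_{\ell=s_{i,k}}^{k}(x_{\ell+1}-x_\ell)$ is bounded by $\tfrac{L(\tau+1)}{2}\sum_{\ell=(k-\tau)_+}^{k}W_\ell$; this is where the $\tau+1$ actually comes from. A second, analogous inequality (smoothness at $x_{s_{i,k}}$ combined with convexity toward $x_k$ rather than $x^\star$) gives the descent bound $P(x_{k+1})\le P(x_k)+\tfrac{L}{2}\sum_\ell W_\ell - (\tfrac{1}{\gamma}-\tfrac{L\tau}{2})W_k$; multiplying it by $\alpha_k$, adding the two inequalities, and using $\alpha_{k+1}=\alpha_k+1$ yields the claimed recursion directly.
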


\begin{proof}
See Appendix~\ref{Sec:PIAG Lemma1 Proof}.
\end{proof}

Using this iterate relationship, the sequence result in Lemma~\ref{Sec:Main Results Lemma4} yields the following convergence guarantee for the \textsc{Piag} method.

\begin{theorem} 
\label{Sec:PIAG Theorem1}
Let Assumptions~\ref{Sec:PIAG Assumption1} and~\ref{Sec:PIAG Assumption2} hold. Suppose that $\gamma \in\left(0, \gamma_{\max}\right]$ with
\begin{align*}
\gamma_{\max} =  \frac{1}{L\bigl(2\tau+1\bigr)}.
\end{align*}
Then, for every $k\in\mathbb{N}$ and any $x^\star \in  \mathcal{X}^\star$, the iterates $\{x_k\}$ generated by Algorithm~\ref{Sec:PIAG Algorithm2} satisfy
\begin{align*}
P(x_k) - P^\star \leq \frac{ \frac{1}{2\gamma}\|x_0-x^{\star}\|^2 + \tau\bigl( P(x_0)-P^\star\bigr)}{k + \tau}.
\end{align*}
\end{theorem}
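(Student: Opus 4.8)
The plan is to feed the iterate relationship of Lemma~\ref{Sec:PIAG Lemma1} into the first statement of Lemma~\ref{Sec:Main Results Lemma4}. The key preliminary observation is that the free parameter $\alpha_0$ in Lemma~\ref{Sec:PIAG Lemma1} should be set to $\alpha_0=\tau$. With this choice $\alpha_k = k+\tau$, and Lemma~\ref{Sec:PIAG Lemma1} tells us that
\[
V_k = 2\gamma(k+\tau)\bigl(P(x_k)-P^\star\bigr) + \|x_k - x^\star\|^2, \qquad W_k = \|x_{k+1}-x_k\|^2
\]
satisfy
\[
V_{k+1} \leq V_k + \gamma L (k + 2\tau + 1)\sum_{\ell=(k-\tau)_+}^k W_\ell - \bigl((2-\gamma L\tau)(k+\tau)+1\bigr) W_k, \quad k\in\mathbb{N}_0.
\]
This is an instance of~\eqref{Sec:Main Results Lemma4 Eq0} with $X_k \equiv 0$, $q_k \equiv 1$, $e_k \equiv 0$, constant delay $\tau_k \equiv \tau$, and $p_k = \gamma L(k + 2\tau+1)$, $r_k = (2-\gamma L\tau)(k+\tau) + 1$.

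Next I would check the hypotheses of part~1 of Lemma~\ref{Sec:Main Results Lemma4}. Non-negativity of $\{V_k\}$, $\{W_k\}$, $\{X_k\}$ is immediate (using $P(x_k)\geq P^\star$ for $x^\star\in\mathcal{X}^\star$), and $q_k=1\in[0,1]$, $p_k\geq 0$. Since $\gamma\leq\gamma_{\max}=1/(L(2\tau+1))$ implies $\gamma L\tau \leq \tau/(2\tau+1) < 1$, we get $r_k > 0$. The crux is condition~\eqref{Sec:Main Results Lemma4 Eq1}, namely $\sum_{\ell=0}^\tau p_{k+\ell} \leq r_k$ for all $k\in\mathbb{N}_0$. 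Summing the arithmetic progression gives
\[
\sum_{\ell=0}^\tau p_{k+\ell} = \gamma L\Bigl[(\tau+1)(k+2\tau+1) + \tfrac{\tau(\tau+1)}{2}\Bigr].
\]
The left-hand side is increasing in $\gamma$ while $r_k$ is non-increasing in $\gamma$, so it suffices to verify the inequality at $\gamma=\gamma_{\max}$, where $\gamma L = 1/(2\tau+1)$. Multiplying through by $2\tau+1$, the required inequality collapses, after expansion, to $(2\tau+1)k + \tfrac{1}{2}(\tau^2+\tau)\geq 0$, which holds for all $k\in\mathbb{N}_0$ and $\tau\geq 0$.

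Having verified the hypotheses, part~1 of Lemma~\ref{Sec:Main Results Lemma4} yields $V_{K+1}\leq V_0 + \sum_{k=0}^K e_k = V_0$ for every $K\in\mathbb{N}_0$; that is, $V_k\leq V_0$ for all $k\in\mathbb{N}$. Dropping the non-negative term $\|x_k-x^\star\|^2$ on the left and using $V_0 = 2\gamma\tau(P(x_0)-P^\star) + \|x_0-x^\star\|^2$, we obtain $2\gamma(k+\tau)(P(x_k)-P^\star)\leq 2\gamma\tau(P(x_0)-P^\star) + \|x_0-x^\star\|^2$, and dividing by $2\gamma(k+\tau)$ gives exactly the claimed bound. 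The main obstacle is the algebraic verification of~\eqref{Sec:Main Results Lemma4 Eq1}, together with spotting that $\alpha_0=\tau$ is the choice that simultaneously makes $V_0$ reduce to $\tfrac{1}{2\gamma}\|x_0-x^\star\|^2 + \tau(P(x_0)-P^\star)$ after the $1/(2\gamma(k+\tau))$ scaling and makes the resulting step-size threshold come out to $\gamma_{\max}=1/(L(2\tau+1))$.
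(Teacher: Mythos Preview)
Your proposal is correct and follows essentially the same approach as the paper: set $\alpha_0=\tau$ in Lemma~\ref{Sec:PIAG Lemma1}, apply part~1 of Lemma~\ref{Sec:Main Results Lemma4}, and verify condition~\eqref{Sec:Main Results Lemma4 Eq1}. The paper rearranges~\eqref{Sec:Main Results Lemma4 Eq1} as an upper bound on $\gamma L$ and shows the right-hand side is at least $1/(2\tau+1)$, whereas you plug in $\gamma=\gamma_{\max}$ directly and expand; both routes collapse to the same inequality $(2\tau+1)k+\tfrac12\tau(\tau+1)\geq 0$, and the remainder of the argument is identical.
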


\begin{proof}
See Appendix~\ref{Sec:PIAG Theorem1 Proof}.
\end{proof}

According to Theorem~\ref{Sec:PIAG Theorem1}, the \textsc{Piag} iterations converge at a rate of $\mathcal{O}({1/k})$ when the constant step-size $\gamma $ is appropriately tuned. The convergence rate depends on the choice of~$\gamma$. For example, if we pick 
\begin{align}
\gamma = \frac{1}{L\bigl(2\tau+1\bigr)},
\label{Sec:PIAG Eq3}
\end{align}
then the corresponding $\{x_k\}$ converges in terms of function values with the rate
\begin{align}
P(x_{k}) - P^\star \leq \frac{L\|x_0-x^{\star}\|^2 + 2\tau \left( L\|x_0-x^{\star}\|^2 + P(x_0)-P^\star\right)}{2(k + \tau)},\quad k\in\mathbb{N}.
\label{Sec:PIAG Eq3-4}
\end{align}
One can verify that the right-hand side is monotonically increasing in $\tau$. Hence, the guaranteed convergence rate slows down as the delays increase in magnitude. In the case that $\tau = 0$, the bound~\eqref{Sec:PIAG Eq3-4} reduces to~\eqref{Sec:PIAG Eq2}, which is achieved by the \textsc{Pg} method. From~\eqref{Sec:PIAG Eq3-4}, we can see that if
\begin{align*}
k \geq  K_{\epsilon}=\frac{L\|x_0-x^{\star}\|^2 + 2\tau \left( L\|x_0-x^{\star}\|^2 + P(x_0)-P^\star\right)}{2\epsilon} - \tau,
\end{align*}
then \textsc{Piag} with the step-size choice~\eqref{Sec:PIAG Eq3} achieves an accuracy of $P(x_k) - P^\star\leq \epsilon$. This shows that the \textsc{Piag} method has an iteration complexity of $\mathcal{O}\bigl(L(\tau+1)/\epsilon\bigr)$ for convex problems. Therefore, as $\tau$ increases, the complexity bound deteriorates linearly with $\tau$. Note that the linear dependence on the maximum delay bound~$\tau$ is unavoidable and cannot further be improved~\citep{Arjevani:2020}.

\begin{remark}
\cite{Sun:19} analyzed convergence of the \textsc{Piag} method under Assumptions~\ref{Sec:PIAG Assumption1} and~\ref{Sec:PIAG Assumption2} and proved~$\mathcal{O}(C/k)$ convergence rate, where $C$ is a positive constant. While the constant $C$ is implicit in~\cite{Sun:19}, \cite{Huang:21} showed that the analysis in~\cite{Sun:19} guarantees an $\mathcal{O}(\tau^3 L^2/\epsilon)$ iteration complexity for the \textsc{Piag} method. In comparison with this result, Theorem~\ref{Sec:PIAG Theorem1} gives a better dependence on the Lipschitz constant ($L$ vs. $L^2$) and on the maximum delay bound ($\tau$ vs. $\tau^3$) in the iteration complexity .    
\end{remark}

Next, we restrict our attention to composite optimization problems under the following \textit{quadratic functional growth} condition.
\begin{assumption}
\label{Sec:PIAG Assumption3}
There exists a constant $\mu>0$ such that
\begin{align}
P(x) - P^\star \geq \frac{\mu}{2}\bigl \|x - \Pi_{\mathcal{X}^\star}(x)\bigr\|^2,\quad \forall x \in \textup{dom}\; R,
\label{Sec:PIAG Eq4}
\end{align}
where $\textup{dom}\; R$, defined as $\textup{dom}\; R :=\{x\in\mathbb{R}^d\;|\; R(x)<+\infty\}$, is the effective domain of the function~$R$, and $\Pi_{\mathcal{X}^\star}(\cdot)$ denotes the Euclidean-norm projection onto the set $\mathcal{X}^\star$, i.e.,
\begin{align*}
\Pi_{\mathcal{X}^\star}(x) = \textup{argmin}_{u\in\mathcal{X}^\star}  \| u - x \|.
\end{align*}
\end{assumption}
Assumption~\ref{Sec:PIAG Assumption3} implies that the objective function grows faster than the squared distance between any feasible point and the optimal set. While every strongly convex function satisfies the quadratic functional growth condition~\eqref{Sec:PIAG Eq4}, the converse is not true in general~\citep{Necoara:19}. For example, if $A\in\mathbb{R}^{m\times d}$ is rank deficient and $b\in\mathbb{R}^m$, the function $F(x) =  \|Ax-b\|^2$ is not strongly convex, but it satisfies Assumption~\ref{Sec:PIAG Assumption3}. Other examples of objective functions which satisfy the quadratic functional growth condition can be found in~\cite{Necoara:19}. Let us define the condition number of the optimization problem~\eqref{Sec:PIAG Eq1} as $Q=L/\mu$. The role of $Q$ in determining the linear convergence rate of (proximal) gradient methods is well-known~\citep{Nesterov:13}.

We next show that under Assumption~\ref{Sec:PIAG Assumption3}, an iterate relationship on the form~\eqref{Sec:Main Results Eq4} appears in the analysis of the \textsc{Piag} method.

\begin{lemma}
\label{Sec:PIAG Lemma2}
Suppose that Assumptions~\ref{Sec:PIAG Assumption1},~\ref{Sec:PIAG Assumption2} and~\ref{Sec:PIAG Assumption3} hold. Let
\begin{align*}
V_k = \frac{2}{L}\bigl(P(x_{k})-P^\star\bigr)  +  \|x_k -\Pi_{\mathcal{X}^\star}(x_k)\|^2,
\end{align*}
and $W_k = \|x_{k+1}-x_k\|^2$ for $k\in\mathbb{N}_0$. Then, the iterates $\{x_k\}$ generated by Algorithm~\ref{Sec:PIAG Algorithm2} satisfy
\begin{align*}
V_{k+1} \leq \left(\frac{1}{1+\gamma \mu \theta}\right) V_k + \frac{1+ \gamma L\bigl(\tau+1\bigr)}{1+\gamma \mu \theta} \sum_{\ell=(k-\tau)_+}^{k} W_{\ell} -  \frac{\frac{2}{\gamma L} + 1 -  \tau}{1+\gamma \mu \theta} W_k,
\end{align*}
where $\theta = \frac{Q}{Q + 1}$.
\end{lemma}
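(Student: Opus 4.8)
The proof combines the one–step estimate for the \textsc{Piag} iteration underlying Lemma~\ref{Sec:PIAG Lemma1} — suitably specialized — with the quadratic functional growth condition. The only genuinely new ingredient relative to Lemma~\ref{Sec:PIAG Lemma1} is the use of Assumption~\ref{Sec:PIAG Assumption3}; everything else is a re-run of that argument with two cosmetic changes: the weight on the function–value gap is taken constant, equal to $1/(\gamma L)$ (this is what produces the coefficients $\tfrac2L$ and $\tfrac{2}{\gamma L}$ in the statement, in place of the growing $\alpha_k$ of Lemma~\ref{Sec:PIAG Lemma1}), and the reference point is the iterate–dependent projection $\bar x_k:=\Pi_{\mathcal{X}^\star}(x_k)$ rather than a fixed minimizer. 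Because the claim is a single–step inequality and nothing is telescoped over $k$, letting the reference point depend on $k$ causes no difficulty.

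\textbf{Step 1 (one–step inequality relative to $\bar x_k$).} Starting from $x_{k+1}=\mathrm{prox}_{\gamma R}(x_k-\gamma g_k)$ with $g_k=\tfrac1n\sum_i\nabla f_i(x_{s_{i,k}})$, the three–point inequality for the proximal map, combined with the per–component estimate $\langle\nabla f_i(x_{s_{i,k}}),x-x_{k+1}\rangle\le f_i(x)-f_i(x_{k+1})+\tfrac{L_i}{2}\|x_{k+1}-x_{s_{i,k}}\|^2$ (convexity of $f_i$ at $x$ plus $L_i$–smoothness), gives for any test point $x$
\begin{align*}
P(x_{k+1})-P(x)+\tfrac{1}{2\gamma}\|x_{k+1}-x\|^2\le\tfrac{1}{2\gamma}\|x_k-x\|^2-\tfrac{1}{2\gamma}W_k+\tfrac{1}{2n}\sum_{i=1}^nL_i\|x_{k+1}-x_{s_{i,k}}\|^2 .
\end{align*}
Bounding $\|x_{k+1}-x_{s_{i,k}}\|\le\sum_{\ell=s_{i,k}}^{k}\sqrt{W_\ell}$ and then applying the delay error estimate over the at most $\tau+1$ indices in $\{(k-\tau)_+,\dots,k\}$ exactly as in the proof of Lemma~\ref{Sec:PIAG Lemma1}, I instantiate this bound once at $x=\bar x_k$ (so $P(\bar x_k)=P^\star$ and $\|x_k-\bar x_k\|^2=\|x_k-\Pi_{\mathcal{X}^\star}(x_k)\|^2$) and once at $x=x_k$ (a sufficient–decrease estimate, which is the source of the $-\tfrac{2}{\gamma L}W_k$ term and of the $\tfrac2L(P(x_k)-P^\star)$ piece of $V_k$), and add the first (scaled by $2\gamma$) to the second (scaled by $\tfrac2L$). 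After collecting terms this yields
\begin{align*}
\Bigl(2\gamma+\tfrac{2}{L}\Bigr)\bigl(P(x_{k+1})-P^\star\bigr)+\|x_{k+1}-\bar x_k\|^2\le V_k+\bigl(1+\gamma L(\tau+1)\bigr)\sum_{\ell=(k-\tau)_+}^{k}W_\ell-\Bigl(\tfrac{2}{\gamma L}+1-\tau\Bigr)W_k .
\end{align*}

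\textbf{Step 2 (extracting the contraction via quadratic growth).} I drop $\|x_{k+1}-\bar x_k\|^2\ge\|x_{k+1}-\Pi_{\mathcal{X}^\star}(x_{k+1})\|^2$ on the left and invoke Assumption~\ref{Sec:PIAG Assumption3} at $x_{k+1}$: $\|x_{k+1}-\Pi_{\mathcal{X}^\star}(x_{k+1})\|^2\le\tfrac2\mu\bigl(P(x_{k+1})-P^\star\bigr)$. The left–hand side of the Step~1 inequality is then at least $(1+\gamma\mu\theta)V_{k+1}$ provided
\begin{align*}
2\gamma\bigl(P(x_{k+1})-P^\star\bigr)\ge\gamma\mu\theta\Bigl[\tfrac{2}{L}\bigl(P(x_{k+1})-P^\star\bigr)+\|x_{k+1}-\Pi_{\mathcal{X}^\star}(x_{k+1})\|^2\Bigr],
\end{align*}
and, bounding the distance term on the right once more by $\tfrac2\mu(P(x_{k+1})-P^\star)$, this collapses to $1\ge\theta(1+\mu/L)=\theta(Q+1)/Q$, which holds with equality precisely at $\theta=Q/(Q+1)$. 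Dividing the inequality of Step~1 by $1+\gamma\mu\theta$ then gives the claimed relation.

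\textbf{Main obstacle.} The delicate part is Step~1: getting the coefficient of $\sum_\ell W_\ell$ down to $1+\gamma L(\tau+1)$ and that of $W_k$ to $\tfrac{2}{\gamma L}+1-\tau$ requires the sharp treatment of the aggregated–gradient delay error used for Lemma~\ref{Sec:PIAG Lemma1}; a crude Cauchy–Schwarz pass loses a factor and only produces $(1+\gamma L)(\tau+1)$ on the sum. One must also take the sufficient–decrease estimate at $x_k$ (not at $\bar x_k$) so that it supplies the $\tfrac2L(P(x_k)-P^\star)$ part of $V_k$ together with the $-\tfrac{2}{\gamma L}W_k$ term. Once Step~1 is in place Step~2 is immediate, and the exact value $\theta=Q/(Q+1)$ is forced by the requirement $\theta(1+\mu/L)\le1$, which is also the only point where Assumption~\ref{Sec:PIAG Assumption3} enters.
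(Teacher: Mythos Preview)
Your Step~2 is correct and matches the paper: there too one splits $(1+\alpha)\bigl(P(x_{k+1})-P^\star\bigr)$ with $\alpha=\tfrac{1}{\gamma L}$ as $\theta\bigl(P(x_{k+1})-P^\star\bigr)+(1-\theta+\alpha)\bigl(P(x_{k+1})-P^\star\bigr)$, applies quadratic growth to the first piece, and the choice $\theta=Q/(Q+1)$ makes the left side equal $(1+\gamma\mu\theta)V_{k+1}$.

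The gap is in Step~1. You obtain the sufficient-decrease estimate by instantiating your displayed three-point inequality at $x=x_k$. That gives
\[
P(x_{k+1})-P(x_k)\;\le\;-\frac{1}{\gamma}W_k+\frac{1}{2n}\sum_{i=1}^{n}L_i\,\|x_{k+1}-x_{s_{i,k}}\|^2,
\]
and the sharpest possible bound on the last term is $\tfrac{L(\tau+1)}{2}\sum_{\ell=(k-\tau)_+}^{k} W_\ell$. Scaled by $\tfrac{2}{L}$ and added to the $x=\bar x_k$ instance scaled by $2\gamma$, this yields exactly the ``crude'' coefficients $(1+\gamma L)(\tau+1)$ on the sum and $\tfrac{2}{\gamma L}+1$ on $W_k$ that you flag in your obstacle paragraph --- not the $1+\gamma L(\tau+1)$ and $\tfrac{2}{\gamma L}+1-\tau$ the lemma states. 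No sharper treatment of $\|x_{k+1}-x_{s_{i,k}}\|^2$ can repair this: that distance inherently spans $\tau+1$ increments, so the factor $\tau+1$ is intrinsic to this route.

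What the paper actually uses for the second ingredient (and what underlies Lemma~\ref{Sec:PIAG Lemma1} as well) is a \emph{different} descent inequality, proved by starting from $L$-smoothness of $F$ at $x_k$,
\[
F(x_{k+1})\le F(x_k)+\langle\nabla F(x_k),x_{k+1}-x_k\rangle+\tfrac{L}{2}W_k,
\]
then using the prox optimality with $g_k$ and bounding $\langle\nabla F(x_k)-g_k,\,x_{k+1}-x_k\rangle$ via AM--GM. The delay now enters through $\|x_k-x_{s_{i,k}}\|$, which spans only $\tau$ increments, and one obtains
\[
P(x_{k+1})-P(x_k)\;\le\;\frac{L}{2}\sum_{\ell=(k-\tau)_+}^{k}W_\ell-\Bigl(\frac{1}{\gamma}-\frac{L\tau}{2}\Bigr)W_k.
\]
Scaled by $\tfrac{2}{L}$ and added to the $x=\bar x_k$ instance scaled by $2\gamma$, this produces precisely the stated coefficients. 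So the missing idea is not a sharper Cauchy--Schwarz on the delay term; it is a structurally different descent estimate in which the staleness is measured from $x_k$ rather than from $x_{k+1}$.
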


\begin{proof}
See Appendix~\ref{Sec:PIAG Lemma2 Proof}.
\end{proof}

We use Lemma~\ref{Sec:PIAG Lemma2} together with Lemma~\ref{Sec:Main Results Lemma4} to derive the convergence rate of the \textsc{Piag} method for optimization problems whose objective functions satisfy the quadratic functional growth condition.

\begin{theorem} 
\label{Sec:PIAG Theorem2}
Let Assumptions~\ref{Sec:PIAG Assumption1},~\ref{Sec:PIAG Assumption2} and~\ref{Sec:PIAG Assumption3} hold. Suppose that the step-size $\gamma$ is set to
\begin{align*}
\gamma = \frac{h}{L\bigl(2\tau + 1\bigr)},\quad h\in(0,1].
\end{align*}
Then, for every $k\in\mathbb{N}$, the iterates $\{x_k\}$ generated by Algorithm~\ref{Sec:PIAG Algorithm2} satisfy
\begin{align*}
\|x_k - \Pi_{\mathcal{X}^\star}(x_k)\|^2 &\leq \left(1- \frac{1}{1 + (Q+1)(2\tau+1)/h}\right)^{k}\left(\frac{2}{L}\bigl( P(x_{0})-P^\star\bigr)  + \|x_0 - \Pi_{\mathcal{X}^\star}(x_0)\|^2\right),\\
P(x_k)-P^\star &\leq \left(1-  \frac{1}{1 + (Q+1)(2\tau+1)/h}\right)^{k}\left( P(x_{0})-P^\star\bigr)  + \frac{L}{2} \|x_0 - \Pi_{\mathcal{X}^\star}(x_0)\|^2\right).
\end{align*}
\end{theorem}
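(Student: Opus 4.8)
The plan is to combine Lemma~\ref{Sec:PIAG Lemma2} with the second part of Lemma~\ref{Sec:Main Results Lemma4}. First I would record the iterate relationship from Lemma~\ref{Sec:PIAG Lemma2}: with $V_k = \frac{2}{L}(P(x_k)-P^\star) + \|x_k-\Pi_{\mathcal{X}^\star}(x_k)\|^2$, $W_k=\|x_{k+1}-x_k\|^2$, $X_k=0$, $e_k=0$, and $\theta = Q/(Q+1)$, we have the inequality of the form~\eqref{Sec:Main Results Lemma4 Eq0} with constant coefficients
\begin{align*}
q_k = q := \frac{1}{1+\gamma\mu\theta}, \qquad p_k = p := \frac{1+\gamma L(\tau+1)}{1+\gamma\mu\theta}, \qquad r_k = r := \frac{\frac{2}{\gamma L}+1-\tau}{1+\gamma\mu\theta}.
\end{align*}

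The main work is to verify the hypothesis $2\tau+1 \leq \min\{1/(1-q),\; r/p\}$ of part~2 of Lemma~\ref{Sec:Main Results Lemma4} under the step-size choice $\gamma = h/(L(2\tau+1))$ with $h\in(0,1]$. For the first bound, $1/(1-q) = 1 + \frac{1}{\gamma\mu\theta}$; substituting $\gamma$ and $\theta = Q/(Q+1)$ and using $\mu Q = L$ gives $\frac{1}{\gamma\mu\theta} = \frac{L(2\tau+1)(Q+1)}{h\mu Q} = \frac{(2\tau+1)(Q+1)}{h}$, so $1/(1-q) = 1 + (Q+1)(2\tau+1)/h \geq 2\tau+1$ since the extra term is at least $2\tau+1$ (as $(Q+1)/h \geq 1$). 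For the second bound, $r/p = \frac{2/(\gamma L)+1-\tau}{1+\gamma L(\tau+1)}$; with $\gamma L = h/(2\tau+1)\le 1$ the numerator is $\frac{2(2\tau+1)}{h}+1-\tau \geq 2(2\tau+1)+1-\tau = 3\tau+3 \geq 2\tau+1$ for $\tau\ge 0$, while the denominator is $1 + \frac{h(\tau+1)}{2\tau+1} \leq 1+1 = 2$ (since $h(\tau+1)\le 2\tau+1$); a short computation then shows $r/p \ge 2\tau+1$. I expect this arithmetic — keeping the $h$-dependence explicit and checking both inequalities simultaneously — to be the main obstacle, though it is routine.

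Once the hypothesis is verified, part~2 of Lemma~\ref{Sec:Main Results Lemma4} with $e_k=0$ and $X_k=0$ yields $V_{K+1} \leq Q_{K+1} V_0$, where $Q_{K+1} = \prod_{\ell=0}^{K} q_\ell = q^{K+1}$ since $q_k\equiv q$. Thus
\begin{align*}
V_{K+1} \leq q^{K+1} V_0 = \left(1 - \frac{1}{1+(Q+1)(2\tau+1)/h}\right)^{K+1} V_0,
\end{align*}
using $q = 1/(1+\gamma\mu\theta) = 1 - \frac{\gamma\mu\theta}{1+\gamma\mu\theta} = 1 - \frac{1}{1+1/(\gamma\mu\theta)}$ and the identity $1/(\gamma\mu\theta) = (Q+1)(2\tau+1)/h$ computed above. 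Re-indexing $k = K+1$ and dropping non-negative terms from $V_k$ gives both claimed bounds: since $V_k \geq \|x_k-\Pi_{\mathcal{X}^\star}(x_k)\|^2$ we obtain the first inequality, and since $V_k \geq \frac{2}{L}(P(x_k)-P^\star)$ we get $P(x_k)-P^\star \leq \frac{L}{2} q^k V_0 = q^k\big(P(x_0)-P^\star + \frac{L}{2}\|x_0-\Pi_{\mathcal{X}^\star}(x_0)\|^2\big)$, which is the second inequality.

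One detail to attend to: Lemma~\ref{Sec:PIAG Lemma2} as stated must hold with the initial gradients all evaluated at $x_0$ (as in Algorithm~\ref{Sec:PIAG Algorithm2}), so the relation~\eqref{Sec:Main Results Lemma4 Eq0} is valid for all $k\in\mathbb{N}_0$ with the convention $(k-\tau)_+ = 0$ for small $k$; this is exactly the setting of Lemma~\ref{Sec:Main Results Lemma4}, so no boundary adjustment is needed. I would also double-check that $r>0$, i.e.\ $\frac{2}{\gamma L}+1-\tau > 0$, which follows from $\frac{2}{\gamma L} = \frac{2(2\tau+1)}{h} \geq 2(2\tau+1) > \tau$; this is needed so that part~2 of Lemma~\ref{Sec:Main Results Lemma4} applies with $r_k = r > 0$.
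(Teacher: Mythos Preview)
Your overall strategy matches the paper's proof exactly: invoke Lemma~\ref{Sec:PIAG Lemma2} to obtain the recursion with constant $q,p,r$, verify the hypothesis $2\tau+1 \leq \min\{1/(1-q),\, r/p\}$ of part~2 of Lemma~\ref{Sec:Main Results Lemma4}, conclude $V_k \leq q^k V_0$, and then read off the two stated bounds.

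However, your verification of $r/p \geq 2\tau+1$ has a gap. The crude estimates you derive (numerator $\geq 3\tau+3$, denominator $\leq 2$) only give $r/p \geq (3\tau+3)/2$, and this is \emph{not} $\geq 2\tau+1$ once $\tau \geq 2$ (for instance, at $\tau=2$ you get $4.5 < 5$, even though the true value of $r/p$ at $h=1$ is $45/8 > 5$). The ``short computation'' you allude to cannot proceed from these separate bounds; bounding numerator and denominator independently loses too much. Instead, clear denominators and rewrite the condition $r/p \geq 2\tau+1$ directly: with $\gamma L = h/(2\tau+1)$ it becomes
\[
2(2\tau+1) \;\geq\; h^2(\tau+1) + 3h\tau,
\]
which at $h=1$ reads $4\tau+2 \geq 4\tau+1$, and the right-hand side is nondecreasing in $h\ge 0$, so the inequality holds for all $h\in(0,1]$. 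This is precisely how the paper handles it. Your treatment of the first condition $2\tau+1 \leq 1/(1-q)$ is correct, and once both conditions are verified the remainder of your argument (including the computation of $q$ in the stated form and the extraction of the two bounds from $V_k$) is fine and coincides with the paper's.
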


\begin{proof}
See Appendix~\ref{Sec:PIAG Theorem2 Proof}.
\end{proof}

Theorem~\ref{Sec:PIAG Theorem2} demonstrates that under Assumption~\ref{Sec:PIAG Assumption3}, the \textsc{Piag} method is linearly convergent by taking a constant step-size inversely proportional to the maximum delay $\tau$. The best guaranteed convergence rate is obtained for the step-size
\begin{align*}
\gamma =  \frac{1}{L\bigl(2\tau + 1\bigr)}.
\end{align*}
With this choice of $\gamma$, the iterates converge linearly in terms of function values with rate
\begin{align*}
P(x_k)-P^\star \leq \left(1 - \frac{1}{1 + (Q+1)(2\tau +1)}\right)^{k} \epsilon_0,
\end{align*}
where $\epsilon_0 = P(x_{0})-P^\star  + \frac{L}{2} \|x_0 - \Pi_{\mathcal{X}^\star}(x_0)\|^2$. Taking logarithm of both sides yields
\begin{align*}
\log(P(x_k)-P^\star) \leq k \log \left(1 - \frac{1}{1 + (Q+1)(2\tau +1)}\right) + \log\left(\epsilon_0\right).
\end{align*}
Since $\log(1+x) \leq x$ for any $x > -1$, it follows that
\begin{align*}
\log(P(x_k)-P^\star) \leq -\frac{k}{1 + (Q+1)(2\tau +1)} + \log\left(\epsilon_0\right).
\end{align*}
Therefore, if the number of iterations satisfy 
\begin{align*}
k\geq K_{\epsilon}= \bigr(1 + (Q+1)(2\tau +1)\bigl)\log\left(\frac{\epsilon_0}{\epsilon}\right),
\end{align*}
then $P(x_k)-P^\star\leq \epsilon$. We conclude that the \textsc{Piag} method achieves an iteration complexity of $\mathcal{O}\bigl(Q(\tau+1) \log(1/\epsilon)\bigr)$ for optimization problems satisfying the quadratic functional growth condition. Note that when $\tau=0$, this bound becomes $\mathcal{O}\bigl(Q\log(1/\epsilon)\bigr)$, which is the iteration complexity for the \textsc{Pg} method~\cite[Theorem $10.30$]{Beck:17}.

As discussed before, if the component functions are selected in a fixed cyclic order, then $\tau = n-1$. It follows from Theorem~\ref{Sec:PIAG Theorem1} and Theorem~\ref{Sec:PIAG Theorem2} that the iteration complexity of the \textsc{Piag} method with cyclic sampling is $\mathcal{O}(n L / \epsilon)$ for convex problems and $\mathcal{O}\bigr(n Q \log(1/\epsilon)\bigl)$ for problems whose objective functions satisfy the quadratic functional growth condition. Each iteration of the \textsc{Piag} method requires only one gradient evaluation compared to $n$ gradient computations in the \textsc{Pg} method. Therefore, in terms of the total number of component gradients evaluated to find an $\epsilon$-optimal solution, the iteration complexity of \textsc{Pg} and the cyclic \textsc{Piag} are the same.

\begin{remark}
\cite{Schmidt:17} proposed a randomized variant of \textsc{Piag}, called stochastic average gradient~(\textsc{Sag}), where the component functions are sampled uniformly at random. The iteration complexity of the \textsc{Sag} method, in expectation, is $\mathcal{O}\bigl(\max\{n, Q\}\log(1/\epsilon)\bigr)$ for strongly convex problems and $\mathcal{O}\bigl((n+L)/\epsilon)\bigr)$ for convex problems. The maximum allowable step-size for the \textsc{Sag} method is larger than that of the \textsc{Piag} method, which can lead to improved empirical performance~\cite[Figure $1$]{Schmidt:17}. Note, however, that in some applications, the component functions must be processed in a particular deterministic order and, hence, random sampling is not possible. For example, in source localization or distributed parameter estimation over wireless networks, sensors may only communicate with their neighbors subject to certain constraints in terms of geography and distance, which can restrict the updates to follow a specific deterministic order~\citep{Blatt:07}.     
\end{remark}

\subsubsection{Comparison of Our Analysis with Prior Work}

\cite{Blatt:07} proposed the incremental aggregated gradient (\textsc{Iag}) method for solving unconstrained optimization problems. For a special case where each component function is quadratic, they showed that the \textsc{Iag} method with a constant step-size achieves a linear rate of convergence. The authors obtained these results using a perturbation analysis of the eigenvalues of a periodic linear system. However, the analysis in \cite{Blatt:07} only applies to quadratic objective functions and  provides neither an explicit convergence rate nor an explicit upper bound on the step-sizes that ensure linear convergence. 

\cite{Tseng:14} proved global convergence and local linear convergence for the \textsc{Piag} method in a more general setting where the gradients of the component functions are Lipschitz continuous and each component function satisfies a local error bound assumption. For an $L$-smooth and (possibly) non-convex function $F$, the proof of Theorem $4.1$ in~\cite{Tseng:14} shows that the iterates generated by \textsc{Piag} satisfy
\begin{align}
P(x_{k+1})-P^\star \leq P(x_{k})-P^\star  + \frac{\gamma^2 L}{2} \sum_{\ell = (k-\tau)_+}^{k}\|d_{\ell}\|^2 - \gamma\left(1 - \frac{\gamma L \tau}{2}\right)\|d_k\|^2,
\label{PIAG_Tseng}
\end{align}
where $d_k$ is the search direction at iteration $k$ and given by
\begin{align*}
d_k = \underset{d\in \mathbb{R}^d}{\textup{argmin}} \left\{\langle g_k,\; d \rangle 
 + \frac{1}{2}\|d\|^2 + R(x_k + d)\right\}.  
\end{align*}
By defining $V_k = P(x_{k})-P^\star$, $X_k = \gamma(1-h) \|d_k\|^2$ with $h\in(0, 1)$, and $W_k = \|d_k\|^2$, we can rewrite~\eqref{PIAG_Tseng} as
\begin{align*}
X_k + V_{k+1} \leq V_k  + \frac{\gamma^2 L}{2} \sum_{\ell = (k-\tau)_+}^{k} W_{\ell} - \gamma\left(h - \frac{\gamma L \tau}{2}\right) W_k.
\end{align*}
The iterates satisfy a relationship on the form~\eqref{Sec:Main Results Eq4}. Thus, according to Lemma~\ref{Sec:Main Results Lemma4}, if 
\begin{align*}
\frac{\gamma^2 L(\tau+1)}{2}\leq  \gamma\left(h - \frac{\gamma L \tau}{2}\right),   
\end{align*}
then $\sum_{k=0}^{K} X_{k} \leq V_0$ for every $K\in\mathbb{N}$. This, in turn, implies that
\begin{align*}
\sum_{k=0}^{K} \|d_k\|^2 &\leq \frac{P(x_{0})-P^\star}{\gamma(1-h)}, \quad \textup{for}\;\gamma = \frac{2h}{L(2\tau + 1)},  
\end{align*}
which is the same result as in~\cite{Tseng:14} for non-convex objective functions. While the analysis in~\cite{Tseng:14} is more general than that in~\cite{Blatt:07}, as the authors did not limit the objective function to be strongly convex and quadratic, explicit rate estimates and a characterization of the step-size needed for linear convergence were still missing in~\cite{Tseng:14}.

\cite{Gurbuzbalaban:17} provided the first explicit linear rate result for the \textsc{Iag} method. According to the proof of Theorem $3.3$ in \cite{Gurbuzbalaban:17}, the iterates generated by \textsc{Iag} satisfy a relationship of the form~\eqref{Sec:Main Results Eq3} given by
\begin{align*}
V_{k+1} &\leq \left(1-\frac{2\gamma \mu L}{\mu + L}\right) V_k +  \left(9\gamma^4 L^4 \tau^2 + 6 \gamma^2 L^2 \tau\right) \max_{(k-2\tau)_+\leq \ell \leq k} V_{\ell},
\end{align*}
where $V_k = \|x_k -x^\star\|^2$. As in \cite{Gurbuzbalaban:17}, we can use Lemma~\ref{Sec:Main Results Lemma1} to obtain an iteration complexity of $\mathcal{O}\bigl(Q^2\tau^2 \log(1/\epsilon)\bigr)$ for the \textsc{Iag} method. However, this analysis has a drawback that the guaranteed bound grows quadratically with both the condition number $Q$ and the maximum delay $\tau$.

The quadratic dependence of the iteration complexity on $Q$ was improved by \cite{AFJ:16} for strongly convex composite objectives and by \cite{Zhang:21} under a quadratic functional growth condition. Specifically, the proof of Theorem $1$ in~\cite{AFJ:16} shows that for the iterates generated by \textsc{Piag}, we have
\begin{align*}
V_{k+1} \leq \left(\frac{1}{1 + \gamma \mu}\right)V_k  + \left(\frac{\gamma L (\tau+1)}{1 + \gamma \mu}\right) \sum_{\ell = (k-\tau)_+}^{k} W_{\ell} - \left(\frac{1}{1 + \gamma \mu}\right) W_k,
\end{align*}
where $V_k = \|x_k -x^\star\|^2$ and $W_k = \|x_{k+1}-x_k\|^2$. Since the iterates satisfy a relationship on the form~\eqref{Sec:Main Results Eq4}, it follows from Lemma~\ref{Sec:Main Results Lemma4} that 
\begin{align*}
\|x_k -x^\star\|^2 &\leq \left(\frac{1}{1 + \gamma \mu}\right)\|x_0 -x^\star\|^2, \quad \textup{for}\; \gamma \in\left(0, \frac{1}{L(2\tau + 1)(\tau+1)}\right].  
\end{align*}
The above convergence rate lead to an iteration complexity of $\mathcal{O}\bigl(Q\tau^2 \log(1/\epsilon)\bigr)$, which matches the bound derived in \cite{AFJ:16}. However, this analysis has some limitations. First, the number of iterations required to reach an $\epsilon$-optimal solution increases quadratically with the maximal delay $\tau$. Second, the constant step-size that guarantees linear convergence is inversely proportional to the square of $\tau$. 

\cite{Vanli:18} improved the quadratic dependence on $\tau$ to a linear one by showing that the iteration complexity of the \textsc{Piag} method is $\mathcal{O}\bigl(Q\tau \log(1/\epsilon)\bigr)$. According to the proof of Theorem~$3.9$ in~\cite{Vanli:18}, the iterates generated by \textsc{Piag} satisfy
\begin{align*}
V_{k+1} \leq \left(\frac{1}{1 + \gamma \mu/16}\right)V_k  + \left(\frac{3L}{4(1 + \gamma \mu/16)}\right) \sum_{\ell = (k-\tau)_+}^{k-1} W_{\ell} - \left(\frac{1}{4\gamma(1 + \gamma \mu/16)}\right) W_k,
\end{align*}
where $V_k =  P(x_k) - P^\star$ and $W_k = \|x_{k+1}-x_k\|^2$. Therefore, by Lemma~\ref{Sec:Main Results Lemma4}, we have 
\begin{align*}
P(x_k) - P^\star &\leq \left(\frac{1}{1 + \gamma \mu/16}\right)^k (P(x_0) - P^\star), \quad \textup{for}\;\gamma \in\left(0, \frac{1}{3L(2\tau + 1)}\right],  
\end{align*}
which is the same bound as in~\cite{Vanli:18}. In comparison with this result, Theorem~\ref{Sec:PIAG Theorem2} allows \textsc{Piag} to use larger step-sizes that leads to a tighter guaranteed convergence rate. This improvement is achieved through our choice of the sequence $V_k$, which includes two terms: $P(x_k) - P^\star$ and $\|x_k - x^\star\|^2$. In addition, the analysis in~\cite{Vanli:18} only applies to strongly convex problems.

We have thus shown that our sequence results can be used to obtain the convergence guarantees established in~\cite{Tseng:14, Gurbuzbalaban:17, AFJ:16, Vanli:18}, as their analysis involves recurrences of the form~\eqref{Sec:Main Results Eq3} or~\eqref{Sec:Main Results Eq4}. Upon comparing our analysis with previous work, it becomes clear that for a specific algorithm, such as \textsc{Piag}, the range of step-sizes and convergence rates guaranteed by Lemmas~\ref{Sec:Main Results Lemma1}-~\ref{Sec:Main Results Lemma4} depend heavily on the choice of sequences $V_k$, $W_k$, and $X_k$. Selecting these sequences involves considering the characteristics of the algorithm and properties of the optimization problem that the algorithm aims to solve. For example, for the \textsc{Piag} method, we see that 
\begin{align*}
V_k = P(x_{k})-P^\star    
\end{align*}
for non-convex problems, 
\begin{align*}
V_k = 2\gamma (k + \tau)\bigl(P(x_{k})-P^\star\bigr)  + \|x_k - x^\star\|^2
\end{align*}
for convex problems, and 
\begin{align*}
V_k = \frac{2}{L}\bigl(P(x_{k})-P^\star\bigr)  +  \|x_k -\Pi_{\mathcal{X}^\star}(x_k)\|^2
\end{align*}
for problems whose objective functions satisfy the quadratic functional growth condition result in sharper theoretical bounds and larger step-sizes than the previous state-of-the-art.

%
%
\subsection{Asynchronous \textsc{SGD}}
\label{Sec:SGD}

To demonstrate the versatility of the sequence results, and that they are not limited to deliver step-size rules and convergence rates that depend on the maximal delay, we now consider stochastic optimization problems of the form
\begin{align}
\underset{x\in \mathbb{R}^d}{\textup{minimize}}\quad F(x) := \mathbb{E}_{\xi \sim \mathcal{D}}\bigl[f(x, \xi)\bigr].
\label{Sec:SGD Eq1}
\end{align}
Here, $x$ is the decision variable, $\xi$ is a random variable drawn from the probability distribution $\mathcal{D}$, and $f(\cdot, \xi):\mathbb{R}^d \rightarrow \mathbb{R}$ is differentiable for each $\xi$. This objective captures, for example, supervised learning where $x$ represents parameters of a machine learning model to be trained. In this case, $\mathcal{D}$ is an unknown distribution of labelled examples, $\xi$ is a data point, $f(x, \xi)$ is the loss of the model with parameters $x$ on the data point $\xi$, and $F$ is the generalization error. We use $\mathcal{X}^\star$ to denote the set of optimal solutions of Problem~\eqref{Sec:SGD Eq1} and $F^\star$ to denote the corresponding optimal value.

In machine learning applications, the distribution $\mathcal{D}$ is often unknown, which makes it challenging to solve Problem~\eqref{Sec:SGD Eq1}. To support these applications, we do not assume knowledge of~$F$, only access to a stochastic oracle. Each time the oracle is queried with an $x \in \mathbb{R}^d$, it generates an independent and identically distributed (i.i.d.) sample $\xi$ from $\mathcal{D}$ and returns $\nabla f(x, \xi)$, which is an unbiased estimate of $\nabla F(x)$, i.e.,
\begin{align*}
\nabla F(x) = \mathbb{E}_{\xi \sim \mathcal{D}}\bigl[\nabla f(x,\xi)\bigr].  
\end{align*}
We then use the stochastic gradient $\nabla f(x,\xi)$, instead of $\nabla F(x)$, in the update rule of the  optimization algorithm that attempts to minimize $F$.

The classical stochastic gradient descent~(\textsc{Sgd}) method is among the first and the most commonly used algorithms developed for solving Problem~\eqref{Sec:SGD Eq1}. Its popularity comes
mainly from the fact that it is easy to implement and
has low computational cost per iteration. The \textsc{Sgd} method proceeds iteratively by drawing an i.i.d sample $\xi_k$ from $\mathcal{D}$, computing $\nabla f(x_k, \xi_k)$, and updating the current vector $x_k$ via
\begin{align*}
x_{k+1} = x_k - \gamma_k \nabla f(x_k, \xi_k), 
\end{align*}
where $\gamma_k$ is a step-size (or learning rate). For a $L$-smooth and convex function $F$, the iteration complexity of the \textsc{Sgd} method is
\begin{align*}
\mathcal{O}\left(\frac{L}{\epsilon} + \frac{ \sigma^2}{\epsilon^2}\right),    
\end{align*}
which is $\mathcal{O}(\sigma^2/\epsilon^2)$  asymptotically in $\epsilon$~\citep{Lan:12}. 

The \textsc{Sgd} method is inherently serial in the sense that gradient computations take place on a single processor which has access to the whole dataset
and updates iterations sequentially, i.e., one after another. However, it is often infeasible for a single machine to store and process the vast amounts of data that we encounter in practical problems. In these situations, it is common to implement the \textsc{Sgd} method in a master-worker architecture in which several worker processors compute stochastic gradients in parallel based on their portions of the dataset while a master processor stores the decision vector and updates the current iterate. The master-worker implementation can be executed in two ways: synchronous and asynchronous.

In the synchronous case, the master will perform an update and broadcast the new decision vector to the workers when it has collected stochastic gradients from \emph{all} the workers. Given $M$ workers, the master performs the following mini-batch \textsc{Sgd} update 
\begin{align*}
x_{k+1} = x_k - \frac{\gamma_k}{M} \sum_{m=1}^ M \nabla f\bigl(x_k, \xi_{k, m}\bigr), 
\end{align*}
when stochastic gradients $\nabla f\bigl(x_k, \xi_{k, 1}\bigr), \ldots, \nabla f\bigl(x_k, \xi_{k, M}\bigr)$ are computed and communicated back by workers. In terms of the total number of stochastic gradients evaluated to find an $\epsilon$-optimal solution, the iteration complexity of the mini-batch \textsc{Sgd} method is
\begin{align}
\mathcal{O}\left(\frac{M L}{\epsilon} + \frac{ \sigma^2}{\epsilon^2}\right)    
\label{SGD_minibatch_rate}
\end{align}
for $L$-smooth and convex objective functions~(\cite{Dekel:2012}). This bound is asymptotically $\mathcal{O}(\sigma^2/\epsilon^2)$, which is exactly the asymptotic iteration complexity
achieved by the \textsc{Sgd} method. However,
using $M$ workers in parallel, the mini-batch \textsc{Sgd} method can achieve updates at a rate roughly $M$ times faster. This means that mini-batch \textsc{Sgd} is expected to enjoy a near-linear speedup in the number of workers.

A main drawback of the mini-batch \textsc{Sgd} method is that the workers need to synchronize in each round and compute stochastic gradients at the same decision vector. Due to various factors, such as differences in computational capabilities and communication bandwidth, or interference from other running jobs, some workers may evaluate stochastic gradients slower than others. This causes faster workers to be idle during each iteration and the algorithm suffers from the \textit{straggler} problem, in which the algorithm can only move forward at the pace of the slowest worker. The asynchronous \textsc{Sgd} method offers a solution to this issue by allowing the workers to compute gradients at different rates without synchronization and letting the master to perform updates using outdated gradients. In other words, there is no need for workers to wait for the others to finish their gradient computations, and the master can update the decision vector every time it receives a stochastic gradient from some worker. For this method, the master performs the update
\begin{align}
x_{k+1} = x_k - \gamma_k  \nabla f\bigl(x_{k-\tau_k}, \xi_{k}\bigr),
\label{Sec:SGD Eq2}
\end{align}
where $\tau_k$ is the delay of the gradient at iteration $k$. The value of $\tau_k$, which is often much greater than zero, captures the staleness of the information used to compute the stochastic gradient involved in the update of $x_k$. In~\eqref{Sec:SGD Eq2}, the index of stochastic noise $\xi_k$ is equal to the iteration number $k$ to show that previous iterates $x_{k'}$ for $k'\leq k$ do not depend on this stochastic noise~\citep{AgD:12}. Algorithm~\ref{Algorithm:SGD} describes the asynchronous \textsc{Sgd} method executed in a master-worker setting with one master and $M$ workers. 

\begin{algorithm}[H]
\textbf{Input:}   $x_0 \in \mathbb{R}^d$
  \begin{algorithmic}[1]
    \State Server sends $x_0$ to all workers 
     \For {$k=0, 1, \ldots$ }
      \State Workers compute stochastic gradients at the assigned points in parallel 
      \State Gradient $\nabla f\bigl(x_{k-\tau_k}, \xi_{k}\bigr)$ arrives from some worker $m_k\in\{1,\ldots, M\}$
      \State Server updates $x_{k+1} \leftarrow x_{k} - \gamma_k \nabla f\bigl(x_{k-\tau_k}, \xi_{k}\bigr)$
     \State Server sends $x_{k+1}$ to worker $m_k$
    \EndFor
  \end{algorithmic}
  \caption{Asynchronous \textsc{Sgd}}
  \label{Algorithm:SGD}
\end{algorithm}

To characterize the iteration complexity and the convergence rate of Algorithm~\ref{Algorithm:SGD}, we make the following assumptions.

\begin{assumption}
\label{Sec:SGD Assumption1}
The objective function $F:\mathbb{R}^d\rightarrow \mathbb{R}$ is $L$-smooth.  
\end{assumption}
\begin{assumption}
\label{Sec:SGD Assumption2}
The stochastic gradients have bounded variance, i.e., there exists a
constant $\sigma \geq 0$ such that
\begin{align*}
\mathbb{E}_{\xi \sim \mathcal{D}}\bigl[\|\nabla f(x,\xi) - \nabla F(x)\|^2\bigr] \leq \sigma^2.
\end{align*}
\end{assumption}
\begin{assumption}
\label{Sec:SGD Assumption3}
There is a non-negative integer $\tau_{\max}$ such that
\begin{align*}
0\leq \tau_k \leq \tau_{\max},\quad k\in\mathbb{N}_0.
\end{align*}
\end{assumption}

Most existing theoretical guarantees for the asynchronous \textsc{Sgd} method show that the number of iterations required to reach an $\epsilon$-optimal solution grows with the maximum delay~$\tau_{\max}$ (see, e.g., \cite{AgD:12, Lian:15, Feyzmahdavian:16, Mania:17, Arjevani:2020, Karimireddy:2020}). In particular,~\cite{Arjevani:2020} analyzed positive semi-definite quadratic functions and~\cite{Karimireddy:2020} considered general $L$-smooth convex functions and proved that if the delays are always constant ($\tau_k = \tau_{\max} $ for $k\in\mathbb{N}_0$), the asynchronous \textsc{Sgd} method attains an iteration complexity bound of the form
\begin{align}
\mathcal{O}\left(\frac{L(\tau_{\max}+1)}{\epsilon} + \frac{ \sigma^2}{\epsilon^2}\right).
\label{SGD_rate_constantdelay}
\end{align}
This bound is unable to exclude the possibility that the performance of asynchronous \textsc{Sgd} degrades proportionally to~$\tau_{\max}$. In heterogeneous environments involving workers with different computational speeds, $\tau_{\max}$ is effectively determined by the slowest worker. This suggests that similar to mini-batch \textsc{Sgd}, the asynchronous \textsc{Sgd} method may face the straggler issue. The step-size used to achieve the bound~\eqref{SGD_rate_constantdelay} is inversely proportional to~$\tau_{\max}$, and may hence be quite small which degrades the performance of the algorithm in practice.

To mitigate the impact of stragglers on asynchronous optimization,~\cite{Aviv:2021} and~\cite{Cohen:2021} developed delay-adaptive \textsc{Sgd} methods whose rates depend on the \textit{average} delay $\tau_{\textup{ave}}$ rather than on $\tau_{\max}$. Performance of these methods are more robust to asynchrony since $\tau_{\textup{ave}}$ can be significantly smaller than $\tau_{\max}$, especially when some workers are much slower than others. For instance, consider the implementation of the asynchronous \textsc{Sgd} method using two workers, where one worker is $1000$ times faster than the other in computing gradients. When the master waits for the slower worker to finish computing a gradient, it can use the faster worker's gradients to produce $1000$ updates. As a result, $\tau_k$ is equal to $0$ for $k=0,\ldots,999$, and $\tau_{1000}=1000$. This indicates that the average delay $\tau_{\textup{ave}}$ would be approximately $1$, while $\tau_{\max}=1000$. Although this scenario is hypothetical and constructed to make a point, actual delays tend to have $\tau_{\max}\gg \tau_{\rm ave}$. For example, the delay measurements for a $40$ worker implementation of asynchronous \textsc{Sgd} reported in \cite[Figure~$1$]{Mishchenko:2022} have $\tau_{\max}=1200$, while $\tau_{\rm ave}=40$.

While the methods developed in~\cite{Aviv:2021} and~\cite{Cohen:2021} are more robust to straggling workers, there are some limitations to their convergence guarantees. Specifically, the rates in \cite{Aviv:2021} were derived under the assumption that the gradients are uniformly bounded, while the results in~\cite{Cohen:2021} only hold with probability $\frac{1}{2}$. \cite{Koloskova:2022} and~\cite{Mishchenko:2022} have recently addressed these limitations, and shown that asynchronous \textsc{Sgd} is always faster than mini-batch \textsc{Sgd} regardless of the delay patterns. In particular, \cite{Koloskova:2022} proposed a delay-adaptive step-size rule, under which asynchronous \textsc{Sgd} achieves an iteration complexity 
\begin{align*}
\mathcal{O}\left(\frac{L(\tau_{\textup{ave}}+1)}{\epsilon} + \frac{ \sigma^2}{\epsilon^2}\right)
\end{align*}
for $L$-smooth non-convex functions. \cite{Mishchenko:2022} provided convergence guarantees for $L$-smooth convex, strongly convex, and non-convex functions which depend only on the number of workers used to implement the algorithm.

Similar to~\cite{Koloskova:2022}, we investigate the convergence of the asynchronous \textsc{Sgd} method with delay-dependent step-sizes. We define the step-size sequence as
\begin{align}
\gamma_k = 
\begin{cases}
\gamma,\quad & \tau_k\leq \tau_{\textup{th}},\\
0,\quad & \tau_k>\tau_{\textup{th}},
\end{cases}
\label{SGD_Adaptive_Stepsize}
\end{align}
where the threshold parameter $\tau_{\textup{th}}$ is a non-negative constant satisfying
\begin{align*}
\min\left\{2\tau_{\textup{ave}},\;\tau_{\textup{max}}\right\}\leq \tau_{\textup{th}}.  
\end{align*}
The adaptive rule~\eqref{SGD_Adaptive_Stepsize} sets the step-size to a constant value $\gamma$ when the delay of the $k$th iteration is at most $\tau_{\textup{th}}$. Otherwise, it sets the step-size to zero, effectively dropping the gradients with large delays. Note that~\eqref{SGD_Adaptive_Stepsize} reduces to a constant step-size rule by setting $\tau_{\textup{th}} \geq \tau_{\textup{max}}$. In this case, the step-size remains fixed at $\gamma$ for all iterations, regardless of the delay values. The main goal of this section is to use the sequence results from Section~\ref{Sec:Main Results} to: $(i)$ extend the results of~\cite{Koloskova:2022} to $L$-smooth convex and strongly convex functions; $(ii)$ improve upon the previously best-known rate of the asynchronous \textsc{Sgd} method with constant step-sizes for strongly convex functions given in~\cite{Karimireddy:2020}; and $(iii)$ recover the convergence rates derived in~\cite{Mishchenko:2022}.

\begin{remark}
\label{Remark_sgd_averagedelay}
To use the delay-dependent step-size rule~\eqref{SGD_Adaptive_Stepsize}, prior knowledge of $\tau_{\textup{ave}}$ is required. This is challenging since gradient delays are usually difficult to predict before implementing the algorithm. However,~\cite{Koloskova:2022} and~\cite{Mishchenko:2022} have shown that for the asynchronous \textsc{Sgd} method running on $M$ workers, it holds  that
\begin{align*}
\tau_{\textup{ave}} \leq M - 1.   
\end{align*}
Therefore, Algorithm~\ref{Algorithm:SGD} can be implemented without prior knowledge of $\tau_{\textup{ave}}$ by setting
\begin{align*}
\tau_{\textup{th}} = 2(M-1).    
\end{align*}
\end{remark}

The following result shows that in the analysis of the asynchronous \textsc{Sgd} method, we can establish iterate relationships on the form~\eqref{Sec:Main Results Eq4}.

\begin{lemma}
\label{Sec:SGD Lemma1}
Suppose that Assumptions~\ref{Sec:SGD Assumption1}--~\ref{Sec:SGD Assumption3} hold. Let 
\begin{align*}
V_k = \mathbb{E}\bigl[\|x_{k} - x^\star\|^2\bigr],\;
X_k = 2\gamma_k \mathbb{E}\bigl[F(x_{k}) - F^\star\bigr],\; \textup{and}\; W_k= \gamma_k^2\mathbb{E}\bigl[\bigl\| \nabla F(x_{k-\tau_k}) \bigr\|^2\bigr]
\end{align*}
for $k\in\mathbb{N}_0$, where the expectation is over all random variables $\xi_0,\ldots, \xi_{k-1}$. For the iterates generated by Algorithm~\ref{Algorithm:SGD}, the following statements hold:
\begin{enumerate}
    \item If $F$ is convex, then
\small    
\begin{align*}
X_k + V_{k+1} &\leq V_k +  2\gamma_k\tau_k L \sum_{\ell = (k - \tau_k)_+}^{k-1} W_{\ell} - \left(\frac{1}{\gamma_k L} - 1\right) W_k + \left(\gamma_k^2 + 2\gamma_k L \sum_{\ell = (k - \tau_k)_+}^{k-1} \gamma_{\ell}^2\right)\sigma^2
\end{align*}
\normalsize
for $\gamma_k>0$, and $ V_{k+1} = V_k$ for $\gamma_k=0$.
\item If $F$ is $\mu$-strongly convex, then
    \small
\begin{align*}
V_{k+1} &\leq (1 - \gamma_k \mu) V_k +  2\gamma_k\tau_k L \sum_{\ell = (k - \tau_k)_+}^{k-1} W_{\ell} - \left(\frac{1}{\gamma_k L} - 1\right) W_k + \left(\gamma_k^2 + 2 \gamma_k L\sum_{\ell = (k - \tau_k)_+}^{k-1} \gamma_{\ell}^2\right)\sigma^2
\end{align*}
\normalsize
for $\gamma_k>0$, and $ V_{k+1} = V_k$ for $\gamma_k=0$.
\end{enumerate}
\end{lemma}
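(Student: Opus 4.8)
The plan is to derive the recursion directly from the update rule~\eqref{Sec:SGD Eq2}, carefully separating the "true gradient" part from the stochastic noise, and then to bound the gap between $\nabla F(x_{k-\tau_k})$ and $\nabla F(x_k)$ by the accumulated displacement over the last $\tau_k$ steps. First I would expand $\|x_{k+1}-x^\star\|^2$ using $x_{k+1}=x_k-\gamma_k\nabla f(x_{k-\tau_k},\xi_k)$, obtaining a cross term $-2\gamma_k\langle \nabla f(x_{k-\tau_k},\xi_k),\, x_k-x^\star\rangle$ and the square term $\gamma_k^2\|\nabla f(x_{k-\tau_k},\xi_k)\|^2$. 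Taking conditional expectation over $\xi_k$ (using that $x_k,\ldots,x_{k-\tau_k}$ and $\tau_k$ do not depend on $\xi_k$, as emphasized after~\eqref{Sec:SGD Eq2}), the cross term becomes $-2\gamma_k\langle \nabla F(x_{k-\tau_k}),\,x_k-x^\star\rangle$, and Assumption~\ref{Sec:SGD Assumption2} lets me split the square term as $\gamma_k^2\|\nabla F(x_{k-\tau_k})\|^2 + \gamma_k^2\sigma^2$, which already produces the $-W_k$ term (up to the $\frac{1}{\gamma_kL}-1$ factor, recovered below) and part of the $\sigma^2$ term. The $\gamma_k=0$ case is immediate since then $x_{k+1}=x_k$.

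The heart of the argument is handling $-2\gamma_k\langle\nabla F(x_{k-\tau_k}),\,x_k-x^\star\rangle$. I would write $x_k-x^\star = (x_{k-\tau_k}-x^\star) + (x_k - x_{k-\tau_k})$ and $x_k - x_{k-\tau_k} = \sum_{\ell=(k-\tau_k)_+}^{k-1}(x_{\ell+1}-x_\ell)$. For the first piece, convexity of $F$ (or $\mu$-strong convexity in part~2) gives $\langle\nabla F(x_{k-\tau_k}),\,x_{k-\tau_k}-x^\star\rangle \geq F(x_{k-\tau_k})-F^\star$ (plus $\tfrac{\mu}{2}\|x_{k-\tau_k}-x^\star\|^2$ in the strongly convex case), and then $L$-smoothness plus the standard inequality $F(x_{k-\tau_k}) \geq F(x_k) - \langle\nabla F(x_k),x_k-x_{k-\tau_k}\rangle$-type bound, or more cleanly the co-coercivity/descent lemma, converts $F(x_{k-\tau_k})-F^\star$ into $F(x_k)-F^\star$ at the cost of a term controlled by $\|\nabla F(x_{k-\tau_k})\|^2$ — this is exactly where the $\frac{1}{\gamma_kL}-1$ coefficient on $W_k$ comes from (one needs $L$-smoothness to relate $\|\nabla F(x_{k-\tau_k})-\nabla F(x_k)\|$ to the step sizes). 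For the strongly convex case the $-\gamma_k\mu\|x_{k-\tau_k}-x^\star\|^2$ must be converted to $-\gamma_k\mu V_k$; I expect to absorb the discrepancy $\|x_{k-\tau_k}-x^\star\|^2$ vs. $\|x_k-x^\star\|^2$ into the delayed-$W_\ell$ sum as well, or to use $(1-\gamma_k\mu)$ loosely enough that this is not needed.

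For the second piece, $-2\gamma_k\langle\nabla F(x_{k-\tau_k}),\,x_\ell-x_{\ell+1}\rangle$ summed over $\ell$, I would apply Young's inequality $2\langle a,b\rangle \leq \|a\|^2 + \|b\|^2$ in the form $2\langle\nabla F(x_{k-\tau_k}),\,x_\ell-x_{\ell+1}\rangle \leq \|\nabla F(x_{k-\tau_k})\|^2 + \|x_{\ell+1}-x_\ell\|^2$ with an appropriate scaling by $L$ so that the $\|\nabla F(x_{k-\tau_k})\|^2$ contributions (there are $\tau_k$ of them) match a $W_k$-type term absorbed into the coefficient already accounted for, while $\|x_{\ell+1}-x_\ell\|^2 = \gamma_\ell^2\|\nabla f(x_{\ell-\tau_\ell},\xi_\ell)\|^2$, and taking expectations splits this as $W_\ell + \gamma_\ell^2\sigma^2$ — producing precisely the $2\gamma_k\tau_kL\sum W_\ell$ term and the $2\gamma_kL\sum\gamma_\ell^2\sigma^2$ term in the statement. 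The scaling constant in Young's inequality has to be chosen as $\gamma_k L$ (or $1/(\gamma_k L)$) so that the bookkeeping closes with coefficient $2\gamma_k\tau_k L$ on the delayed sum; getting these constants to line up exactly — and making sure the $\|\nabla F(x_{k-\tau_k})\|^2$ terms generated here are exactly offset by the slack in the $W_k$ coefficient — is the main obstacle and the part requiring the most care. I would then take total expectation over $\xi_0,\ldots,\xi_{k-1}$ and collect terms to get the stated inequalities.
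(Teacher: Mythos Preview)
Your overall skeleton—expand $\|x_{k+1}-x^\star\|^2$, condition on $\xi_k$, and then control the cross term $-2\gamma_k\langle\nabla F(x_{k-\tau_k}),x_k-x^\star\rangle$—matches the paper. But three of your specific choices differ from the paper in ways that will prevent the constants from closing as stated.

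\textbf{The inner product is not split.} The paper does \emph{not} decompose $x_k-x^\star=(x_{k-\tau_k}-x^\star)+(x_k-x_{k-\tau_k})$ and treat the two pieces separately. Instead it adds two inequalities: co-coercivity (Nesterov, Thm.~2.1.5),
\[
F(x_{k-\tau_k})+\langle\nabla F(x_{k-\tau_k}),x^\star-x_{k-\tau_k}\rangle+\tfrac{1}{2L}\|\nabla F(x_{k-\tau_k})\|^2\le F^\star,
\]
and the descent lemma (Nesterov, Lem.~1.2.3) between $x_k$ and $x_{k-\tau_k}$. Summing them gives directly
\[
F(x_k)-F^\star\le\langle\nabla F(x_{k-\tau_k}),x_k-x^\star\rangle+\tfrac{L}{2}\|x_k-x_{k-\tau_k}\|^2-\tfrac{1}{2L}\|\nabla F(x_{k-\tau_k})\|^2.
\]
The crucial $-\tfrac{1}{2L}\|\nabla F(x_{k-\tau_k})\|^2$ (which after multiplication by $2\gamma_k$ becomes the $-\tfrac{1}{\gamma_kL}W_k$ you were looking for) comes from co-coercivity, not from plain convexity. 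Your Young-inequality treatment of the ``second piece'' would only \emph{add} positive $\|\nabla F(x_{k-\tau_k})\|^2$ terms, so it cannot supply this negative contribution; and if you already use the descent lemma for the conversion $F(x_{k-\tau_k})\to F(x_k)$, the inner product $\langle\nabla F(x_{k-\tau_k}),x_k-x_{k-\tau_k}\rangle$ appearing there exactly cancels your ``second piece''—so the Young step is redundant and only loosens the bound.

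\textbf{The residual is $\|x_k-x_{k-\tau_k}\|^2$, bounded via a martingale argument.} After the combination above, the only delay-induced residual is $\gamma_kL\,\mathbb{E}\|x_k-x_{k-\tau_k}\|^2$. The paper bounds this by citing Lemma~15 of \cite{Koloskova:2022}: write each increment as $\gamma_\ell\nabla F(x_{\ell-\tau_\ell})+\gamma_\ell\eta_\ell$ with $\eta_\ell$ a martingale-difference noise, split by $\|a+b\|^2\le2\|a\|^2+2\|b\|^2$, use Cauchy--Schwarz on the deterministic part and orthogonality of the $\eta_\ell$ on the noise part. This is what yields $2\tau_k\sum_\ell W_\ell+2\sigma^2\sum_\ell\gamma_\ell^2$ with a $\tau_k$-\emph{free} coefficient on $\sigma^2$. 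Your term-by-term Young bound would pick up an extra factor of $\tau_k$ on the $\sigma^2$ sum and would not match the stated inequality.

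\textbf{Strongly convex case.} The paper does not apply strong convexity at $x_{k-\tau_k}$ at all; it first proves the convex inequality, and then simply uses $F(x_k)-F^\star\ge\tfrac{\mu}{2}\|x_k-x^\star\|^2$ to get $X_k\ge\gamma_k\mu V_k$ and subtracts. This sidesteps entirely the $\|x_{k-\tau_k}-x^\star\|^2$ vs.\ $\|x_k-x^\star\|^2$ discrepancy you flagged.
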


\begin{proof}
See Appendix~\ref{Sec:SGD Lemma1 Proof}.
\end{proof}

Using this iterate relationship, Lemma~\ref{Sec:Main Results Lemma4} allows us to derive convergence
guarantees for the asynchronous~\textsc{Sgd} method. Our proof is based on the observation that, even if some gradients with large delays are discarded, gradients with delays less than or equal to the threshold parameter $\tau_{\textup{th}}$ are sufficient to ensure the convergence of the algorithm.

\begin{theorem}
\label{Sec:SGD Theorem1}
Suppose Assumptions~\ref{Sec:SGD Assumption1}--~\ref{Sec:SGD Assumption3} hold. Let $\tau_{\textup{th}}$ be a non-negative constant such that
\begin{align*}
\min\left\{2\tau_{\textup{ave}},\;\tau_{\textup{max}}\right\}\leq \tau_{\textup{th}},   
\end{align*}
where $\tau_{\textup{ave}}$ is the average delay, i.e.,
\begin{align*}
\tau_{\textup{ave}} = \frac{1}{K+1} \sum_{k=0}^K \tau_k.    
\end{align*}
Using the delay-dependent step-size rule~\eqref{SGD_Adaptive_Stepsize} in Algorithm~\ref{Algorithm:SGD} yields the following results:
\begin{enumerate}
\item For convex $F$ and any $x^\star \in \mathcal{X}^\star$, choosing
\begin{align*}
\gamma \in \left(0, \frac{1}{L(\tau_{\textup{th}}\sqrt{2}+1)}\right],     
\end{align*}
guarantees that
\begin{align*}
\mathbb{E}\bigl[F(\bar{x}_K) - F^\star \bigr] \leq \frac{\|x_{0} - x^\star\|^2 }{\gamma (K + 1)} + (1 + \sqrt{2})\gamma \sigma^2, \quad K\in\mathbb{N}_0,
\end{align*}
where $\bar{x}_K$ is the weighted average of $x_0,\ldots, x_K$ defined as
\begin{align*}
\bar{x}_K = \frac{1}{\sum_{k=0}^K \gamma_k} \sum_{k=0}^K \gamma_k x_k.   
\end{align*}
\item For strongly convex $F$,
choosing
\begin{align*}
\gamma \in \left(0, \frac{1}{L(2\tau_{\textup{th}}+1)}\right], 
\end{align*}
ensures that
\begin{align*}
\mathbb{E}\bigl[\|x_{K} - x^\star\|^2\bigr] \leq \textup{exp}\left(-\frac{\gamma \mu K}{2}\right) \|x_{0} - x^\star\|^2 + \frac{2\gamma\sigma^2}{\mu},\quad K\in \mathbb{N}.
\end{align*}
\end{enumerate}
\end{theorem}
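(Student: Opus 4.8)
The plan is to feed the two iterate relations of Lemma~\ref{Sec:SGD Lemma1} into Lemma~\ref{Sec:Main Results Lemma4} — the first part for the convex case and the second part for the strongly convex case — after a light repackaging into the template~\eqref{Sec:Main Results Lemma4 Eq0}, and then to post‑process the resulting bounds with Jensen's inequality (convex case) or a geometric‑series estimate of the $Q_k$ factors (strongly convex case). Throughout, the crucial structural observation is that iterations with $\gamma_k=0$ are harmless: there $X_k=W_k=0$ and $V_{k+1}=V_k$, so~\eqref{Sec:Main Results Lemma4 Eq0} holds trivially with $q_k=1$, $p_k=0$, $e_k=0$ and $\tau_k$ reset to $0$. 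Consequently every index that actually contributes has $\tau_k\le\tau_{\textup{th}}$, so $\tau=\tau_{\textup{th}}$ is a legitimate uniform delay bound in Lemma~\ref{Sec:Main Results Lemma4}, and the decomposition of $\bar x_K$ over active indices behaves well.

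For an active step ($\gamma_k=\gamma$) I would move the $W_k$ term into the stale sum: extending $\sum_{\ell=(k-\tau_k)_+}^{k-1}W_\ell$ to $\sum_{\ell=(k-\tau_k)_+}^{k}W_\ell$ (legal since $W_\ell\ge 0$) rewrites the right‑hand side of Lemma~\ref{Sec:SGD Lemma1} in the form~\eqref{Sec:Main Results Lemma4 Eq0} with $p_k=2\gamma\tau_k L$ and $r_k=p_k+\tfrac{1}{\gamma L}-1$, and with $e_k\le\gamma^2\sigma^2(1+2\gamma L\tau_{\textup{th}})$ after using $\gamma_\ell^2=\gamma\gamma_\ell$ and bounding the number of stale indices by $\tau_{\textup{th}}$. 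For the convex case the hypothesis $\sum_{j=0}^{\tau_{\textup{th}}}p_{k+j}\le r_k$ of the first part of Lemma~\ref{Sec:Main Results Lemma4} then reduces, after bounding each $p_{k+j}$ with $j\ge 1$ by $2\gamma\tau_{\textup{th}}L$, to $2\gamma L\tau_{\textup{th}}^2\le\tfrac{1}{\gamma L}-1$, i.e. $2\gamma^2L^2\tau_{\textup{th}}^2+\gamma L\le 1$ — exactly what $\gamma\le\tfrac{1}{L(\sqrt2\,\tau_{\textup{th}}+1)}$ guarantees. For the strongly convex case I would instead use the constant $p=2\gamma\tau_{\textup{th}}L$, $r=p+\tfrac{1}{\gamma L}-1$, and $q_k=1-\gamma_k\mu\ge q:=1-\gamma\mu\in(0,1)$; the two conditions $2\tau_{\textup{th}}+1\le\tfrac{1}{1-q}=\tfrac{1}{\gamma\mu}$ and $2\tau_{\textup{th}}+1\le\tfrac{r}{p}$ then follow from $\gamma\le\tfrac{1}{L(2\tau_{\textup{th}}+1)}$ together with $\mu\le L$.

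With the repackaging in place, in the convex case the first part of Lemma~\ref{Sec:Main Results Lemma4} yields $\sum_{k=0}^{K}X_k\le V_0+\sum_{k=0}^{K}e_k$; summing the $e_k$‑bound (again swapping the order of the double sum and using $\gamma_\ell^2=\gamma\gamma_\ell$) gives $\sum_k e_k\le(1+2\gamma L\tau_{\textup{th}})\sigma^2\gamma\sum_k\gamma_k\le(1+\sqrt2)\sigma^2\gamma\sum_k\gamma_k$, while Jensen's inequality applied to $\bar x_K$ (whose weights are deterministic, the delays being fixed) turns this into $\mathbb{E}[F(\bar x_K)-F^\star]\le\bigl(\|x_0-x^\star\|^2+(1+\sqrt2)\sigma^2\gamma\sum_k\gamma_k\bigr)/\bigl(2\sum_k\gamma_k\bigr)$. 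In the strongly convex case the second part of Lemma~\ref{Sec:Main Results Lemma4} gives $V_K\le Q_K\bigl(V_0+\sum_{k<K}e_k/Q_{k+1}\bigr)$ with $Q_K=\prod_{\ell<K}q_\ell=(1-\gamma\mu)^{N_K}$, where $N_K$ counts the active steps before $K$; expanding $Q_K/Q_{k+1}$ as a product of the $q_\ell$ over the active indices in $(k,K)$ collapses $\sum_{k<K}e_k\,Q_K/Q_{k+1}$ into a geometric series bounded by $\tfrac{\gamma\sigma^2(1+2\gamma L\tau_{\textup{th}})}{\mu}<\tfrac{2\gamma\sigma^2}{\mu}$.

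The remaining ingredient — and the step I expect to need the most care — is the counting argument that converts $\sum_k\gamma_k$ and $N_K$ into the explicit quantities in the statement, by showing that $\tau_{\textup{th}}\ge\min\{2\tau_{\textup{ave}},\tau_{\max}\}$ forces at least half of $\gamma_0,\dots,\gamma_K$ to equal $\gamma$. If $\tau_{\textup{th}}\ge\tau_{\max}$ no step is dropped; otherwise $\tau_{\textup{th}}\ge 2\tau_{\textup{ave}}$ and a Markov‑type bound, $\bigl|\{k\le K:\tau_k>\tau_{\textup{th}}\}\bigr|\,\tau_{\textup{th}}\le\sum_{k=0}^{K}\tau_k=(K+1)\tau_{\textup{ave}}\le(K+1)\tau_{\textup{th}}/2$, yields $\sum_{k=0}^{K}\gamma_k\ge\gamma(K+1)/2$ and $N_K\ge K/2$ (up to a harmless off‑by‑one in the range over which the average is taken). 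Feeding $\sum_k\gamma_k\ge\gamma(K+1)/2$ into the convex estimate gives $\mathbb{E}[F(\bar x_K)-F^\star]\le\tfrac{\|x_0-x^\star\|^2}{\gamma(K+1)}+(1+\sqrt2)\gamma\sigma^2$, and $Q_K\le(1-\gamma\mu)^{K/2}\le\exp(-\gamma\mu K/2)$ gives the strongly convex bound. The delicate points are therefore (i) this averaging/counting step, (ii) verifying that the zero‑step‑size iterations slot into Lemma~\ref{Sec:Main Results Lemma4} without damage and that the coefficients $(p_k,r_k)$, resp. $(p,r,q)$, meet its hypotheses exactly under the advertised step‑size ranges; the manipulation of the $\sigma^2$‑terms and the two convexity/smoothness inequalities are then routine.
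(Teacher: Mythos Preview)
Your proposal is correct and follows essentially the same route as the paper: both feed Lemma~\ref{Sec:SGD Lemma1} into Lemma~\ref{Sec:Main Results Lemma4} (part~1 for the convex case, part~2 for the strongly convex case), verify the step-size conditions via the elementary inequality $\alpha^2\beta^2+\alpha\le 1$ whenever $\alpha(\beta+1)\le 1$, and finish with the same Markov-type counting argument showing $|\{k\le K:\tau_k\le\tau_{\textup{th}}\}|\ge(K+1)/2$. The only cosmetic difference is that the paper replaces $\tau_k$ by $\tau_{\textup{th}}$ \emph{before} invoking Lemma~\ref{Sec:Main Results Lemma4}, so it works with constant $p=2\gamma\tau_{\textup{th}}L$ and constant $e=(1+2\gamma L\tau_{\textup{th}})\gamma^2\sigma^2$ throughout, whereas you keep $p_k$ iteration-dependent in the convex case; both choices lead to the identical condition $2\gamma^2L^2\tau_{\textup{th}}^2+\gamma L\le 1$ and the same final bounds.
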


\begin{proof}
See Appendix~\ref{Sec:SGD Theorem1 Proof}.
\end{proof}

According to Theorem~\ref{Sec:SGD Theorem1}, the asynchronous \textsc{Sgd} method converges to a ball around the optimum at a rate of $\mathcal{O}(1/K)$ for convex functions and at a linear rate for strongly convex functions. The choice of $\gamma$ in the delay-adaptive step-size rule~\eqref{SGD_Adaptive_Stepsize} affects both the convergence rate and the residual error: decreasing $\gamma$ reduces the residual error, but it also results in a slower convergence. Next, we present a possible strategy for
selecting $\gamma$ to achieve an $\epsilon$-optimal solution.

\begin{theorem}
\label{Sec:SGD Theorem2}
Suppose that Assumptions~\ref{Sec:SGD Assumption1}--~\ref{Sec:SGD Assumption3} hold. Given any $\epsilon>0$, the following statements hold: 
\begin{enumerate}
\item For convex $F$, choosing  
\begin{align*}
\gamma = \min \left\{ \frac{1}{L(\tau_{\textup{th}}\sqrt{2}+1)},\; \frac{\epsilon}{2(\sqrt{2}+1)\sigma^2}\right\}  
\end{align*}
implies $\mathbb{E}\bigl[F(\bar{x}_K) - F^\star \bigr] \leq \epsilon$ after
\begin{align*}
\mathcal{O}\left(\frac{L(\tau_{\textup{th}}+1)}{\epsilon} + \frac{ \sigma^2}{\epsilon^2}\right).   
\end{align*}
iterations. 
\item For strongly convex $F$, setting $\gamma$ to
\begin{align*}
\gamma = \min \left\{ \frac{1}{L(2\tau_{\textup{th}}+1)},\;\frac{\epsilon \mu}{4\sigma^2}\right\}, 
\end{align*}
guarantees that $\mathbb{E}\bigl[\|x_{K} - x^\star\|^2\bigr]\leq \epsilon$ after 
\begin{align*}
\mathcal{O}\left(\left(\frac{L(\tau_{\textup{th}}+1)}{\mu} + \frac{\sigma^2}{\epsilon\mu^2}\right)\log\left(\frac{1}{\epsilon}\right)\right).  
\end{align*}
iterations.
\end{enumerate}
\end{theorem}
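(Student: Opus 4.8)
The plan is to treat Theorem~\ref{Sec:SGD Theorem2} as a direct consequence of Theorem~\ref{Sec:SGD Theorem1}: feed the prescribed step-sizes into the bounds of Theorem~\ref{Sec:SGD Theorem1} and split the right-hand side into an ``optimization'' term and a ``noise/residual'' term, forcing each to be at most $\epsilon/2$. For the convex case, I would first observe that $\gamma = \min\{1/(L(\tau_{\textup{th}}\sqrt{2}+1)),\, \epsilon/(2(\sqrt{2}+1)\sigma^2)\}$ lies in $(0,\, 1/(L(\tau_{\textup{th}}\sqrt{2}+1))]$, so the first part of Theorem~\ref{Sec:SGD Theorem1} applies and gives
\[
\mathbb{E}\bigl[F(\bar{x}_K) - F^\star\bigr] \leq \frac{\|x_0 - x^\star\|^2}{\gamma(K+1)} + (1+\sqrt{2})\gamma\sigma^2 .
\]
Since $\gamma \leq \epsilon/(2(\sqrt{2}+1)\sigma^2)$, the second term is already at most $\epsilon/2$, so it only remains to make $\|x_0-x^\star\|^2/(\gamma(K+1)) \leq \epsilon/2$, i.e.\ to take $K+1 \geq 2\|x_0-x^\star\|^2/(\gamma\epsilon)$. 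Bounding $1/\gamma = \max\{L(\tau_{\textup{th}}\sqrt{2}+1),\, 2(\sqrt{2}+1)\sigma^2/\epsilon\} \leq L(\tau_{\textup{th}}\sqrt{2}+1) + 2(\sqrt{2}+1)\sigma^2/\epsilon$ then yields the claimed iteration count $\mathcal{O}\bigl(L(\tau_{\textup{th}}+1)/\epsilon + \sigma^2/\epsilon^2\bigr)$, with $\|x_0-x^\star\|^2$ absorbed into the hidden constant.

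For the strongly convex case I would proceed in exactly the same way with the second part of Theorem~\ref{Sec:SGD Theorem1}: the choice $\gamma = \min\{1/(L(2\tau_{\textup{th}}+1)),\, \epsilon\mu/(4\sigma^2)\}$ is again admissible, so
\[
\mathbb{E}\bigl[\|x_K - x^\star\|^2\bigr] \leq \exp\!\left(-\frac{\gamma\mu K}{2}\right)\|x_0-x^\star\|^2 + \frac{2\gamma\sigma^2}{\mu} .
\]
Here $\gamma \leq \epsilon\mu/(4\sigma^2)$ forces the residual term $2\gamma\sigma^2/\mu$ below $\epsilon/2$, and requiring $\exp(-\gamma\mu K/2)\|x_0-x^\star\|^2 \leq \epsilon/2$ gives $K \geq (2/(\gamma\mu))\log(2\|x_0-x^\star\|^2/\epsilon)$. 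Substituting $1/\gamma \leq L(2\tau_{\textup{th}}+1) + 4\sigma^2/(\epsilon\mu)$ produces the complexity $\mathcal{O}\bigl((L(\tau_{\textup{th}}+1)/\mu + \sigma^2/(\epsilon\mu^2))\log(1/\epsilon)\bigr)$.

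Everything here is mechanical once Theorem~\ref{Sec:SGD Theorem1} is in hand, so I do not anticipate a genuine obstacle. The only point needing a little care is the case analysis hidden in the $\min$ defining $\gamma$: one must check that in the ``noise-dominated'' branch ($\gamma$ proportional to $\epsilon$, resp.\ to $\epsilon\mu/\sigma^2$) the step-size is still admissible for Theorem~\ref{Sec:SGD Theorem1}, and that in the ``delay-dominated'' branch ($\gamma = 1/(L(\tau_{\textup{th}}\sqrt{2}+1))$, resp.\ $1/(L(2\tau_{\textup{th}}+1))$) the residual term is automatically at most $\epsilon/2$; both are immediate from $\gamma$ being a minimum of the two quantities. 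A final bookkeeping remark is that $\|x_0-x^\star\|^2$, for an arbitrary fixed $x^\star \in \mathcal{X}^\star$, is a problem constant and hence disappears into the $\mathcal{O}(\cdot)$ notation.
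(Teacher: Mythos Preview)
Your proposal is correct and follows essentially the same route as the paper: both invoke Theorem~\ref{Sec:SGD Theorem1}, use the second branch of the $\min$ in $\gamma$ to force the residual term below $\epsilon/2$, and then solve for the number of iterations needed to drive the first term below $\epsilon/2$. The only cosmetic difference is that the paper writes $1/\gamma$ as a $\max$ of the two candidates whereas you bound it by their sum, which of course yields the same $\mathcal{O}(\cdot)$ complexity.
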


\begin{proof}
See Appendix~\ref{Sec:SGD Theorem2 Proof}.
\end{proof}

According to Theorem~\ref{Sec:SGD Theorem2}, the value of $\gamma$ needed to achieve an $\epsilon$-optimal solution depends on two terms. The role of the first term is to decrease the effects of asynchrony on the convergence of the algorithm while the second term aims to control the noise from stochastic gradient information. The first term is monotonically decreasing in the threshold parameter~$\tau_{\textup{th}}$. At the same time, the guaranteed bounds on the iteration complexity are monotonically increasing in $\tau_{\textup{th}}$. Therefore, choosing a smaller value for $\tau_{\textup{th}}$ allows the algorithm to use larger step-sizes by discarding more gradients with long delays, and results in fewer iterations required to attain an $\epsilon$-optimal solution. 

When the threshold parameter $\tau_{\textup{th}}$ is set to $2\tau_{\textup{ave}}$, Theorem~\ref{Sec:SGD Theorem2} shows that the asynchronous \textsc{Sgd} method with delay-adaptive step-sizes achieves an iteration complexity of
\begin{align*}
\mathcal{O}\left(\frac{L(\tau_{\textup{ave}}+1)}{\epsilon} + \frac{ \sigma^2}{\epsilon^2}\right)  
\end{align*}
for convex objective functions. The average delay $\tau_{\textup{ave}}$ only appears in the first term and its negative impact on the convergence rate is asymptotically negligible once
\begin{align}
\tau_{\textup{ave}} \leq \mathcal{O}\left(\frac{ \sigma^2}{L\epsilon}\right).
\label{SGD_Speedup}
\end{align}
In this case, the asynchronous \textsc{Sgd} method has the same asymptotic iteration complexity as the serial \textsc{Sgd} method. Our asymptotic rate $\mathcal{O}(\sigma^2/\epsilon^2)$ for asynchronous \textsc{Sgd} is consistent with the results in \cite{Arjevani:2020} and \cite{Karimireddy:2020}. However, their requirement to guarantee such rate is
\begin{align*}
\tau_{\max} \leq \mathcal{O}\left(\frac{ \sigma^2}{L\epsilon}\right),
\end{align*}
which is more conservative than~\eqref{SGD_Speedup} since necessarily $\tau_{\textup{ave}}\leq\tau_{\max}$. 

According to Remark~\ref{Remark_sgd_averagedelay}, we can select the threshold parameter as $\tau_{\textup{th}} = 2(M-1)$. By doing so, it follows from Theorem~\ref{Sec:SGD Theorem2} that for convex problems, we have 
\begin{align*}
\mathcal{O}\left(\frac{L M}{\epsilon} + \frac{ \sigma^2}{\epsilon^2}\right).  
\end{align*}
By comparing this bound with~\eqref{SGD_minibatch_rate}, we can see that the asynchronous \textsc{Sgd} method with delay-adaptive step-sizes attains the same iteration complexity as the mini-batch \textsc{Sgd} method. Since each update of mini-batch \textsc{Sgd} takes the time needed by the slowest worker, its expected per-iteration time is slower than that of asynchronous \textsc{Sgd}. This means that asynchronous \textsc{Sgd} outperforms mini-batch \textsc{Sgd} regardless of the delays in the gradients, as previously proved by~\cite{Koloskova:2022} and \cite{Mishchenko:2022}.

By setting $\tau_{\textup{th}} = 2\tau_{\textup{ave}}$, the iteration complexity for Algorithm~\ref{Algorithm:SGD} is 
\begin{align*}
\mathcal{O}\left(\left(\frac{L(\tau_{\textup{ave}}+1)}{\mu} + \frac{\sigma^2}{\epsilon\mu^2}\right)\log\left(\frac{1}{\epsilon}\right)\right)
\end{align*}
for strongly convex objective functions. In the case that $\tau_{\textup{ave}}=0$, the preceding guaranteed bound reduces to the one obtained in~\cite{Gower:19} for the serial \textsc{Sgd} method. We can see that the average delay $\tau_{\textup{ave}}$ can be as large as 
$\mathcal{O}\left({ \sigma^2}/{(\mu L\epsilon)}\right)$ without affecting the asymptotic rate of Algorithm~\ref{Algorithm:SGD}. 

An alternative approach to tune the step-size in Algorithm~\ref{Algorithm:SGD} is to use $\gamma$ that depends on a prior knowledge of the number of iterations to be performed~\citep{Lan:12, Mishchenko:2022}. Assume that the number of
iterations is fixed in advance, say equal to $\mathcal{K}$. The following result provides the convergence rate of Algorithm~\ref{Algorithm:SGD} in terms of $\mathcal{K}$.

\begin{theorem}
\label{Sec:SGD Theorem3}
Let Assumptions~\ref{Sec:SGD Assumption1}--~\ref{Sec:SGD Assumption3} hold. Given $\mathcal{K}\in \mathbb{N}$, the following statements hold: 
\begin{enumerate}
\item For convex $F$ and
\begin{align*}
\gamma = \min \left\{ \frac{1}{L(\tau_{\textup{th}}\sqrt{2}+1)},\; \frac{\|x_{0} - x^\star\|}{\sigma\sqrt{\sqrt{2} + 1}\sqrt{\mathcal{K} + 1}}\right\},  
\end{align*}
Algorithm~\ref{Algorithm:SGD} ensures
\begin{align*}
\mathbb{E}\bigl[F(\bar{x}_{\mathcal{K}}) - F^\star \bigr]= \mathcal{O}\left(\frac{L(\tau_{\textup{th}}+1)}{\mathcal{K}} + \frac{\sigma}{\sqrt{\mathcal{K}}}\right).    
\end{align*}
\item For strongly convex $F$ and
\begin{align*}
\gamma = \min \left\{ \frac{1}{L(2\tau_{\textup{th}}+1)},\;\frac{2}{\mu \mathcal{K}}\log\left(1 + \frac{\mu^2\mathcal{K}\|x_{0} - x^\star\|^2}{4\sigma^2}\right)\right\}, 
\end{align*}
Algorithm~\ref{Algorithm:SGD}  guarantees that  
\begin{align*}
\mathbb{E}\bigl[\|x_{\mathcal{K}} - x^\star\|^2\bigr] = \tilde{\mathcal{O}}\left(\exp\left(-\frac{\mu \mathcal{K}}{L(\tau_{\textup{th}}+1)}\right)+ \frac{\sigma^2}{\mathcal{K}}\right).  
\end{align*}
\end{enumerate}
\end{theorem}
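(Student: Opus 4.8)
The plan is to derive both parts as direct corollaries of Theorem~\ref{Sec:SGD Theorem1} by substituting the prescribed $\mathcal{K}$-dependent step-sizes and balancing the two resulting error terms; no new Lyapunov recursion is needed. First I would check admissibility: in part~1 the step-size is the minimum of $1/(L(\tau_{\textup{th}}\sqrt{2}+1))$ and a positive quantity, hence lies in $\bigl(0,\,1/(L(\tau_{\textup{th}}\sqrt{2}+1))\bigr]$, and in part~2 it lies in $\bigl(0,\,1/(L(2\tau_{\textup{th}}+1))\bigr]$, so the hypotheses of Theorem~\ref{Sec:SGD Theorem1} hold. With $K=\mathcal{K}$ this gives
\[
\mathbb{E}\bigl[F(\bar{x}_{\mathcal{K}}) - F^\star\bigr] \leq \frac{\|x_0 - x^\star\|^2}{\gamma(\mathcal{K}+1)} + (1+\sqrt{2})\gamma\sigma^2
\]
in the convex case and
\[
\mathbb{E}\bigl[\|x_{\mathcal{K}} - x^\star\|^2\bigr] \leq \exp\Bigl(-\frac{\gamma\mu\mathcal{K}}{2}\Bigr)\|x_0 - x^\star\|^2 + \frac{2\gamma\sigma^2}{\mu}
\]
in the strongly convex case. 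Everything after this is bookkeeping.

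For the convex case, write $\gamma_1 = 1/(L(\tau_{\textup{th}}\sqrt{2}+1))$ and $\gamma_2 = \|x_0-x^\star\|/(\sigma\sqrt{(\sqrt{2}+1)(\mathcal{K}+1)})$, so $\gamma = \min\{\gamma_1,\gamma_2\}$. Applying $1/\min\{\gamma_1,\gamma_2\}\leq 1/\gamma_1 + 1/\gamma_2$ bounds the first term by $L(\tau_{\textup{th}}\sqrt{2}+1)\|x_0-x^\star\|^2/(\mathcal{K}+1)$ plus $\sigma\sqrt{(\sqrt{2}+1)/(\mathcal{K}+1)}\,\|x_0-x^\star\|$, while $\gamma\leq\gamma_2$ bounds the noise term by $(1+\sqrt{2})\sigma\|x_0-x^\star\|/\sqrt{(\sqrt{2}+1)(\mathcal{K}+1)}$. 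Collecting these yields $\mathcal{O}\bigl(L(\tau_{\textup{th}}+1)/\mathcal{K} + \sigma/\sqrt{\mathcal{K}}\bigr)$, with the dependence on $\|x_0-x^\star\|$ absorbed into the $\mathcal{O}$-constant.

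For the strongly convex case I would split on which argument attains the minimum. If $\gamma$ equals the logarithmic term $(2/(\mu\mathcal{K}))\log\bigl(1 + \mu^2\mathcal{K}\|x_0-x^\star\|^2/(4\sigma^2)\bigr)$, then the exponential collapses: $\exp(-\gamma\mu\mathcal{K}/2) = \bigl(1 + \mu^2\mathcal{K}\|x_0-x^\star\|^2/(4\sigma^2)\bigr)^{-1}\leq 4\sigma^2/(\mu^2\mathcal{K}\|x_0-x^\star\|^2)$, so the first term is at most $4\sigma^2/(\mu^2\mathcal{K})$, and the noise term equals $(4\sigma^2/(\mu^2\mathcal{K}))\log(\cdots)=\tilde{\mathcal{O}}(\sigma^2/\mathcal{K})$; both are $\tilde{\mathcal{O}}(\sigma^2/\mathcal{K})$. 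If instead $\gamma = 1/(L(2\tau_{\textup{th}}+1)) \leq (2/(\mu\mathcal{K}))\log(\cdots)$, the first term is $\exp\bigl(-\mu\mathcal{K}/(2L(2\tau_{\textup{th}}+1))\bigr)\|x_0-x^\star\|^2$, which has the claimed exponential form since $2\tau_{\textup{th}}+1\leq 2(\tau_{\textup{th}}+1)$, and the noise term satisfies $2\gamma\sigma^2/\mu \leq (4\sigma^2/(\mu^2\mathcal{K}))\log(\cdots) = \tilde{\mathcal{O}}(\sigma^2/\mathcal{K})$ because $\gamma$ is here the smaller of the two. Taking the maximum over the two regimes gives $\tilde{\mathcal{O}}\bigl(\exp(-\mu\mathcal{K}/(L(\tau_{\textup{th}}+1))) + \sigma^2/\mathcal{K}\bigr)$.

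I do not expect a genuine obstacle: the substantive content — that zeroing the step-size on iterations with delay above $\tau_{\textup{th}}$ still produces a recursion of the form~\eqref{Sec:Main Results Eq4} with enough admissible updates to force convergence — is already packaged into Lemma~\ref{Sec:SGD Lemma1} and Theorem~\ref{Sec:SGD Theorem1}. The only care needed is keeping the two branches of each minimum consistent, and noting that the constant inside the exponent of the strongly convex bound is altered by a bounded factor under $2\tau_{\textup{th}}+1\leq 2(\tau_{\textup{th}}+1)$, which is immaterial for a $\tilde{\mathcal{O}}$ rate statement.
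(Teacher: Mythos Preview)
Your proposal is correct and follows essentially the same route as the paper: both arguments invoke Theorem~\ref{Sec:SGD Theorem1} to obtain the two-term bounds and then balance $\gamma$ against $\mathcal{K}$, with the paper delegating the convex bookkeeping to~\cite{Lan:12} and the strongly convex case to Lemma~1 of~\cite{Karimireddy:2021}, whereas you carry out the $1/\min\{\gamma_1,\gamma_2\}\leq 1/\gamma_1+1/\gamma_2$ split and the two-regime case analysis explicitly. Your remark about the bounded-factor change in the exponent is consistent with the paper's own derivation, which starts from $\exp(-\gamma\mu\mathcal{K}/2)$ with $\gamma\leq 1/(L(2\tau_{\textup{th}}+1))$ and hence incurs the same looseness inside the $\tilde{\mathcal{O}}$.
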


\begin{proof}
See Appendix~\ref{Sec:SGD Theorem3 Proof}.
\end{proof}

For strongly convex problems, the previously best known convergence rate under constant step-sizes was given in~\cite{Karimireddy:2020} and is expressed as 
\begin{align*}
\tilde{\mathcal{O}}\left(\tau_{\max}\exp\left(-\frac{\mu \mathcal{K}}{L(\tau_{\max}+1)}\right)+ \frac{\sigma^2}{\mathcal{K}}\right).
\end{align*}
However, when we set $\tau_{\textup{th}} = \tau_{\max}$, Theorem~\ref{Sec:SGD Theorem3} tells us that the asynchronous \textsc{Sgd} method with a constant step-size converges at a rate of
\begin{align*}
\tilde{\mathcal{O}}\left(\exp\left(-\frac{\mu \mathcal{K}}{L(\tau_{\max}+1)}\right)+ \frac{\sigma^2}{\mathcal{K}}\right).
\end{align*}
Therefore, we obtain a sharper guaranteed convergence rate than the one presented in \cite{Karimireddy:2020}. Note also that our theoretical guarantee matches with
the bound in~\cite{Arjevani:2020} derived for strongly convex quadratic functions. 

By selecting the threshold parameter as $\tau_{\textup{th}} = 2(M-1)$, it follows from Theorem~\ref{Sec:SGD Theorem3} that the the asynchronous \textsc{Sgd} method achieves a convergence rate of 
\begin{align*}
\mathbb{E}\bigl[F(\bar{x}_{\mathcal{K}}) - F^\star \bigr]= \mathcal{O}\left(\frac{L M}{\mathcal{K}} + \frac{\sigma}{\sqrt{\mathcal{K}}}\right) 
\end{align*}
for convex functions and 
\begin{align*}
\tilde{\mathcal{O}}\left(\exp\left(-\frac{\mu \mathcal{K}}{L M}\right)+ \frac{\sigma^2}{\mathcal{K}}\right)
\end{align*}
for strongly convex functions, which match the rates provided by~\cite{Mishchenko:2022}. 

\subsubsection{Comparison of Our Analysis with Prior Work}

\cite{Arjevani:2020} analyzed the asynchronous \textsc{Sgd} method for convex quadratic functions and provided the first tight convergence rate. However, their proof technique is based on the generating function method, which is only applicable to quadratic functions. To extend these results to general smooth functions, \cite{Karimireddy:2020} employed the perturbed iterate technique, which was originally developed by~\cite{Mania:17} to analyze asynchronous optimization algorithms. In Theorem~\ref{Sec:SGD Theorem3}, we establish a convergence rate for convex problems that matches the one derived by \cite{Karimireddy:2020} while also providing a tighter guarantee for strongly convex problems. Next, we show how to recover the convergence rate result for non-convex objective functions given in \cite{Karimireddy:2020}. For $L$-smooth (possibly non-convex) functions $F$, the proof of Theorem~$16$ in \cite{Karimireddy:2020} shows that the iterates generated by asynchronous \textsc{Sgd} satisfy
\begin{align}
 \mathbb{E}\bigl[F(\tilde{x}_{k+1}) - F^\star]  \leq \mathbb{E}\bigl[F(\tilde{x}_{k}) - F^\star] & +  \gamma^3 L^2\tau_{\max}  \sum_{\ell = (k - \tau_{\max})_+}^{k-1} \mathbb{E}\bigl[\|\nabla F(x_{\ell})\|^2\bigr] \nonumber\\
&\hspace{-2.5cm} - \frac{\gamma \left(1 - L\gamma\right)}{2}\mathbb{E}\bigl[\|\nabla F(x_{k})\|^2\bigr] + \frac{(3\gamma^3 L^2 \tau_{\max} + \gamma^2L)\sigma^2}{2},
\label{SGD_Stich}
\end{align}
where $\{\tilde{x}_k\}$ with $\tilde{x}_0 = x_0$ is a virtual sequence defined in the proof. Let 
\begin{align*}
V_k = \mathbb{E}\bigl[F(\tilde{x}_{k}) - F^\star\bigr],\; W_k = \mathbb{E}\bigl[\|\nabla F(x_{k})\|^2\bigr],\; X_k = \frac{\gamma}{4} \mathbb{E}\bigl[\|\nabla F(x_{k})\|^2\bigr],
\end{align*}
and $e = {(3\gamma^3 L^2 \tau_{\max} + \gamma^2L)\sigma^2}/{2}$. 
Then, we can rewrite~\eqref{SGD_Stich} as
\begin{align*}
X_k + V_{k+1} \leq V_k & +  \gamma^3 L^2\tau_{\max}  \sum_{\ell = (k - \tau_{\max})_+}^{k-1} W_\ell - \frac{\gamma \left(1 - 2L\gamma\right)}{4}W_k + e,
\end{align*}
which is on the form~\eqref{Sec:Main Results Eq4}. Thus, according to Lemma~\ref{Sec:Main Results Lemma4}, if 
\begin{align*}
\gamma^3 L^2 \tau_{\max}^2 \leq  \frac{\gamma\left(1 - 2 L\gamma\right)}{4},   
\end{align*}
then $\sum_{k=0}^{K} X_{k} \leq V_0 + (K+1)e$ for $K\in\mathbb{N}$. This implies that
\begin{align*}
\sum_{k=0}^{K} \mathbb{E}\bigl[\|\nabla F(x_{k})\|^2\bigr] \leq \frac{4\bigl(F({x}_{0}) - F^\star\bigr)}{\gamma} + {5\gamma L(K+1)\sigma^2},\quad \textup{for}\; \gamma \in \left(0, \frac{1}{2L (\tau_{\max} + 1)}\right].
\end{align*}
By setting 
\begin{align*}
\gamma = \min \left\{ \frac{1}{2L (\tau_{\max} + 1)}, \sqrt{\frac{4\bigl(F({x}_{0}) - F^\star\bigr)}{5 L(K+1) \sigma^2}}\right\},
\end{align*}
which minimizes the right-hand side of the inequality above with respect to $\gamma$, we obtain 
\begin{align*}
\min_{0\leq k \leq K}\left\{ \mathbb{E}\bigl[\|\nabla F(x_{k})\|^2\bigr]\right\} = \mathcal{O}\left(\frac{L(\tau_{\max} + 1)\bigl(F({x}_{0}) - F^\star\bigr)}{K+1} + \sigma \sqrt{\frac{L \bigl(F({x}_{0}) - F^\star\bigr)}{K+1}}\right).
\end{align*}
This convergence guarantee matches the bound derived in~\cite{Karimireddy:2020}.

Also focusing on non-convex problems, \cite{Koloskova:2022} proposed a delay-adaptive step-size rule that eliminates the dependence of the convergence rate on $\tau_{\max}$. According to the proof of Theorem~$8$ in \cite{Koloskova:2022}, the following inequality holds for the sequence generated by asynchronous \textsc{Sgd}:
\begin{align*}
X_k + V_{k+1} &\leq V_k +  \gamma_k\tau_k L^2 \sum_{\ell = (k - \tau_k)_+}^{k-1} W_{\ell} - \frac{1}{4\gamma_k} W_k + \left(\gamma_k^2 L + \gamma_k L^2 \sum_{\ell = (k - \tau_k)_+}^{k-1} \gamma_{\ell}^2\right)\sigma^2,
\end{align*}
where $V_k = \mathbb{E}\bigl[F(x_k) - F^\star\bigr]$, $X_k = \frac{\gamma_k}{2} \mathbb{E}\bigl[\|\nabla F(x_{k})\|^2\bigr]$, and $W_k= \gamma_k^2\mathbb{E}\bigl[\bigl\| \nabla F(x_{k-\tau_k}) \bigr\|^2\bigr]$. Similar to the proof for Theorem~\ref{Sec:SGD Theorem1}, it follows from Lemma~\ref{Sec:Main Results Eq4} that
\begin{align*}
\frac{1}{\sum_{k=0}^{K} \gamma_k}\sum_{k=0}^{K} \gamma_k \mathbb{E}\bigl[\|\nabla F(x_{k})\|^2\bigr] \leq \frac{4\bigl(F({x}_{0}) - F^\star\bigr)}{\gamma(K+1)} + {6\gamma L\sigma^2},\quad \textup{for}\; \gamma \in \left(0, \frac{1}{2L (\tau_{\textup{th}} + 1)}\right].
\end{align*}
By setting $\tau_{\textup{th}} = 2\tau_{\textup{ave}}$, the above convergence rate leads to an iteration complexity of
\begin{align*}
\mathcal{O}\left(\frac{\tau_{\textup{ave}}}{\epsilon} + \frac{\sigma^2}{\epsilon^2}\right).
\end{align*}
This bound is the same as the one provided in~\cite{Koloskova:2022}. Alternatively, choosing $\tau_{\textup{th}} = 2(M-1)$ results in 
\begin{align*}
\mathcal{O}\left(\frac{M}{\epsilon} + \frac{\sigma^2}{\epsilon^2}\right),
\end{align*}
which matches the iteration complexity derived in \cite{Mishchenko:2022} using the perturbed iterate technique. While we recover the convergence guarantees presented in \cite{Mishchenko:2022}, recurrences of the form~\eqref{Sec:Main Results Eq3} and~\eqref{Sec:Main Results Eq4} did not appear in their proofs. The reason is that the analysis in \cite{Mishchenko:2022} relies on upper bounds for the delayed terms that are expressed in terms of $M$, rather than on the delay values themselves.

%
%
\subsection{Asynchronous Coordinate Update Methods}
\label{Sec:ARock}
Many popular optimization algorithms, such as gradient descent, projected gradient descent, proximal-point, forward-backward splitting, Douglas-Rachford splitting, and the alternating direction method of multipliers~(\textsc{Admm}), can be viewed as special instances of the Krasnosel'ski\u{i}–Mann (\textsc{Km}) method for operators~\citep{Peng:16}. The only way they differ is in their choice of operator. The \textsc{Km} method has the form
\begin{align}
x_{k+1} = (1-\gamma)x_k + \gamma T(x_k),\quad k\in\mathbb{N}_0,
\label{KM-iteration1}
\end{align}
where $\gamma \in(0,1)$ is the step-size, and $T:\mathbb{R}^d \rightarrow \mathbb{R}^d$ is a nonexpansive operator, i.e.,
\begin{align*}
\| T(x) - T(y) \|\leq \|x - y \|, \quad \forall x,y\in \mathbb{R}^d.
\end{align*}
Any vector $x^\star \in\mathbb{R}^d$ satisfying $T(x^\star) = x^\star$ is called a \emph{fixed point} of $T$ and the \textsc{Km} method can be viewed as an algorithm for finding such a fixed point. Let $S = I_d - T$, where $I_d$ is the identity operator on $\mathbb{R}^d$. Then,~\eqref{KM-iteration1} can be rewritten as
\begin{align}
x_{k+1} = x_k - \gamma S(x_k),\quad k\in\mathbb{N}_0.
\label{KM-iteration2}
\end{align}
This shows that the \textsc{Km} method can be interpreted as a taking a step of length $\gamma$ in the opposite direction of~$S$ evaluated at the current iterate. For example, the gradient descent method for minimization of a $L$-smooth convex function $f:\mathbb{R}^d \rightarrow \mathbb{R}$ can be formulated as the \textsc{Km} method with 
\begin{align*}
S &= \frac{2}{L} \nabla f,\quad \textup{or equivalently}, \quad T = I_d - \frac{2}{L} \nabla f.
\end{align*}
We represent $x$ as $x=\bigl([x]_1,\ldots, [x]_m\bigr)$, where $[x]_i \in \mathbb{R}^{d_i}$ with $d_1$, $\ldots, d_m$ being positive integer numbers satisfying $d = d_1 + \ldots + d_m$. We denote by $S_i:\mathbb{R}^{d}\rightarrow \mathbb{R}^{d_i}$ the $i$th block of $S$, so $S(x)=\bigl(S_1(x),\ldots, S_m(x)\bigr)$. Thus, we can rewrite~\eqref{KM-iteration2} as
\begin{align*}
\left[x_{k+1}\right]_i = \left[x_k\right]_i - \gamma S_i(x_k),\quad i\in [m].
\end{align*}

ARock is an algorithmic framework for parallelizing the \textsc{Km} method in an asynchronous fashion~\citep{Peng:16}. In ARock, multiple agents (machines, processors, or cores) have access to a shared memory for storing the vector~$x$, and are able to read and update $x$ simultaneously without using locks. Conceptually, ARock lets each agent repeat the following steps:
\begin{itemize}
\item Read $x$ from the shared memory without locks and save it in a local cache as $\widehat{x}$;
\item Choose an index $i\in [m]$ uniformly at random and use $\widehat{x}$ to compute $S_i(\widehat{x})$;
\item Update component $i$ of the shared $x$ via
\begin{align*}
[x]_i \leftarrow [x]_i - \gamma S_i(\widehat{x}).
\end{align*}
\end{itemize}
Since the agents are being run independently without synchronization, while one agent is busy reading~$x$ and evaluating $S_i(x)$, other agents may repeatedly update the value stored in the shared memory. Therefore, the value $\widehat{x}$ read from the shared memory may differ from the value of $x$ to which the update is made later. In other words, each agent can update the shared memory using possibly out-of-date information.

\begin{algorithm}[H]
\textbf{Input:}   $x_0 \in \mathbb{R}^d$, step-size $\gamma>0$, number of iterations $K\in \mathbb{N}$
  \begin{algorithmic}[1]
  \State Initialize global counter $k\leftarrow 0$
     \While {$k < K$}
      \State Read each position of shared memory denoted by $\widehat{x}_k$
      \State Sample $i_k$ from $\{1,\ldots, m\}$ with equal probability $\frac{1}{m}$
      \State Set $[x_{k+1}]_{i_k} \leftarrow [x_{k}]_{i_k} - \gamma S_{i_k}(\widehat{x}_k)$
     \State Set $k \leftarrow k+1$
    \EndWhile
  \end{algorithmic}
  \caption{ARock}
  \label{Algorithm:ARock}
\end{algorithm}

Algorithm~\ref{Algorithm:ARock} describes ARock. We assume that the write operation on line~$5$ is atomic, in the sense that the updated result will successfully appear in the shared memory by the end of the execution. In practice, this assumption can be enforced through compare-and-swap operations~\citep{RRW+:11}. Updating a scalar is a single atomic instruction on most modern hardware. Thus, the atomic write assumption in Algorithm~\ref{Algorithm:ARock} naturally holds when each block is a single scalar, i.e., $m=d$ and $d_i =1$ for all $i \in [m]$.

We define one \textit{iteration} of ARock as a modification on any block of $x$ in the shared memory. A global counter $k$ is introduced to track the total number of iterations so that $x_k$ is the value of $x$ in the shared memory after $k$ iterations. We use $i_k$ to denote the component that is updated at iteration $k$, and $\widehat{x}_k$ for value of $x$ that is used in the calculation of $S_{i_k}(\cdot)$. Since every iteration changes one block of the shared memory and all writes are required to be atomic, 
\begin{align}
x_k &= \widehat{x}_k + \sum_{j\in J_k}(x_{j+1} - x_{j}),
\label{ArockRead}
\end{align}
where $J_k\subseteq\{0, \ldots, k-1\}$. The set $J_k$ contains the indices of the iterations during which a block of shared memory is updated between the time when $\hat{x}_k$ is read and the time when $x_k$ is written. Thus, the sum in~\eqref{ArockRead} represents all updates of the shared vector that have occurred from the time that agent $i_k$ begins reading $\widehat{x}_k$ from memory until it finishes evaluating $S_{i_k}( \widehat{x}_k)$ and writes the result to memory (see~\cite{Peng:16} for a more detailed discussion). Note that individual blocks of the shared vector may be updated by multiple agents while it is in the process of being read by another agent. Therefore, the components of $\widehat{x}_k$ may have different ages and the vector $\widehat{x}_k$ may never actually exist in the shared memory during the execution of Algorithm~\ref{Algorithm:ARock}. This phenomenon is known as \textit{inconsistent read}~\citep{Liu:15}.

To analyze the convergence of ARock, we need to make a few assumptions similar to~\cite{Peng:16}.

\begin{assumption}
\label{Assumption:ARock}
For Algorithm~\ref{Algorithm:ARock}, the following properties hold:
\begin{enumerate}
\item \textbf{(Pseudo-contractivity)} The operator $T = I_d -S$ has a fixed point and is pseudo-contractive with respect to the Euclidean norm with contraction modulus $c$. That is, there exists $c\in (0,1)$ such that
\begin{align*}
\| T(x) - x^\star\| \leq c \| x - x^\star\|,\quad \forall x\in\mathbb{R}^d.
\end{align*}
\item \textbf{(Bounded delay)} There is a non-negative integer $\tau$ such that
\begin{align*}
(k - \tau)_+ \leq \min\bigl\{j\;|\; j \in J_k \bigr\}, \quad \forall k\in\mathbb{N}_0.
\end{align*}
\item \textbf{(Independence)} Random variables $i_k$ for $k=0,\ldots, K$ are independent of each other.
\end{enumerate}
\end{assumption} 
Under Assumption~\ref{Assumption:ARock}.1, the serial \textsc{Km} iteration~\eqref{KM-iteration2} with $\gamma\in(0,1]$ converges to the fixed point $x^\star$ at a linear rate~\cite[Chapter $3$]{BeT:89}. Assumption~\ref{Assumption:ARock}.2 guarantees that during any update cycle of an agent, the vector~$x$ in the shared memory is updated at most $\tau$ times by other agents. Therefore, no component of $\widehat{x}_k$ is older than $\tau$ for all $k\in\mathbb{N}_0$. The value of $\tau$ is an indicator of the degree of asynchrony in ARock. In practice, $\tau$ will depend on the number of agents involved in the computation. If all agents are working at the same rate, we would expect $\tau$ to be a multiple of the number of agents~\citep{Wright:15}. Similar to~\cite{Peng:16} and most results on asynchronous stochastic optimization for shared memory architecture (e.g., \cite{RRW+:11,Liu:14,Liu:15}), we assume that the age of the components of $\widehat{x}_k$ is independent of the block~$i_k$ being updated at iteration $k$. However, this may not hold in practice if, for example, some blocks are more expensive to update than others~\citep{Leblond:18}. This independence assumption can be relaxed using several techniques such as \textit{before read labeling}~\citep{Mania:17}, \textit{after read labeling}~\citep{Leblond:18}, \textit{single coordinate consistent ordering}~\citep{Cheung:20}, and \textit{probabilistic models of asynchrony}~\citep{Sun:17,Cannelli:20}.

The following result shows that in the analysis of ARock, we can establish iterate relationships on the form~\eqref{Sec:Main Results Eq4}.

\begin{lemma}
\label{Sec:ARock Lemma1}
Suppose that Assumption~\ref{Assumption:ARock} holds. Let $V_k = \mathbb{E}\bigl[\|x_{k} - x^\star\|^2\bigr] $ and $W_k =  \mathbb{E}\bigl[\|S(\widehat{x}_k)\|^2\bigr]$ for $k\in\mathbb{N}_0$, where the expectation is over all choices of index $i_k$ up to step $k$. Then, the iterates generated by Algorithm~\ref{Algorithm:ARock} satisfy
\begin{align*}
V_{k+1} &\leq \left(1 - \frac{\gamma(1-c^2)}{m\left(1+\gamma \left(\frac{\tau}{m} + \sqrt{\frac{\tau}{m}}\right)\right)} \right) V_k  + \frac{2\gamma^2}{m\tau}\left(\frac{\tau}{m} + \sqrt{\frac{\tau}{m}}\right)\sum_{\ell = (k - \tau)_+}^{k-1}W_ {\ell} \\
&\hspace{5mm} - \frac{\gamma}{m}\left(1 - \gamma\left(1 + \frac{\tau}{m} + \sqrt{\frac{\tau}{m}}\right)\right) W_k.
\end{align*}
\end{lemma}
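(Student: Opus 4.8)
The plan is to start from the coordinate update rule and peel off the effect of asynchrony through the structural identity~\eqref{ArockRead}. Since only block $i_k$ is modified at iteration $k$, we have $\|x_{k+1}-x^\star\|^2 = \|x_k - x^\star\|^2 - 2\gamma\langle S_{i_k}(\widehat{x}_k),\,[x_k - x^\star]_{i_k}\rangle + \gamma^2\|S_{i_k}(\widehat{x}_k)\|^2$. Taking the conditional expectation over $i_k$, which is uniform on $[m]$ and independent of the past by Assumption~\ref{Assumption:ARock}.3, produces a factor $1/m$ and replaces block quantities by the full operator: $\mathbb{E}_{i_k}\|x_{k+1}-x^\star\|^2 = \|x_k - x^\star\|^2 - \tfrac{2\gamma}{m}\langle S(\widehat{x}_k),\, x_k - x^\star\rangle + \tfrac{\gamma^2}{m}\|S(\widehat{x}_k)\|^2$. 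The same single-coordinate computation gives the bookkeeping identity $\mathbb{E}_{i_j}\|x_{j+1}-x_j\|^2 = \tfrac{\gamma^2}{m}\|S(\widehat{x}_j)\|^2$, so that after taking total expectations every squared increment $\|x_{j+1}-x_j\|^2$ in the window $\{(k-\tau)_+,\ldots,k-1\}$ turns into $\tfrac{\gamma^2}{m}W_j$; this is where the sum $\sum_{\ell=(k-\tau)_+}^{k-1}W_\ell$ will originate.

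Next I would split the inner product using $x_k = \widehat{x}_k + \sum_{j\in J_k}(x_{j+1}-x_j)$ from~\eqref{ArockRead}, writing $\langle S(\widehat{x}_k),\, x_k - x^\star\rangle = \langle S(\widehat{x}_k),\, \widehat{x}_k - x^\star\rangle + \sum_{j\in J_k}\langle S(\widehat{x}_k),\, x_{j+1}-x_j\rangle$. For the first term I would use pseudo-contractivity: since $T = I_d - S$, expanding $\|T(\widehat{x}_k) - x^\star\|^2 = \|\widehat{x}_k - x^\star\|^2 - 2\langle S(\widehat{x}_k),\, \widehat{x}_k - x^\star\rangle + \|S(\widehat{x}_k)\|^2$ and invoking Assumption~\ref{Assumption:ARock}.1 yields $2\langle S(\widehat{x}_k),\, \widehat{x}_k - x^\star\rangle \geq (1-c^2)\|\widehat{x}_k - x^\star\|^2 + \|S(\widehat{x}_k)\|^2$. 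The cross terms are handled by Young's inequality, $|\langle S(\widehat{x}_k),\, x_{j+1}-x_j\rangle| \leq \tfrac{\beta}{2}\|S(\widehat{x}_k)\|^2 + \tfrac{1}{2\beta}\|x_{j+1}-x_j\|^2$, and since $|J_k| \leq \tau$ with $J_k \subseteq \{(k-\tau)_+,\ldots,k-1\}$ by Assumption~\ref{Assumption:ARock}.2 this contributes at most $\tfrac{\beta\tau}{2}\|S(\widehat{x}_k)\|^2$ plus $\tfrac{1}{2\beta}\sum_{j\in J_k}\|x_{j+1}-x_j\|^2$.

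To land on $V_k = \mathbb{E}\|x_k - x^\star\|^2$ rather than $\mathbb{E}\|\widehat{x}_k - x^\star\|^2$, I would convert the contraction term via $\|\widehat{x}_k - x^\star\|^2 \geq \tfrac{1}{1+\delta}\|x_k - x^\star\|^2 - \tfrac{1}{\delta}\|x_k - \widehat{x}_k\|^2$ together with the Cauchy--Schwarz bound $\|x_k - \widehat{x}_k\|^2 \leq \tau\sum_{j\in J_k}\|x_{j+1}-x_j\|^2$. Taking total expectations, substituting $\mathbb{E}\|x_{j+1}-x_j\|^2 = \tfrac{\gamma^2}{m}W_j$, and collecting all leftover $\|S(\widehat{x}_k)\|^2$-terms into the single negative coefficient of $W_k$ then produces an inequality of the form~\eqref{Sec:Main Results Eq4} with $X_k \equiv 0$ and $e_k \equiv 0$. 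The genuinely delicate step — and the main obstacle — is the final optimization of the free parameters $\beta$ and $\delta$: the natural balance turns out to scale like $\sqrt{\tau/m}$, and one must verify that with this choice the contraction factor, the coefficient of $\sum_\ell W_\ell$, and the coefficient of $W_k$ collapse to exactly the stated expressions built from $\tfrac{\tau}{m} + \sqrt{\tfrac{\tau}{m}}$. No individual estimate above is hard; it is this parameter bookkeeping that carries the weight of the lemma.
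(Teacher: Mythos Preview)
Your structural outline matches the paper's proof almost step for step: expand the single-coordinate update, average over $i_k$, split the inner product via $x_k = \widehat{x}_k + \sum_{j\in J_k}(x_{j+1}-x_j)$, use pseudo-contractivity on $\langle S(\widehat{x}_k),\,\widehat{x}_k-x^\star\rangle$, and convert $\|\widehat{x}_k-x^\star\|^2$ to $\|x_k-x^\star\|^2$ with a Young-type inequality. The one substantive gap is your bound on the asynchrony error.

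You propose to control $\|x_k-\widehat{x}_k\|^2$ by the convexity estimate $\|x_k-\widehat{x}_k\|^2\le \tau\sum_{j\in J_k}\|x_{j+1}-x_j\|^2$ and then invoke $\mathbb{E}\|x_{j+1}-x_j\|^2=\tfrac{\gamma^2}{m}W_j$, yielding $\mathbb{E}\|x_k-\widehat{x}_k\|^2\le \tfrac{\gamma^2\tau}{m}\sum_\ell W_\ell$. This is too crude to produce the stated constants. The paper instead decomposes each increment as $\gamma U_{i_j}S_{i_j}(\widehat{x}_j)=\tfrac{\gamma}{m}S(\widehat{x}_j)+\gamma\bigl(U_{i_j}S_{i_j}(\widehat{x}_j)-\tfrac{1}{m}S(\widehat{x}_j)\bigr)$ and uses that the zero-mean fluctuations are uncorrelated across $j$ (since the $i_j$ are independent). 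This variance--bias splitting gives the much tighter bound
\[
\mathbb{E}\|x_k-\widehat{x}_k\|^2\;\le\;\frac{\gamma^2(\sqrt{m}+\sqrt{\tau})^2}{m^2}\sum_{\ell}W_\ell
\;=\;\frac{\gamma^2}{m}\Bigl(1+\sqrt{\tfrac{\tau}{m}}\Bigr)^{\!2}\sum_\ell W_\ell,
\]
and an analogous refinement is used on the cross term $\mathbb{E}\langle\widehat{x}_k-x_k,\,S(\widehat{x}_k)\rangle$. It is precisely this $(\sqrt{m}+\sqrt{\tau})^2/m^2$ scaling that, after choosing $\delta=\gamma\bigl(\tfrac{\tau}{m}+\sqrt{\tfrac{\tau}{m}}\bigr)$, collapses all three coefficients to the stated expressions in $\tfrac{\tau}{m}+\sqrt{\tfrac{\tau}{m}}$. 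With your naive bound the resulting sum coefficient is larger than the target by a factor $\tau^2/\bigl(m(\tfrac{\tau}{m}+\sqrt{\tfrac{\tau}{m}})^2\bigr)$, which is of order $\tau$ when $\tau\ll m$; no choice of $\beta,\delta$ can repair this while simultaneously matching the contraction factor and the $W_k$ coefficient. So the ``parameter bookkeeping'' you flag as the main obstacle is not the issue---the missing idea is the martingale-difference treatment of $\sum_{j\in J_k}(x_{j+1}-x_j)$.
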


\begin{proof}
See Appendix~\ref{Sec:ARock Lemma1 Proof}.
\end{proof}

Using this iterate relationship, the sequence result in Lemma~\ref{Sec:Main Results Lemma4} allows us to derive new convergence guarantees for ARock. 

\begin{theorem}
\label{Theorem:ARock}
Suppose that Assumption~\ref{Assumption:ARock} holds and that the step-size $\gamma$ is set to  
\begin{align*}
\gamma = \frac{h}{1+5\left(\frac{\tau}{m} + \sqrt{\frac{\tau}{m}}\right)}
\end{align*}
with $h\in (0,1]$. Then, the iterates generated by Algorithm~\ref{Algorithm:ARock} satisfy
\begin{align}
\mathbb{E}\bigl[\|x_{k} - x^\star\|^2\bigr]  \leq \left(1 - \frac{h(1-c^2)}{m\left(1+6 \left(\frac{\tau}{m} + \sqrt{\frac{\tau}{m}}\right)\right)} \right)^k \|x_{0} - x^\star\|^2
\label{ARock:Theorem1}
\end{align}
for all $k\in\mathbb{N}_0$. Moreover, the algorithm reaches an accuracy of  $\mathbb{E}\bigl[\|x_{k} - x^\star\|^2\bigr]\leq \epsilon$ after
\begin{align}
k \geq K_{\epsilon} = \frac{m\left(1+6 \left(\frac{\tau}{m} + \sqrt{\frac{\tau}{m}}\right)\right)}{h(1-c^2)} \log\left(\frac{\|x_0 - x^\star\|^2}{\epsilon}\right)
\label{ARock:Theorem2}
\end{align}
iterations.
\end{theorem}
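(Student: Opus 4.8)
The plan is to recognize the iterate relationship of Lemma~\ref{Sec:ARock Lemma1} as an instance of~\eqref{Sec:Main Results Lemma4 Eq0} and then invoke the second statement of Lemma~\ref{Sec:Main Results Lemma4}. Write $\beta := \frac{\tau}{m}+\sqrt{\frac{\tau}{m}}$ for brevity, so that Lemma~\ref{Sec:ARock Lemma1} reads
\begin{align*}
V_{k+1} \leq q V_k + p \sum_{\ell=(k-\tau)_+}^{k-1} W_\ell - r W_k,\quad k\in\mathbb{N}_0,
\end{align*}
with $q = 1 - \frac{\gamma(1-c^2)}{m(1+\gamma\beta)}$, $p = \frac{2\gamma^2\beta}{m\tau}$ and $r = \frac{\gamma}{m}\bigl(1-\gamma(1+\beta)\bigr)$. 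Since $W_k\geq 0$, I would first regroup $\sum_{\ell=(k-\tau)_+}^{k-1}W_\ell - rW_k = \sum_{\ell=(k-\tau)_+}^{k}W_\ell - (r+p)W_k$, which turns the relation into exactly the form~\eqref{Sec:Main Results Lemma4 Eq0} with $X_k\equiv 0$, $e_k\equiv 0$, $q_k\equiv q$, $p_k\equiv p$ and $r_k\equiv r+p$. (When $\tau=0$ one has $p=0$ and an empty sum, so $V_{k+1}\leq qV_k$ directly; hence I assume $\tau\geq 1$ below, in which case $p>0$, and since $\gamma(1+\beta)=\frac{h(1+\beta)}{1+5\beta}<1$ for the chosen step-size, also $r>0$, so $r+p>0$.)

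Next I would verify the hypotheses of Lemma~\ref{Sec:Main Results Lemma4}(2). That $q\in(0,1)$ is immediate from $0<1-c^2<1$ and $\gamma>0$, and $q_k\equiv q$ trivially satisfies $q_k\geq q$. The delay condition $2\tau+1\leq\min\bigl\{\frac{1}{1-q},\,\frac{r+p}{p}\bigr\}$ splits into two parts. For the second, $\frac{r+p}{p}\geq 2\tau+1$ is equivalent to $2\tau p\leq r$, i.e.\ to $\frac{4\gamma^2\beta}{m}\leq\frac{\gamma}{m}\bigl(1-\gamma(1+\beta)\bigr)$, i.e.\ to $\gamma(1+5\beta)\leq 1$ — which holds with equality at $h=1$ and strictly for $h<1$ because of the very choice $\gamma = h/(1+5\beta)$. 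For the first part, since $1-c^2<1$ it suffices that $2\tau+1\leq\frac{m(1+\gamma\beta)}{\gamma}=\frac{m}{\gamma}+m\beta$; and $\frac{m}{\gamma}=\frac{m(1+5\beta)}{h}\geq m(1+5\beta)$, so $\frac{m}{\gamma}+m\beta\geq m+6m\beta = m+6\tau+6\sqrt{m\tau}\geq 1+2\tau$ because $m\geq 1$.

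With both conditions established, Lemma~\ref{Sec:Main Results Lemma4}(2) applies with $Q_k=\prod_{\ell=0}^{k-1}q = q^k$ and $e_k\equiv 0$, giving $V_{K+1}\leq q^{K+1}V_0$ for every $K\in\mathbb{N}_0$, i.e.\ $V_k\leq q^{k}\|x_0-x^\star\|^2$ for all $k\in\mathbb{N}_0$ (using $V_0=\|x_0-x^\star\|^2$). It then remains to replace $q$ by the rate in the statement: since $h\leq 1$,
\begin{align*}
1+\gamma\beta \;=\; \frac{1+(5+h)\beta}{1+5\beta}\;\leq\;\frac{1+6\beta}{1+5\beta},
\end{align*}
so $\frac{\gamma(1-c^2)}{m(1+\gamma\beta)}=\frac{1-c^2}{m}\cdot\frac{\gamma(1+5\beta)}{1+6\beta}\cdot\frac{1+5\beta}{1+\gamma\beta}\cdot\frac{1}{1+5\beta}\cdot(1+5\beta)\ldots$ — more cleanly, $\frac{\gamma}{1+\gamma\beta}\geq\frac{\gamma(1+5\beta)}{1+6\beta}=\frac{h}{1+6\beta}$, hence $q\leq 1-\frac{h(1-c^2)}{m(1+6\beta)}$, which is precisely~\eqref{ARock:Theorem1}. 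The iteration-complexity bound~\eqref{ARock:Theorem2} follows by imposing $\bigl(1-\tfrac{h(1-c^2)}{m(1+6\beta)}\bigr)^{k}\|x_0-x^\star\|^2\leq\epsilon$, taking logarithms, and using $\log(1-x)\leq -x$.

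The main obstacle is the bookkeeping in the delay condition: one must spot the regrouping $-rW_k\mapsto-(r+p)W_k$ that converts the sum running to $k-1$ into one running to $k$, and then track the constants carefully enough to see that the factor $5\beta$ in $\gamma$ is exactly what makes $\gamma(1+5\beta)\leq 1$, hence $2\tau p\leq r$, while the same bound simultaneously controls $\frac{1}{1-q}$; the remaining steps are routine algebra.
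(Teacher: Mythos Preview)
Your proof is correct and follows essentially the same approach as the paper's: regroup the sum in Lemma~\ref{Sec:ARock Lemma1} so that it runs up to $k$, identify the recurrence as an instance of Lemma~\ref{Sec:Main Results Lemma4}(2), verify the two halves of the condition $2\tau+1\leq\min\{1/(1-q),(r+p)/p\}$ using exactly the fact that $\gamma(1+5\beta)\leq 1$, and then upper-bound $q$ via $1+\gamma\beta\leq(1+6\beta)/(1+5\beta)$. Your separate treatment of $\tau=0$ (where $p$ is not strictly positive and Lemma~\ref{Sec:Main Results Lemma4}(2) does not formally apply) is a nice touch that the paper glosses over.
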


\begin{proof}
See Appendix~\ref{Sec:ARock Theorem1 Proof}.
\end{proof}

Theorem~\ref{Theorem:ARock} shows that for pseudo-contractive operators, ARock converges in expectation at a linear rate. To quantify how $\tau$ can affect the convergence of ARock we define
%
\begin{align*}
\Gamma = \frac{\tau}{m} + \sqrt{\frac{\tau}{m}}.
\end{align*}
Clearly, $\Gamma$ is monotonically increasing in $\tau$ and is equal to zero for $\tau =0$. The maximum allowable step-size and the linear rate of ARock depend on $\Gamma$. As $\Gamma$ tends to infinity, the maximum allowable step-size decreases and approaches zero while the convergence factor increases and approaches one. Therefore, ARock should take a smaller step-size for a larger $\Gamma$, which will lead to a slower convergence rate. Note that if $\Gamma \approx 0$, or equivalently, $\tau \ll m$, then ARock with step-size $\gamma \approx 1$ has the linear rate
\begin{align*}
\rho = 1- \frac{1-c^2}{m},
\end{align*}
which is precisely the rate for the serial stochastic \textsc{Km} method~\citep{Hannah:17}. As discussed before, $\tau$ is related to the number of agents used in the algorithm. Therefore, the number of agents can be of the order of $o(m)$ without appreciably degrading the convergence rate of ARock.

The serial \textsc{Km} method~\eqref{KM-iteration2} with constant step-size $\gamma_{\textsc{Km}} =h\in (0,1]$ needs 
\begin{align*}
k\geq\frac{1}{h(1-c^2)} \log\left(\frac{\|x_0 - x^\star\|^2}{\epsilon}\right)
\end{align*}
iterations to satisfy $\|x_{k} - x^\star\|^2\leq \epsilon$~\citep{Hannah:17}. Each iteration of~\eqref{KM-iteration2} requires computing all $m$ blocks of $S$ and updating the whole vector~$x$. Thus, the overall complexity of the serial \textsc{Km} method, in terms of the total number of blocks updated to find an  $\epsilon$-optimal solution, is
\begin{align*}
K_{\textsc{Km}} = \frac{m}{h(1-c^2)} \log\left(\frac{\|x_0 - x^\star\|^2}{\epsilon}\right).
\end{align*}
On the other hand, by Theorem~\ref{Theorem:ARock}, ARock with step-size $\gamma_{\textup{ARock}} = h/(1+5\Gamma)$ performs
\begin{align*}
K_{\textup{ARock}} = \frac{m\left(1+6 \Gamma\right)}{h(1-c^2)} \log\left(\frac{\|x_0 - x^\star\|^2}{\epsilon}\right)
\end{align*}
component updates to return an $\epsilon$-optimal solution in average. In the case that $\Gamma \approx 0$, we have $\gamma_{\textup{ARock}} \approx \gamma_{\textsc{Km}}$ and $K_{\textup{ARock}}\approx K_{\textsc{Km}} $. Hence, as long as $\tau$ is bounded by $o(m)$, ARock can use the same step-size as the serial  \textsc{Km} method and achieve the same iteration bound. Furthermore, since ARock runs on $p$ agents in parallel, updates can occur roughly $p$ times more frequently, leading to a near-linear speedup in the number of agents.

\subsubsection{Comparison of Our Analysis with Prior Work}
\cite{Peng:16} proposed ARock for finding fixed points of nonexpansive operators in an asynchronous parallel fashion. They proved that if $T$ is nonexpansive and $S=I -T$ is  quasi-$\mu$-strongly monotone\footnote{The operator $S:\mathbb{R}^d \rightarrow \mathbb{R}^d$ is quasi-$\mu$-strongly monotone if it satisfies $\langle x -y,\; Sx \rangle \geq \mu \|x -y\|^2$ for all $x\in\mathbb{R}^d$ and $y\in \{z\in\mathbb{R}^d\; |\; Sz = 0\}.$}, then ARock with step-size
\begin{align*}
\gamma = \frac{1}{1 + \mathcal{O}\left(\frac{\tau^2}{\sqrt{m}}\right)}
\end{align*}
converges linearly to a fixed point and achieves a linear speedup for $\tau\leq \mathcal{O}\left(m^{1/4}\right)$. However, we will show that using Lemma~\ref{Sec:Main Results Lemma4} in their proof allows to improve the linear speed condition from $\tau\leq \mathcal{O}\left(m^{1/4}\right)$ to $\tau\leq \mathcal{O}\left(m^{1/2}\right)$. According to the proof of Theorem $4$ in \cite{Peng:16}, the iterates generated by ARock satisfy
\begin{align*}
V_{k+1} \leq \left(1 - \frac{\gamma \mu}{ 2 m} \right) V_k  + \frac{\gamma^2}{m^2}\left(\gamma \mu \tau  +  \sqrt{m}\right)\sum_{\ell = (k - \tau)_+}^{k-1}W_ {\ell} 
 - \frac{\gamma^2}{m}\left(\frac{1}{2\gamma} - 1 - \frac{\tau}{\sqrt{m}} \right) W_k, 
\end{align*}
where $V_k = \mathbb{E}\bigl[\|x_{k} - x^\star\|^2 $ and $W_k =  \mathbb{E}\bigl[\|S(\widehat{x}_k)\|^2\bigr]$ for $k\in\mathbb{N}_0$. Since this inequality is on the form~\eqref{Sec:Main Results Eq4}, it follows from Lemma~\ref{Sec:Main Results Lemma4} that if
\begin{align*}
\frac{\gamma^2}{m^2}\left(\gamma \mu \tau + \sqrt{m}\right)(2\tau)\leq  \frac{\gamma^2}{m}\left(\frac{1}{2\gamma} - 1 - \frac{\tau}{\sqrt{m}} \right),   
\end{align*}
then $V_k$ converges at a linear rate. This implies that
\begin{align*}
\mathbb{E}\bigl[\|x_{k} - x^\star\|^2 \leq \left(1 - \frac{\gamma \mu}{ 2 m} \right)^k \|x_{0} - x^\star\|^2,\quad \textup{for}\; \gamma \in \left(0,\; \frac{1}{2\left(1 + \frac{\tau(\sqrt{\mu} + 3)}{\sqrt{m}}\right)}\right].
\end{align*}
Therefore, $\tau$ can be as large as $\mathcal{O}\left(m^{1/2}\right)$ without affecting the maximum allowable step-size. Hence, using Lemma~\ref{Sec:Main Results Lemma4} in the proof of Theorem $4$ in \cite{Peng:16} improves the upper bound on $\tau$ by a factor of $m^{1/4}$.

\cite{Hannah:17} improved the results in \cite{Peng:16} by showing that for pseudo-contractive operators,
\begin{align*}
\gamma = \frac{1}{1 + \mathcal{O}\left(\frac{\tau}{\sqrt{m}}\right)}
\end{align*}
guarantees the linear convergence of ARock and $\tau\leq \mathcal{O}\left(m^{1/2}\right)$ ensures a linear speedup. Compared to the result presented in~\cite{Hannah:17}, not only can Theorem~\ref{Theorem:ARock} provide a larger value for the maximal allowable step-size, but it also improves the requirement for the linear speedup property from $\tau \leq \mathcal{O}\left(m^{1/2}\right)$ to $\tau \leq o\left(m\right)$. The analysis in~\cite{Hannah:17} involves a recurrence of the form
\begin{align*}
V_{k+1} &\leq q V_k + p \sum_{\ell = (k-\tau_k)_+}^{k} W_{\ell} - r W_k,\quad k\in\mathbb{N}_0,
\end{align*}
with the same quantities for $V_k$ and $W_k$ as our analysis. However, our coefficients for $q$, $p$, and $r$ are different, since we employ tighter upper bounds on the delayed terms. It is, in this case, the use of Lemmas~\ref{Lemma1_ARock} and~\ref{Lemma2_ARock} 
to bound $\mathbb{E}_k\bigl[\|x_k - \widehat{x}_k\|^2\bigr]$ and $\mathbb{E}_k\bigl[\langle \widehat{x}_k - x_{k},  S(\widehat{x}_k)\bigr\rangle\bigr]$, respectively, 
that are the keys to our improved results.

\paragraph{A Special Case: Asynchronous Coordinate Descent Method.} 
We now present a special case of ARock, namely the asynchronous coordinate descent algorithm for minimizing a class of composite objective functions. Specifically, we consider the problem
\begin{align}
\underset{x\in \mathbb{R}^d}{\textup{minimize}}\quad P(x) :=F(x) + R(x),
\label{ARock:OptimizationProblem}
\end{align}
where $F:\mathbb{R}^d \rightarrow \mathbb{R}$ is $\mu$-strongly convex and $L$-smooth, and $R:\mathbb{R}^d \rightarrow \mathbb{R}$ is separable in all coordinates, i.e.,
\begin{align*}
R(x) = \sum_{i=1}^m r_i\bigl([x]_i\bigr).
\end{align*}
Here, each $r_i:\mathbb{R}^{d_i} \rightarrow \mathbb{R}\cup \{+\infty\}$, $i\in [m]$, is a closed, convex, and extended real-valued function. The best known examples of separable regularizers include $\ell_1$ norm, $\ell_2$ norm square, and the indicator function of box constraints~\citep{Fercoq:15}. The minimizer of Problem~\eqref{ARock:OptimizationProblem} is the unique fixed point of $T_{\textup{prox}}$ defined as
\begin{align*}
T_{\textup{prox}}(x) = \textup{prox}_{\frac{2}{\mu + L}R}\left(x -\frac{2}{\mu + L} \nabla F(x)\right).
\end{align*}
The operator $T_{\textup{prox}}$ is contractive with contraction modulus
\begin{align*}
c = \frac{Q - 1}{Q + 1},
\end{align*}
where $Q = L/\mu$~\citep{Bertsekas:15}. To solve~\eqref{ARock:OptimizationProblem}, we apply ARock to $S = I_d - T_{\textup{prox}}$. Then, the update rule of Algorithm~\ref{Algorithm:ARock} at the $k$th iteration becomes
\begin{align*}
[x_{k+1}]_{i_k} \leftarrow [x_{k}]_{i_k} - \gamma\left([\widehat{x}_{k}]_{i_k} - \textup{prox}_{\frac{2}{\mu + L}r_{i_k}}\left([\widehat{x}_{k}]_{i_k} -\frac{2}{\mu + L} \nabla_{i_k} F(\widehat{x}_k)\right)\right),
\end{align*}
where $\nabla_i F(x)$ denotes the partial gradient of $F$ with respect to $[x]_i$. According to Theorem~\ref{Theorem:ARock}, the iterates generated by ARock with step-size $\gamma=1/(1+5\Gamma)$ satisfy
\begin{align*}
\mathbb{E}\bigl[\|x_{k} - x^\star\|^2\bigr]  \leq \left(1 - \frac{4Q}{m\left(1+6 \Gamma\right)(Q+1)^2}\right)^k \|x_{0} - x^\star\|^2,\quad k\in\mathbb{N}_0.
\end{align*}
If $\Gamma \approx 0$, or equivalently, $\tau \ll m$, then ARock using $\gamma \approx 1$ converges linearly at a rate of
\begin{align}
\rho_{\textup{ARock}} = 1 - \frac{4Q}{m(Q+1)^2}.
\label{ARockRate}
\end{align}
Note that when $m=1$, the preceding linear rate reduces to
\begin{align*}
\rho_{\textup{GD}} = \left(\frac{Q-1}{Q+1}\right)^2,
\end{align*}
which is the best convergence rate for the gradient descent method applied to strongly convex optimization.

\begin{remark}
\cite{Liu:15} proposed an asynchronous coordinate descent algorithm for solving optimization problems of the form~\eqref{ARock:OptimizationProblem}. They proved that the linear speedup is achievable if $\tau\leq \mathcal{O}\left(m^{1/4}\right)$. For a special case of Problem~\eqref{ARock:OptimizationProblem} where $R(x)\equiv 0$,~\cite{Liu:14} showed that the asynchronous coordinate descent method can enjoy the linear speedup if $\tau\leq \mathcal{O}\left(m^{1/2}\right)$. In comparison with~\cite{Liu:15} and~\cite{Liu:14}, our requirement for the linear speedup property is $\tau\leq o\left(m\right)$ and, hence, allows a larger value for $\tau$.
Recently,~\cite{Cheung:20} analyzed convergence of the asynchronous coordinate descent method for composite objective functions without assuming independence  between $i_k$ and $\widehat{x}_k$. Their analysis guarantees the linear speedup for $\tau\leq \mathcal{O}\left(m^{1/2}\right)$.
\end{remark}

\begin{remark}
In Theorem~\ref{Theorem:ARock}, the linear convergence of ARock is given in terms of the expected quadratic distance from the iterates to the fixed point. Note however that the literature on coordinate descent algorithms usually establishes convergence results using coordinate-wise Lipschitz constants of the function $F$ (see, e.g.,~\cite{Liu:15, Liu:14, Nesterov:12}). This allows to provide larger step-sizes which can lead to potentially better convergence bounds, especially in terms of the function values $P(x_k) - P^\star$. 
\end{remark}

%
%
\subsection{A Lyapunov Approach to Analysis of Totally Asynchronous Iterations}
\label{Sec:ContractiveMapping}
Finally, we study iterations involving maximum norm pseudo-contractions under the general asynchronous model introduced by~\cite{BeT:89}, which allows for heterogeneous and time-varying communication delays and update rates. Such iterations arise, for example, in algorithms for the solution of certain classes of linear equations, optimization problems and variational inequalities~\citep{BeT:89, Moallemi:10, Hale:17}, distributed algorithms for averaging~\citep{Mehyar:07}, power control algorithms for wireless networks~\citep{Feyzmahdavian:12}, and reinforcement algorithms for solving discounted Markov decision processes~\citep{Zeng:20}. We will demonstrate how the convergence results in~\cite{BeT:89} for maximum norm contractions can be derived and extended using Lemmas~\ref{Sec:Main Results Lemma1}--\ref{Sec:Main Results Lemma3}. This allows to unify and expand the existing results for partially and totally asynchronous iterations.

We consider iterative algorithms on the form
\begin{align}
x_{k+1} =T(x_k),\quad k\in \mathbb{N}_0,
\label{Sec:ContractiveMapping Eq1}
\end{align}
where $T:\mathbb{R}^d\rightarrow \mathbb{R}^{d}$ is a continuous mapping. This iteration aims to find a fixed point of $T$, that is, a vector $x^\star \in\mathbb{R}^{d}$ satisfying $x^\star = T(x^\star)$. Similar to Subsection~\ref{Sec:ARock}, we decompose the space $\mathbb{R}^d$ as a Cartesian product of $m$ subspaces:
\begin{align*}
\mathbb{R}^d = \mathbb{R}^{d_1} \times \cdots \times \mathbb{R}^{d_m},\quad d = \sum_{i=1}^m d_i.
\end{align*}
Accordingly, we can partition any vector $x\in\mathbb{R}^d$ as $x=\bigl([x]_1,\ldots, [x]_m\bigr)$ with $[x]_i \in \mathbb{R}^{d_i}$, $i\in [m]$. We denote by $T_i:\mathbb{R}^{d}\rightarrow \mathbb{R}^{d_i}$ the $i$th component of $T$, so $T(x)=\bigl(T_1(x),\ldots, T_m(x)\bigr)$. Then, we rewrite~\eqref{Sec:ContractiveMapping Eq1} as
\begin{align}
\bigl[x_{k+1}\bigr]_i = T_i\bigl([x_k]_1,\ldots, [x_k]_m\bigr), \quad  i \in [m].
\label{Sec:ContractiveMapping Eq2}
\end{align}
This iteration can be viewed as a network of $m$~agents, each responsible for updating one of the $m$ blocks of $x$ so as to find a global fixed point of the operator~$T$.

Let us fix some norms $\|\cdot\|_i$ for the spaces $\mathbb{R}^{d_i}$. The \textit{block-maximum norm} $\|\cdot\|_{b,\infty}^w$ on $\mathbb{R}^d$ is defined as
\begin{align*}
\bigl\|x\bigr\|_{b,\infty}^w&=\max_{i\in[m]} \;w_i \bigl\| [x]_i \bigr\|_i,
\end{align*}
where $w_i$ are positive constants. Note that when $d_i = 1$ for each $i \in [m]$, the block-maximum norm reduces to the maximum norm
\begin{align*}
\|x\|_{\infty}^w&=\max_{i\in[m]} \;w_i \bigl|[x]_i \bigr|.
\end{align*}
The following definition introduces \textit{pseudo-contractive} mappings with respect to the block-maximum norm. 

\begin{definition}
A mapping~$T:\mathbb{R}^d\rightarrow \mathbb{R}^d$ is called a pseudo-contraction with respect to the block-maximum norm if it has a fixed-point $x^{\star}$ and the property
\begin{align*}
\|T(x)-x^{\star}\|_{b,\infty}^w \leq c\;\|x-x^{\star}\|_{b,\infty}^w,\quad\forall x\in \mathbb{R}^d,
\end{align*}
where $c$, called the \textit{contraction modulus} of $T$, is a constant belonging to $(0,1)$.
\end{definition}
Pseudo-contractions have at most one fixed point~\cite[Proposition 3.1.2]{BeT:89}. Note that we follow~\cite{BeT:89} and include the existence of a fixed point in the definition of pseudo-contractions. Examples of iterative algorithms that involve pseudo-contractions with respect to the block-maximum norm can be found in~\cite[Chapter 3]{BeT:89}. An important property of pseudo-contractions is that they have a unique fixed point, to which the iterates produced by~\eqref{Sec:ContractiveMapping Eq2} converge at a linear rate~\cite[Proposition~3.1.2]{BeT:89}. More precisely, using the Lyapunov function $V_k = \|x_k -x^{\star}\|_{b,\infty}^w$, one can show that the sequence $\{x_k\}$ generated by (\ref{Sec:ContractiveMapping Eq2}) with initial vector $x_0\in \mathbb{R}^d$ satisfies $V_{k+1} \leq c V_k$ for all $k\in\mathbb{N}_0$. This implies that 
\begin{align*}
\|x_k -x^{\star}\|_{b,\infty}^w \leq c^k  \|x_0 -x^{\star}\|_{b,\infty}^w,\quad k\in\mathbb{N}_0.
\end{align*}

The algorithm described by~\eqref{Sec:ContractiveMapping Eq2} is \textit{synchronous} in the sense that all agents update their states at the same time and have instantaneous access to the states of all other agents. Synchronous execution is possible if there are no communication failures in the network and if all agents operate in sync with a global clock. In practice, these requirements are hard to satisfy: local clocks in different agents tend to drift, global synchronization mechanisms are complex to implement and carry substantial execution overhead, and the communication latency between agents can be significant and unpredictable. Insisting on synchronous operation  in an inherently asynchronous environment forces agents to spend  a significant time idle, waiting for the slowest agents (perhaps due to lower processing power or higher workload per iteration) to complete its work.

In an \textit{asynchronous} implementation of the iteration~\eqref{Sec:ContractiveMapping Eq2}, each agent can update its state at its own pace, using possibly outdated information from the other agents. This leads to iterations on the form
\begin{eqnarray}
[x_{k+1}]_i =\begin{cases}
       T_i\bigl ([x_{s_{i1, k}}]_1,\ldots, [x_{s_{i m, k}}]_m\bigr ), & k\in \mathcal{K}_i,\\
       [x_k]_i, & k\not\in \mathcal{K}_i.
	\end{cases}
\label{Sec:ContractiveMapping Eq3}
\end{eqnarray}
Here, $\mathcal{K}_i$ is the set of times when agent~$i$ executes an update, and $s_{i j,k}$ is the time at which the most recent version of $[x]_j$ available to agent $i$ at time $k$ was computed. The sets $\mathcal{K}_i$ need not be known to all the agents. Thus, there is no requirement for a shared global clock. Since the agents use only values computed in the previous iterations, $s_{i j,k}\leq k$ for all $k\in\mathbb{N}_0$. We can view the value
\begin{align*}
\tau_{i j, k}:= k - s_{i j, k}
\end{align*}
as the communication delay from agent~$j$ to agent~$i$ at time~$k$.  Clearly, the synchronous iteration~\eqref{Sec:ContractiveMapping Eq2} is a special case of~\eqref{Sec:ContractiveMapping Eq3} where $\mathcal{K}_i=\mathbb{N}_0$ and $\tau_{i j, k}=0$ for all $i,j \in [m]$ and all $k\in \mathbb{N}_0$.

Based on the assumptions on communication delays and update rates,~\cite{BeT:89} introduced a classification of asynchronous algorithms as either \textit{totally asynchronous} or \textit{partially asynchronous}. The totally asynchronous model is characterized by the following assumption. 
\begin{assumption}[\textbf{Total Asynchronism}]
\label{Sec:ContractiveMapping Assumption1}
For the asynchronous iteration~\eqref{Sec:ContractiveMapping Eq3}, the following properties hold:
\begin{enumerate}
\item The sets $\mathcal{K}_i$ are infinite subsets of $\mathbb{N}_0$ for each $i \in [m]$.
\item $\lim_{k\to \infty} {s}_{i j, k} = +\infty$ for all $i,j \in [m]$.
\end{enumerate}
\end{assumption}

Assumption~\ref{Sec:ContractiveMapping Assumption1}.1 guarantees that no agent ceases to execute its update while Assumption~\ref{Sec:ContractiveMapping Assumption1}.2 guarantees that outdated information about the agent updates is eventually purged from the computation (compare the discussion after Lemma~\ref{Sec:Main Results Lemma2}). Under total asynchronism, the communication delays $\tau_{i j, k}$ can become unbounded as $k$ increases. This is the main difference with partially asynchronous iterations, where delays are bounded; in particular, the following assumption holds.
\begin{assumption}[\textbf{Partial Asynchronism}]
\label{Sec:ContractiveMapping Assumption2}
For the asynchronous iteration~\eqref{Sec:ContractiveMapping Eq3}, there exist non-negative integers $B$ and $D$ such that the following conditions hold: 
\begin{enumerate}
\item At least one of the elements of the set $\{k, k+1,\ldots, k + B\}$ belongs to $\mathcal{K}_i$ for each $i \in [m]$ and all $k\in\mathbb{N}_0$.
\item $  k - D \leq s_{i j, k}\leq k$ for all $i,j \in [m]$ and all $k \in \mathcal{K}_i$.
\item $s_{i i, k} = k$ for all $i \in [m]$ and $k\in \mathcal{K}_i$.
\end{enumerate}
\end{assumption}

Assumptions~\ref{Sec:ContractiveMapping Assumption2}.1 and~\ref{Sec:ContractiveMapping Assumption2}.2 ensure that the time interval between updates executed by each agent and the communication delays are bounded by $B$ and $D$, respectively. This means that no agent waits an arbitrarily long time to compute or to receive a message from another agent. Assumption~\ref{Sec:ContractiveMapping Assumption2}.3 states that agent~$i$ always uses the latest version of its own component $[x]_i$. Note that when $B = D = 0$, the asynchronous iteration~\eqref{Sec:ContractiveMapping Eq3} under partial asynchronism reduces to the synchronous iteration~\eqref{Sec:ContractiveMapping Eq2}. 

We will now present a uniform analysis of the  iteration~\eqref{Sec:ContractiveMapping Eq3} involving block-maximum norm pseudo-contractions under both partial and total asynchronism, and study its convergence rate under different assumptions on the communication delays and update rates. To this end, we introduce $\tau_k$ to represent the maximum age of the outdated information being used to update blocks at global time $k\in\mathbb{N}_0$. Specifically, we define $\tau_k$ as 
\begin{align}
\tau_k := k - \min_{i \in [m]}\min_{j \in [m]} s_{i j, t_i(k)},
\label{Sec:ContractiveMapping Eq4}
\end{align}
where $t_i(k)$ is the most recent update time of agent $i$ at $k\in\mathbb{N}_0$, i.e.,
\begin{align*}
t_i(k) = \max \bigl \{\kappa\; | \; \kappa\leq k \; \land \; \kappa\in \mathcal{K}_i\bigr \}.
\end{align*}
In this way, if $k\in \mathcal{K}_i$ then $t_i(k) = k$, and if $k\in (\kappa^-, \kappa^+)$ for two consecutive elements $\kappa^-$ and $\kappa^+$ of $\mathcal{K}_i$, then $t_i(k) = \kappa^-$. For simplicity, we assume that $0\in\mathcal{K}_i$ for each $i$, so that $t_i(k)$ is well defined for all $k\in\mathbb{N}_0$.

The next result shows that for the asynchronous iteration~\eqref{Sec:ContractiveMapping Eq3}, if $V_k = \|x_k - x^{\star}\|_{b,\infty}^w$ is used as a candidate Lyapunov function (similarly to the convergence analysis for the synchronous case), we can establish iterate relationships on the form~\eqref{Sec:Main Results Eq3}.

\begin{lemma}
\label{Sec:ContractiveMapping Lemma1}
Suppose that $T$ is pseudo-contractive with respect to the block-maximum norm with contraction modulus $c$. Let $V_k = \|x_k - x^{\star}\|_{b,\infty}^w$. Then, the iterates $\{x_k\}$ generated by the asynchronous iteration~\eqref{Sec:ContractiveMapping Eq3} satisfy
\begin{align*}
V_{k+1} \leq c \max_{(k-\tau_k)_+ \leq \ell \leq k} V_{\ell},\quad k\in\mathbb{N}_0,
\end{align*}
where $\tau_k$ is defined in~\eqref{Sec:ContractiveMapping Eq4}.
\end{lemma}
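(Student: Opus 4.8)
The plan is to estimate each block $[x_{k+1}]_i - [x^\star]_i$ separately and then take the block-maximum norm, so the single structural fact I need up front is that the $i$th block of $x_{k+1}$ is always the output of agent $i$'s most recent update. First I would observe: if $k\in\mathcal{K}_i$ then $t_i(k)=k$ and $[x_{k+1}]_i = T_i\bigl([x_{s_{i1,k}}]_1,\ldots,[x_{s_{im,k}}]_m\bigr)$ directly from \eqref{Sec:ContractiveMapping Eq3}; and if $k\notin\mathcal{K}_i$, then by definition of $t_i(k)$ none of the times $t_i(k)+1,\ldots,k$ lies in $\mathcal{K}_i$, so block $i$ is frozen over that range and $[x_{k+1}]_i=[x_k]_i=\cdots=[x_{t_i(k)+1}]_i$. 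Since $t_i(k)\in\mathcal{K}_i$ in both cases (this is why the convention $0\in\mathcal{K}_i$ is assumed), I obtain the unified identity
\[
[x_{k+1}]_i = T_i\bigl([x_{s_{i1,t_i(k)}}]_1,\ldots,[x_{s_{im,t_i(k)}}]_m\bigr),\qquad i\in[m].
\]

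Next I would introduce, for each fixed $i$, the auxiliary vector $\widehat{x}^{(i)}\in\mathbb{R}^d$ whose $j$th block equals $[x_{s_{ij,t_i(k)}}]_j$, so that $[x_{k+1}]_i = T_i(\widehat{x}^{(i)})$. Applying the pseudo-contraction property of $T$ to $\widehat{x}^{(i)}$ and reading off the $i$th block (recall that each block's weighted norm is dominated by the block-maximum norm) gives
\begin{align*}
w_i\bigl\|[x_{k+1}]_i-[x^\star]_i\bigr\|_i
&\le \bigl\|T(\widehat{x}^{(i)})-x^\star\bigr\|_{b,\infty}^w
\le c\,\bigl\|\widehat{x}^{(i)}-x^\star\bigr\|_{b,\infty}^w \\
&= c\max_{j\in[m]} w_j\bigl\|[x_{s_{ij,t_i(k)}}]_j-[x^\star]_j\bigr\|_j
\le c\max_{j\in[m]} V_{s_{ij,t_i(k)}}.
\end{align*}

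Then I would locate the read-indices $s_{ij,t_i(k)}$ inside the window $\bigl[(k-\tau_k)_+,\,k\bigr]$: on one side $s_{ij,t_i(k)}\le t_i(k)\le k$ and $s_{ij,t_i(k)}\ge 0$; on the other side, by the very definition of $\tau_k$ in \eqref{Sec:ContractiveMapping Eq4}, $s_{ij,t_i(k)}\ge \min_{i'\in[m]}\min_{j'\in[m]} s_{i'j',t_{i'}(k)} = k-\tau_k$. Hence $\max_{j\in[m]} V_{s_{ij,t_i(k)}}\le \max_{(k-\tau_k)_+\le\ell\le k} V_\ell$ for every $i$, and taking the maximum over $i$ on the left-hand side yields $V_{k+1}\le c\max_{(k-\tau_k)_+\le\ell\le k} V_\ell$, which is the claim.

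The only genuinely delicate point is the bookkeeping in the first step: recognizing that a non-updated block equals the output of the agent's last update, and therefore that the indices one must track are the $s_{ij,t_i(k)}$ — evaluated at $t_i(k)$ rather than at $k$ — which is exactly the reason $\tau_k$ is defined through $t_i(k)$ in \eqref{Sec:ContractiveMapping Eq4}. Everything after this reduction is a one-line application of the contraction inequality together with the definition of the block-maximum norm.
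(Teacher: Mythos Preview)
Your proof is correct and follows essentially the same route as the paper: both first establish the unified identity $[x_{k+1}]_i = T_i\bigl([x_{s_{i1,t_i(k)}}]_1,\ldots,[x_{s_{im,t_i(k)}}]_m\bigr)$ by tracking the most recent update time $t_i(k)$, then apply the pseudo-contraction inequality blockwise and use the definition of $\tau_k$ to place each read-index $s_{ij,t_i(k)}$ in the window $[(k-\tau_k)_+,k]$. Your introduction of the auxiliary vector $\widehat{x}^{(i)}$ is merely a notational convenience; the underlying chain of inequalities is identical to the paper's.
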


\begin{proof}
See Appendix~\ref{Sec:ContractiveMapping Lemma1 Proof}.
\end{proof}

We apply Lemma~\ref{Sec:Main Results Lemma2} to show that for pseudo-contractions with respect to the block-maximum norm, the asynchronous iteration~\eqref{Sec:ContractiveMapping Eq3} converges asymptotically to the fixed point under total asynchronism.

\begin{theorem}
\label{Sec:ContractiveMapping Theorem1}
Let Assumption~\ref{Sec:ContractiveMapping Assumption1} hold. Suppose that $T$ is pseudo-contractive with respect to the block-maximum norm. Then, the sequence $\{x_k\}$ generated by the asynchronous iteration~\eqref{Sec:ContractiveMapping Eq3} converges asymptotically to the unique fixed point of~$T$.
\end{theorem}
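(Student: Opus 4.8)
The plan is to read off the statement as a direct corollary of Lemma~\ref{Sec:ContractiveMapping Lemma1} combined with the asymptotic convergence test in Lemma~\ref{Sec:Main Results Lemma2}. First I would fix a fixed point $x^\star$ of $T$ — it exists and is unique since $T$ is a pseudo-contraction with respect to the block-maximum norm (Proposition~3.1.2 in~\cite{BeT:89}) — and set $V_k = \|x_k - x^\star\|_{b,\infty}^w\ge 0$. By Lemma~\ref{Sec:ContractiveMapping Lemma1}, the sequence $\{V_k\}$ obeys
\begin{align*}
V_{k+1} \le c \max_{(k-\tau_k)_+ \le \ell \le k} V_\ell, \quad k\in\mathbb{N}_0,
\end{align*}
with $\tau_k$ given by~\eqref{Sec:ContractiveMapping Eq4}. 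This is an instance of~\eqref{Sec:Main Results Lemma2 Eq1} with $q=0$ and $p=c$, and since the contraction modulus satisfies $c\in(0,1)$ we have $q+p=c<1$. So the only thing left to check before invoking Lemma~\ref{Sec:Main Results Lemma2} is the delay condition~\eqref{Sec:Main Results Lemma2 Eq2}, i.e.\ $\lim_{k\to\infty}(k-\tau_k)=+\infty$.

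The heart of the proof is therefore to derive~\eqref{Sec:Main Results Lemma2 Eq2} from total asynchronism (Assumption~\ref{Sec:ContractiveMapping Assumption1}). By~\eqref{Sec:ContractiveMapping Eq4}, $k-\tau_k=\min_{i\in[m]}\min_{j\in[m]} s_{ij,t_i(k)}$, so I must show this quantity tends to infinity. I would argue in two layers. Since each $\mathcal{K}_i$ is an infinite subset of $\mathbb{N}_0$ (Assumption~\ref{Sec:ContractiveMapping Assumption1}.1) and $t_i(k)=\max\{\kappa\le k:\kappa\in\mathcal{K}_i\}$ is non-decreasing, $t_i(k)\to\infty$; hence for any threshold $N$ there is a $K_0$ with $t_i(k)\ge N$ for all $i\in[m]$ and all $k\ge K_0$. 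Next, by Assumption~\ref{Sec:ContractiveMapping Assumption1}.2, $s_{ij,\kappa}\to\infty$ as $\kappa\to\infty$ (along $\kappa\in\mathcal{K}_i$), so given any target $M$ I can choose $N=N(M)$, uniformly over the finitely many pairs $(i,j)$, such that $s_{ij,\kappa}\ge M$ whenever $\kappa\ge N$. Composing: for all $k$ large enough, $s_{ij,t_i(k)}\ge M$ for every $i,j$, so $k-\tau_k\ge M$; as $M$ was arbitrary, $k-\tau_k\to\infty$.

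With~\eqref{Sec:Main Results Lemma2 Eq2} in hand, Lemma~\ref{Sec:Main Results Lemma2} gives $\lim_{k\to\infty}V_k=0$, that is, $\|x_k-x^\star\|_{b,\infty}^w\to 0$, which is equivalent to componentwise convergence $[x_k]_i\to[x^\star]_i$ for each $i$, hence $x_k\to x^\star$; the limit is the unique fixed point of $T$. The hard part will be the double-limit bookkeeping in the middle paragraph: one must be careful that $s_{ij,\cdot}$ is evaluated only at update indices in $\mathcal{K}_i$, and that the thresholds $N(M)$ and $K_0$ are taken uniformly over the finite index set $[m]$ and over the pairs $(i,j)$ — everything else is a mechanical application of the sequence lemma. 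As a side remark, the same reasoning shows that the partially asynchronous model (Assumption~\ref{Sec:ContractiveMapping Assumption2}) also satisfies~\eqref{Sec:Main Results Lemma2 Eq2}, so this theorem subsumes that case at the level of asymptotic convergence, while the sharper finite-time rates for bounded delays would instead be obtained from Lemma~\ref{Sec:Main Results Lemma1}.
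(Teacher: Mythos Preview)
Your proposal is correct and follows essentially the same route as the paper: apply Lemma~\ref{Sec:ContractiveMapping Lemma1} to get a recursion of the form~\eqref{Sec:Main Results Lemma2 Eq1} with $q=0$, $p=c$, verify $\lim_{k\to\infty}(k-\tau_k)=+\infty$ from Assumption~\ref{Sec:ContractiveMapping Assumption1} via $t_i(k)\to\infty$ and then $s_{ij,t_i(k)}\to\infty$, and conclude with Lemma~\ref{Sec:Main Results Lemma2}. The paper's proof is terser about the double-limit step, but your more careful uniformity bookkeeping over the finite index set $[m]\times[m]$ is exactly what is needed to make that step rigorous.
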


\begin{proof}
See Appendix~\ref{Sec:ContractiveMapping Theorem1 Proof}.
\end{proof}

While convergent synchronous algorithms may diverge in the face of asynchronism, Theorem~\ref{Sec:ContractiveMapping Theorem1} shows that pseudo-contractive mappings in the block-maximum norm can tolerate arbitrarily large communication delays and update intervals satisfying Assumption~\ref{Sec:ContractiveMapping Assumption1}. In addition, as the next result shows, these iterations admit an explicit convergence rate bound when they are executed in a partially asynchronous fashion.

\begin{theorem}
\label{Sec:ContractiveMapping Theorem2}
Let Assumption~\ref{Sec:ContractiveMapping Assumption2} hold. Suppose that $T$ is pseudo-contractive with respect to the block-maximum norm with contraction modulus $c$. Then, the sequence $\{x_k\}$ generated by the asynchronous iteration~\eqref{Sec:ContractiveMapping Eq3} satisfies
\begin{align*}
 \|x_k - x^{\star}\|_{b,\infty}^w \leq  c^{\frac{k}{B + D + 1}}\|x_0 - x^{\star}\|_{b,\infty}^w,\quad k\in\mathbb{N}_0.
\end{align*}
\end{theorem}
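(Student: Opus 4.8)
The plan is to combine the Lyapunov inequality of Lemma~\ref{Sec:ContractiveMapping Lemma1} with the bounded-delay sequence result of Lemma~\ref{Sec:Main Results Lemma1}. With $V_k = \|x_k - x^{\star}\|_{b,\infty}^w$, Lemma~\ref{Sec:ContractiveMapping Lemma1} already gives
\begin{align*}
V_{k+1} \leq c \max_{(k-\tau_k)_+ \leq \ell \leq k} V_{\ell}, \quad k\in\mathbb{N}_0,
\end{align*}
which is exactly of the form~\eqref{Sec:Main Results Lemma1 Eq1} with $q = 0$ and $p = c$. Since $q + p = c \in (0,1)$, the only missing ingredient is a uniform bound $\tau_k \leq \tau$ for some non-negative integer $\tau$; once that is in place, Lemma~\ref{Sec:Main Results Lemma1} yields $V_k \leq \rho^k V_0$ with $\rho = (q+p)^{1/(1+\tau)} = c^{1/(1+\tau)}$, and taking $\tau = B+D$ gives precisely the asserted estimate.

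Hence the core of the argument is to show that Assumption~\ref{Sec:ContractiveMapping Assumption2} forces $\tau_k \leq B + D$ for every $k \in \mathbb{N}_0$. First I would establish the ``update lag'' bound $t_i(k) \geq k - B$ for all $i\in[m]$ and all $k$: if $t = t_i(k)$ is the most recent update time of agent $i$ up to $k$, then none of $t+1, \ldots, k$ lies in $\mathcal{K}_i$, so applying Assumption~\ref{Sec:ContractiveMapping Assumption2}.1 at index $t+1$ forces one of $\{t+1, \ldots, t+1+B\}$ into $\mathcal{K}_i$, whence $t + 1 + B \geq k + 1$, i.e. $t \geq k - B$. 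Next, since $t_i(k) \in \mathcal{K}_i$, Assumption~\ref{Sec:ContractiveMapping Assumption2}.2 gives $s_{i j, t_i(k)} \geq t_i(k) - D \geq k - B - D$ for every $j\in[m]$. Taking the minimum over $i,j$ in the definition~\eqref{Sec:ContractiveMapping Eq4} of $\tau_k$ then yields $\tau_k = k - \min_{i}\min_{j} s_{i j, t_i(k)} \leq B + D$ (and $\tau_k \geq 0$ since $s_{ij,t_i(k)} \leq t_i(k) \leq k$).

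With $\tau_k \leq B+D$ in hand, I would invoke Lemma~\ref{Sec:Main Results Lemma1} with $q = 0$, $p = c$, and $\tau = B+D$ to conclude $V_k \leq c^{k/(B+D+1)} V_0$, which is the claim. The main (and only mildly delicate) obstacle is the bookkeeping in the lag bound — correctly converting the ``at least one update in every window of length $B+1$'' condition into $t_i(k) \geq k-B$, and handling the edge case $k=0$, where the standing assumption $0 \in \mathcal{K}_i$ gives $t_i(0) = 0$ and $\tau_0 = 0$ consistently. Everything else is a direct application of results already established in the excerpt.
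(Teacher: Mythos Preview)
Your proposal is correct and follows essentially the same route as the paper: bound $\tau_k \leq B+D$ using Assumption~\ref{Sec:ContractiveMapping Assumption2} (the paper states $k-B \leq t_i(k)$ directly, while you supply the window argument), invoke Lemma~\ref{Sec:ContractiveMapping Lemma1}, and then apply Lemma~\ref{Sec:Main Results Lemma1} with $q=0$, $p=c$, $\tau = B+D$.
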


\begin{proof}
See Appendix~\ref{Sec:ContractiveMapping Theorem2 Proof}.
\end{proof}

According to Theorem~\ref{Sec:ContractiveMapping Theorem2}, the asynchronous iteration~\eqref{Sec:ContractiveMapping Eq3} involving block-maximum norm pseudo-contractions remains linearly convergent for bounded communication delays and update rates. Note that $c^{\frac{1}{B + D + 1}}$
is monotonically increasing with $B$ and $D$, and approaches one as either $B$ or $D$ tends to infinity. Hence, the guaranteed convergence rate of~\eqref{Sec:ContractiveMapping Eq3} slows down as either the delays increase in magnitude or agents execute less frequently. The convergence rate is directly 
related to the number of iterations required for the algorithm to converge. According to Theorem~\ref{Sec:ContractiveMapping Theorem2}, the asynchronous iteration~\eqref{Sec:ContractiveMapping Eq3} needs
\begin{align*}
k\geq \frac{B + D + 1}{-\log(c)}\log\left(\frac{\|x_0 - x^{\star}\|_{b,\infty}^w}{\epsilon}\right)
\end{align*}
iterations to satisfy $ \|x_k - x^{\star}\|_{b,\infty}^w\leq \epsilon$. We can see that the time to reach a fixed target accuracy deteriorates linearly with both $B$ and $D$. 

We now use  Lemma~\ref{Sec:Main Results Lemma3} to develop a result that provides guaranteed convergence rates for the asynchronous algorithm~\eqref{Sec:ContractiveMapping Eq3} under a rather broad family of communication delays and update rates, in between the partially and totally asynchronous models.

\begin{theorem}
\label{Sec:ContractiveMapping Theorem3}
Suppose that $T$ is pseudo-contractive with respect to the block-maximum norm with contraction modulus $c$. Suppose also that there exists a function $\Lambda:\mathbb{R}\rightarrow \mathbb{R}$ such that the following conditions hold:
\begin{itemize}
\item[(i)] $\Lambda(0)=1$.
\item[(ii)] $\Lambda$ is non-increasing.
\item [(iii)] $\lim_{k\rightarrow +\infty} \Lambda(k) = 0$ and 
\begin{align*}
c \Lambda(k - \tau_k) \leq \Lambda(k+1), \quad k\in \mathbb{N}_0,
\end{align*}
where $\tau_k$ is defined in~\eqref{Sec:ContractiveMapping Eq4}.
\end{itemize}
Then, the iterates $\{x_k\}$ generated by the asynchronous iteration~\eqref{Sec:ContractiveMapping Eq3} satisfy
\begin{align*}
 \|x_k - x^{\star}\|_{b,\infty}^w \leq \Lambda(k) \|x_ 0- x^{\star}\|_{b,\infty}^w,\quad k\in\mathbb{N}.
\end{align*}
\end{theorem}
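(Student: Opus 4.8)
The plan is to deduce Theorem~\ref{Sec:ContractiveMapping Theorem3} as a direct corollary of Lemma~\ref{Sec:Main Results Lemma3}, using Lemma~\ref{Sec:ContractiveMapping Lemma1} to produce a recursion of the required form. First I would set $V_k = \|x_k - x^{\star}\|_{b,\infty}^w$, which is a non-negative sequence since it is a norm. Lemma~\ref{Sec:ContractiveMapping Lemma1}, which has already been established, asserts that the iterates generated by~\eqref{Sec:ContractiveMapping Eq3} satisfy
\begin{align*}
V_{k+1} \leq c \max_{(k-\tau_k)_+ \leq \ell \leq k} V_{\ell}, \quad k\in\mathbb{N}_0,
\end{align*}
with $\tau_k$ as in~\eqref{Sec:ContractiveMapping Eq4}. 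This is exactly a recurrence of the form~\eqref{Sec:Main Results Lemma3 Eq1} with the non-negative constants $q = 0$ and $p = c$.

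Second, I would verify that the hypotheses of Lemma~\ref{Sec:Main Results Lemma3} are met. Since $c\in(0,1)$ is the contraction modulus, we have $q + p = c < 1$. The conditions (i)--(iii) that Theorem~\ref{Sec:ContractiveMapping Theorem3} imposes on $\Lambda$ coincide with those required in Lemma~\ref{Sec:Main Results Lemma3}, and the inequality $c\,\Lambda(k - \tau_k) \leq \Lambda(k+1)$ assumed in (iii) is precisely~\eqref{Sec:Main Results Lemma3 Eq2} with $q + p = c$. Hence Lemma~\ref{Sec:Main Results Lemma3} applies verbatim and yields $V_k \leq \Lambda(k) V_0$ for all $k\in\mathbb{N}_0$, that is, $\|x_k - x^{\star}\|_{b,\infty}^w \leq \Lambda(k)\|x_0 - x^{\star}\|_{b,\infty}^w$, which is the claim.

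There is no substantial obstacle here beyond making the identification $(q,p) = (0,c)$ and confirming that the delay sequence $\tau_k$ appearing in Lemma~\ref{Sec:ContractiveMapping Lemma1} is the same one used to constrain $\Lambda$. The one point that warrants care — but which is already dealt with inside the proof of Lemma~\ref{Sec:Main Results Lemma3} — is the handling of indices $k$ with $k - \tau_k < 0$: there $(k-\tau_k)_+ = 0$ and $\Lambda(0) = 1$, and the monotonicity of $\Lambda$ together with~\eqref{Sec:Main Results Lemma3 Eq2} still forces $c \leq \Lambda(k+1)$, so the induction underpinning Lemma~\ref{Sec:Main Results Lemma3} closes. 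I would finish by noting that Theorems~\ref{Sec:ContractiveMapping Theorem1} and~\ref{Sec:ContractiveMapping Theorem2} are recovered as special cases of this result: the choice $\Lambda(t) = c^{t/(B+D+1)}$ reproduces the partially asynchronous linear rate, while polynomially decaying choices of $\Lambda$ analogous to those in Corollary~\ref{Sec:Main Results Corollary1} cover intermediate regimes with unbounded but controlled delays.
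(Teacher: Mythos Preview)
Your proposal is correct and matches the paper's own proof essentially line for line: set $V_k = \|x_k - x^{\star}\|_{b,\infty}^w$, invoke Lemma~\ref{Sec:ContractiveMapping Lemma1} to obtain the recursion~\eqref{Sec:Main Results Lemma3 Eq1} with $q=0$, $p=c$, and then apply Lemma~\ref{Sec:Main Results Lemma3}. The additional remarks you make about negative indices and special cases are fine but are not part of the paper's (very short) argument.
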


\begin{proof}
See Appendix~\ref{Sec:ContractiveMapping Theorem3 Proof}.
\end{proof}

According to Theorem~\ref{Sec:ContractiveMapping Theorem3}, any function $\Lambda$ satisfying conditions $(i)$--$(iii)$ can be used to estimate the convergence rate of the asynchronous iteration~\eqref{Sec:ContractiveMapping Eq3}. Condition (iii) implies that the admissible choices for~$\Lambda$ depend on~$\tau_{k}$. This means that the rate at which the nodes execute their updates as well as the way communication delays tend large both affect the guaranteed convergence rate of~\eqref{Sec:ContractiveMapping Eq3}. For example, one can verify that under partial asynchronism, the function 
\begin{align*}
\Lambda(t) = c^{\frac{t}{B+D+1}}
\end{align*}
satisfies conditions~$(i)$--$(iii)$. In the following example, we use Theorem~\ref{Sec:ContractiveMapping Theorem3} and Corollary~\ref{Sec:Main Results Corollary1} to establish convergence rates for a particular class of totally asynchronous iterations, where the communication delays can grow unbounded at a linear rate. 

\paragraph{Example.} For the asynchronous iteration~\eqref{Sec:ContractiveMapping Eq3}, suppose that $\mathcal{K}_i = \mathbb{N}_0$ for each $i \in [m]$. Suppose also that there exist scalars $\alpha\in(0,1)$ and $\beta\geq 0$ such that
\begin{align}
(1-\alpha)k - \beta \leq s_{i j, k}, \quad i,j\in [m],\; k \in \mathcal{K}_i.
\label{Sec:ContractiveMapping Example1}
\end{align}
Note that~\eqref{Sec:ContractiveMapping Example1} implies that the communication delays $\tau_{i j,k}$ belong to the interval $[0, \alpha k + \beta]$ and may therefore grow unbounded. In this example, as $t_i(k) = k$, we have
\begin{align*}
(1-\alpha)k - \beta \leq \min_{i \in [m]}\min_{j \in [m]} s_{i j, t_i(k)}.
\end{align*}
Thus, by Lemma~\ref{Sec:ContractiveMapping Lemma1}, the iterates generated by the asynchronous iteration~\eqref{Sec:ContractiveMapping Eq3} satisfy
\begin{align*}
V_{k+1} &\leq q V_k + p \max_{(k - \tau_k)_+ \leq \ell \leq k} V_{\ell},\quad k\in\mathbb{N}_0,
\end{align*}
with $V_k=\|x_k - x^{\star}\|_{b,\infty}^w$, $q=0$, $p=c$, and $\tau_k \leq \alpha k + \beta$. Since $q+p=c<1$, it follows from Corollary~\ref{Sec:Main Results Corollary1} that the function 
\begin{align*}
\Lambda(t) = \left(\frac{\alpha t}{1-\alpha + \beta}  + 1\right)^{-\eta}
\end{align*}
with $\eta = \ln(c)/\ln(1-\alpha)$ satisfies conditions $(i)$--$(iii)$ of Theorem~\ref{Sec:ContractiveMapping Theorem3}. Therefore,
\begin{align*}
\|x_k - x^{\star}\|_{b,\infty}^w \leq  \left(\frac{\alpha k}{1-\alpha + \beta}  + 1\right)^{-\eta} \|x_0 - x^{\star}\|_{b,\infty}^w,\quad k\in\mathbb{N}_0.
\end{align*}
We can see that under unbounded communication delays satisfying~\eqref{Sec:ContractiveMapping Example1}, the convergence rate of the asynchronous iteration~\eqref{Sec:ContractiveMapping Eq3} is of order $\mathcal{O}(1/k^\eta)$. 

Although pseudo-contractions in the block-maximum norm converge when executed in a totally asynchronous manner, we note that in many applications, it is rare that the associated fixed-point iterations are maximum norm contractions. For instance, the gradient descent iterations for unconstrained optimization problems are maximum norm contractions if the Hessian matrix of the cost function is diagonally dominant~\cite[Section 3.1.3]{BeT:89}. The diagonal dominance assumption is quite strong and violated even by some strongly convex quadratic objective functions~\cite[Example 6.3.1]{BeT:89}.

\begin{remark}
\cite{BeT:89} proved that contractions in the block-maximum norm converge when executed in a totally asynchronous manner. However, they did not quantify how bounds on the communication delays and update rates of agents affect the convergence rate of the iterates. There are very few results in the literature on convergence rates of asynchronous iterations involving block-maximum norm contractions. Notable exceptions are the works of~\cite{Bertsekas:89} and~\cite{Zeng:20}, where the convergence rate of iterates was estimated under partial asynchronism. Theorems~\ref{Sec:ContractiveMapping Theorem1}, ~\ref{Sec:ContractiveMapping Theorem2} and~\ref{Sec:ContractiveMapping Theorem3} demonstrate that not only do we recover the asymptotic convergence results in~\cite{BeT:89} for maximum norm pseudo-contractions, but we also provide explicit bounds on the convergence rate of asynchronous iterations for various classes of bounded and unbounded communication delays and update rates.
\end{remark}

%
%
\section{Conclusions}
We have introduced a number of novel sequence results for asynchronous iterations that appear in the analysis of parallel and asynchronous algorithms. In contrast to previous analysis frameworks, which have used conservative bounds for the effects of asynchrony, our results attempt to capture the inherent structure in the asynchronous iterations. The results balance simplicity, applicability and power, and provide explicit bounds on how the amount of asynchrony affects the guaranteed convergence rates. To demonstrate the potential of the sequence results, we illustrated how they can be used to improve our theoretical understanding of several important classes of asynchronous optimization algorithms. First, we derived better iteration complexity bounds for the proximal incremental aggregated gradient method, reducing the dependence of the convergence times on the maximum delay and problem condition number. Second, we provided tighter guarantees for the asynchronous stochastic gradient descent method that depend on the average delay rather on the maximal delay. Third, we gave an improved analysis of the ARock framework for asynchronous block-coordinate updates of Krasnosel'ski\u{i}–Mann iterations, proving a larger range of admissible step-sizes, faster convergence rates and better scaling properties with respect to the number of parallel computing elements. Finally, we gave a uniform treatment of asynchronous iterations involving block-norm contractions under partial and (several versions of) total asynchronism.

%
%

\bibliographystyle{plain}


%
%
\clearpage
\begin{appendices}
\section*{Appendicies}

\section{Proofs for Section~\ref{Sec:Motivation}}
\label{Appendix_Movation}



\begin{lemma}
\label{Lemma_delayedgradient_1}
Suppose $f$ is $L$-smooth and $\mu$-strongly convex. Consider the delayed gradient method~\eqref{eqn:delayed_gd} with $\gamma \in \left(0, \frac{2}{\mu + L}\right]$ and let $\omega_k$ be the perturbation term defined in \eqref{eqn:delayed_gd_error}. Then,
\begin{align*}
\omega_k \leq\left( 
    \gamma^4 L^4 \tau^2 + 2\gamma^2L^2 \tau
    \right) \max_{(k-2\tau)_+\leq \ell \leq k}\left\{ \Vert x_\ell-x^{\star}\Vert^2 \right\}.
\end{align*}
\end{lemma}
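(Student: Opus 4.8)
The plan is to bound the two summands that make up $\omega_k$ separately, using Cauchy--Schwarz together with three standard facts: that the gradient map $x\mapsto x-\gamma\nabla f(x)$ moves no further from $x^\star$ than $x$ does when $\gamma\in(0,2/(\mu+L)]$, that $\nabla f$ is $L$-Lipschitz, and that $\nabla f(x^\star)=0$ (which holds since $x^\star$ is the minimizer). Throughout, write $M_k:=\max_{(k-2\tau)_+\le\ell\le k}\Vert x_\ell-x^\star\Vert$.

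First I would control the gradient drift $\nabla f(x_k)-\nabla f(x_{k-\tau})$. Telescoping the iteration~\eqref{eqn:delayed_gd} gives $x_k-x_{k-\tau}=\sum_{j=k-\tau}^{k-1}(x_{j+1}-x_j)=-\gamma\sum_{j=k-\tau}^{k-1}\nabla f(x_{j-\tau})$, and since $\nabla f(x^\star)=0$ each term obeys $\Vert\nabla f(x_{j-\tau})\Vert\le L\Vert x_{j-\tau}-x^\star\Vert$. Combining the triangle inequality with $L$-smoothness yields
\begin{align*}
\Vert\nabla f(x_k)-\nabla f(x_{k-\tau})\Vert \;\le\; L\Vert x_k-x_{k-\tau}\Vert \;\le\; \gamma L^2\sum_{j=k-\tau}^{k-1}\Vert x_{j-\tau}-x^\star\Vert \;\le\; \gamma L^2\tau\, M_k,
\end{align*}
where the last step uses that the indices $j-\tau$ range over $\{k-2\tau,\dots,k-1-\tau\}\subseteq[(k-2\tau)_+,k]$ and that there are $\tau$ of them.

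Next I would bound the first summand of $\omega_k$ by Cauchy--Schwarz and the contraction estimate: since $\gamma\le 2/(\mu+L)$, the same inequality used for gradient descent (Theorem $2.1.15$ in~\cite{Nesterov:13}) gives $\Vert x_k-\gamma\nabla f(x_k)-x^\star\Vert^2\le\bigl(1-2\gamma\mu L/(\mu+L)\bigr)\Vert x_k-x^\star\Vert^2\le\Vert x_k-x^\star\Vert^2\le M_k^2$, so
\begin{align*}
2\gamma\bigl\langle\nabla f(x_k)-\nabla f(x_{k-\tau}),\,x_k-\gamma\nabla f(x_k)-x^\star\bigr\rangle \;\le\; 2\gamma\cdot\gamma L^2\tau M_k\cdot M_k \;=\; 2\gamma^2L^2\tau\, M_k^2.
\end{align*}
The second summand is $\gamma^2\Vert\nabla f(x_k)-\nabla f(x_{k-\tau})\Vert^2\le\gamma^2(\gamma L^2\tau M_k)^2=\gamma^4L^4\tau^2 M_k^2$. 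Adding the two bounds gives $\omega_k\le(\gamma^4L^4\tau^2+2\gamma^2L^2\tau)M_k^2$, which is the claimed inequality.

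The only real obstacle is bookkeeping the delayed indices so that the relevant iterates all lie in the window $[(k-2\tau)_+,k]$ (and handling the initial transient $k<\tau$, where the iteration and the telescoping sum are truncated, but the $(\cdot)_+$ in the window absorbs this); the analytic content reduces to Cauchy--Schwarz plus the two Lipschitz/contraction estimates.
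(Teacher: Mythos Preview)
Your proof is correct and follows essentially the same route as the paper's: Cauchy--Schwarz on the inner product, $L$-smoothness to pass from $\nabla f(x_k)-\nabla f(x_{k-\tau})$ to $\Vert x_k-x_{k-\tau}\Vert$, telescoping the delayed update and using $\nabla f(x^\star)=0$ to get the $\gamma L^2\tau M_k$ bound, and the contraction $\Vert x_k-\gamma\nabla f(x_k)-x^\star\Vert\le\Vert x_k-x^\star\Vert$ for the given step-size range. The only difference is cosmetic ordering of the steps.
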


\begin{proof}
Applying the Cauchy-Schwarz inequality, we obtain
\begin{align*}
    \omega_k \leq 2 \gamma
    \Vert \nabla f(x_k) - \nabla f(x_{k-\tau})\Vert \Vert x_k-\gamma\nabla f(x_k)-x^{\star}\Vert + \gamma^2 \Vert \nabla f(x_k) - \nabla f(x_{k-\tau}) \Vert^2.
\end{align*}
We use $L$-smoothness of $f$ to bound $\Vert \nabla f(x_k) - \nabla f(x_{k-\tau})\Vert$. Specifically,
\begin{align*}
    \Vert \nabla f(x_k) - \nabla f(x_{k-\tau})\Vert \leq L 
    \Vert x_k - x_{k-\tau}\Vert = L \left\Vert \sum_{\ell=(k-\tau)_+}^{k-1} x_{\ell +1} - x_\ell\right\Vert &\overset{\eqref{eqn:delayed_gd}}{=}
    L \left\Vert \sum_{\ell=(k-\tau)_+}^{k-1} \gamma\nabla f(x_{\ell-\tau})\right\Vert\\
    &\leq \gamma L \sum_{\ell=(k-\tau)_+}^{k-1} \Vert \nabla f(x_{\ell-\tau})\Vert,
\end{align*}
where the last inequality follows from the triangle inequality. Since $\nabla f(x^\star) = 0$, we have
\begin{align*}
 \Vert \nabla f(x_k) - \nabla f(x_{k-\tau})\Vert \leq \gamma L^2 \sum_{\ell=(k-\tau)_+}^{k-1} \Vert x_{\ell-\tau}-x^{\star}\Vert \leq \gamma L^2 \tau \max_{(k-2\tau)_+\leq \ell \leq k}\left\{ \Vert x_\ell-x^{\star}\Vert \right\}.  
\end{align*}
From~\eqref{Sec:Main Results Eq1}, $\Vert x_{k}-\gamma \nabla f(x_k)-x^{\star}\Vert\leq \Vert x_k-x^{\star}\Vert$ holds for the proposed range of $\gamma$. Thus,
\begin{align*}
\omega_k &\leq 2\gamma^2 L^2 \tau \Vert x_k-x^{\star}\Vert \max_{(k-2\tau)_+\leq \ell \leq k}\left\{ \Vert x_\ell-x^{\star}\Vert \right\} + \gamma^4 L^4 \tau^2 \max_{(k-2\tau)_+\leq \ell \leq k}\left\{ \Vert x_\ell-x^{\star}\Vert^2 \right\}\\
& \leq  \left( 
    \gamma^4 L^4 \tau^2 + 2\gamma^2L^2 \tau
    \right) \max_{(k-2\tau)_+\leq \ell \leq k}\left\{ \Vert x_\ell-x^{\star}\Vert^2 \right\}.
\end{align*}
\end{proof}

\begin{lemma}
\label{Lemma_delayedgradient_2}
Suppose $f$ is $L$-smooth and convex. Then, the iterates generated by~\eqref{eqn:delayed_gd} satisfy
\begin{align*}
f(x_k)-f^\star + \frac{1}{2L}\Vert \nabla f(x_{k-\tau})\Vert^2 - \frac{\gamma^2 L\tau}{2} \sum_{\ell=(k-\tau)_+}^{k-1} \Vert \nabla f(x_{\ell-\tau)}\Vert^2 \leq \langle \nabla f(x_{k-\tau}), x_k-x^{\star} \rangle.
\end{align*}
\end{lemma}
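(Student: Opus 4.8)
The plan is to lower-bound $\langle \nabla f(x_{k-\tau}), x_k-x^{\star}\rangle$ by decomposing the displacement as $x_k-x^{\star}=(x_{k-\tau}-x^{\star})+(x_k-x_{k-\tau})$ and estimating the two resulting inner products separately, in such a way that the $f(x_{k-\tau})$ contributions cancel.

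For the first piece, I would invoke the standard refinement of the convexity inequality for $L$-smooth convex functions, $f(x)\geq f(y)+\langle\nabla f(y),x-y\rangle+\frac{1}{2L}\|\nabla f(x)-\nabla f(y)\|^2$, applied with $y=x_{k-\tau}$ and $x=x^{\star}$. Since $\nabla f(x^{\star})=0$, this yields
\begin{align*}
\langle \nabla f(x_{k-\tau}),\,x_{k-\tau}-x^{\star}\rangle \;\geq\; f(x_{k-\tau})-f^{\star}+\frac{1}{2L}\|\nabla f(x_{k-\tau})\|^2.
\end{align*}
For the second piece, I would use the descent lemma (the $L$-smoothness upper bound) $f(x_k)\leq f(x_{k-\tau})+\langle\nabla f(x_{k-\tau}),x_k-x_{k-\tau}\rangle+\frac{L}{2}\|x_k-x_{k-\tau}\|^2$, rearranged to
\begin{align*}
\langle \nabla f(x_{k-\tau}),\,x_k-x_{k-\tau}\rangle \;\geq\; f(x_k)-f(x_{k-\tau})-\frac{L}{2}\|x_k-x_{k-\tau}\|^2.
\end{align*}
Adding the two bounds, the terms $\pm f(x_{k-\tau})$ cancel and I obtain $\langle \nabla f(x_{k-\tau}),x_k-x^{\star}\rangle \geq f(x_k)-f^{\star}+\frac{1}{2L}\|\nabla f(x_{k-\tau})\|^2-\frac{L}{2}\|x_k-x_{k-\tau}\|^2$.

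It then remains to control $\|x_k-x_{k-\tau}\|^2$. Telescoping the update~\eqref{eqn:delayed_gd} gives $x_k-x_{k-\tau}=\sum_{\ell=(k-\tau)_+}^{k-1}(x_{\ell+1}-x_\ell)=-\gamma\sum_{\ell=(k-\tau)_+}^{k-1}\nabla f(x_{\ell-\tau})$, and since this sum has at most $\tau$ terms, the power-mean (Cauchy--Schwarz) inequality $\|\sum_{i=1}^n a_i\|^2\leq n\sum_{i=1}^n\|a_i\|^2$ yields $\|x_k-x_{k-\tau}\|^2\leq \gamma^2\tau\sum_{\ell=(k-\tau)_+}^{k-1}\|\nabla f(x_{\ell-\tau})\|^2$. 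Substituting this bound closes the argument. There is no real obstacle here; the only points requiring care are choosing the decomposition so that the $f(x_{k-\tau})$ terms cancel exactly (rather than being left over), and handling the truncated index range $(k-\tau)_+$ in the edge case $k<\tau$, where the sum simply has fewer than $\tau$ terms and the same bound holds a fortiori.
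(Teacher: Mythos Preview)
Your proposal is correct and is essentially the same as the paper's proof: both use the descent lemma for $L$-smooth functions together with the refined convexity inequality $f(x)\geq f(y)+\langle\nabla f(y),x-y\rangle+\frac{1}{2L}\|\nabla f(x)-\nabla f(y)\|^2$ (Theorem~2.1.5 in Nesterov) at $x=x^\star$, $y=x_{k-\tau}$, and then bound $\|x_k-x_{k-\tau}\|^2$ via the telescoping sum and Cauchy--Schwarz. The only cosmetic difference is that the paper chains the two inequalities directly starting from $f(x_k)$, whereas you first split the inner product along $x_k-x^\star=(x_{k-\tau}-x^\star)+(x_k-x_{k-\tau})$ and bound the two pieces separately; the algebra is identical.
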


\begin{proof}
According to Theorem $2.1.5$ in~\cite{Nesterov:13}, we have 
\begin{align}
f(x_k)&\leq f(x_{k-\tau}) + \bigl\langle \nabla f(x_{k-\tau}),\; x_k - x_{k-\tau} \bigr\rangle +\frac{L}{2} \Vert x_k - x_{k-\tau}\Vert^2 \nonumber\\
& \leq f^\star + \bigl\langle \nabla f(x_{k-\tau}),\; x_k - x^\star \bigr\rangle  - \frac{1}{2L}\bigl\| \nabla f(x_{k-\tau}) \bigr\|^2+\frac{L}{2} \Vert x_k - x_{k-\tau}\Vert^2.
\label{Lemma_delayedgradient_2_p1}
\end{align}
We can bound the final term by
\begin{align}
\Vert x_k - x_{k-\tau}\Vert^2  \overset{\eqref{eqn:delayed_gd}}{=}
\left\Vert \sum_{\ell=(k-\tau)_+}^{k-1} \gamma\nabla f(x_{\ell-\tau})\right\Vert^2
\leq \gamma^2 \tau \sum_{\ell=(k-\tau)_+}^{k-1} \Vert \nabla f(x_{\ell-\tau})\Vert^2,
\label{Lemma_delayedgradient_2_p2}
\end{align}  
where the inequality follows from the convexity of $\Vert\cdot\Vert^2$. Substituting~\eqref{Lemma_delayedgradient_2_p2} into~\eqref{Lemma_delayedgradient_2_p1}, and rearranging the terms conclude the proof.
\end{proof}
\section{Proofs for Section~\ref{Sec:Main Results}}
\subsection{Proof of Lemma~\ref{Sec:Main Results Lemma2}}
\label{Sec:Main Results Lemma2 Proof}
The proof idea is inspired by the convergence theorem for totally asynchronous iterations~\cite[Proposition $6.2.1$]{BeT:89}. We first use perfect induction to prove that
\begin{align}
V_k \leq V_0, \quad k\in\mathbb{N}_0.
\label{Sec:Main Results Lemma2 Proof Eq1}
\end{align}
Since $V_0$ satisfies~\eqref{Sec:Main Results Lemma2 Proof Eq1}, the induction hypothesis is true for $k = 0$. Now, assume that~\eqref{Sec:Main Results Lemma2 Proof Eq1} holds for all $k$ up to some $K\in\mathbb{N}_0$. This means that $V_K\leq V_0$ and 
\begin{align*}
\max_{(K-\tau_K)_+ \leq \ell \leq K} V_{\ell} & \leq V_0.
\end{align*}
Thus, from~\eqref{Sec:Main Results Lemma2 Eq1}, we have
\begin{align*}
V_{K+1} & \leq (q + p) V_0\\
&\leq V_0,
\end{align*}
where the second inequality follows from the fact that $q + p \in (0,1)$. Therefore, $V_{K+1}\leq V_0$ and, hence, the induction proof is complete. Next, we prove that $V_k$ converges to zero as $k\rightarrow +\infty$. To this end, we use induction to show that for each $m\in \mathbb{N}_0$, there exists $K_m \in \mathbb{N}_0$ such that 
\begin{align}
V_k \leq (q + p)^m V_0, \quad \forall k\geq K_m. 
\label{Sec:Main Results Lemma2 Proof Eq2}
\end{align}
From~\eqref{Sec:Main Results Lemma2 Proof Eq1}, $V_k\leq (q + p)^0 V_0$ for all $k\geq 0$. Thus, the induction hypothesis holds for $m = 0$ (with $K_0 = 0$). Assuming~\eqref{Sec:Main Results Lemma2 Proof Eq2} is true
for a given $m$, we will show that there exists $K_{m+1} \in \mathbb{N}_0$ such that $V_k \leq (q + p)^{m+1} V_0$ for all $k\geq K_{m+1}$. From~\eqref{Sec:Main Results Lemma2 Eq2}, 
one can find a sufficiently large $\overline{K}_m\geq K_m$ such that $k-\tau_k\geq K_m$ for $k\geq \overline{K}_m$. Since~\eqref{Sec:Main Results Lemma2 Proof Eq2} holds by induction, we have
\begin{align*}
\max_{(k-\tau_k)_+ \leq \ell \leq k} V_{\ell} \leq  (q + p)^{m} V_0, \quad \forall k\geq \overline{K}_m.
\end{align*}
It follows from~\eqref{Sec:Main Results Lemma2 Eq1} that
\begin{align*}
V_{k+1} & \leq (q + p)(q + p)^{m} V_0\\
& = (q + p)^{m+1} V_0, \quad \forall k\geq \overline{K}_m,
\end{align*}
which implies that 
\begin{align*}
V_k \leq (q + p)^{m+1} V_0, \quad \forall k\geq \overline{K}_{m}+1.
\end{align*}
Set $K_{m+1} = \overline{K}_m + 1$. The induction proof is complete. In summary, we conclude that for each $m$, there exists $K_m$ such that $V_k \leq (q + p)^{m} V_0$ for all $k\geq K_{m}$. Since $q + p < 1$, $(q+p)^m$ approaches zero as $m\rightarrow +\infty$. Hence, the sequence $V_k$ asymptotically converges to zero.

\subsection{Proof of Lemma~\ref{Sec:Main Results Lemma3}}
\label{Sec:Main Results Lemma3 Proof}
We will show by induction that
\begin{align}
V_k \leq \Lambda(k) V_0, \quad k\in\mathbb{N}_0.
\label{Sec:Main Results Lemma3 Proof Eq1}
\end{align}
Since $\Lambda(0)=1$, the induction hypothesis is true for $k = 0$. Assume for induction that~\eqref{Sec:Main Results Lemma3 Proof Eq1} holds for all $k$ up to some $K$. From~\eqref{Sec:Main Results Lemma3 Eq1}, we have
\begin{align*}
V_{K+1} &\leq q \Lambda(K) V_0 + p \max_{(K-\tau_K)_+ \leq \ell \leq K} \Lambda(\ell) V_0\\
&\leq q \Lambda(K) V_0 + p \max_{K-\tau_K  \leq \ell \leq K} \Lambda(\ell) V_0.
\end{align*}
Since $\Lambda$ is non-increasing on $\mathbb{R}$, we obtain $V_{K+1}  \leq  \left(q + p \right) \Lambda(K - \tau_K) V_0$. It follows from~\eqref{Sec:Main Results Lemma3 Eq2} that $V_{K + 1} \leq \Lambda(K+1) V_0$. Therefore,~\eqref{Sec:Main Results Lemma3 Proof Eq1} holds for $k = K + 1$. The induction proof is complete.

\subsection{Proof of Lemma~\ref{Sec:Main Results Lemma4}}
\label{Sec:Main Results Lemma4 Proof}
Let us define $W_k=0$ for $k\in \{\ldots ,-2, -1\}$. Dividing both sides of~\eqref{Sec:Main Results Lemma4 Eq0} by $Q_{k+1}$, summing from $k=0$ to $K$, and then using telescoping cancellation, we have
\begin{align*}
\sum_{k=0}^{K} \frac{X_{k}}{Q_{k+1}} + \frac{V_{K+1}}{Q_{K+1}} & \leq V_0 + \sum_{k=0}^{K} \sum_{\ell = (k-\tau_k)_+}^{k} \frac{p_k W_{\ell}}{Q_{k+1}} - \sum_{k=0}^{K}\frac{r_k W_k }{Q_{k+1}} + \sum_{k=0}^{K} \frac{e_{k}}{Q_{k+1}}\\
&= V_0 +  \sum_{k=0}^{K} \sum_{\ell = k-\tau_k}^{k} \frac{p_k W_{\ell} }{Q_{k+1}} -  \sum_{k=0}^{K}\frac{r_k W_k }{Q_{k+1}} + \sum_{k=0}^{K} \frac{e_{k}}{Q_{k+1}}.
\end{align*}
Since $\tau_k\leq \tau$ for all $k\in\mathbb{N}_0$, we obtain
\begin{align}
\sum_{k=0}^{K} \frac{X_{k}}{Q_{k+1}} + \frac{V_{K+1}}{Q_{K+1}} &\leq V_0 +  \sum_{k=0}^{K} \sum_{\ell = k-\tau}^{k} \frac{p_k W_{\ell} }{Q_{k+1}} - \sum_{k=0}^{K}\frac{r_k W_k }{Q_{k+1}} + \sum_{k=0}^{K} \frac{e_{k}}{Q_{k+1}} \nonumber\\
&=V_0 +  \sum_{\ell = 0}^\tau \sum_{k = - \ell}^{K-\ell} \frac{p_{k+\ell} W_{k}}{Q_{k + \ell + 1}} - \sum_{k=0}^{K}\frac{r_k W_k }{Q_{k+1}} + \sum_{k=0}^{K} \frac{e_{k}}{Q_{k+1}} \nonumber\\
&\leq V_0 +  \sum_{\ell = 0}^\tau \sum_{k = 0}^{K} \frac{p_{k+\ell} W_{k}}{Q_{k+\ell+1}} - \sum_{k=0}^{K}\frac{r_k W_k }{Q_{k+1}} + \sum_{k=0}^{K} \frac{e_{k}}{Q_{k+1}} \nonumber\\
&= V_0 - \sum_{k = 0}^{K} \left(\frac{r_k}{Q_{k+1}} - \sum_{\ell=0}^\tau \frac{p_{k+\ell}}{Q_{k+\ell+1}}\right)W_k + \sum_{k=0}^{K} \frac{e_{k}}{Q_{k+1}},
\label{Sec:Main Results Lemma4 Proof Eq1}
\end{align}
where the second inequality comes from our assumption that $W_k\geq 0$ for $k\geq 0$ and $W_k=0$ for $k<0$. We are now ready to prove Lemma~\ref{Sec:Main Results Lemma4}.

$1.$ In this case, $q_k=1$ and, hence, $Q_k=1$ for each $k$. Thus,~\eqref{Sec:Main Results Lemma4 Proof Eq1} simplifies to
\begin{align*}
\sum_{k=0}^{K} X_{k} + V_{K+1} \leq V_0 -  \sum_{k = 0}^{K} \left( r_k - \sum_{\ell = 0}^\tau p_{k+\ell}\right)W_k + \sum_{k=0}^{K} e_{k}.
\end{align*}
The assumption that 
\begin{align*}
 \sum_{\ell=0}^\tau p_{k+\ell} \leq r_k
\end{align*}
holds for every $k\in\mathbb{N}_0$ implies that 
\begin{align*}
\sum_{k=0}^{K} X_{k} + V_{K+1} \leq V_0 + \sum_{k=0}^{K} e_{k}.
\end{align*}
Since $\{X_k\}$ and $\{V_k\}$ are non-negative sequences, it follows that 
\begin{align*}
\sum_{k=0}^{K} X_{k} \leq V_0 + \sum_{k=0}^{K} e_{k}\;\;\textup{and}\;\; V_{K+1}\leq V_0 + \sum_{k=0}^{K} e_k
\end{align*}
hold for $K\in\mathbb{N}_0$.

$2.$ In this case, $p_k = p$ and $r_k=r$. Thus,~\eqref{Sec:Main Results Lemma4 Proof Eq1} simplifies to
\begin{align}
\sum_{k=0}^{K} \frac{X_{k}}{Q_{k+1}} + \frac{V_{K+1}}{Q_{K+1}} &\leq  V_0 - \sum_{k = 0}^{K} \left(\frac{r}{Q_{k+1}} - \sum_{\ell=0}^\tau \frac{p}{Q_{k+\ell+1}}\right)W_k + \sum_{k=0}^{K} \frac{e_{k}}{Q_{k+1}}.
\label{Sec:Main Results Lemma4 Proof Eq2}
\end{align}
For each $k\in\mathbb{N}_0$ and $\ell\in[0, \tau]$, we have
\begin{align*}
Q_{k+\ell +1} &= Q_{k+1} \prod_{s = 0}^{\ell-1} q_{k + s + 1}\\
&\geq Q_{k+1} q^\ell
\end{align*}
where the inequality uses $q_k \geq q$ for $k\in\mathbb{N}_0$. Since
$q \in (0, 1]$, we have $q^\ell\geq q^\tau$ for $\ell\in[0, \tau]$, implying that $Q_{k+\ell +1}\geq  Q_{k+1} q^\tau$. Combining this inequality with~\eqref{Sec:Main Results Lemma4 Proof Eq2} yields
\begin{align}
\sum_{k=0}^{K} \frac{X_{k}}{Q_{k+1}} + \frac{V_{K+1}}{Q_{K+1}} &\leq  V_0 - \left(r - p (\tau + 1) q^{-\tau}  \right)\sum_{k = 0}^{K} \frac{W_k}{Q_{k+1}} + \sum_{k=0}^{K} \frac{e_{k}}{Q_{k+1}}.
\label{Sec:Main Results Lemma4 Proof Eq3}
\end{align}
By Bernoulli's inequality, i.e., $(1 + x)^n\geq 1 + n x$ for any $n\in\mathbb{N}_0$ and any $x> -1$, we have
\begin{align}
q^\tau &= (1-(1-q))^\tau \nonumber \\
&\geq 1- (1-q)\tau.
\label{Sec:Main Results Lemma4 Proof Eq4}
\end{align}
The assumption that $2\tau +1 \leq 1/(1-q)$ implies that $(1-q) \leq 1/(2\tau+1)$ and thereby
\begin{align*}
 1 - (1-q)\tau \geq \frac{\tau+1}{2\tau+1}.
\end{align*}
By~\eqref{Sec:Main Results Lemma4 Proof Eq4}, it therefore holds that
\begin{align*}
q^\tau \geq \frac{\tau+1}{2\tau+1},
\end{align*}
or equivalently, $q^{-\tau}(\tau+1)\leq (2\tau+1)$. From~\eqref{Sec:Main Results Lemma4 Proof Eq3}, we then have
\begin{align*}
\sum_{k=0}^{K} \frac{X_{k}}{Q_{k+1}} + \frac{V_{K+1}}{Q_{K+1}} &\leq  V_0 - \left(r - (2\tau+1)p\right) \sum_{k = 0}^{K-1} \frac{W_k }{Q_{k+1}} + \sum_{k=0}^{K} \frac{e_{k}}{Q_{k+1}}.
\end{align*}
The assumption that $2\tau +1 \leq r/p$ allows us to drop the second term on the right-hand side. Thus,
\begin{align*}
\sum_{k=0}^{K} \frac{X_{k}}{Q_{k+1}} + \frac{V_{K+1}}{Q_{K+1}} &\leq  V_0 + \sum_{k=0}^{K} \frac{e_{k}}{Q_{k+1}}, \quad K\in\mathbb{N},
\end{align*}
which concludes the proof.

%
%
\section{Proofs for Subsection~\ref{Sec:PIAG}}
\label{Appendix_PIAG_Proofs}
This section provides the proofs for the results presented in Subsection~\ref{Sec:PIAG}. We first state two key lemmas which establish important recursions for the iterates generated by \textsc{Piag}.

\begin{lemma}
\label{Appendix:PIAG Lemma2}
Suppose Assumptions~\ref{Sec:PIAG Assumption1} and~\ref{Sec:PIAG Assumption2} hold. Let $\{x_k\}$ be the sequence generated by Algorithm~\ref{Sec:PIAG Algorithm2}. Then, for any $x^\star \in \mathcal{X}^\star$ and every $k\in\mathbb{N}_0$, we have
\begin{align}
P(x_{k+1})-P^\star + \frac{1}{2\gamma}\|x_{k+1} - x^\star\|^2  \leq \frac{1}{2\gamma} \|x_k - x^\star\|^2 + \frac{L\bigl(\tau+1\bigr)}{2} \sum_{\ell = (k-\tau)_+}^{k}\|x_{\ell + 1} - x_{\ell}\|^2 - \frac{1}{2\gamma}\|x_{k+1}-x_k\|^2.
\label{Appendix:PIAG Lemma2 Eq1}
\end{align}
\end{lemma}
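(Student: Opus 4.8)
The plan is to establish \eqref{Appendix:PIAG Lemma2 Eq1} by combining the standard prox-descent inequality for one proximal-gradient step with a careful control of the gradient error $\nabla F(x_k) - g_k$ coming from the aggregation. First I would use the fact that $x_{k+1} = \prox_{\gamma R}(x_k - \gamma g_k)$ is characterized by the optimality condition $\frac{1}{\gamma}(x_k - x_{k+1}) - g_k \in \partial R(x_{k+1})$; combined with convexity of $R$, this yields, for any $x^\star$,
\begin{align*}
R(x_{k+1}) - R(x^\star) \leq \Bigl\langle \tfrac{1}{\gamma}(x_k - x_{k+1}) - g_k,\; x_{k+1} - x^\star \Bigr\rangle.
\end{align*}
Next I would bring in $F$: since each $f_i$ (hence $F$) is $L$-smooth, the descent lemma gives $F(x_{k+1}) \leq F(x_k) + \langle \nabla F(x_k), x_{k+1} - x_k\rangle + \frac{L}{2}\|x_{k+1}-x_k\|^2$, and convexity of $F$ gives $F(x_k) \leq F(x^\star) + \langle \nabla F(x_k), x_k - x^\star\rangle$. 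Adding these two and the $R$-inequality, and writing $\langle \nabla F(x_k), x_{k+1}-x^\star\rangle = \langle g_k, x_{k+1}-x^\star\rangle + \langle \nabla F(x_k) - g_k, x_{k+1}-x^\star\rangle$, the $g_k$-inner-product terms cancel, and after using the three-point identity $\langle x_k - x_{k+1}, x_{k+1}-x^\star\rangle = \frac12\|x_k-x^\star\|^2 - \frac12\|x_{k+1}-x^\star\|^2 - \frac12\|x_{k+1}-x_k\|^2$, I obtain
\begin{align*}
P(x_{k+1}) - P^\star + \tfrac{1}{2\gamma}\|x_{k+1}-x^\star\|^2 \leq \tfrac{1}{2\gamma}\|x_k-x^\star\|^2 - \bigl(\tfrac{1}{2\gamma} - \tfrac{L}{2}\bigr)\|x_{k+1}-x_k\|^2 + \langle \nabla F(x_k) - g_k,\; x_{k+1}-x^\star\rangle.
\end{align*}

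The crux is then to bound the error term $\langle \nabla F(x_k) - g_k, x_{k+1}-x^\star\rangle$. Writing $\nabla F(x_k) - g_k = \frac{1}{n}\sum_{i=1}^n \bigl(\nabla f_i(x_k) - \nabla f_i(x_{s_{i,k}})\bigr)$ and telescoping $x_k - x_{s_{i,k}} = \sum_{\ell = s_{i,k}}^{k-1}(x_{\ell+1}-x_\ell)$, I would use $L_i$-smoothness of $f_i$ to get $\|\nabla f_i(x_k) - \nabla f_i(x_{s_{i,k}})\| \leq L_i \sum_{\ell = (k-\tau)_+}^{k-1}\|x_{\ell+1}-x_\ell\|$ (using $s_{i,k}\geq (k-\tau)_+$), so that $\|\nabla F(x_k)-g_k\| \leq L \sum_{\ell=(k-\tau)_+}^{k-1}\|x_{\ell+1}-x_\ell\|$ by \eqref{LipschitzConstant}. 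For the inner product I would instead pair the error directly against $x_{k+1}-x^\star$ via a Cauchy–Schwarz / Young's inequality argument designed to produce exactly the sum $\sum_{\ell=(k-\tau)_+}^k \|x_{\ell+1}-x_\ell\|^2$ with coefficient $\frac{L(\tau+1)}{2}$ and a matching $-\frac{L}{2}\|x_{k+1}-x_k\|^2$-type term that, together with the $-(\frac{1}{2\gamma}-\frac{L}{2})\|x_{k+1}-x_k\|^2$ already present, collapses to $-\frac{1}{2\gamma}\|x_{k+1}-x_k\|^2$. Concretely, one decomposes $\langle \nabla F(x_k)-g_k, x_{k+1}-x^\star\rangle = \langle \nabla F(x_k)-g_k, x_{k+1}-x_k\rangle + \langle \nabla F(x_k)-g_k, x_k - x^\star\rangle$; the first piece is handled by $ab \leq \frac12 a^2 + \frac12 b^2$ and the bound on $\|\nabla F(x_k)-g_k\|$ together with $\bigl(\sum_{\ell}\|x_{\ell+1}-x_\ell\|\bigr)^2 \leq (\tau+1)\sum_\ell \|x_{\ell+1}-x_\ell\|^2$ (Cauchy–Schwarz over at most $\tau+1$ indices, including $\ell=k$), and the second piece I expect to absorb using convexity-type telescoping arguments that are, I believe, the standard \textsc{Piag} bookkeeping.

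The main obstacle will be the second cross term $\langle \nabla F(x_k) - g_k, x_k - x^\star\rangle$: unlike $\langle \nabla F(x_k)-g_k, x_{k+1}-x_k\rangle$, it is not obviously controllable by the small quantities $\|x_{\ell+1}-x_\ell\|^2$, since $\|x_k-x^\star\|$ need not be small. The resolution — and the key structural trick in \textsc{Piag} analyses — is to avoid isolating $\langle\nabla F(x_k)-g_k, x_k-x^\star\rangle$ altogether, and instead to feed the aggregated gradient into the convexity/smoothness inequalities for $F$ evaluated at the \emph{stale} points $x_{s_{i,k}}$ rather than at $x_k$: that is, use $f_i(x^\star) \geq f_i(x_{s_{i,k}}) + \langle \nabla f_i(x_{s_{i,k}}), x^\star - x_{s_{i,k}}\rangle$ and a smoothness expansion of $f_i$ from $x_{s_{i,k}}$ to $x_{k+1}$, summed over $i$ and divided by $n$, so that the delayed gradients $\nabla f_i(x_{s_{i,k}})$ (which are precisely what $g_k$ is built from) appear naturally and the only residual terms are differences $f_i(x_{s_{i,k}}) - f_i(x_{\ell})$ that telescope against $L_i$-smoothness into the desired $\sum_\ell \|x_{\ell+1}-x_\ell\|^2$ sum. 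I would carry out this alternative grouping carefully, tracking the constants so the final coefficient of the sum is exactly $\frac{L(\tau+1)}{2}$ and the $\|x_{k+1}-x_k\|^2$ terms combine to $-\frac{1}{2\gamma}$, and then \eqref{Appendix:PIAG Lemma2 Eq1} follows; Lemma~\ref{Sec:PIAG Lemma1} and Lemma~\ref{Sec:PIAG Lemma2} are then obtained by multiplying \eqref{Appendix:PIAG Lemma2 Eq1} by suitable weights and invoking Assumption~\ref{Sec:PIAG Assumption3} where needed.
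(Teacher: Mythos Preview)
Your final approach is exactly the paper's: apply convexity of each $f_i$ at the stale point $x_{s_{i,k}}$ and $L_i$-smoothness from $x_{s_{i,k}}$ to $x_{k+1}$, sum and divide by $n$, so that the resulting $\langle g_k, x_{k+1}-x^\star\rangle$ cancels the identical term coming from the prox/$R$ part. One small correction to your description of the residual: what survives is not function-value differences but the quadratic $\sum_i \frac{L_i}{2n}\|x_{k+1}-x_{s_{i,k}}\|^2$; you telescope $x_{k+1}-x_{s_{i,k}}=\sum_{\ell=s_{i,k}}^{k}(x_{\ell+1}-x_\ell)$ and apply Jensen (convexity of $\|\cdot\|^2$) over at most $\tau+1$ terms to obtain the $\frac{L(\tau+1)}{2}\sum_{\ell=(k-\tau)_+}^{k}\|x_{\ell+1}-x_\ell\|^2$ bound, while the $-\frac{1}{2\gamma}\|x_{k+1}-x_k\|^2$ term comes straight from the three-point identity with no further combining needed.
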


\begin{proof}
At iteration $k\in\mathbb{N}_0$, the update rule in Algorithm~\ref{Sec:PIAG Algorithm2} is of the form
\begin{align*}
\begin{split}
g_k &= \frac{1}{n}\sum_{i=1}^n \nabla f_i\bigl(x_{s_{i,k}}\bigr),\\
x_{k+1} & = \textup{prox}_{\gamma R}(x_k - \gamma g_k).
\end{split}
\end{align*}
We write the proximal mapping more explicitly as
\begin{align}
x_{k+1} =  \underset{u\in \mathbb{R}^d}{\textup{argmin}} \left\{\frac{1}{2}\bigl\|u - (x_k - \gamma g_k)\bigr\|^2 + \gamma R(u)\right\}.
\label{Appendix:PIAG Lemma2 Proof Eq1}
\end{align}
From the first-order optimality condition for the point $u$ in the minimization problem~\eqref{Appendix:PIAG Lemma2 Proof Eq1}, there is a sub-gradient  $\xi_{k+1} \in \partial R(x_{k+1})$ such that 
\begin{align}
\bigl\langle x_{k+1} - x_k +\gamma(g_k + \xi_{k+1}),\; x^\star - x_{k+1} \bigr\rangle \geq 0.
\label{Appendix:PIAG Lemma2 Proof Eq1-2}
\end{align}
Since $R$ is convex, we have
\begin{align}
R(x_{k+1}) & \leq  R(x^\star) + \bigl \langle \xi_{k+1},\; x_{k+1} - x^\star\bigr\rangle\nonumber\\
& \overset{\eqref{Appendix:PIAG Lemma2 Proof Eq1-2}}{\leq} R(x^\star) + \bigl \langle \frac{x_k - x_{k+1}}{\gamma} - g_k,\; x_{k+1} - x^\star\bigr\rangle\nonumber\\
& = R(x^\star) + \frac{1}{2\gamma}\left(\|x_k - x^\star\|^2  - \|x_{k+1}- x_k\|^2 - \|x_{k+1} - x^\star\|^2\right)- \bigl \langle g_k,\; x_{k+1} - x^\star\bigr\rangle,
\label{Appendix:PIAG Lemma2 Proof Eq2}
\end{align}
where the last equality follows from the fact 
\begin{align*}
2\langle a-b, b-c\rangle = \|a-c\|^2 - \|a-b\|^2 - \|b-c\|^2,\quad a,b,c\in\mathbb{R}^d.
\end{align*}
By Assumption~\ref{Sec:PIAG Assumption2}, each component function $f_i$ is $L_i$-smooth. Thus, from Lemma $1.2.3$ in~\cite{Nesterov:13}, we have
\begin{align*}
f_i(x_{k+1}) &\leq f_i(x_{s_{i,k}}) +  \bigl\langle \nabla f_i(x_{s_{i,k}}), \;x_{k+1} - x_{s_{i,k}} \bigr\rangle  + \frac{L_i}{2}\bigl\| x_{k+1} - x_{s_{i,k}} \bigr\|^2\\
&\leq f_i(x^\star) +  \bigl\langle \nabla f_i(x_{s_{i,k}}), \;x_{k+1} - x^\star \bigr\rangle  + \frac{L_i}{2}\bigl\| x_{k+1} - x_{s_{i,k}} \bigr\|^2,
\end{align*}
where the second inequality follows from the convexity of $f_i$. Dividing both sides of the above inequality by~$n$ and then summing from $i = 1$ to $n$, we obtain
\begin{align}
F(x_{k+1}) \leq F(x^\star) +  \left\langle g_k , \;x_{k+1} - x^\star \right\rangle  + \sum_{i=1}^n\frac{L_i}{2n}\bigl\| x_{k+1} - x_{s_{i,k}} \bigr\|^2.
\label{Appendix:PIAG Lemma2 Proof Eq3}
\end{align}
By adding inequalities~\eqref{Appendix:PIAG Lemma2 Proof Eq2} and~\eqref{Appendix:PIAG Lemma2 Proof Eq3}, rearranging the terms, and recalling that $P(x) = F(x) + R(x)$, we have
\begin{align}
P(x_{k+1}) - P^\star + \frac{1}{2\gamma}\|x_{k+1} - x^\star\|^2 \leq \frac{1}{2\gamma}\|x_k - x^\star\|^2  +  \underbrace{\sum_{i=1}^n\frac{L_i}{2n}\bigl\| x_{k+1} -  x_{s_{i,k}} \bigr\|^2}_{\mathcal{H}}  -  \frac{1}{2\gamma}\|x_{k+1} - x_k\|^2.
\label{Appendix:PIAG Lemma2 Proof Eq4}
\end{align}
Next, we find an upper bound on the term $\mathcal{H}$. We expand $\mathcal{H}$ as follows:
\begin{align*}
\mathcal{H} =\sum_{i=1}^n \frac{L_i}{2n} \left \|\sum_{\ell = s_{i,k}}^{k}x_{\ell + 1} - x_{\ell}\right\|^2 
= \sum_{i=1}^n \frac{L_i\bigl(k-s_{i,k}+1\bigr)^2}{2n} \left \|\sum_{\ell = s_{i,k}}^{k}\frac{x_{\ell + 1} - x_{\ell}}{k-s_{i,k}+1}\right\|^2.
\end{align*}
The squared Euclidean norm ($\|\cdot\|^2$) is convex. Thus, 
\begin{align*}
\mathcal{H} &\leq \sum_{i=1}^n \frac{L_i\bigl(k-s_{i,k}+1\bigr)}{2n} \sum_{\ell = s_{i,k}}^{k}\|x_{\ell + 1} - x_{\ell}\|^2.
\end{align*}
Since $(k-\tau)_+ \leq s_{i,k} \leq k$ for all $i \in [n]$ and $k\in\mathbb{N}_0$, we have 
\begin{align}
\mathcal{H} &\leq \sum_{i=1}^n \frac{L_i\bigl(\tau+1\bigr)}{2n} \sum_{\ell=(k-\tau)_+}^{k}\|x_{\ell + 1} - x_{\ell}\|^2\nonumber\\
& \overset{\eqref{LipschitzConstant}}{=} \frac{L\bigl(\tau + 1\bigr)}{2} \sum_{\ell=(k-\tau)_+}^{k}\|x_{\ell+1} - x_{\ell}\|^2.
\label{Appendix:PIAG Lemma2 Proof Eq5}
\end{align}
Substituting~\eqref{Appendix:PIAG Lemma2 Proof Eq5} into the bound~\eqref{Appendix:PIAG Lemma2 Proof Eq4} concludes the proof.
\end{proof}

\begin{lemma}
\label{Appendix:PIAG Lemma3}
Suppose Assumptions~\ref{Sec:PIAG Assumption1} and~\ref{Sec:PIAG Assumption2} hold. Let $\{\alpha_k\}$ be a sequence of non-negative numbers. Then, for every $k\in\mathbb{N}_0$, the sequence $\{x_k\}$ generated by Algorithm~\ref{Sec:PIAG Algorithm2} satisfies
\begin{align}
\alpha_k\bigl(P(x_{k+1})-P^\star\bigr) \leq \alpha_k\bigl(P(x_{k})-P^\star\bigr)  + \frac{L \alpha_k}{2} \sum_{\ell = (k-\tau)_+}^{k}\|x_{\ell + 1} -x_{\ell}\|^2 - \alpha_k\left(\frac{1}{\gamma} - \frac{L\tau}{2}\right)\|x_{k+1} -x_k\|^2.
\label{Appendix:PIAG Lemma3 Eq1}
\end{align}
\end{lemma}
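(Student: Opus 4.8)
The plan is to first establish the one-step descent bound
\[
P(x_{k+1})-P^\star \le P(x_k)-P^\star + \frac{L}{2}\sum_{\ell=(k-\tau)_+}^{k}\|x_{\ell+1}-x_\ell\|^2 - \left(\frac{1}{\gamma}-\frac{L\tau}{2}\right)\|x_{k+1}-x_k\|^2 ,
\]
valid for every $k\in\mathbb{N}_0$; since the $\alpha_k$ are non-negative, multiplying through by $\alpha_k$ then gives exactly~\eqref{Appendix:PIAG Lemma3 Eq1}. The point that makes this sharper than the estimate produced inside the proof of Lemma~\ref{Appendix:PIAG Lemma2} (where a factor $\tau+1$ multiplies the whole sum) is to measure the smoothness of $F$ over the single step $x_k\to x_{k+1}$ and to account for the gradient-approximation error $\nabla F(x_k)-g_k$ separately.

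First I would reuse the proximal optimality condition from the proof of Lemma~\ref{Appendix:PIAG Lemma2}: there is a subgradient $\xi_{k+1}\in\partial R(x_{k+1})$ with $x_{k+1}-x_k+\gamma(g_k+\xi_{k+1})=0$, so convexity of $R$ at $x_k$ yields $R(x_{k+1})\le R(x_k)+\langle \xi_{k+1},x_{k+1}-x_k\rangle = R(x_k)-\frac{1}{\gamma}\|x_{k+1}-x_k\|^2-\langle g_k,x_{k+1}-x_k\rangle$. Next I would apply the descent lemma (Lemma~$1.2.3$ in~\cite{Nesterov:13}) to each $L_i$-smooth component $f_i$ (Assumption~\ref{Sec:PIAG Assumption2}) between $x_k$ and $x_{k+1}$, average over $i\in[n]$, and use $\frac{1}{n}\sum_{i=1}^n L_i = L$ from~\eqref{LipschitzConstant} to get $F(x_{k+1})\le F(x_k)+\langle \nabla F(x_k),x_{k+1}-x_k\rangle+\frac{L}{2}\|x_{k+1}-x_k\|^2$. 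Adding the two estimates, subtracting $P^\star$, and cancelling the $\pm\langle g_k,x_{k+1}-x_k\rangle$ against part of $\langle \nabla F(x_k),x_{k+1}-x_k\rangle$ leaves
\[
P(x_{k+1})-P^\star \le P(x_k)-P^\star - \left(\frac{1}{\gamma}-\frac{L}{2}\right)\|x_{k+1}-x_k\|^2 + \langle \nabla F(x_k)-g_k,\; x_{k+1}-x_k\rangle .
\]

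The remaining work is to bound the inner product. Writing $\nabla F(x_k)-g_k = \frac{1}{n}\sum_{i=1}^n\bigl(\nabla f_i(x_k)-\nabla f_i(x_{s_{i,k}})\bigr)$, the triangle inequality, $L_i$-smoothness, and the telescoping estimate $\|x_k-x_{s_{i,k}}\| \le \sum_{\ell=(k-\tau)_+}^{k-1}\|x_{\ell+1}-x_\ell\|$ (which holds because $(k-\tau)_+\le s_{i,k}\le k$) give $\|\nabla F(x_k)-g_k\| \le L\sum_{\ell=(k-\tau)_+}^{k-1}\|x_{\ell+1}-x_\ell\|$. Then Cauchy-Schwarz on the inner product, Young's inequality $ab\le \frac{1}{2\tau}a^2+\frac{\tau}{2}b^2$, and a further Cauchy-Schwarz on the (at most $\tau$) summands bound the inner product by $\frac{L}{2}\sum_{\ell=(k-\tau)_+}^{k-1}\|x_{\ell+1}-x_\ell\|^2+\frac{L\tau}{2}\|x_{k+1}-x_k\|^2$ (the case $\tau=0$ is trivial, since then $s_{i,k}=k$ and $g_k=\nabla F(x_k)$). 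Substituting this into the previous display and rewriting the surplus $\frac{L}{2}\|x_{k+1}-x_k\|^2$ as the $\ell=k$ term of the sum yields the one-step bound above, and multiplying by $\alpha_k\ge 0$ finishes the argument.

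I expect the delicate point to be this last step: the constants $\frac{L}{2}$ on the sum and $\frac{L\tau}{2}$ on $\|x_{k+1}-x_k\|^2$ are linear, not quadratic, in $\tau$, and obtaining them requires not expanding $\|x_{k+1}-x_{s_{i,k}}\|^2$ directly — which would cost a factor $\tau+1$ on the whole sum, precisely the looseness present in Lemma~\ref{Appendix:PIAG Lemma2} — but instead isolating $\nabla F(x_k)$ so that the term $-\langle g_k,x_{k+1}-x_k\rangle$ produced by the proximal step cancels all of the gradient contribution except the drift $\nabla F(x_k)-g_k$, which is then split with a Young's inequality whose weight is tuned to $1/\tau$.
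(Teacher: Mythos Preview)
Your proof is correct. The paper's own proof simply invokes the second inequality of $(3.5)$ in Lemma~$3.4$ of~\cite{Vanli:18} and then multiplies by $\alpha_k$; you have instead supplied a self-contained derivation of that cited inequality, using the proximal optimality condition, the descent lemma for $F$, and a Young-inequality bound on the drift $\langle\nabla F(x_k)-g_k,\,x_{k+1}-x_k\rangle$ with weight $1/\tau$. At the level of ideas the two routes coincide --- this is exactly the argument behind the Vanli et~al.\ bound --- so your version is not a different proof but a fully unpacked one, with the added virtue that it makes transparent where the linear-in-$\tau$ constants come from (and why measuring smoothness only over the single step $x_k\to x_{k+1}$, rather than over $x_{s_{i,k}}\to x_{k+1}$ as in Lemma~\ref{Appendix:PIAG Lemma2}, avoids the extra factor $\tau+1$).
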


\begin{proof}
According to Assumption~\ref{Sec:PIAG Assumption2}, each component function $f_i$, $i\in [n]$, is $L_i$-smooth. It follows from second inequality of $(3.5)$ in Lemma $3.4$ in~\cite{Vanli:18} that
\begin{align*}
P(x_{k+1})-P^\star &\leq P(x_{k})-P^\star  + \frac{L}{2} \sum_{\ell=(k-\tau)_+}^{k-1}\|x_{\ell+1} -x_{\ell}\|^2 - \left(\frac{1}{\gamma} - \frac{L(\tau+1)}{2}\right)\|x_{k+1} -x_k\|^2 \\
&= P(x_{k})-P^\star  + \frac{L}{2} \sum_{\ell=(k-\tau)_+}^{k}\|x_{\ell+1} -x_{\ell}\|^2 - \left(\frac{1}{\gamma} - \frac{L\tau}{2}\right)\|x_{k+1} -x_k\|^2.
\end{align*}
Multiplying both sides of the above inequality by the non-negative number $\alpha_k$ proves the lemma.
\end{proof}

\subsection{Proof of Lemma~\ref{Sec:PIAG Lemma1}}
\label{Sec:PIAG Lemma1 Proof}
Since $\alpha_k = k +\alpha_0$ with $\alpha_0 \geq 0$, the sequence $\{\alpha_k\}$ is non-negative and satisfies $\alpha_{k+1} = \alpha_k +1$ for every $k\in\mathbb{N}_0$. Adding inequalities~\eqref{Appendix:PIAG Lemma2 Eq1} and~\eqref{Appendix:PIAG Lemma3 Eq1}, 
and rearranging the terms, we have
\begin{align}
\alpha_{k+1}\bigl(P(x_{k+1})-P^\star\bigr) + \frac{1}{2\gamma}\|x_{k+1} - x^\star\|^2  &\leq \alpha_k\bigl(P(x_{k})-P^\star\bigr)  + \frac{1}{2\gamma} \|x_k - x^\star\|^2 \nonumber\\
&\hspace{5mm} + \frac{L\bigl(\alpha_k + \tau+1\bigr)}{2} \sum_{\ell=(k-\tau)_+}^{k}\|x_{\ell+1} - x_{\ell}\|^2\nonumber\\
&\hspace{5mm} - \frac{1}{2}\left(\frac{2\alpha_k +1}{\gamma} -   \alpha_k L\tau\right)\|x_{k+1}-x_k\|^2.
\label{Appendix:PIAG Lemma1 Proof Eq1}
\end{align}
Multiplying both sides by $2\gamma$ and then letting $V_k = 2\gamma\alpha_k\bigl(P(x_{k})-P^\star\bigr)  + \|x_k - x^\star\|^2$ and $W_k = \|x_{k+1}-x_k\|^2$, we can rewrite~\eqref{Appendix:PIAG Lemma1 Proof Eq1} as
\begin{align*}
V_{k+1} &\leq V_k + \gamma L\bigl(\alpha_k + \tau+1\bigr)\sum_{\ell=(k-\tau)_+}^{k} W_{\ell} - \bigl(2\alpha_k +1 - \gamma \alpha_k L\tau \bigr) W_k,\quad k\in\mathbb{N}_0.
\end{align*}
The proof is complete.

\subsection{Proof of Theorem~\ref{Sec:PIAG Theorem1}}
\label{Sec:PIAG Theorem1 Proof}
According to Lemma~\ref{Sec:PIAG Lemma1}, the iterates generated by Algorithm~\ref{Sec:PIAG Algorithm2} satisfy
\begin{align*}
V_{k+1} &\leq V_k + p_k \sum_{\ell=(k-\tau)_+}^{k} W_{\ell} -  r_k W_k,\quad k\in\mathbb{N}_0,
\end{align*}
where $p_k = \gamma L\bigl(\alpha_k + \tau+1\bigr)$, $r_k =  2\alpha_k +1- \gamma L\tau  \alpha_k$, and $\alpha_k = k + \alpha_0$. To apply Lemma~\ref{Sec:Main Results Lemma4}, 
we need to enforce that the convergence condition
\begin{align*}
\sum_{\ell=0}^\tau  \gamma L\bigl(\alpha_{\ell+k} + \tau+1\bigr) \leq  2\alpha_k +1 - \gamma L\tau \alpha_k
\end{align*}
is satisfied for every $k\in\mathbb{N}_0$. This inequality is equivalent to
\begin{align}
\gamma L \leq \frac{2\alpha_k +1}{\tau \alpha_k + \sum_{\ell=0}^\tau \bigl(\alpha_{\ell+k} +\tau+1\bigr)}.
\label{Appendix:PIAG Theorem1 Proof Eq1}
\end{align}
We will prove that if $\alpha_0 = \tau$ and $\gamma L (2\tau + 1)\leq 1$, then~\eqref{Appendix:PIAG Theorem1 Proof Eq1} holds for all $k\in\mathbb{N}_0$. Replacing $\alpha_{k} = k + \tau$ in~\eqref{Appendix:PIAG Theorem1 Proof Eq1}, we have
\begin{align*}
\frac{2\alpha_k +1}{\tau \alpha_k + \sum_{\ell=0}^\tau \bigl(\alpha_{\ell+k} +\tau+1\bigr)} & =  \frac{2 k + 2\tau +1}{(2\tau+1)k +  \frac{7}{2}\tau^2 + \frac{7}{2}\tau + 1}\\
& = \frac{1}{2\tau +1} +  \frac{(2\tau+1)k +  \frac{1}{2}\tau(\tau +1)}{\left(2\tau +1\right)\left((2\tau+1)k +  \frac{7}{2}\tau^2 + \frac{7}{2}\tau + 1\right)}.
\end{align*}
The second term on the right-hand side is non-negative for any $\tau\in \mathbb{N}_0$ and $k\in \mathbb{N}_0$. Thus, 
\begin{align*}
 \frac{1}{2\tau +1} \leq \frac{2\alpha_k +1}{\tau \alpha_k + \sum_{\ell=0}^\tau \bigl(\alpha_{\ell+k} +\tau+1\bigr)}, \quad k\in\mathbb{N}_0.
\end{align*}
This shows that if 
\begin{align*}
\gamma L \leq \frac{1}{2\tau + 1},
\end{align*}
then~\eqref{Appendix:PIAG Theorem1 Proof Eq1} holds for $k\in\mathbb{N}_0$, and hence the convergence condition~\eqref{Sec:Main Results Lemma4 Eq1} in  Lemma~\ref{Sec:Main Results Lemma4} is satisfied. It follows from part $1$ of Lemma~\ref{Sec:Main Results Lemma4} that $V_k \leq V_0$ for  $k\in\mathbb{N}$. Recalling that $V_k = 2\gamma\alpha_k\bigl(P(x_{k})-P^\star\bigr)  + \|x_k - x^\star\|^2$, we obtain
\begin{align*}
2\gamma(k+\tau)\bigl(P(x_{k})-P^\star\bigr)  + \|x_k - x^\star\|^2 \leq 2\gamma \tau \bigl(P(x_{0})-P^\star\bigr)  + \|x_0 - x^\star\|^2, \quad k\in\mathbb{N}.
\end{align*}
By dropping the second term on the left-hand side and dividing both sides by $2\gamma(k+\tau)$, we finish the proof.

\subsection{Proof of Lemma~\ref{Sec:PIAG Lemma2}}
\label{Sec:PIAG Lemma2 Proof}
According to Lemma~\ref{Appendix:PIAG Lemma2} in Appendix~\ref{Appendix_PIAG_Proofs}, the relation~\eqref{Appendix:PIAG Lemma2 Eq1} holds for every $x^\star \in \mathcal{X}^\star$. Taking $x^\star = \Pi_{\mathcal{X}^\star}(x_k)$ in~\eqref{Appendix:PIAG Lemma2 Eq1}, we get
\begin{align}
P(x_{k+1})-P^\star + \frac{1}{2\gamma}\|x_{k+1} - \Pi_{\mathcal{X}^\star}(x_k)\|^2  &\leq \frac{1}{2\gamma} \|x_k -\Pi_{\mathcal{X}^\star}(x_k)\|^2 \nonumber\\
&\hspace{5mm} + \frac{L\bigl(\tau+1\bigr)}{2} \sum_{\ell = (k-\tau)_+}^{k}\|x_{\ell+1} - x_{\ell}\|^2 - \frac{1}{2\gamma}\|x_{k+1}-x_k\|^2.
\label{PIAG Lemma2 Proof Eq1}
\end{align}
By the projection property, we have $\|x_{k+1} - \Pi_{\mathcal{X}^\star}(x_{k+1})\| \leq \|x_{k+1} - \Pi_{\mathcal{X}^\star}(x_k)\|$. Combining this inequality with~\eqref{PIAG Lemma2 Proof Eq1} yields
\begin{align}
P(x_{k+1})-P^\star + \frac{1}{2\gamma}\|x_{k+1} - \Pi_{\mathcal{X}^\star}(x_{k+1})\|^2  &\leq \frac{1}{2\gamma} \|x_k -\Pi_{\mathcal{X}^\star}(x_k)\|^2 \nonumber \\
&\hspace{5mm}+ \frac{L\bigl(\tau+1\bigr)}{2} \sum_{\ell = (k-\tau)_+}^{k}\|x_{\ell+1} - x_{\ell}\|^2 - \frac{1}{2\gamma}\|x_{k+1}-x_k\|^2.
\label{PIAG Lemma2 Proof Eq2}
\end{align}
Adding inequalities~\eqref{Appendix:PIAG Lemma3 Eq1} and~\eqref{PIAG Lemma2 Proof Eq2}, and setting $\alpha_k = \alpha$ for some $\alpha>0$, we have
\begin{align}
(1+\alpha)\bigl(P(x_{k+1})-P^\star\bigr) + \frac{1}{2\gamma}\|x_{k+1} - \Pi_{\mathcal{X}^\star}(x_{k+1})\|^2  &\leq \alpha\bigl(P(x_{k})-P^\star\bigr)  + \frac{1}{2\gamma} \|x_k -  \Pi_{\mathcal{X}^\star}(x_{k})\|^2 \nonumber\\
& \hspace{5mm} + \frac{L\bigl(\alpha + \tau+1\bigr)}{2} \sum_{\ell=(k-\tau)_+}^{k}\|x_{\ell+1} - x_{\ell}\|^2\nonumber\\
&\hspace{5mm}  - \frac{1}{2}\left(\frac{2\alpha +1}{\gamma} -  \alpha L\tau\right)\|x_{k+1}-x_k\|^2.
\label{PIAG Lemma2 Proof Eq3}
\end{align}
Let $\theta = \frac{Q}{Q+1}$. Note that $\theta \in (0,1)$. It follows from Assumption~\ref{Sec:PIAG Assumption3} that
\begin{align*}
(1+ \alpha) \bigl(P(x_{k+1}) - P^\star\bigr) &=  \theta \bigl(P(x_{k+1}) - P^\star\bigr) + \left(1 - \theta + \alpha\right)\bigl(P(x_{k+1}) - P^\star\bigr)\\
&\geq \frac{\mu \theta}{2}\|x_{k+1} -\Pi_{\mathcal{X}^\star}(x_{k+1})\|^2  + \left(1 - \theta + \alpha\right)\bigl(P(x_{k+1}) - P^\star\bigr).
\end{align*}
Combining the above inequality with~\eqref{PIAG Lemma2 Proof Eq3} and then multiplying both sides by $2\gamma$, we obtain
\begin{align}
2\gamma (1- \theta +\alpha)\bigl(P(x_{k+1})-P^\star\bigr) + (1+\gamma \mu \theta) \|x_{k+1} - \Pi_{\mathcal{X}^\star}(x_{k+1})\|^2 & \leq 2\alpha\gamma\bigl(P(x_{k})-P^\star\bigr)  + \|x_k - \Pi_{\mathcal{X}^\star}(x_{k})\|^2\nonumber\\
& \hspace{5mm} + \gamma L\bigl(\alpha + \tau+1\bigr)\sum_{\ell=(k-\tau)_+}^{k}\|x_{\ell+1} - x_{\ell}\|^2\nonumber\\
& \hspace{5mm} - \left(2\alpha +1 - \gamma  \alpha L\tau \right)\|x_{k+1}-x_k\|^2.
\label{PIAG Lemma2 Proof Eq4}
\end{align}
By letting $\alpha = \frac{1}{\gamma L}$, $V_k = \frac{2}{L}\left( P(x_k)-P^\star\right) + \|x_k -  \Pi_{\mathcal{X}^\star}(x_{k})\|^2$ and $W_k = \|x_{k+1} - x_k\|^2$, the inequality~\eqref{PIAG Lemma2 Proof Eq4} can be rewritten as
\begin{align*}
(1+\gamma \mu \theta) V_{k+1} \leq V_k + \bigl(1+ \gamma L(\tau+1)\bigr) \sum_{\ell=(k-\tau)_+}^{k} W_{\ell} - \left(\frac{2}{\gamma L} + 1 -  \tau\right) W_k.
\end{align*}
Dividing both sides by $1 + \gamma \mu \theta$ completes the proof.

\subsection{Proof of Theorem~\ref{Sec:PIAG Theorem2}}
\label{Sec:PIAG Theorem2 Proof}
According to Lemma~\ref{Sec:PIAG Lemma2}, the iterates generated by Algorithm~\ref{Sec:PIAG Algorithm2} satisfy 
\begin{align*}
V_{k+1} &\leq q V_k + p \sum_{\ell=(k-\tau)_+}^{k} W_{\ell} -  r W_k
\end{align*}
for every  $k\in\mathbb{N}_0$, where
\begin{align*}
q= \frac{1}{1+\gamma \mu \theta}, \quad p =   \frac{1+ \gamma L\bigl(\tau+1\bigr)}{1+\gamma \mu \theta} \quad \textup{and} \quad r =  \frac{\frac{2}{\gamma L} + 1 -  \tau}{1+\gamma \mu \theta}.
\end{align*}
To apply Lemma~\ref{Sec:Main Results Lemma4}, we need to ensure that
\begin{align*}
2\tau + 1\leq \min\left\{\frac{1}{\gamma \mu \theta} +1, \frac{\frac{2}{\gamma L} + 1 -  \tau}{1+ \gamma L\bigl(\tau+1\bigr)}\right\}.
\end{align*}
This convergence condition is equivalent to
\begin{align}
\begin{cases}
2 \gamma \mu \theta \tau &\leq 1,\\
\gamma^2 L^2 (2\tau+1)(\tau+1) + 3\gamma L \tau & \leq  2.
\end{cases}
\label{Appendix:PIAG Theorem2 Proof Eq1}
\end{align}
Define $h=\gamma L (2\tau+1)$. The inequalities~\eqref{Appendix:PIAG Theorem2 Proof Eq1} can be rewritten in terms of $h$, $Q$ (recall that $\theta =  \frac{Q}{Q+1}$), and $\tau$ as
\begin{align}
\begin{cases}
2 h \tau &\leq (Q+1)(2\tau+1),\\
h^2(\tau+1) + 3 h\tau & \leq  2(2\tau+1).
\end{cases}
\label{Appendix:PIAG Theorem2 Proof Eq2}
\end{align}
For any fixed $\tau$, the left-hand side of inequalities~\eqref{Appendix:PIAG Theorem2 Proof Eq2} is non-decreasing in $h \geq 0$ and smaller than the right-hand side for $h=1$. Thus, the inequalities~\eqref{Appendix:PIAG Theorem2 Proof Eq2} hold for any $h\in[0,1]$. This shows that if $\gamma$ is set to
\begin{align*}
\gamma = \frac{h}{L(2\tau + 1)}, \quad h\in (0,1],
\end{align*}
then the convergence condition~\eqref{Appendix:PIAG Theorem2 Proof Eq1} is satisfied. Therefore, by part $2$ of Lemma~\ref{Sec:Main Results Lemma4}, $V_k \leq q^k V_0$ for $k\in\mathbb{N}$. Since $V_k = \frac{2}{L}\left( P(x_k)-P^\star\right) + \|x_k - \Pi_{\mathcal{X}^\star}(x_{k})\|^2$, it follows that
\begin{align*}
\frac{2}{L}\left( P(x_k)-P^\star\right) + \|x_k -  \Pi_{\mathcal{X}^\star}(x_{k})\|^2 &\leq \left(\frac{1}{1+\gamma \mu \theta}\right)^{k}\left(\frac{2}{L} \bigl(P(x_{0})-P^\star\bigr)  + \|x_0 -  \Pi_{\mathcal{X}^\star}(x_{0})\|^2\right)\\
& = \left(1 - \frac{1}{1 + (Q+1)(2\tau+1)/h}\right)^{k}\left(\frac{2}{L} \bigl(P(x_{0})-P^\star\bigr)  + \|x_0 -  \Pi_{\mathcal{X}^\star}(x_{0})\|^2\right).
\end{align*}
%

%
%
\section{Proofs for Subsection~\ref{Sec:SGD}}
\label{Appendix:SGD}
In this section, we provide the proofs for the results presented in Subsection~\ref{Sec:SGD}. We denote by $\mathbb{E}$ the expectation with respect
to the underlying probability space and by $\mathbb{E}_{k}$ the conditional expectation with respect to the past, that
is, with respect to $\{\xi_0,\ldots, \xi_{k-1}\}$. 

We first state a useful lemma which helps us to determine admissible step-sizes for the asynchronous \textsc{Sgd} method.

\begin{lemma}
\label{SGD Proof Lemma1}
Assume that $\alpha$ and $\beta$ are non-negative real numbers. If $\alpha(\beta + 1)\leq 1$, then 
\begin{align*}
\alpha^2 \beta^2 + \alpha \leq 1.
\end{align*}
\end{lemma}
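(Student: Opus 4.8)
The plan is to reduce everything to elementary manipulations of the single hypothesis $\alpha(\beta+1)\leq 1$. First I would observe that since $\beta\geq 0$, the hypothesis gives $\alpha\leq \frac{1}{\beta+1}\leq 1$; in particular $1-\alpha\geq 0$, which is the fact that makes squaring safe later.

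Next I would rewrite the hypothesis as $\alpha\beta \leq 1-\alpha$. Both sides are non-negative (the left by $\alpha,\beta\geq 0$, the right by the previous step), so squaring preserves the inequality: $\alpha^2\beta^2 \leq (1-\alpha)^2$. Adding $\alpha$ to both sides yields
\begin{align*}
\alpha^2\beta^2 + \alpha \;\leq\; (1-\alpha)^2 + \alpha \;=\; 1 - \alpha + \alpha^2.
\end{align*}
It remains to note $1-\alpha+\alpha^2\leq 1$, i.e.\ $\alpha^2\leq \alpha$, which holds because $0\leq\alpha\leq 1$. Chaining these gives $\alpha^2\beta^2+\alpha\leq 1$, as claimed.

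There is no real obstacle here; the only point requiring a moment's care is justifying that one may square $\alpha\beta\leq 1-\alpha$, which is exactly why I would establish $\alpha\leq 1$ (hence $1-\alpha\geq 0$) at the very start. The whole argument is three short lines and needs no results beyond basic arithmetic of non-negative reals.
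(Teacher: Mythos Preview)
Your proof is correct. The paper takes a slightly different but equally elementary route: from $\alpha(\beta+1)\leq 1$ it reads off $\alpha\leq 1/(\beta+1)$ and $\alpha^2\leq 1/(\beta+1)^2$, substitutes these bounds directly to get
\[
\alpha^2\beta^2+\alpha\;\leq\;\frac{\beta^2}{(\beta+1)^2}+\frac{1}{\beta+1}\;=\;\frac{\beta^2+\beta+1}{(\beta+1)^2},
\]
and then observes $\beta^2+\beta+1\leq(\beta+1)^2$ for $\beta\geq 0$. So the paper eliminates $\alpha$ immediately and finishes with a single-variable inequality in $\beta$, whereas you keep $\alpha$ in play, square the rearranged hypothesis $\alpha\beta\leq 1-\alpha$, and finish with $\alpha^2\leq\alpha$. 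Both are three-line arguments of the same flavor; yours avoids fractions entirely, while the paper's is perhaps more mechanical once the substitution is made.
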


\begin{proof}
If $\alpha(\beta + 1)\leq1$ holds, then $\alpha \leq 1/(\beta + 1)$ and $\alpha^2 \leq 1/(\beta + 1)^2$. Thus,
\begin{align*}
\alpha^2 \beta^2 + \alpha &\leq \frac{\beta^2}{\left(\beta +1\right)^2} + \frac{1}{\beta + 1}\\
& = \frac{\beta^2 + \beta + 1}{(\beta + 1)^2}.
\end{align*}
Since $\beta\geq 0$, one can verify that $\beta^2 + \beta +1 \leq (\beta + 1)^2$, implying that the right-hand side of the above inequality is less than or equal to $1$. Therefore, $\alpha^2 \beta^2 + \alpha \leq 1$.
\end{proof}

\subsection{Proof of Lemma~\ref{Sec:SGD Lemma1}}
\label{Sec:SGD Lemma1 Proof}
When $\gamma_k = 0$, $x_{k+1}=x_k$. This implies that $\|x_{k+1} - x^\star\|^2 = \|x_{k} - x^\star\|^2$, and hence $V_{k+1} = V_k$.  Below, we assume $\gamma_k>0$.

By subtracting $x^\star$ from
both sides of~\eqref{Sec:SGD Eq2} and then taking norm squares, we have
\begin{align}
\|x_{k+1} - x^\star\|^2 = \|x_{k} - x^\star\|^2 - 2\gamma_k \left\langle x_{k} - x^\star,  \nabla f(x_{k-\tau_k}, \xi_k)\right\rangle +  \gamma_k^2\left\|\nabla f(x_{k-\tau_k}, \xi_k)\right\|^2.
\label{ProofSGD0}
\end{align}
Taking conditional expectation on both sides of~\eqref{ProofSGD0} with respect to $\{\xi_0,\ldots, \xi_{k-1}\}$ leads to
\begin{align*}
\mathbb{E}_k\bigl[\|x_{k+1} - x^\star\|^2\bigr] & = \|x_{k} - x^\star\|^2 -  2 \gamma_k \bigl\langle x_{k} - x^\star,  \nabla F(x_{k-\tau_k})\bigr\rangle + \gamma_k^2 \mathbb{E}_k\bigl[\|\nabla f(x_{k-\tau_k}, \xi_k)\|^2\bigr] \nonumber\\
& = \|x_{k} - x^\star\|^2 -  2 \gamma_k \bigl\langle x_{k} - x^\star,  \nabla F(x_{k-\tau_k})\bigr\rangle \\
&\hspace{1cm}+ \gamma_k^2 \mathbb{E}_k\bigl[\|\nabla f(x_{k-\tau_k}, \xi_k) - \nabla F(x_{k-\tau_k})\|^2\bigr] + \gamma_k^2 \|\nabla F(x_{k-\tau_k})\|^2,
\end{align*}
where the second equality uses the fact that $\mathbb{E}[\|x\bigr\|^2] = \mathbb{E}[\|x- \mathbb{E}[x]\|^2] + \|\mathbb{E}[x]\|^2$ for any random vector~$x \in \mathbb{R}^d$. According to Assumption~\ref{Sec:SGD Assumption2}, stochastic gradients have bounded variance. Thus, 
\small
\begin{align}
\mathbb{E}_k\bigl[\|x_{k+1} - x^\star\|^2\bigr] \leq \|x_{k} - x^\star\|^2 -  2 \gamma_k \bigl\langle x_{k} - x^\star,  \nabla F(x_{k-\tau_k})\bigr\rangle+ \gamma_k^2 \sigma^2 + \gamma_k^2 \|\nabla F(x_{k-\tau_k})\|^2.
\label{ProofSGD1}
\end{align}
\normalsize
\textbf{1. Convex case}

Since $F$ is convex and $L$-smooth, it follows from Theorem $2.1.5$ in~\cite{Nesterov:13} that 
\begin{align}
F(x_{k-\tau_k}) +  \bigl\langle x^\star - x_{k-\tau_k},\;\nabla F(x_{k-\tau_k}) \bigr\rangle  + \frac{1}{2L}\bigl\| \nabla F(x_{k-\tau_k}) \bigr\|^2 \leq F^\star.
\label{ProofSGD2}
\end{align}
Moreover, from Lemma $1.2.3$ in~\cite{Nesterov:13}, we have
\begin{align}
F(x_{k}) &\leq F(x_{k-\tau_k}) +  \bigl\langle x_{k} - x_{k-\tau_k}, \; \nabla F(x_{k-\tau_k}) \bigr\rangle  + \frac{L}{2}\bigl\|x_{k} - x_{k-\tau_k} \bigr\|^2.
\label{ProofSGD3}
\end{align}
By adding inequalities~\eqref{ProofSGD2} and~\eqref{ProofSGD3} and then rearranging the terms, we get
\begin{align*}
F(x_{k}) - F^\star &\leq \bigl\langle x_{k} - x^\star,\; \nabla F(x_{k-\tau_k}) \bigr\rangle  + \frac{L}{2}\bigl\|x_{k} - x_{k-\tau_k} \bigr\|^2 - \frac{1}{2L}\bigl\| \nabla F(x_{k-\tau_k}) \bigr\|^2.
\end{align*}
Combining this inequality
with~\eqref{ProofSGD1} and taking the full expectation on both sides, we obtain
\begin{align}
2\gamma_k \mathbb{E}\bigl[F(x_{k}) - F^\star\bigr] + \mathbb{E}\bigl[\|x_{k+1} - x^\star\|^2\bigr]&\leq \mathbb{E}\bigl[\|x_{k} - x^\star\|^2\bigr] + \gamma_k L \underbrace{\mathbb{E}\bigl[\|x_{k} - x_{k-\tau_k} \bigr\|^2\bigr]}_{\mathcal{H}} \nonumber\\
&\hspace{1cm}- \left(\frac{\gamma_k}{L} - \gamma_k^2\right)\mathbb{E}\bigl[\bigl\| \nabla F(x_{k-\tau_k}) \bigr\|^2\bigr] + \gamma_k^2 \sigma^2.
\label{ProofSGD4}
\end{align}
Next, we find an upper bound on the term $\mathcal{H}$. From the first inequality in the proof of Lemma $15$ in~\cite{Koloskova:2022}, we have
\begin{align*}
\mathbb{E}\bigl[\|x_k - x_{k-\tau_k}\|^2\bigr] \leq 2\tau_k \sum_{\ell = (k - \tau_k)_+}^{k-1} \gamma_{\ell}^2\mathbb{E}\bigl[\left\|\nabla F(x_{\ell - \tau_{\ell}}) \right\|^2\bigr] + 2 \sigma^2 \sum_{\ell = (k - \tau_k)_+}^{k-1} \gamma_{\ell}^2.
\end{align*}
Substituting the above inequality into~\eqref{ProofSGD4} yields
\begin{align}
2\gamma_k \mathbb{E}\bigl[F(x_{k}) - F^\star\bigr] + \mathbb{E}\bigl[\|x_{k+1} - x^\star\|^2\bigr] \leq \mathbb{E}\bigl[\|x_{k} - x^\star\|^2\bigr] & +  2\gamma_k\tau_k L \sum_{\ell = (k - \tau_k)_+}^{k-1} \gamma_{\ell}^2\mathbb{E}\bigl[\left\|\nabla F(x_{\ell - \tau_{\ell}}) \right\|^2\bigr] \nonumber\\
&\hspace{-1cm} - \left(\frac{\gamma_k}{L} - \gamma_k^2\right)\mathbb{E}\bigl[\bigl\| \nabla F(x_{k-\tau_k}) \bigr\|^2\bigr] \nonumber \\
&\hspace{-1cm} + \left(\gamma_k^2 + 2\gamma_k L \sum_{\ell = (k - \tau_k)_+}^{k-1} \gamma_{\ell}^2\right)\sigma^2.
\label{ProofSGD5}
\end{align}
By letting $X_k = 2\gamma_k \mathbb{E}\bigl[F(x_{k}) - F^\star\bigr]$, $V_k = \mathbb{E}\bigl[\|x_{k} - x^\star\|^2\bigr]$, and $W_k= \gamma_k^2\mathbb{E}\bigl[\bigl\| \nabla F(x_{k-\tau_k}) \bigr\|^2\bigr]$, we can rewrite~\eqref{ProofSGD5} as
\begin{align*}
X_k + V_{k+1} &\leq V_k +  2\gamma_k\tau_k L \sum_{\ell = (k - \tau_k)_+}^{k-1} W_{\ell} - \left(\frac{1}{\gamma_k L} - 1\right) W_k + \left(\gamma_k^2 + 2\gamma_k L \sum_{\ell = (k - \tau_k)_+}^{k-1} \gamma_{\ell}^2\right)\sigma^2.
\end{align*}
\textbf{2. Strongly convex case}

If $F$ is $\mu$-strongly convex, then
\begin{align*}
\frac{\mu}{2}  \| x_k - x^\star\|^2 \leq F(x_k) - F^\star,    
\end{align*}
which implies that $\gamma_k \mu V_k \leq X_k$. Therefore, 
\begin{align*}
V_{k+1} &\leq (1 - \gamma_k \mu) V_k +  2\gamma_k\tau_k L \sum_{\ell = (k - \tau_k)_+}^{k-1} W_{\ell} - \left(\frac{1}{\gamma_k L} - 1\right) W_k + \left(\gamma_k^2 + 2 \gamma_k L\sum_{\ell = (k - \tau_k)_+}^{k-1} \gamma_{\ell}^2\right)\sigma^2.
\end{align*}
This completes the proof.

\subsection{Proof of Theorem~\ref{Sec:SGD Theorem1}}
\label{Sec:SGD Theorem1 Proof}
For any $K\in\mathbb{N}_0$, define the sets $\mathcal{T}_K = \{k\leq K\;|\; \tau_k \leq \tau_{\textup{th}}\}$ and $\overline{\mathcal{T}}_{K} = \{ k\leq K\;|\; \tau_k > \tau_{\textup{th}}\}$. Since $\gamma_k = \gamma$ for $\tau_k \leq \tau_{\textup{th}}$ and $\gamma_k = 0$ for $\tau_k > \tau_{\textup{th}}$, we have 
\begin{align*}
\gamma_k = 
\begin{cases}
\gamma,\quad & k\in\mathcal{T}_K,\\
0,\quad & k \in \overline{\mathcal{T}}_K.
\end{cases}
\end{align*}
\textbf{1. Convex case}

For any $k\in \mathcal{T}_K$, according to Lemma~\ref{Sec:SGD Lemma1}, the iterates generated by Algorithm~\ref{Algorithm:SGD} satisfy
\begin{align}
X_k + V_{k+1} &\leq V_k +  2\gamma\tau_k L \sum_{\ell  = (k - \tau_k)_+}^{k-1} W_{\ell} - \left(\frac{1}{\gamma L} - 1\right) W_k + \left(1 + 2\gamma \tau_k L  \right)\gamma^2\sigma^2 \nonumber\\
&\leq V_k +  2\gamma\tau_{\textup{th}} L \sum_{\ell  = (k - \tau_{\textup{th}})_+}^{k-1} W_{\ell} - \left(\frac{1}{\gamma L} - 1\right) W_k + \left(1 + 2\gamma \tau_{\textup{th}} L  \right)\gamma^2\sigma^2 \nonumber\\
& = V_k +  2\gamma\tau_{\textup{th}} L \sum_{\ell  = (k - \tau_{\textup{th}})_+}^{k} W_{\ell} - \left(\frac{1}{\gamma L} + 2\gamma\tau_{\textup{th}} L- 1\right) W_k + \left(1 + 2\gamma \tau_{\textup{th}} L  \right)\gamma^2\sigma^2,
\label{SGD Proof Theorem 1}
\end{align}
where the second inequality uses $\tau_k \leq \tau_{\textup{th}}$ for $k \in \mathcal{T}_K$. For $k \in \overline{\mathcal{T}}_K$, $\gamma_k=0$ and, hence, $X_k = 0$. Thus, by Lemma~\ref{Sec:SGD Lemma1}, we have
\begin{align*}
X_k + V_{k+1} &= V_k, \quad k \in \overline{\mathcal{T}}_K.
\end{align*}
Next, note that this implies that~\eqref{SGD Proof Theorem 1} also holds for $k \in \overline{\mathcal{T}}_K$ since $W_\ell$ is non-negative for all $\ell\in\mathbb{N}_0$, $W_k = 0$ for $k \in \overline{\mathcal{T}}_K$, and $\left(1 + 2\gamma \tau_{\textup{th}} L \right)\gamma^2\sigma^2\geq 0$. Therefore, 
\begin{align*}
X_k + V_{k+1} &\leq V_k +  p \sum_{\ell  = (k - \tau_{\textup{th}})_+}^{k} W_{\ell} - r W_k + e
\end{align*}
with $p = 2\gamma\tau_{\textup{th}} L$,  $r = \frac{1}{\gamma L} + 2\gamma\tau_{\textup{th}} L- 1$, and $e = \left(1 + 2\gamma \tau_{\textup{th}} L  \right)\gamma^2\sigma^2$ is satisfied by the iterates for every $k$. To apply Lemma~\ref{Sec:Main Results Lemma4}, we need to ensure that
\begin{align*}
2\gamma \tau_{\textup{th}} L(\tau_{\textup{th}} + 1)\leq \left(\frac{1}{\gamma L} + 2\gamma\tau_{\textup{th}} L- 1\right).
\end{align*}
This convergence condition is equivalent to
\begin{align}
2\gamma^2 \tau_{\textup{th}}^2 L^2 + \gamma L \leq 1.
\label{SGD Proof Theorem 2}
\end{align}
Using Lemma~\ref{SGD Proof Lemma1} in Appendix~\ref{Appendix:SGD} with $\alpha = \gamma L$ and $\beta = \tau_{\textup{th}}\sqrt{2}$, we choose the step-size $\gamma$ as
\begin{align*}
\gamma \in \left(0, \frac{1}{L(\tau_{\textup{th}}\sqrt{2}+1)}\right]     
\end{align*}
to guarantee the convergence condition~\eqref{SGD Proof Theorem 2}. It follows from
part $1$ of Lemma~\ref{Sec:Main Results Lemma4} that
\begin{align*}
\sum_{k = 0}^ K X_k \leq V_0 + (K +1)e, \quad K\in\mathbb{N}_0.
\end{align*}
Since $V_0 = \|x_{0} - x^\star\|^2$, $X_k = 2 \gamma_k \mathbb{E}\bigl[F(x_{k}) - F^\star\bigr]$ and $e = \left(1 + 2\gamma \tau_{\textup{th}} L  \right)\gamma^2\sigma^2$, we obtain
\begin{align*}
2\sum_{k = 0}^ K \gamma_k \mathbb{E}\bigl[F(x_{k}) - F^\star\bigr] &\leq \|x_{0} - x^\star\|^2 + (K+1)\left(1 + 2\gamma \tau_{\textup{th}} L  \right)\gamma^2\sigma^2\\
& \leq \|x_{0} - x^\star\|^2 + (K+1)(1 + \sqrt{2})\gamma^2\sigma^2,
\end{align*}
where the second inequality follows since $\gamma$ is selected to make $\gamma L (\tau_{\textup{th}}\sqrt{2} + 1)\leq 1$. By the convexity of $F$ and definition of $\bar{x}_K$, we have
\begin{align*}
F(\bar{x}_K) - F^\star = F\left(\frac{1}{\sum_{k=0}^K \gamma_k} \sum_{k=0}^K \gamma_k x_k \right) - F^\star\leq \frac{1}{\sum_{k=0}^K \gamma_k}\sum_{k=0}^K \gamma_k (F(x_k) - F^\star),
\end{align*}
which implies that
\begin{align}
\mathbb{E}\bigl[F(\bar{x}_K) - F^\star \bigr] \leq \frac{\|x_{0} - x^\star\|^2 + (K+1)\left(1 + \sqrt{2}\right)\gamma^2\sigma^2}{2 \sum_{k=0}^K \gamma_k}, \quad K\in\mathbb{N}_0.\label{SGD Proof Theorem 3}
\end{align}
Next, we will find a lower bound on  $\sum_{k=0}^K \gamma_k$. As $\gamma_k = \gamma$ for $k\in \mathcal{T}_K$ and  $\gamma_k = 0$ for $k \in \overline{\mathcal{T}}_K$, we have
\begin{align}
\sum_{k=0}^K \gamma_k = \gamma |\mathcal{T}_K|.
\label{SGD Proof Theorem 4}
\end{align}
We consider two cases:
\begin{enumerate}
\item If $2 \tau_{\textup{ave}} \leq \tau_{\textup{max}}$, then $2 \tau_{\textup{ave}}\leq \tau_{\textup{th}}$. As observed by~\cite{Koloskova:2022}, since
\begin{align*}
\sum_{k=0}^K \tau_k = (K+1)\tau_{\textup{ave}},
\end{align*}
at most $\lfloor (K + 1)/2 \rfloor$ of the terms on the left hand side of the above equality can be larger than $2\tau_{\textup{ave}}$. Thus, there are at least half of the iterations with the delay smaller than $2\tau_{\textup{ave}}$ and, hence, $\tau_{\textup{th}}$. This implies that $(K+1)/2 \leq |\mathcal{T}_K|$.
\item If $\tau_{\textup{max}}\leq 2 \tau_{\textup{ave}}$, then $\tau_{\textup{max}}\leq \tau_{\textup{th}}$. In this case, we have $\tau_k \leq \tau_{\textup{th}}$ for all $k=0,\ldots,K$ since $\tau_k \leq \tau_{\textup{max}}$ for $k\in\mathbb{N}_0$. This shows that $|\mathcal{T}_K| = K + 1$.
\end{enumerate}
Therefore, $(K+1)/2 \leq |\mathcal{T}_K|$. From~\eqref{SGD Proof Theorem 4}, we then have
\begin{align*}
\sum_{k=0}^K \gamma_k \geq \frac{\gamma(K+1)}{2}.    
\end{align*}
Combining this inequality with~\eqref{SGD Proof Theorem 3} leads to 
\begin{align*}
\mathbb{E}\bigl[F(\bar{x}_K) - F^\star \bigr] \leq \frac{\|x_{0} - x^\star\|^2 }{\gamma (K + 1)} + (1 + \sqrt{2})\gamma \sigma^2, \quad K\in\mathbb{N}_0.
\end{align*}
\textbf{2. Strongly convex case}

Similar to the proof for the convex case,  
\small
\begin{align}
V_{k+1} &\leq (1-\gamma \mu) V_k +  2\gamma \tau_{\textup{th}} L \sum_{\ell  = (k - \tau_{\textup{th}})_+}^{k} W_{\ell} - \left(\frac{1}{\gamma L} + 2\gamma\tau_{\textup{th}} L- 1\right) W_k + \left(1 + 2\gamma \tau_{\textup{th}} L  \right)\gamma^2\sigma^2
\label{SGD Proof Theorem 5}
\end{align}
\normalsize
is satisfied for $k\in \mathcal{T}_K$. For $k\in \overline{\mathcal{T}}_K$, $\gamma_k = 0$. Thus, according to Lemma~\ref{Sec:SGD Lemma1}, we have $V_{k+1}= V_k$ for $k\in \overline{\mathcal{T}}_K$. This implies that
\begin{align}
V_{k+1} &\leq V_k +  2\gamma \tau_{\textup{th}} L \sum_{\ell  = (k - \tau_{\textup{th}})_+}^{k} W_{\ell} - \left(\frac{1}{\gamma L} + 2\gamma\tau_{\textup{th}} L- 1\right) W_k
\label{SGD Proof Theorem 6}
\end{align}
holds for $k \in \overline{\mathcal{T}}_K$ since $W_\ell$ is non-negative for $\ell\in\mathbb{N}_0$ and $W_k = 0$ for $k \in \overline{\mathcal{T}}_K$. It follows from~\eqref{SGD Proof Theorem 5} and~\eqref{SGD Proof Theorem 6} that the iterates generated by Algorithm~\ref{Algorithm:SGD} satisfy
\begin{align*}
V_{k+1} &\leq q_k V_k +  2\gamma \tau_{\textup{th}} L \sum_{\ell  = (k - \tau_{\textup{th}})_+}^{k} W_{\ell} - \left(\frac{1}{\gamma L} + 2\gamma\tau_{\textup{th}} L- 1\right) W_k + e_k,
\end{align*}
where $q_k=1$ and $e_k=0$ for $k\in \overline{\mathcal{T}}_K$, and $q_k = 1 - \gamma \mu$ and $e_k = \left(1 + 2\gamma \tau_{\textup{th}} L  \right)\gamma^2\sigma^2$ for $k\in \mathcal{T}_K$. Let $q= 1 - \gamma \mu$. It is clear that $q_k \geq q$ for every $k$. Thus, to apply Lemma~\ref{Sec:Main Results Lemma4}, we need to enforce that
\begin{align*}
2\tau_{\textup{th}} + 1 &\leq \min\left\{\frac{1}{\gamma \mu}, \frac{\frac{1}{\gamma L} + 2\gamma\tau_{\textup{th}} L- 1}{2\gamma \tau_{\textup{th}} L}\right\}.
\end{align*}
This convergence condition is equivalent to
\begin{align}
\begin{cases}
\gamma \mu (2\tau_{\textup{th}}+1) &\leq 1,\\
4 \gamma^2 \tau_{\textup{th}}^2 L^2 + \gamma L & \leq 1.
\end{cases}
\label{SGD Proof Theorem 7}
\end{align}
If $\gamma L (2\tau_{\textup{th}} + 1)\leq 1$, the first inequality in~\eqref{SGD Proof Theorem 7} holds since $\mu\leq L$. Using Lemma~\ref{SGD Proof Lemma1} in Appendix~\ref{Appendix:SGD} with $\alpha = \gamma L$ and $\beta = 2\tau_{\textup{th}}$, we can see that if $\gamma L (2\tau_{\textup{th}} + 1)\leq 1$, the second inequality in~\eqref{SGD Proof Theorem 7} also holds. Thus, the convergence condition~\eqref{SGD Proof Theorem 7} is satisfied for 
\begin{align*}
\gamma \in \left(0, \frac{1}{L(2\tau_{\textup{th}}+1)}\right].  
\end{align*}
Therefore, by part $2$ of Lemma~\ref{Sec:Main Results Lemma4}, we have
\begin{align}
V_{K+1} \leq Q_{K+1} V_0 + Q_{K+1}\sum_{k=0}^K \frac{e_k}{Q_{k+1}}, \quad K\in \mathbb{N}_0,
\label{SGD Proof Theorem 8}
\end{align}
where $Q_k = \prod_{\ell = 0}^{k-1} q_{\ell}$. Since $q_\ell=1$ for $\ell \in \overline{\mathcal{T}}_K$ and $q_\ell = 1 - \gamma \mu$ for $\ell\in \mathcal{T}_K$, we have
\begin{align*}
Q_{K+1} = \prod_{\ell = 0}^{K} q_{\ell}=\prod_{\ell \in \mathcal{T}_K}(1-\gamma \mu) = (1 - \gamma \mu)^{|\mathcal{T}_K|}.
\end{align*}
As discussed in the proof for the convex case, $(K+1)/2 \leq |\mathcal{T}_K|$. Thus,
\begin{align}
Q_{K+1} &\leq (1 -\gamma \mu)^{\frac{K+1}{2}} \nonumber\\
& \leq \textup{exp}\left(-\frac{\gamma \mu (K + 1)}{2}\right),
\label{SGD Proof Theorem 9}
\end{align}
where the second inequality follows from the fact that $1 - \alpha \leq \textup{exp}(-\alpha)$ for $\alpha\geq 0$. Next, we bound $Q_{K+1}\sum_{k=0}^{K} e_k/Q_{k+1}$. Since $e_k = 0$ for $k \in \overline{\mathcal{T}}_K$ and $e_k = \left(1 + 2\gamma \tau_{\textup{th}} L  \right)\gamma^2\sigma^2$ for $k\in \mathcal{T}_K$, we have
\begin{align}
Q_{K+1}\sum_{k=0}^K \frac{e_k}{Q_{k+1}} & = (1-\gamma \mu)^{|\mathcal{T}_K|}\sum_{k\in \mathcal{T}_K} \frac{\left(1 + 2\gamma \tau_{\textup{th}} L  \right)\gamma^2\sigma^2}{\prod_{\ell = 0}^{k} q_{\ell}} \nonumber\\
& = \left(1 + 2\gamma \tau_{\textup{th}} L  \right)\gamma^2\sigma^2\sum_{k=0}^{|\mathcal{T}_K|-1} (1-\gamma \mu)^{k} \nonumber\\
&\leq \left(1 + 2\gamma \tau_{\textup{th}} L  \right)\gamma^2\sigma^2\sum_{k=0}^\infty (1-\gamma \mu)^{k} \nonumber\\
&= \frac{\left(1 + 2\gamma \tau_{\textup{th}} L  \right)\gamma\sigma^2}{\mu}.
\label{SGD Proof Theorem 10}
\end{align}
Substituting~\eqref{SGD Proof Theorem 9} and~\eqref{SGD Proof Theorem 10} into~\eqref{SGD Proof Theorem 8} yields
\begin{align*}
V_{K+1} &\leq \textup{exp}\left(-\frac{\gamma \mu (K + 1)}{2}\right) V_0 + \frac{\left(1 + 2\gamma \tau_{\textup{th}} L  \right)\gamma\sigma^2}{\mu},\\
&\leq \textup{exp}\left(-\frac{\gamma \mu (K + 1)}{2}\right) V_0 + \frac{2\gamma\sigma^2}{\mu},
\end{align*}
where the second inequality is due to that $\gamma L (2\tau_{\textup{th}} + 1)\leq 1$. Since $V_k = \mathbb{E}\bigl[\|x_{k} - x^\star\|^2\bigr]$, it follows that
\begin{align*}
\mathbb{E}\bigl[\|x_{K} - x^\star\|^2\bigr] \leq \textup{exp}\left(-\frac{\gamma \mu K}{2}\right) \|x_{0} - x^\star\|^2 + \frac{2\gamma\sigma^2}{\mu}\quad K\in \mathbb{N}.
\end{align*}
This completes the proof. 

\subsection{Proof of Theorem~\ref{Sec:SGD Theorem2}}
\label{Sec:SGD Theorem2 Proof}
\textbf{1. Convex case}

According to Theorem~\ref{Sec:SGD Theorem1}, the iterates generated by the asynchronous \textsc{Sgd} method with $\gamma \leq  \frac{1}{L(\tau_{\textup{th}}\sqrt{2}+1)}$ satisfy
\begin{align}
\mathbb{E}\bigl[F(\bar{x}_K) - F^\star \bigr] \leq \frac{\|x_{0} - x^\star\|^2 }{\gamma (K + 1)} + (1 + \sqrt{2})\gamma \sigma^2, \quad K\in\mathbb{N}_0.
\label{SGD Proof Theorem2 1}
\end{align}
We will find the total number of iterations $K_\epsilon$ necessary to achieve $\epsilon$-optimal solution, i.e., 
\begin{align*}
\mathbb{E}\bigl[F(\bar{x}_K) - F^\star \bigr] \leq \epsilon, \quad \textup{for}\; K\geq K_\epsilon.
\end{align*}
By selecting $\gamma$ so that $(\sqrt{2} + 1)\gamma \sigma^2 \leq \epsilon/2$, the second term on the right-hand side of~\eqref{SGD Proof Theorem2 1} is less than $\epsilon/2$. Set $\gamma$ to
\begin{align}
\gamma = \min \left\{ \frac{1}{L(\tau_{\textup{th}}\sqrt{2}+1)},\; \frac{\epsilon}{2(\sqrt{2}+1)\sigma^2}\right\}.
\label{SGD Proof Theorem2 2}
\end{align}
We now choose $K$ so
that the first term on the right-hand side of~\eqref{SGD Proof Theorem2 1} is less than $\epsilon/2$, i.e.,
\begin{align*}
  \frac{\|x_{0} - x^\star\|^2}{\gamma (K+1)}\leq \frac{\epsilon}{2}.    
\end{align*}
This implies that
\begin{align*}
  K\geq K_\epsilon&:=\frac{2\|x_{0} - x^\star\|^2}{\gamma \epsilon }-1\\
  &\overset{\eqref{SGD Proof Theorem2 2}}{=} 2\max\left\{ \frac{L(\tau_{\textup{th}}\sqrt{2}+1)}{\epsilon},\; \frac{2(\sqrt{2}+1)\sigma^2}{\epsilon^2}\right\}\|x_{0} - x^\star\|^2 - 1.
\end{align*}
\textbf{2. Strongly convex case}

It follows from Theorem~\ref{Sec:SGD Theorem1} that the iterates generated by the asynchronous \textsc{Sgd} method with $\gamma \leq  \frac{1}{L(2\tau_{\textup{th}}+1)}$ satisfy
\begin{align}
\mathbb{E}\bigl[\|x_{K} - x^\star\|^2\bigr] \leq \textup{exp}\left(-\frac{\gamma \mu K}{2}\right) \|x_{0} - x^\star\|^2 + \frac{2\gamma\sigma^2}{\mu}\quad K\in \mathbb{N}.
\label{SGD Proof Theorem2 3}
\end{align}
With the choice of
\begin{align}
\gamma = \min \left\{ \frac{1}{L(2\tau_{\textup{th}}+1)},\;\frac{\epsilon \mu}{4\sigma^2}\right\}, 
\label{SGD Proof Theorem2 4}
\end{align}
the second term on the right-hand side of~\eqref{SGD Proof Theorem2 3} is less than $\epsilon/2$. Next we choose $K$ so that
\begin{align*}
\textup{exp}\left(-\frac{\gamma \mu K}{2}\right) \|x_{0} - x^\star\|^2 \leq \frac{\epsilon}{2}.    
\end{align*}
Taking logarithm of both sides and rearranging the terms gives
\begin{align*}
  K\geq K_\epsilon&:=\frac{2}{\gamma \mu}\log\left(\frac{2\|x_{0} - x^\star\|^2}{\epsilon }\right)\\
  &\overset{\eqref{SGD Proof Theorem2 4}}{=} 2\max\left\{ \frac{L(2\tau_{\textup{th}}+1)}{\mu},\; \frac{4\sigma^2}{\mu^2\epsilon}\right\}\log\left(\frac{2\|x_{0} - x^\star\|^2}{\epsilon }\right).
\end{align*}

\subsection{Proof of Theorem~\ref{Sec:SGD Theorem3}}
\label{Sec:SGD Theorem3 Proof}
\textbf{1. Convex case}

According to Theorem~\ref{Sec:SGD Theorem1}, we have
\begin{align*}
\mathbb{E}\bigl[F(\bar{x}_\mathcal{K}) - F^\star \bigr] \leq \frac{\|x_{0} - x^\star\|^2 }{\gamma (\mathcal{K} + 1)} + (1 + \sqrt{2})\gamma \sigma^2.
\end{align*}
By minimizing the right-hand side of the above inequality with respect to $\gamma$ over the interval 
\begin{align*}
\left(0, \frac{1}{L(\tau_{\textup{th}}\sqrt{2}+1)}\right],
\end{align*}
we obtain
\begin{align*}
\gamma^\star = \min \left\{ \frac{1}{L(\tau_{\textup{th}}\sqrt{2}+1)},\; \frac{\|x_{0} - x^\star\|}{\sigma\sqrt{\sqrt{2} + 1}\sqrt{\mathcal{K} + 1}}\right\}.  
\end{align*}
The rest of the proof is similar to the one for the serial \textsc{Sgd} method derived in~\cite{Lan:12} and thus omitted.

\noindent\textbf{2. Strongly convex case}

By Theorem~\ref{Sec:SGD Theorem1}, we have
\begin{align*}
\mathbb{E}\bigl[\|x_{\mathcal{K}} - x^\star\|^2\bigr] \leq \textup{exp}\left(-\frac{\gamma \mu \mathcal{K}}{2}\right) \|x_{0} - x^\star\|^2 + \frac{2\gamma\sigma^2}{\mu}.
\end{align*}
The result follows from Lemma $1$ in~\cite{Karimireddy:2021}.

%
%
\section{Proofs for Subsection~\ref{Sec:ARock}}
\label{Appendix:ARock}
In this section, we provide the proofs for the results presented in Subsection~\ref{Sec:ARock}. 

\subsection{Preliminaries}
Let $I \in \mathbb{R}^{d \times d}$ be the identity matrix. We define the matrices $U_i\in\mathbb{R}^{d\times d_i}$, $i \in [m]$, for which $I = [U_1, \ldots, U_m]$. Then, any vector~$x=\bigl([x]_1,\ldots, [x]_m\bigr)\in \mathbb{R}^d$ can be represented as 
\begin{align*}
 x = \sum_{i=1}^m U_i [x]_i,\quad [x]_i \in \mathbb{R}^{d_i},\; i \in [m].
\end{align*}
Since $[x_{k+1}]_{j} = [x_{k}]_{j} - \gamma S_{j}(\widehat{x}_k)$ for $j = i_k$ and $[x_{k+1}]_{j} = [x_{k}]_{j}$ for $j \neq i_k$, the update formula of Algorithm~\ref{Algorithm:ARock} can be written as 
\begin{align}
x_{k+1} = x_{k} - \gamma U_{i_k} S_{i_k}(\widehat{x}_k),\quad k\in\mathbb{N}_0.
\label{ArockUpdateRule}
\end{align}
Note that $x_k$ depends on the observed realization of the random variable~$\xi_{k-1} := \{i_0, \ldots, i_{k-1}\}$ but not on~$i_j$ for any $j\geq k$. For convenience, we define $\xi_{-1} = \emptyset$. We use $\mathbb{E}$ to denote the expectation over all random variables, and $\mathbb{E}_{k}$ to denote the conditional expectation in term of $i_k$ given $\xi_{k-1}$. 

For any random vector~$x \in \mathbb{R}^d$, the variance can be decomposed as
\begin{align}
\mathbb{E}\left[\bigl\|x- \mathbb{E}[x]\bigr\|^2\right] = \mathbb{E}\left[\bigl\|x\bigr\|^2\right] - \bigl\|\mathbb{E}[x]\bigr\|^2.
\label{VarianceBound}
\end{align}
For any vectors $a,b \in \mathbb{R}^d$ and any constant $\eta>0$, the inequalities 
\begin{align}
\langle a,\; b \rangle &\leq \frac{\eta\|a\|^2}{2} + \frac{\|b\|^2}{2\eta},\label{Cauchy1}\\
\|a + b\|^2 &\leq \left(1 + \eta\right)\|a \|^2 + \left(1 +  \frac{1}{\eta}\right)\|b\|^2,\label{Cauchy2}\\
-\|a \|^2 &\leq -\frac{\|b\|^2}{1 + \eta} +\frac{\|a -b\|^2}{\eta}, \label{Cauchy3}
\end{align}
hold by the Cauchy-Schwarz inequality.

We need the following two lemmas in the convergence analysis of ARock. The first one provides an upper bound on the expectation of $\|x_k - \widehat{x}_k\|^2$.

\begin{lemma}
\label{Lemma1_ARock}
Let $\{x(k)\}$ be the sequence generated by  Algorithm~\ref{Algorithm:ARock}. Then, it holds that
\begin{align*}
\mathbb{E}\bigl[\|x_k - \widehat{x}_k\|^2\bigr] \leq \frac{\gamma^2 \left(\sqrt{m} + \sqrt{\tau}\right)^2}{m^2}\sum_{\ell = (k - \tau)_+}^{k-1}\mathbb{E}\bigl[\left\|S(\widehat{x}_{\ell}) \right\|^2\bigr], \quad k\in\mathbb{N}_0.
\end{align*}
\end{lemma}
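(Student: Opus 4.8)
The plan is to expand $x_k-\widehat{x}_k$ via the inconsistent-read identity~\eqref{ArockRead} and the update rule~\eqref{ArockUpdateRule}, split each summand into a conditionally mean-zero part and a predictable part, estimate the two parts by an orthogonality (martingale) argument and a Cauchy--Schwarz argument respectively, and finally balance the two estimates with a free parameter.

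First, combining~\eqref{ArockRead} with Assumption~\ref{Assumption:ARock}.2 and the update rule~\eqref{ArockUpdateRule} gives
\[
x_k-\widehat{x}_k=\sum_{j\in J_k}(x_{j+1}-x_j)=-\gamma\sum_{j\in J_k}U_{i_j}S_{i_j}(\widehat{x}_j),\qquad J_k\subseteq\{(k-\tau)_+,\dots,k-1\},
\]
so that $|J_k|\le\tau$, and by induction on $j$ using~\eqref{ArockUpdateRule} the vector $\widehat{x}_j$ is measurable with respect to $\{i_0,\dots,i_{j-1}\}$. (If $\tau=0$ then $J_k=\emptyset$ and the statement is trivial, so assume $\tau\ge 1$.) Writing $M_j:=\tfrac1m S(\widehat{x}_j)=\mathbb{E}_j\bigl[U_{i_j}S_{i_j}(\widehat{x}_j)\bigr]$ and $N_j:=U_{i_j}S_{i_j}(\widehat{x}_j)-M_j$, we have $\mathbb{E}_j[N_j]=0$, and~\eqref{Cauchy2} with parameter $s>0$ yields
\[
\|x_k-\widehat{x}_k\|^2\le\gamma^2(1+s)\Bigl\|\sum_{j\in J_k}N_j\Bigr\|^2+\gamma^2\Bigl(1+\tfrac1s\Bigr)\Bigl\|\sum_{j\in J_k}M_j\Bigr\|^2.
\]

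For the predictable part, Cauchy--Schwarz over the at most $\tau$ indices in $J_k$ together with $\|M_j\|=\tfrac1m\|S(\widehat{x}_j)\|$ gives $\bigl\|\sum_{j\in J_k}M_j\bigr\|^2\le\tfrac{\tau}{m^2}\sum_{\ell=(k-\tau)_+}^{k-1}\|S(\widehat{x}_\ell)\|^2$. For the mean-zero part, I would first condition on the update schedule, which by Assumption~\ref{Assumption:ARock}.3 (and the independence convention discussed before it) is independent of the coordinate choices $\{i_t\}$, so that $J_k$ becomes a deterministic index set; then for $j<j'$ in $J_k$ the tower property gives $\mathbb{E}[\langle N_j,N_{j'}\rangle]=\mathbb{E}\bigl[\langle N_j,\mathbb{E}_{j'}[N_{j'}]\rangle\bigr]=0$, hence $\mathbb{E}\bigl\|\sum_{j\in J_k}N_j\bigr\|^2=\sum_{j\in J_k}\mathbb{E}\|N_j\|^2$. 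By the variance identity~\eqref{VarianceBound}, $\mathbb{E}_j\|N_j\|^2=\mathbb{E}_j\|U_{i_j}S_{i_j}(\widehat{x}_j)\|^2-\|M_j\|^2\le\tfrac1m\|S(\widehat{x}_j)\|^2$, using $\mathbb{E}_j\|U_{i_j}S_{i_j}(\widehat{x}_j)\|^2=\tfrac1m\sum_{i=1}^m\|S_i(\widehat{x}_j)\|^2=\tfrac1m\|S(\widehat{x}_j)\|^2$. Therefore $\mathbb{E}\bigl\|\sum_{j\in J_k}N_j\bigr\|^2\le\tfrac1m\sum_{\ell=(k-\tau)_+}^{k-1}\mathbb{E}\|S(\widehat{x}_\ell)\|^2$.

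Collecting the two bounds after taking expectations,
\[
\mathbb{E}\|x_k-\widehat{x}_k\|^2\le\frac{\gamma^2}{m^2}\Bigl(m+\tau+sm+\tfrac{\tau}{s}\Bigr)\sum_{\ell=(k-\tau)_+}^{k-1}\mathbb{E}\|S(\widehat{x}_\ell)\|^2,
\]
and choosing $s=\sqrt{\tau/m}$ makes $sm+\tfrac{\tau}{s}=2\sqrt{m\tau}$, so the bracket equals $(\sqrt{m}+\sqrt{\tau})^2$, which is the claimed inequality. The step I expect to be the main obstacle is the rigorous bookkeeping in the orthogonality argument: $\widehat{x}_j$ and the window $J_k$ both depend on the random coordinate selections, so one must invoke the independence of the read/write schedule from $\{i_t\}$ to freeze $J_k$ before applying the tower property over the $i_t$; the remaining manipulations with~\eqref{Cauchy2} and Cauchy--Schwarz are routine.
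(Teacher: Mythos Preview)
Your proposal is correct and follows essentially the same approach as the paper: the same decomposition into a conditionally mean-zero part $N_j=U_{i_j}S_{i_j}(\widehat{x}_j)-\tfrac1m S(\widehat{x}_j)$ and a predictable part $M_j=\tfrac1m S(\widehat{x}_j)$, the same application of~\eqref{Cauchy2}, the same martingale-orthogonality bound on $\sum N_j$ and Cauchy--Schwarz bound on $\sum M_j$, and the same optimization $s=\sqrt{\tau/m}$. Your explicit remark about conditioning on the read/write schedule (so that $J_k$ becomes deterministic before applying the tower property over the $i_t$) is a point the paper leaves implicit in its cross-term computation, but otherwise the arguments coincide step for step.
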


\begin{proof}
Let $\eta$ be a positive constant. From~\eqref{ArockRead}, we have
\begin{align}
\mathbb{E}\bigl[\|x_k - \widehat{x}_k\|^2\bigr] &= \mathbb{E}\left[\left\| \sum_{j\in J_k}(x_{j+1} - x_{j})\right\|^2 \right] \nonumber\\
&\overset{\eqref{ArockUpdateRule}}{=} \gamma^2 \mathbb{E}\left[\left\| \sum_{j\in J_k} U_{i_j} S_{i_j}(\widehat{x}_j)\right\|^2\right] \nonumber\\
& = \gamma^2 \mathbb{E}\left[\left\| \sum_{j\in J_k} \bigl( U_{i_j} S_{i_j}(\widehat{x}_j) - \frac{1}{m}S(\widehat{x}_j)\bigr) + \frac{1}{m} \sum_{j\in J_k}S(\widehat{x}_j) \right\|^2\right]\nonumber\\
& \overset{\eqref{Cauchy2}}{\leq} \gamma^2(1+\eta) \underbrace{\mathbb{E}\left[\left\| \sum_{j\in J_k} \bigl( U_{i_j} S_{i_j}(\widehat{x}_j) - \frac{1}{m}S(\widehat{x}_j)\bigr)\right\|^2\right]}_{\mathcal{H}_1} + \frac{\gamma^2}{m^2}\left(1+\frac{1}{\eta}\right)  \underbrace{\mathbb{E}\left[\left\| \sum_{j\in J_k}S(\widehat{x}_j) \right\|^2\right]}_{\mathcal{H}_2}.
\label{P1_0}
\end{align}
We will find upper bounds on the quantities $\mathcal{H}_1$ and $\mathcal{H}_2$. We expand $\mathcal{H}_1$ as follows
\begin{align*}
\mathcal{H}_1 &= \sum_{j\in J_k} \mathbb{E}\left[\left\| U_{i_j} S_{i_j}(\widehat{x}_j) - \frac{1}{m}S(\widehat{x}_j)\right\|^2\right] + \sum_{j, j'\in J_k, j>j'} 2 \mathbb{E}\left[\left\langle  U_{i_j} S_{i_j}(\widehat{x}_j) - \frac{1}{m}S(\widehat{x}_{j}),\;  U_{i_{j'}} S_{i_{j'}} (\widehat{x}_{j'})- \frac{1}{m}S(\widehat{x}_{j'})\right\rangle\right]\\
&= \sum_{j\in J_k} \mathbb{E}\left[\left\|U_{i_{j}} S_{i_j}(\widehat{x}_j) - \frac{1}{m}S(\widehat{x}_j)\right\|^2\right],
\end{align*}
where the second equality uses the fact that
\begin{align*}
&\sum_{j, j'\in J_k, j>j'} \mathbb{E}\left[\left\langle U_{i_{j}} S_{i_{j}}(\widehat{x}_j) - \frac{1}{m}S(\widehat{x}_{j}) ,\; U_{i_{j'}} S_{i_{j'}}(\widehat{x}_{j'}) - \frac{1}{m}S(\widehat{x}_{j'})\right\rangle\right]\\
&\hspace{2cm}= 
\sum_{j, j'\in J_k, j>j'} \mathbb{E}\left[\mathbb{E}_{j}\left[\left\langle U_{i_{j}} S_{i_{j}}(\widehat{x}_j) - \frac{1}{m}S(\widehat{x}_{j}),\; U_{i_{j'}} S_{i_{j'}}(\widehat{x}_{j'}) - \frac{1}{m}S(\widehat{x}_{j'})\right\rangle\right]\right]\\
&\hspace{2cm}= 
\sum_{j, j'\in J_k, j>j'} \mathbb{E}\left[\left\langle \mathbb{E}_{j}\left[U_{i_{j}} S_{i_{j}}(\widehat{x}_j)- \frac{1}{m}S(\widehat{x}_{j})\right],\;U_{i_{j'}} S_{i_{j'}}(\widehat{x}_{j'}) -
\frac{1}{m}S(\widehat{x}_{j'})\right\rangle\right]\\
&\hspace{2cm}= 0.
\end{align*}
We then bound $\mathcal{H}_1$ by
\begin{align}
\mathcal{H}_1 &= \sum_{j\in J_k} \mathbb{E}\left[\mathbb{E}_{j}\left[\left\|U_{i_{j}}S_{i_j}(\widehat{x}_j) - \frac{1}{m}S(\widehat{x}_j)\right\|^2\right]\right]\nonumber\\
& \overset{\eqref{VarianceBound}}{=} \frac{1}{m}\sum_{j\in J_k} \mathbb{E}\left[\left\|S(\widehat{x}_j)\right\|^2\right] - \frac{1}{m^2} \sum_{j\in J_k} \mathbb{E}\left[\left\|S(\widehat{x}_j)\right\|^2\right]\nonumber\\
&\leq \frac{1}{m}\sum_{j\in J_k} \mathbb{E}\left[\left\|S(\widehat{x}_j)\right\|^2\right].
\label{H1BoundP1}
\end{align}
Next, we turn to  $\mathcal{H}_2$. We rewrite $\mathcal{H}_2$ as
\begin{align*}
\mathcal{H}_2 = |J_k|^2 \mathbb{E}\left[\left\|\sum_{j\in J_k}\frac{S(\widehat{x}_j)}{|J_k|} \right\|^2\right].
\end{align*}
By convexity of the squared Euclidean norm $\|\cdot\|^2$, we get
\begin{align}
\mathcal{H}_2 \leq |J_k|\sum_{j\in J_k} \mathbb{E}\left[\left\|S(\widehat{x}_j) \right\|^2\right].
\label{H2BoundP1}
\end{align}
Substituting~\eqref{H1BoundP1} and~\eqref{H2BoundP1} into~\eqref{P1_0} yields
\begin{align*}
\mathbb{E}\bigl[\|x_k - \widehat{x}_k\|^2\bigr] &\leq \frac{\gamma^2}{m^2}\left((1+\eta)m +\left(1+\frac{1}{\eta}\right)|J_k| \right)\sum_{j\in J_k} \mathbb{E}\left[\left\|S(\widehat{x}_j) \right\|^2\right].
\end{align*}
From Assumption~\ref{Assumption:ARock}.2, $(k - \tau)_+ \leq j \leq k-1$ for any $j\in J_k$ and $|J_k|\leq \tau$ for all $k\in\mathbb{N}_0$. Thus,
\begin{align*}
\mathbb{E}\bigl[\|x_k - \widehat{x}_k\|^2\bigr] \leq \frac{\gamma^2}{m^2}\left((1+\eta)m +\left(1+\frac{1}{\eta}\right) \tau \right)\sum_{j = (k-\tau)_+}^{k-1} \mathbb{E}\left[\left\|S(\widehat{x}_j) \right\|^2\right].
\end{align*}
It is easy to verify that the optimal choice of $\eta$, which minimizes  the right-hand-side of the above inequality, is $\eta = \sqrt{\tau/m}$. Using this choice of $\eta$ and the change of variable $\ell = j$ completes the proof.
\end{proof}

As a consequence of this lemma, we derive a bound on the expectation of $\langle \widehat{x}_k - x_{k},  S(\widehat{x}_k)\bigr\rangle$.

\begin{lemma}
\label{Lemma2_ARock}
The iterates $\{x(k)\}$ generated by Algorithm~\ref{Algorithm:ARock} satisfy
\begin{align*}
\mathbb{E}\bigl[\langle \widehat{x}_k - x_{k},  S(\widehat{x}_k)\bigr\rangle\bigr] \leq \frac{\gamma}{2}\left(\frac{\tau}{m} +\sqrt{\frac{\tau}{m}}\right) \left(\frac{1}{\tau}\sum_{\ell = (k - \tau)_+}^{k-1}\mathbb{E}\bigl[\left\|S(\widehat{x}_{\ell}) \right\|^2\bigr] + \mathbb{E}\bigl[\|S(\widehat{x}_k)\|^2\bigr]\right).
\end{align*}
\end{lemma}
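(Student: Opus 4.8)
\textbf{Proof plan for Lemma~\ref{Lemma2_ARock}.}

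The plan is to apply the Cauchy--Schwarz-type inequality~\eqref{Cauchy1} to the inner product $\langle \widehat{x}_k - x_k,\; S(\widehat{x}_k)\rangle$ with a judiciously chosen parameter, and then control the resulting $\|x_k - \widehat{x}_k\|^2$ term using Lemma~\ref{Lemma1_ARock}. First I would write, for an arbitrary $\eta>0$,
\begin{align*}
\mathbb{E}\bigl[\langle \widehat{x}_k - x_k,\; S(\widehat{x}_k)\rangle\bigr] \leq \frac{\eta}{2}\,\mathbb{E}\bigl[\|\widehat{x}_k - x_k\|^2\bigr] + \frac{1}{2\eta}\,\mathbb{E}\bigl[\|S(\widehat{x}_k)\|^2\bigr],
\end{align*}
which follows from~\eqref{Cauchy1}. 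Then I would substitute the bound from Lemma~\ref{Lemma1_ARock}, namely $\mathbb{E}\bigl[\|x_k - \widehat{x}_k\|^2\bigr] \leq \frac{\gamma^2(\sqrt{m}+\sqrt{\tau})^2}{m^2}\sum_{\ell=(k-\tau)_+}^{k-1}\mathbb{E}\bigl[\|S(\widehat{x}_\ell)\|^2\bigr]$, into the first term.

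The key step is choosing $\eta$ so that the two resulting coefficients match the target form. Looking at the desired right-hand side, the coefficient of $\frac{1}{\tau}\sum_{\ell}\mathbb{E}[\|S(\widehat{x}_\ell)\|^2]$ is $\frac{\gamma}{2}\bigl(\frac{\tau}{m}+\sqrt{\frac{\tau}{m}}\bigr)$ and the coefficient of $\mathbb{E}[\|S(\widehat{x}_k)\|^2]$ is the same. After substitution, the coefficient of $\sum_\ell \mathbb{E}[\|S(\widehat{x}_\ell)\|^2]$ is $\frac{\eta\gamma^2(\sqrt{m}+\sqrt{\tau})^2}{2m^2}$, which should equal $\frac{\gamma}{2\tau}\bigl(\frac{\tau}{m}+\sqrt{\frac{\tau}{m}}\bigr) = \frac{\gamma}{2m}\bigl(1+\sqrt{\frac{m}{\tau}}\bigr) = \frac{\gamma(\sqrt{\tau}+\sqrt{m})}{2m\sqrt{\tau}}$; solving for $\eta$ gives $\eta = \frac{m}{\gamma\sqrt{\tau}(\sqrt{m}+\sqrt{\tau})}$. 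I would then verify that with this $\eta$ the second coefficient $\frac{1}{2\eta} = \frac{\gamma\sqrt{\tau}(\sqrt{m}+\sqrt{\tau})}{2m} = \frac{\gamma}{2}\bigl(\sqrt{\frac{\tau}{m}}+\frac{\tau}{m}\bigr)$ matches exactly, which it does since $\frac{\sqrt{\tau}(\sqrt{m}+\sqrt{\tau})}{m} = \sqrt{\frac{\tau}{m}}+\frac{\tau}{m}$. So both coefficients collapse to $\frac{\gamma}{2}\bigl(\frac{\tau}{m}+\sqrt{\frac{\tau}{m}}\bigr)$, and factoring this out yields the claimed bound.

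The main obstacle is purely bookkeeping: making sure the algebraic identity $\frac{(\sqrt{m}+\sqrt{\tau})^2}{m\tau}$-type expressions simplify consistently, and handling the edge case $\tau=0$ (where $J_k=\emptyset$, $\widehat{x}_k = x_k$, and both sides are zero, so the inequality holds trivially — though one should note the division by $\tau$ in the statement is understood in this degenerate limit, or the lemma is stated for $\tau\geq 1$). There is no deep difficulty here; the content is entirely in Lemma~\ref{Lemma1_ARock}, and this lemma is a one-line corollary once the optimal $\eta$ is identified.
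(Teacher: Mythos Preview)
Your proposal is correct and follows exactly the same approach as the paper: apply~\eqref{Cauchy1}, invoke Lemma~\ref{Lemma1_ARock}, and substitute $\eta = \frac{m}{\gamma\sqrt{\tau}(\sqrt{m}+\sqrt{\tau})}$. The algebra you carry out to verify the two coefficients coincide is accurate, and your remark on the $\tau=0$ edge case is a reasonable aside that the paper omits.
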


\begin{proof}
 Let $\eta$ be a positive constant. From~\eqref{Cauchy1}, we have
\begin{align*}
\mathbb{E}\bigl[\langle \widehat{x}_k - x_{k},  S(\widehat{x}_k)\bigr\rangle\bigr] \leq \frac{\eta}{2} \mathbb{E}\bigl[\|x_k - \widehat{x}_k\|^2\bigr] + \frac{1}{2\eta}\mathbb{E}\bigl[\|S(\widehat{x}_k)\|^2\bigr].
\end{align*}
It follows from Lemma~\ref{Lemma1_ARock} that
\begin{align*}
\mathbb{E}\bigl[\langle \widehat{x}_k - x_{k},  S(\widehat{x}_k)\bigr\rangle\bigr] \leq \frac{\eta\gamma^2 \left(\sqrt{m} + \sqrt{\tau}\right)^2}{2m^2}\sum_{\ell = (k - \tau)_+}^{k-1}\mathbb{E}\bigl[\left\|S(\widehat{x}_{\ell}) \right\|^2\bigr] + \frac{1}{2\eta}\mathbb{E}\bigl[\|S(\widehat{x}_k)\|^2\bigr].
\end{align*}
Substituting
\begin{align*}
\eta = \frac{m}{\gamma\sqrt{\tau}(\sqrt{m} + \sqrt{\tau})}
\end{align*}
into the above inequality proves the statement of the lemma.
\end{proof}

\subsection{Proof of Lemma~\ref{Sec:ARock Lemma1}}
\label{Sec:ARock Lemma1 Proof}
We reuse the proof technique from~\cite{Hannah:17}. Our point of departure with~\cite{Hannah:17} is to use Lemmas~\ref{Lemma1_ARock} and~\ref{Lemma2_ARock} to bound $\mathbb{E}_k\bigl[\|x_k - \widehat{x}_k\|^2\bigr]$ and $\mathbb{E}_k\bigl[\langle \widehat{x}_k - x_{k},  S(\widehat{x}_k)\bigr\rangle\bigr]$. Let $x^\star$ be a fixed point of the operator $T$. By subtracting~$x^\star$ from both sides of~\eqref{ArockUpdateRule} and then taking norm squares, we have
\begin{align}
\|x_{k+1} - x^\star\|^2 = \|x_{k} - x^\star\|^2 - 2\gamma \left\langle x_{k} - x^\star,  U_{i_k} S_{i_k}(\widehat{x}_k)\right\rangle +  \gamma^2\left\|U_{i_k} S_{i_k}(\widehat{x}_k)\right\|^2.
\label{ProofArock0}
\end{align}
By taking conditional expectation on both sides of~\eqref{ProofArock0} with respect to only the random variable $i_k$, we obtain
\begin{align}
\mathbb{E}_k\bigl[\|x_{k+1} - x^\star\|^2\bigr] &=  \|x_{k} -  x^\star\|^2 - 2\frac{\gamma}{m}\bigl\langle x_{k} -  x^\star,  S(\widehat{x}_k)\bigr\rangle + \frac{\gamma^2}{m} \|S(\widehat{x}_k)\|^2\nonumber\\
& = \|x_{k} - x^\star\|^2 - 2\frac{\gamma}{m}\underbrace{\bigl\langle \widehat{x}_k -  x^\star,  S(\widehat{x}_k)\bigr\rangle}_{\mathcal{H}}
+  2\frac{\gamma}{m}\bigl\langle \widehat{x}_k - x_{k},  S(\widehat{x}_k)\bigr\rangle + \frac{\gamma^2}{m} \|S(\widehat{x}_k)\|^2.
\label{ProofArock1}
\end{align}
We will use $\mathcal{H}$ to generate a $\|x_{k} -  x^\star\|^2$ term to help prove linear convergence. Since $T$ is pseudo-contractive with contraction modulus $c$ and $S(x^\star) = 0$, it follows from Lemma 8 in~\cite{Hannah:17} that
\begin{align}
-\mathcal{H} \leq -\frac{(1-c^2)}{2}\|\widehat{x}_k -  x^\star\|^2 - \frac{1}{2} \|S(\widehat{x}_k)\|^2.
\label{ProofArock2}
\end{align}
We then use~\eqref{Cauchy3} to convert $\|\widehat{x}_k -  x^\star\|^2$ to $\|x_{k} -  x^\star\|^2$ as follows
\begin{align*}
-\|\widehat{x}_k -  x^\star\|^2 \leq  -\left(\frac{1}{1 + \eta}\right)\|x_k - x^\star\|^2 +\frac{1}{\eta}\|x_k -  \widehat{x}_k\|^2,
\end{align*}
where $\eta$ is a positive constant. Combining this inequality and~\eqref{ProofArock2}, we have
\begin{align*}
-\mathcal{H} &\leq -\frac{(1-c^2)}{2(1+\eta)}\|{x}_k -  x^\star\|^2 + \frac{(1-c^2)}{2\eta}\|x_k - \widehat{x}_k\|^2 - \frac{1}{2} \|S(\widehat{x}_k)\|^2\nonumber\\
& \leq -\frac{(1-c^2)}{2(1+\eta)}\|{x}_k -  x^\star\|^2 + \frac{1}{2\eta}\|x_k - \widehat{x}_k\|^2 - \frac{1}{2} \|S(\widehat{x}_k)\|^2.
\end{align*}
Substituting the above inequality into~\eqref{ProofArock1} yields 
\begin{align*}
\mathbb{E}_k\bigl[\|x_{k+1} - x^\star\|^2\bigr] &\leq \left(1 - \frac{\gamma(1-c^2)}{m(1+\eta)} \right)\|x_{k} - x^\star\|^2 +  2\frac{\gamma}{m}\bigl\langle \widehat{x}_k - x_{k},  S(\widehat{x}_k)\bigr\rangle\\
& +\frac{\gamma}{m \eta}\|x_k - \widehat{x}_k\|^2 - \frac{\gamma}{m}\left(1 - \gamma\right)\|S(\widehat{x}_k)\|^2.
\end{align*}
After taking the full expectation on both sides, it follows from Lemmas~\ref{Lemma1_ARock} and~\ref{Lemma2_ARock} that
\begin{align}
\mathbb{E}\bigl[\|x_{k+1} - x^\star\|^2\bigr] &\leq \left(1 - \frac{\gamma(1-c^2)}{m(1+\eta)} \right)\mathbb{E}\bigl[\|x_{k} - x^\star\|^2\bigr]\nonumber\\
&+ \frac{\gamma^2}{m\tau}\left(\frac{\tau}{m} +\sqrt{\frac{\tau}{m}}\right)\left(1 + \left(\frac{\tau}{m} +\sqrt{\frac{\tau}{m}}\right)\frac{\gamma}{\eta}\right)\sum_{\ell = (k - \tau)_+}^{k-1}\mathbb{E}\bigl[\left\|S(\widehat{x}_{\ell}) \right\|^2\bigr] \nonumber\\
& - \frac{\gamma}{m}\left(1 - \gamma\left(1 + \frac{\tau}{m} +\sqrt{\frac{\tau}{m}}\right)\right)\mathbb{E}\bigl[\|S(\widehat{x}_k)\|^2\bigr].
\label{ProofArock04}
\end{align}
Let $V_k = \mathbb{E}\bigl[\|x_{k} - x^\star\|^2 $, $W_k =  \mathbb{E}\bigl[\| S(\widehat{x}_k)\|^2\bigr]$, and
\begin{align*}
\eta = \gamma \left(\frac{\tau}{m} + \sqrt{\frac{\tau}{m}}\right).
\end{align*}
Then, the inequality~\eqref{ProofArock04} can be rewritten as
\begin{align*}
V_{k+1} &\leq \left(1 - \frac{\gamma(1-c^2)}{m\left(1+\gamma \left(\frac{\tau}{m} + \sqrt{\frac{\tau}{m}}\right)\right)} \right) V_k + \frac{2\gamma^2}{m\tau}\left(\frac{\tau}{m} + \sqrt{\frac{\tau}{m}}\right)\sum_{\ell = (k - \tau)_+}^{k-1}W_{\ell} \\
& \hspace{5mm} - \frac{\gamma}{m}\left(1 - \gamma\left(1 + \frac{\tau}{m} + \sqrt{\frac{\tau}{m}}\right)\right) W_k.
\end{align*}
This completes the proof. 

\subsection{Proof of Theorem~\ref{Theorem:ARock}}
\label{Sec:ARock Theorem1 Proof}
According to Lemma~\ref{Sec:ARock Lemma1}, the iterates generated by Algorithm~\ref{Algorithm:ARock} satisfy 
\begin{align*}
V_{k+1} &\leq q V_k + p \sum_{\ell=(k-\tau)_+}^{k} W_{\ell} -  r W_k
\end{align*}
for every  $k\in\mathbb{N}_0$, where
\begin{align*}
q= 1 - \frac{\gamma(1-c^2)}{m\left(1+\gamma \left(\frac{\tau}{m} + \sqrt{\frac{\tau}{m}}\right)\right)}, \quad p =   \frac{2\gamma^2}{m\tau}\left(\frac{\tau}{m} + \sqrt{\frac{\tau}{m}}\right),
\end{align*}
and
\begin{align*}
 r =  \frac{\gamma}{m}\left(1 - \gamma\left(1 + \frac{\tau}{m} + \sqrt{\frac{\tau}{m}}\right)\right) +  \frac{2 \gamma^2}{m\tau}\left(\frac{\tau}{m} + \sqrt{\frac{\tau}{m}}\right).
\end{align*}
To apply Lemma~\ref{Sec:Main Results Lemma4}, we need to enforce that
\begin{align*}
2\tau + 1 &\leq \min\left\{\frac{1}{1-q}, \frac{r}{p}\right\}.
\end{align*}
This convergence condition is equivalent to
\begin{align}
\begin{cases}
\gamma (1-c^2)(2\tau+1) &\leq m\left(1+\gamma \left(\frac{\tau}{m} + \sqrt{\frac{\tau}{m}}\right)\right),\\
\gamma \left(1+5\left(\frac{\tau}{m} + \sqrt{\frac{\tau}{m}}\right)\right)& \leq 1.
\end{cases}
\label{ProofArock05}
\end{align}
Using the change of variable 
\begin{align*}
h = \gamma \left(1+5\left(\frac{\tau}{m} + \sqrt{\frac{\tau}{m}}\right)\right),
\end{align*}
the inequalities~\eqref{ProofArock05} can be rewritten as
\begin{align}
\begin{cases}
 h(1-c^2)(2\tau+1) &\leq m\left( (5 + h) \left(\frac{\tau}{m} + \sqrt{\frac{\tau}{m}}\right) +  1\right),\\
h & \leq 1.
\end{cases}
\label{ProofArock06}
\end{align}
Since $m\geq 1$ and $c \in (0,1)$, we have
\begin{align*}
(1-c^2)(2\tau + 1) &\leq (5+h)\tau + m \\
&= m\left((5+h) \left(\frac{\tau}{m}\right) + 1\right)\\
& \leq m\left((5+h) \left(\frac{\tau}{m} + \sqrt{\frac{\tau}{m}}\right) + 1\right).
\end{align*}
Thus, the inequalities~\eqref{ProofArock06} hold for any $ h\in [0,1]$. This shows that if the step-size $\gamma$ is set to  
\begin{align*}
\gamma = \frac{h}{1+5\left(\frac{\tau}{m} + \sqrt{\frac{\tau}{m}}\right)},\quad h\in (0,1],
\end{align*}
then the convergence condition~\eqref{ProofArock05} is satisfied. Therefore, by part $2$ of Lemma~\ref{Sec:Main Results Lemma4}, $V_k \leq q^k V_0$ for all $k\in\mathbb{N}_0$. Since $V_k = \mathbb{E}\bigl[\|x_{k} - x^\star\|^2\bigr] $, it follows that
\begin{align*}
 \mathbb{E}\bigl[\|x_{k} - x^\star\|^2 &\leq \left(1 - \frac{h(1-c^2)}{m\left(1+(5+h) \left(\frac{\tau}{m} + \sqrt{\frac{\tau}{m}}\right)\right)} \right)^k  \|x_{0} - x^\star\|^2\\
&\leq \left(1 - \frac{h(1-c^2)}{m\left(1+6 \left(\frac{\tau}{m} + \sqrt{\frac{\tau}{m}}\right)\right)} \right)^k  \|x_{0} - x^\star\|^2,
\end{align*}
where the second inequality is due to that $h\leq 1$. This matches the upper bound~\eqref{ARock:Theorem1} on the convergence rate.

Finally, we derive the iteration complexity bound~\eqref{ARock:Theorem2}. Taking logarithm of both sides of~\eqref{ARock:Theorem1} yields
\begin{align*}
\log\left(\mathbb{E}\bigl[\|x_{k} - x^\star\|^2\bigr]\right) \leq k\log\left(1 - \frac{h(1-c^2)}{m\left(1+6 \left(\frac{\tau}{m} + \sqrt{\frac{\tau}{m}}\right)\right)}\right)+ \log\left(\|x_{0} - x^\star\|^2\right).
\end{align*}
Since $\log(1 + x) \leq x$ for any $x > -1$, it follows that
\begin{align*}
\log\left(\mathbb{E}\bigl[\|x_{k} - x^\star\|^2\bigr]\right)  \leq - \left(\frac{h(1-c^2)}{m\left(1+6 \left(\frac{\tau}{m} + \sqrt{\frac{\tau}{m}}\right)\right)}\right) k + \log\left(\|x_{0} - x^\star\|^2\right).
\end{align*}
Therefore, for any $k$ satisfying
\begin{align}
 - \left(\frac{h(1-c^2)}{m\left(1+6 \left(\frac{\tau}{m} + \sqrt{\frac{\tau}{m}}\right)\right)}\right) k + \log\left(\|x_{0} - x^\star\|^2\right) \leq \log(\epsilon),
\label{ProofArock07}
\end{align}
we have $\log\left(\mathbb{E}\bigl[\|x_{k} - x^\star\|^2\bigr]\right)  \leq \log(\epsilon)$, implying that $\mathbb{E}\bigl[\|x_{k} - x^\star\|^2\bigr] \leq \epsilon$. Rearranging terms in~\eqref{ProofArock07} completes the proof.

%
%
\section{Proofs for Subsection~\ref{Sec:ContractiveMapping}}
This section provides the proofs for the results presented in Subsection~\ref{Sec:ContractiveMapping}. 

\subsection{Proof of Lemma~\ref{Sec:ContractiveMapping Lemma1}}
\label{Sec:ContractiveMapping Lemma1 Proof}
For each $i \in [m]$, let $\kappa_i$ and $\kappa'_i$ be two arbitrary consecutive elements of $\mathcal{K}_i$. From~\eqref{Sec:ContractiveMapping Eq3}, we have
\begin{align*}
\bigl[x_{k + 1}\bigr]_i = T_i \left( \left[x_{s_{i 1, \kappa_i}}\right]_1, \ldots, \left[x_{s_{i m, \kappa_i}}\right]_m \right), \quad k\in \bigl[\kappa_i, \kappa'_i\bigr).
\end{align*}
As~$t_i(k) = \kappa_i$ for $k\in \bigl[\kappa_i, \kappa'_i\bigr)$, we obtain
\begin{align*}
\bigl[x_{k + 1}\bigr]_i = T_i \left( \left[x_{s_{i 1, t_i(k)}}\right]_1, \ldots, \left[x_{s_{i m, t_i(k)}}\right]_m \right), \quad k\in \bigl[\kappa_i, \kappa'_i\bigr).
\end{align*}
Since $\kappa_i$ and $\kappa'_i$ are two arbitrary consecutive elements of $\mathcal{K}_i$ and $0\in \mathcal{K}_i$ for each $i$, we can rewrite the asynchronous iteration~\eqref{Sec:ContractiveMapping Eq3} as 
\begin{align}
\bigl[x_{k + 1}\bigr]_i = T_i \left( \left[x_{s_{i 1, t_i(k)}}\right]_1, \ldots, \left[x_{s_{i m, t_i(k)}}\right]_m \right), \quad k\in\mathbb{N}_0.
\label{Appendix:ContractiveMapping Lemma1 Eq1}
\end{align}
Let $V_k = \|x_k - x^{\star}\|_{b,\infty}^w$. From the definition of $\|\cdot\|_{b,\infty}^w$, we have 
\begin{align*}
V_{k+1} &= \max_{i \in [m]} \; w_i\bigl\|[x_{k+1}]_i  -  [x^\star]_i \bigr\|_i\\
&\overset{\eqref{Appendix:ContractiveMapping Lemma1 Eq1}}{=} \max_{i\in[m]} \; w_i\left\|T_i \left( \left[x_{s_{i 1, t_i(k)}}\right]_1, \ldots, \left[x_{s_{i m, t_i(k)}}\right]_m \right) -   [x^\star]_i  \right\|_i\\
&\leq \max_{i\in[m]} \left\| T \left( \left[x_{s_{i 1, t_i(k)}}\right]_1, \ldots, \left[x_{s_{i m, t_i(k)}}\right]_m \right) - x^\star \right\|_{b,\infty}^w,
\end{align*}
where the inequality follows from the fact that $ w_i\bigl\|[x]_i\bigr\|_i \leq \bigl\|x\bigr\|_{b,\infty}^w$ for any $x\in \mathbb{R}^d$ and $i\in[m]$. Since $T$ is pseudo-contractive with respective to the block-maximum norm with contraction modulus $c$, we obtain
\begin{align*}
V_{k+1} &\leq c\max_{i\in[m]}  \left\| \left( \left[x_{s_{i 1, t_i(k)}}\right]_1, \ldots, \left[x_{s_{i m, t_i(k)}}\right]_m \right)  - x^\star \right\|_{b,\infty}^w\\
&= c\max_{i\in[m]}\max_{j\in[m]} \; w_j\left\| \left[x_{s_{i j,  t_i(k)}}\right]_j -  \left[x^\star\right]_j \right\|_j,
\end{align*}
where the equality follows from the definition of $\|\cdot\|_{b,\infty}^w$. Since $w_j\bigl\|[x]_j\bigr\|_j \leq \bigl\|x\bigr\|_{b,\infty}^w$ for any $x\in \mathbb{R}^d$ and $j\in[m]$, we have
\begin{align}
V_{k+1} &\leq c\max_{i\in[m]}\max_{j\in[m]}\left \|x_{s_{i j,  t_i(k)}} - x^\star\right\|_{b,\infty}^w \nonumber\\
& = c\max_{i\in[m]}\max_{j\in[m]} V_{s_{i j,  t_i(k)}}.
\label{Appendix:ContractiveMapping Lemma1 Eq2}
\end{align}
From~\eqref{Sec:ContractiveMapping Eq4}, $ k - \tau_k \leq s_{i j,  t_i(k)}$ for all $i,j\in[m]$ and $k\in\mathbb{N}_0$. Thus, $s_{i j,  t_i(k)} \in \{k - \tau_k,\ldots, k\}$, and hence
\begin{align*}
 V_{s_{i j,  t_i(k)}} \leq \max_{k - \tau_k \leq \ell \leq k} V_{\ell}.
\end{align*}
This together with~\eqref{Appendix:ContractiveMapping Lemma1 Eq2} implies that
\begin{align*}
V_{k+1} \leq c \max_{k - \tau_k \leq \ell \leq k} V_{\ell},
\end{align*}
which is the desired result.

\subsection{Proof of Theorem~\ref{Sec:ContractiveMapping Theorem1}}
\label{Sec:ContractiveMapping Theorem1 Proof}
According to Lemma~\ref{Sec:ContractiveMapping Lemma1}, the iterates generated by~\eqref{Sec:ContractiveMapping Eq3} satisfy
\begin{align*}
V_{k+1} &\leq q V_k + p \max_{(k - \tau_k)_+ \leq \ell \leq k} V_{\ell},\quad k\in\mathbb{N}_0,
\end{align*}
with $q=0$, $p=c$, and 
\begin{align*}
\tau_ k = k -  \min_{i\in[m]}\min_{j\in[m]} s_{i j, t_i(k)}.
\end{align*}
Since $t_i(k)\in\mathcal{K}_i$ for all $i\in[m]$ and $k\in\mathbb{N}_0$, it follows from Assumption~\ref{Sec:ContractiveMapping Assumption1}.1 that $\lim_{k \rightarrow \infty} t_i(k) = \infty$. Thus, by Assumption~\ref{Sec:ContractiveMapping Assumption1}.2,  $s_{i j, t_i(k)} \rightarrow \infty$ as $k \rightarrow \infty$ for all $i,j\in[m]$. This implies that 
\begin{align*}
\lim_{k\rightarrow \infty} k - \tau_k = \infty.
\end{align*}
Therefore, since $c < 1$, using Lemma~\ref{Sec:Main Results Lemma2} with $q = 0$ and $p = c$ completes the proof. 

%
\subsection{Proof of Theorem~\ref{Sec:ContractiveMapping Theorem2}}
\label{Sec:ContractiveMapping Theorem2 Proof}
From Assumption~\ref{Sec:ContractiveMapping Assumption2}.2, $k - D \leq s_{i j, k}$ for all $i,j\in[m]$ and $k\in\mathcal{K}_i$. Since $t_i(k)\in\mathcal{K}_i$ for $k\in\mathbb{N}_0$, we have 
\begin{align*}
t_i(k) - D \leq s_{i j, t_i(k)}.
\end{align*}
By Assumption~\ref{Sec:ContractiveMapping Assumption2}.1, $k - B \leq t_i(k)$ for each $i$ and all $k\in\mathbb{N}_0$. Thus, $k - B - D \leq s_{i j, t_i(k)}$, implying that
\begin{align*}
\tau_ k = k -  \min_{i\in[m]}\min_{j\in[m]} s_{i j, t_i(k)} \leq B + D.
\end{align*}
It follows from Lemma~\ref{Sec:ContractiveMapping Lemma1} that the iterates generated by~\eqref{Sec:ContractiveMapping Eq3} satisfy
\begin{align*}
V_{k+1} &\leq c \max_{(k - \tau_k)_+ \leq \ell \leq k} V_{\ell},\quad k\in\mathbb{N}_0,
\end{align*}
with $V_k = \|x_k - x^{\star}\|_{b,\infty}^w$ and $\tau_k \leq B + D$. As $c < 1$, using Lemma~\eqref{Sec:Main Results Lemma1} with $q=0$, $p=c$ and $\tau = B + D$ leads to
\begin{align*}
V_k \leq c^{\frac{k}{B + D + 1}} V_0 ,\quad k\in\mathbb{N}_0.
\end{align*}
The proof is complete.

%
\subsection{Proof of Theorem~\ref{Sec:ContractiveMapping Theorem3}}
\label{Sec:ContractiveMapping Theorem3 Proof}
According to Lemma~\ref{Sec:ContractiveMapping Lemma1}, the iterates generated by~\eqref{Sec:ContractiveMapping Eq3} satisfy~\eqref{Sec:Main Results Lemma3 Eq1} with $V_k = \|x_k - x^{\star}\|_{b,\infty}^w$, $q=0$, $p=c$, and $\tau_k$ defined in~\eqref{Sec:ContractiveMapping Eq4}. Since $q + p = c < 1$, it follows from Lemma~\ref{Sec:Main Results Lemma3} that $V_k \leq \Lambda(k) V_0$. This completes the proof.

\end{appendices}
\end{document}